\documentclass[12pt, reqno,toc=indent]{amsart}

\usepackage[margin=1.5in]{geometry}

\usepackage[utf8]{inputenc}
\usepackage[T1]{fontenc}
\usepackage[english]{babel}

\usepackage{mathrsfs}

\usepackage{graphicx}
\usepackage{tikz-cd}
\usepackage{tikz}
\usetikzlibrary{arrows.meta,positioning}
\usepackage{tabularx}
\usepackage{enumitem}
\usepackage{float}

\usepackage{wrapfig,lipsum,booktabs}
\usepackage{subcaption}

\setlist[enumerate]{
    itemsep=0.5em,
    topsep=0.5em,
    label=\rm(\roman*)
}

\usepackage{amsmath, amsthm, amssymb, amsfonts}
\usepackage{bm} 
\usepackage{nccmath}
\usepackage{physics}

\usepackage{xcolor}
\definecolor{antiquefuchsia}{rgb}{0.57, 0.36, 0.51}
\definecolor{azure}{rgb}{0.0, 0.5, 1.0}

\usepackage[colorlinks = true, linkcolor = azure, citecolor = antiquefuchsia]{hyperref}

\usepackage{comment}
\usepackage[disable]{todonotes}

\theoremstyle{plain}
\newtheorem{theorem}{Theorem}
\newtheorem*{theorem*}{Theorem}
\newtheorem*{metatheorem*}{Meta--Theorem}
\newtheorem{proposition}[theorem]{Proposition}
\newtheorem{corollary}[theorem]{Corollary}
\newtheorem{lemma}[theorem]{Lemma}

\newtheorem{introthm}{Theorem}

\theoremstyle{definition}
\newtheorem{definition}[theorem]{Definition}
\newtheorem{example}[theorem]{Example}

\newtheorem{remark}[theorem]{Remark}

\numberwithin{theorem}{section}
\numberwithin{equation}{section}

\newcommand{\Z}{\mathbb{Z}} 
\newcommand{\R}{\mathbb{R}} 
\newcommand{\C}{\mathbb{C}}
\newcommand{\D}{\mathscr{D}} 
 
\newcommand{\DD}{\mathbb D} 
\renewcommand{\P}{\mathbb{P}} 
\newcommand{\Id}{\mathrm{Id}}

\newcommand{\s}{\mathbf{s}}

\newcommand{\p}{\mathsf{p}}

\newcommand{\V}{\mathbf V}

\DeclareMathOperator{\Hom}{Hom}
\DeclareMathOperator{\Aut}{Aut}
\DeclareMathOperator{\rk}{rk}

\DeclareMathOperator{\spn}{span}

\renewcommand{\O}{\mathcal O}

\title{Zoll magnetic structures and ruled surfaces}
\date{\today}

\author[J. Bohr]{Jan Bohr}
\address{ 
Mathematical Institute of the University of Bonn,
Endenicher Allee 60, 53115 Bonn, Germany}
\email {bohr@math.uni-bonn.de}

\author[G.P. Paternain]{Gabriel P. Paternain}
\address{ Department of Mathematics, University of Washington, Seattle, WA 98195, USA}
\email {gpp24@uw.edu}

\begin{document}
\maketitle


\begin{abstract}
A {\it Zoll magnetic system} on an oriented closed surface $M$  is a Riemannian metric $g$ together with a function $\lambda\colon M\to \R$, such that every unit speed solution of the ODE  $\ddot \gamma(t)=\lambda(\gamma(t))\gamma(t)^\perp$ is periodic and the minimal period depends continuously on $\gamma$. The trivial example is given by $g$ with constant curvature $K$ and $\lambda\equiv {\rm const.}$ such that $\lambda^2+K>0$. This article  exhibits non-trivial Zoll magnetic systems for every genus ---  for genus $\ge 2$ these are the first such examples. 

The approach is twistor theoretic: To a general magnetic system $(g,\lambda)$ one associates its {\it transport twistor space} $Z(g,\lambda)$, which is the unit disk bundle $DM$, equipped with a degenerate complex structure that encodes the magnetic flow. For the  trivial Zoll magnetic systems explicit {\it holomorphic blow-down maps} $\beta\colon Z(g,\lambda)\to W$  into  certain ruled surfaces $W\to M$ are constructed, mapping $\partial Z(g,\lambda)$ onto a Lagrangian $P\subset W$. For small Lagrangian perturbations $P'\approx P$ the procedure can be reversed and this results in a large class of (non-trivial) nearby Zoll magnetic systems.
\end{abstract}


\section{Introduction}

Let $(M,g)$ be a closed oriented Riemannian surface, $SM$ its unit circle bundle and $\lambda\colon SM\to \R$ a smooth function. A unit speed curve $\gamma\colon \R\to M$ is called  {\it $\lambda$-geodesic}, if it satisfies the ordinary differential equation
\begin{equation}\label{ODE}
	\nabla_{\dot \gamma} \dot \gamma = \lambda(\gamma,\dot\gamma)\dot \gamma^\perp,
\end{equation}
where $\nabla$ is the Levi-Civita connection and $\perp$ denotes rotation by $+\pi/2$. The associated flow $(\phi_t)$ on $SM$ is called a {\it thermostat flow} and the tuple $(g,\lambda)$ is referred to as a {\it thermostat}.


\begin{definition}
	The thermostat $(g,\lambda)$ is called {\it Zoll}, if  the flow $(\phi_t)$ has the same orbits as a free $S^1$-action.
\end{definition}

If $\lambda\equiv 0$, we recover the notion of a Zoll metric. In this case, and more generally when the flow is {\it reversible} (equivalently, if $\lambda(x,-v)=-\lambda(x,v)$\todo{Prove/Leave out?}), the Zoll property enforces $M$ to be diffeomorphic to $S^2$. 

If $\lambda(x,v)=\lambda(x)$ is independent of the direction, the thermostat is also referred to as {\it magnetic system} and \eqref{ODE} may be viewed as model for the motion of a charged particle in a magnetic field, or more geometrically, 
as prescribed (geodesic-)curvature equation for $\gamma$. Moreover, $(\phi_t)$ is the Hamiltonian flow of $H(x,v)=|v|_g^2/2$, restricted to the energy level $H=1/2$ and with respect to the following  twisted symplectic structure on $TM$:
\[
	\omega_\lambda = \omega_0 + \pi^*(\lambda\, d\mathrm{vol}_g).
\]
\todo[inline]{G: check signs, $\omega_{0}=\pm d\alpha$?

J: Flat case with $\omega_0=-(pdq)=dq\wedge dp$ and $X=\dot q\partial_q + \dot p \partial_p$ gives
\begin{align*}
	i_X\omega_\lambda &= i_X(dq\wedge dp + \lambda dq_1\wedge dq_2) = \dot q dp - \dot p dq + \lambda \dot q_1 dq_2 - \lambda \dot q_2 dq_1
\\
&=^! H_p dp + H_q dq,
\end{align*}
such that
\[
	\begin{cases}
		\dot q = H_p\\
		\dot p =  - H_q  + \lambda \dot q^\perp
	\end{cases}
\]
}Here $\omega_0$ is the canonical symplectic structure with respect to $g$ and $\pi\colon TM\to M$ is the foot-point projection. The interpretation as constant curvature equation provides an immediate example of a Zoll magnetic system on every surface of constant curvature $K$ --- namely, if \begin{equation}\label{eqn_modelcase}
	K\equiv \mathrm{const.},\quad \lambda\equiv \mathrm{const.} \quad \text{ and } \quad K+\lambda^2>0,
\end{equation}
then the solutions of \eqref{ODE}, lifted to the universal cover, parametrise the boundaries of geodesic disks and are therefore periodic with a common period. In this article we show that there is an abundance of nearby Zoll magnetic systems, essentially one for each choice of a closed $1$-form on $M$ together with one additional real parameter. The quest for  Zoll magnetic systems was initiated by Asselle--Benedetti--Berti \cite{AsBe21,ABB24}, who constructed the first non-trivial examples on the torus---in the present article we give the first ones for genus $\ge 2$.

Let us briefly comment on the definition of the Zoll property:
If all the orbits of $(\phi_t)$ are closed, one can define the minimal period function \[
	T\colon SM\to (0,\infty),\quad T(x,v)=\inf\{t>0:\phi_t(x,v)=(x,v)\}.
\]
By a theorem of Epstein \cite{Eps72}, there is a smooth first integral $\tilde T\colon SM\to (0,\infty)$, such that $\tilde T/T$ takes values in the positive integers and is $\ge 2$ at most on a finite number of {\it exceptional orbits}. An equivalent definition of the Zoll property is thus that all orbits of $(\phi_t)$ are closed and $T$ is continuous, i.e.~there are no exceptional orbits. In many cases the latter is automatic:  If $M$ has genus $\ge 1$ exceptional orbits are precluded by the classification of Seifert fibred manifolds \cite[Theorem 5.1]{JaNe83}; if $M$ is diffeomorphic to $S^2$ and $\lambda\equiv 0$, they were ruled out by Gromoll--Grove \cite{GrGr81}, who also proved that the geodesics of a Zoll metric are necessarily simple and have a common minimal period (i.e.~$T\equiv \mathrm{const.}$).
For $\lambda \not \equiv 0$, exceptional orbits can arise for variants of the classical {\it Katok example} on $S^2$ \cite{Kat74,Ben16}. Note that we neither insist on simplicity of $\lambda$-geodesics, nor on the existence of a common minimal period.

The first non-trivial examples of  Zoll metrics on $S^2$ (i.e.~$\lambda\equiv 0$) were surfaces of revolution constructed by Otto Zoll \cite{Zol03}---see also \cite{GuMa26} for a recent exposition of this construction and a discussion in the context of geometric rigidity questions (such as in \cite{MaSu18}). 
The construction of Zoll metrics without rotational symmetry is a more difficult matter. Funk \cite{Fun13} had predicted the correct functional freedom, but the first rigorous construction goes back to Guillemin \cite{Gui76}: Given a smooth odd function $h\colon S^2\to \R$, he used the Nash--Moser inverse function theorem to construct a $1$-parameter family of Zoll metrics $(g_t)$ on $S^2$ such that
\[
g_t =\exp(t h + O(t^2)) g_{\mathrm{can}},\quad t\to 0.
\] 
A similar technique was employed more recently by Ambrozio--Marques--Neves \cite{AMN25} to construct exotic metrics on $S^{d}$ ($d\ge 2$) admitting so-called Zoll families of minimal hypersurfaces and by Asselle--Benedetti--Berti \cite{ABB24} to construct Zoll magnetic systems on the $2$-torus.


The twistor theoretic approach to construct Zoll metrics (and more generally, projective structures) on $S^2$ was introduced by LeBrun--Mason in \cite{LeMa02}.
They set up a $1:1$ correspondence between Zoll metrics on $S^2$ near the round one and small perturbations of  $\R P^2$ within $\C P^2$ that satisfy a natural Lagrangian condition. This allows to recover the functional freedom (namely, of an odd function on $S^2$) near the round metric, and has further been extended to construct Zoll metrics  far away from the round one \cite{LeMa10}. Besides the conceptual appeal of this approach, there is also a striking analytical advantage: The twistor correspondence turns the direct perturbation problem (which involves a loss of derivatives necessitating the use of Nash--Moser's inverse function theorem) into a more approachable existence question for holomorphic disks---further instances of this phenomenon are discussed e.g.~in \cite{Don02,LeB06,LeMa07}.

Our construction of Zoll magnetic systems parallels LeBrun--Mason's approach, but becomes considerably more involved due to the presence of a magnetic field: To deal with irreversible flows we introduce a generalisation of the {\it transport twistor spaces} that we first considered in \cite{BoPa23}. The role of $\C P^2$ is now played by a certain ruled surface $W$ over $M$---a fact that was already foreshadowed by the appearence of the branched double cover $\C P^1\times \C P^1\to \C P^2$  in \cite{LeMa10}. We are then led to consider holomorphic disks in $W$ and since these now reside in a $3$-dimensional moduli space, fixing a base point leaves one degree of freedom. This turns out to be an essential feature of the magnetic setting, which can already be observed in the constant curvature model \eqref{eqn_modelcase}, where it is parametrised by $\lambda\in \R$. A key observation of the present article is that this additional degree of freedom corresponds to choosing a leaf in a natural codimension-$1$-foliation of the moduli space. Our construction of exotic Zoll magnetic systems then rests on the persistence of closed leafs under small perturbations of the boundary condition of the holomorphic disks.


\subsection{Existence theorem}\label{sec_thma} 
Before setting up the twistor theory proper, we formulate a theorem that captures the correct functional freedom. We fix a closed oriented Riemannian surface $(M,g)$
and call $\lambda\in C^\infty(SM,\R)$ {\it conformally magnetic}, if it has the form
\begin{equation}\label{confmag}
	\lambda(x,v) = \lambda_0(x) - *d\sigma_x(v),
\end{equation}
where $\lambda_0,\sigma\colon M\to \R$ are smooth functions and $*$ is the Hodge star of $g$. The associated flow is orbit equivalent, via the scaling map $\mathrm{sc}_\sigma(x,w)=(x,e^{\sigma(x)}w)$ to the magnetic system $(e^{2\sigma}g,e^{-\sigma}\lambda_0)$. That is, we can capture all Zoll magnetic systems in the conformal class of $g$ with the
 following moduli space:
\[
	\mathbf{\Lambda}_\mathrm{mag}(g):=\{\lambda \in C^\infty(SM,\R) \text{ conformally magnetic:} ~ (g,\lambda) \text{ Zoll} \}/\Aut(M)\times \Z_2.
\]
Here $\Aut(M)$ is the automorphism group of the underlying Riemann surface, that is,  an element $\varphi\in \Aut(M)$ is orientation preserving and satisfies $\varphi^*g=e^{2u}g$ for some $u\in C^\infty(M,\R)$---it acts on $\lambda\in C^\infty(SM,\R)$ by
\begin{equation}\label{defaction}
	(\lambda \triangleleft \varphi)(x,v) := e^{u(x)} \lambda(x,e^{-u(x)}d\varphi_x(v)) - *du_x(v),\quad (x,v)\in SM.
\end{equation}
This is the natural action coming from rescaling the flow of the magnetic system $(e^{2\sigma}g,e^{-\sigma}\lambda_0)$, where $\varphi\in \Aut(M)$ acts by pull-back:
\[
	\varphi^*(e^{2\sigma}g,e^{-\sigma}\lambda_0) = (e^{2(\varphi^*\sigma +u)} g, e^{-(\varphi^*\sigma + u)}  e^{u}\varphi^*\lambda_0)
\]  
Finally, the group $\Z_2$ acts on $C^
\infty(SM,\R)$ by sending $\lambda$ to $-\lambda\circ \mathsf{f}$, where $\mathsf{f}(x,v)=(x,-v)$---this  corresponds to flipping the sign of the magnetic field.

 \begin{introthm}\label{introthmA}
	Let $(M,g)$ be a closed and oriented Riemannian surface of constant curvature $K$ and $\lambda_0\equiv \mathrm{const.}$ with $K+\lambda_0^2>0$. Then there is a neighbourhood $\mathcal U\subset \left(\Omega^1(M,\R)\cap \ker d\right) \times \R$ of $(0,0)$  in the $C^\infty$-topology and a map
  	\[
  		 \mathcal U\to \mathbf{\Lambda}_\mathrm{mag}(g)
  	\]
	that sends $(0,0)$ to $[\lambda_0]$  	
  	and such that the preimage of $\{[\lambda]: \lambda\equiv \mathrm{const.}, K+\lambda^2>0\}$ is contained in a finite dimensional submanifold of $\mathcal U$. 
	\\[.2em]
{\rm [Proof contained in Section \ref{sec_pfA}.]}
  \end{introthm}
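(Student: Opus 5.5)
The proof follows the twistor strategy described above. Throughout, $\lambda_0$ is the constant from Theorem \ref{introthmA}.

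\textbf{Model blow-down.} For the model system $(g,\lambda_0)$ with $K+\lambda_0^2>0$, I will first construct the transport twistor space $Z=Z(g,\lambda_0)$. This is the disk bundle $DM$ carrying the degenerate complex structure that encodes the magnetic flow. I then write down the holomorphic blow-down map $\beta\colon Z\to W$ explicitly, where $W\to M$ is a ruled surface; concretely I expect a $\C P^1$-bundle associated with a power of the canonical bundle. The map $\beta$ sends $\partial Z=SM$ onto a Lagrangian $P\subset W$. Each fibre $D_xM$ maps to a holomorphic disk $\beta(D_xM)$ with boundary on $P$, and the boundary circle is the projection of the $\lambda_0$-geodesic circle. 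I will then study the moduli space $\mathcal M$ of holomorphic disks in $W$ with boundary on $P$ in the relevant relative homology class. A Riemann–Roch / Maslov index computation should give $\dim\mathcal M=3$. On $\mathcal M$ there is a natural codimension-$1$ foliation, whose leaves in the model case are indexed by the constant $\lambda\in\R$. The leaf through our family of disks is closed and diffeomorphic to $M$.

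\textbf{Perturbation.} Next I perturb the Lagrangian. By Weinstein's neighbourhood theorem, Lagrangians $P'$ near $P$ correspond to small closed $1$-forms on $P\cong SM$. I restrict to forms pulled back from $M$, that is, a class in $\Omega^1(M)\cap\ker d$, and add the extra real parameter $t\in\R$. Because the disks are Fredholm-regular (the linearised $\bar\partial$ operator is surjective by the index/partial-indices computation), the moduli space $\mathcal M'$ deforms smoothly. I then need to show that the foliation persists and that the perturbed family contains a closed leaf $L'\approx M$ near the original one. The parameter $t$ selects which nearby leaf is taken, corresponding to shifting $\lambda$. Reversing the construction, the disks parametrised by $L'$ foliate a neighbourhood of $P'$. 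Their boundaries then give, for each point $x$ of $M$, a circle of curves. This yields a reconstructed degenerate complex structure on $DM$, and from it a conformally magnetic $\lambda$ with respect to a metric in the conformal class of $g$ (after a diffeomorphism, which is why the statement uses the quotient). Since every orbit is a boundary of a disk in a smooth family, the resulting system is Zoll.

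\textbf{Main obstacle.} The hardest step is the persistence of the closed leaf. Codimension-$1$ foliations need not keep compact leaves under perturbation. I would attack this by exhibiting the foliation as the kernel of a closed $1$-form $\alpha$ on $\mathcal M$: for example, the differential of a symplectic-area or flux functional of the disks, which depends only on $[\omega]$ and on the Lagrangian flux. The leaves are then level sets of a primitive near the model leaf, and the implicit function theorem applies, because $\alpha$ is nonvanishing transversally. Perturbing $P$ by a closed $1$-form changes $\alpha$ by a cohomology class. To keep the leaf closed, I may need to restrict to forms whose flux pairing vanishes appropriately, or to absorb that pairing into $t$.

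\textbf{Rigidity claim.} The final claim concerns the preimage of the constant systems. A constant $\lambda$ corresponds to a Lagrangian in the finite-dimensional orbit of model Lagrangians under the automorphisms of $W$ together with the $\lambda$-parameter. Hence the preimage lies in a finite-dimensional submanifold, obtained as the preimage of that orbit under the smooth, locally injective map $\mathcal U\to\{\text{Lagrangians}\}$.
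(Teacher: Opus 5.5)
Your outline has the right architecture: model blow-down, Weinstein graphs of closed $1$-forms, a codimension-one foliation of a $3$-dimensional disk moduli space, one extra real parameter to select the leaf, and $\Aut(W)$-orbits for the trivial systems. However, the step you flag as the main obstacle is left without a working mechanism, and it is the heart of the proof.

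You never say what the foliation is, and your candidate fails. For $\Im\Upsilon$, the form for which $P$ is Lagrangian, every holomorphic disk has zero area, because $\Upsilon$ is of type $(2,0)$ and so $u^*\Upsilon=0$. Your contingency of restricting to closed forms with vanishing flux pairing (or absorbing it into the single parameter $t$) would, in positive genus, only give the map on a subset cut out by further conditions on $\alpha$. It would not give a full neighbourhood of $(0,0)$ in $(\Omega^1(M,\R)\cap\ker d)\times\R$.

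The missing idea is that the leaves are level sets of a \emph{global function} $\varphi\colon\mathcal N(P)\to\C$, defined by $\varphi([u])=\zeta$ iff $u'(0)\in L^\zeta_{u(0)}$. Here $TW|_Q=TQ\oplus L^\zeta$, $\zeta\in\C$, are the holomorphic splittings; they form a $\C$-family because $\operatorname{div}\Upsilon=-2Q$ forces $NQ\cong TQ$.
\begin{itemize}
\item Sections of $\pi_Q$ on which $\varphi$ is constant are exactly the disk families whose tangent lines at $Q$ form a holomorphic line bundle (Lemma \ref{lem_correspondencedisk}), i.e.\ condition (b) of Theorem \ref{introthmB}. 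This is what ties the foliation to the twistor construction.
\item Lemma \ref{lemdphi} writes $d\varphi$ via the Laurent coefficients $\gamma_0,\gamma_1$ of $\langle\omega^2\Upsilon(u),u'\wedge\xi\rangle$. The condition $\operatorname{Res}_Q\Upsilon=0$ removes the $\gamma_0$-term, and the $\Im\Upsilon$-Lagrangian condition kills $\gamma_1$ on $\mathcal H_u$. Hence $\ker d\varphi=\mathcal H$, and $\varphi$ has constant rank one for \emph{every} nearby Lagrangian $P'$ (Lemma \ref{lem_horizontal}).
\item Since the leaves are level sets of a function, there is no period obstruction. Lemma \ref{lem_stability} takes the component through a base point of a level set of the transverse coordinate of $\varphi_{P'}\circ\Psi$. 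It shows this component is compact, covers $Q$, and is one-sheeted by a homotopy argument, and that $\varphi_{P'}\circ\Psi$ is constant on it by the rank condition.
\end{itemize}
So your implicit-function instinct is right once the function is $\varphi$.

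The reconstruction step is also under-specified.
\begin{itemize}
\item A closed leaf yields a thermostat for the \emph{fixed} $g$ because constancy of $\varphi$ gives a holomorphic isomorphism $TM\cong TQ\cong NQ\cong L^\zeta$. This parametrises the disks fibrewise holomorphically by $DM$ (Proposition \ref{prop_defbeta}). The blow-down property uses Lemma \ref{lem_lift2} together with horizontality of the leaf, and injectivity uses the unique intersection with $Q$. Theorem \ref{introthmB} (via Proposition \ref{prop_charZ}) then gives $\lambda$ with $\deg\lambda\le 1$.
\item No diffeomorphism enters, and none is allowed: $\mathbf{\Lambda}_{\mathrm{mag}}(g)$ is a quotient by conformal automorphisms and the flip only.
\item Conformal magneticity is not automatic; it is Lemma \ref{lem_confmag}. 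The Lagrangian condition makes $\beta^*\Upsilon$ a holomorphic $(2,0)$-form on $Z(g,\lambda)\setminus 0$, real on $SM$ with a double pole at the zero section, and Lemma \ref{lem_confmagZ} converts this into conformal magneticity.
\end{itemize}

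Two smaller points. First, $P\cong SM$ is a dimension error: $P$ is a surface diffeomorphic to $M$, and $\beta|_{SM}\colon SM\to P$ is a circle bundle. So closed $1$-forms on $P$ already are those on $M$, and nothing is being restricted. Second, in the rigidity part the implication ``trivial class $\Rightarrow$ $P_\alpha\in\Aut(W)\cdot P_0$'' needs an argument. Proposition \ref{prop_biholo} gives $Z(g,\lambda)\cong Z(g,\lambda_0')$. The model Lagrangian $P_0$ is the same for every constant $\lambda_0'$. Then $\beta_{\alpha,\ell}\circ\beta^{-1}$ extends across $P_0$ to an automorphism of $W$.
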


That is, to a closed $1$-form
$\alpha\in \Omega^1(M,\R)$ and a parameter $\ell\in \R$, both assumed to be sufficiently small, 
 we can associated a class of Zoll conformally magnetic systems in $\mathbf{\Lambda}_\mathrm{mag}(g)$ and generically this class is non-trivial.
  In our twistor correspondence, the closed $1$-forms $\alpha$ correspond to La\-gran\-gians in a ruled surface $W$ and the trivial deformations  arise from Lagrangians on a fixed  orbit of the action by the automorphism group $\operatorname{Aut}(W)$, which is finite dimensional. Of course, over $S^2$ there also reversible Zoll  deformations and these are picked out by a suitable symmetry condition on the Lagrangians in question---we will discuss this in Section \ref{sec_genuszero}.

We expect that the image of
$\mathcal U\to \mathbf{\Lambda}_{\mathrm{mag}}(g)$ contains a full neighbourhood of $[\lambda_0]$. This would follow from reversing our twistor correspondence for nearby $[\lambda]\in \mathbf{\Lambda}_\mathrm{mag}(g)$ and a {\it fixed} ruled surface $W$. We will come back to this problem in future work.

 \subsection{Holomorphic blow-down maps} 
Let $(M,g)$ be a closed oriented  Riemannian surface. This is naturally a Riemann surface with respect to the following complex structure on the tangent spaces $TM$:
\[
	(x,\omega v) := (x,\Re(\omega) v +  \Im(\omega) v^\perp),\quad (x,v)\in TM,\omega \in \C.
\]
The unit disk bundle $DM$ is a compact $4$-manifold with boundary $\partial DM=SM$ and interior $DM^\circ=DM\backslash SM$.
Our strategy is based on considering {fibrewise holomorphic blow-down maps}
 \[
	\beta\colon DM\to W 
 \]
 into a complex surface. {\it Fibrewise holomorphic} means that the restriction to each fibre $D_x M$ is holomorphic, or equivalently, $\omega \mapsto  \beta(x,\omega v)$ is a holomorphic map on the the disk $\DD=\{\omega\in \C:|\omega|\le 1\}$ for all $(x,v)\in SM$. 
 Further, we call $\beta$ a {\it blow-down map}, if the image $P=\beta(SM)$ is an embedded surface and there is a factorisation as follows:
 \[
\begin{tikzcd}
	DM \arrow["\sim"]{r} \arrow[swap, "\beta"]{dr}&  {[W,P]} \arrow{d}\\
	& W
\end{tikzcd}	
\]
Here $[W,P]\to W$ is the blow-up of $W$ along $P$ in the sense of Melrose and we ask the top arrow to be a diffeomorphism --- see Section \ref{sec_blow} for full definitions. 

Let now $\lambda\colon SM\to \R$ be a smooth function.
Then the $\lambda$-geodesic flow  is generated by the following vector field  on $SM$:
\[
	X+\lambda V.
\]
Here $X$ is the geodesic vector field, that is, the generator of the flow for $\lambda=0$ and the {vertical vector field} $V$ is defined as generator of the fibrewise rotations $(x,v)\mapsto (x,e^{it}v)$ on $SM$. Any blow-down map $\beta\colon DM\to W$ restricts to an $S^1$-bundle $SM\to P$. Hence the condition
\[
	d \beta(X+\lambda V)=0\text{ on } SM,
\]
which means that $\beta|_{SM}$ is a first integral of the flow, immediately implies that  $(g,\lambda)$ is a Zoll thermostat. Additionally imposing fibrewise holomorphicity allows to bound the {degree} of $\lambda$, which is defined as follows: We say that \[\deg(\lambda)\le m,\] iff $\lambda(x,\cdot)\colon S_xM \to \R$ extends to a polynomial of degree $\le m$ on $T_xM$ ($x\in M$). That is, magnetic systems correspond to $\deg(\lambda)\le 0$ and conformally magnetic ones to $\deg(\lambda)\le 1$, subject to an exactness condition on the linear portion.

To formulate a precise result to this effect, we require a few more definitions: First,  $P=\beta(SM)$ is called {\it totally real}, iff
\[
	T_p P \cap J(T_pP)=0,\quad p\in P,
\]
where $J$ is the complex structure tensor of $W$. We can then define
\[
	\mu(\beta):= \text{Maslov index of the holomorphic disk $\beta(D_xM)$ in $(W,P)$},
\]
which is an integer that is independent from $x\in M$ (cf.~Section \ref{sec_chernmaslov}). Given a smooth function $\tau\colon M\to \R$, we define
\[
	\beta_\tau\colon DM\to W,\quad \beta_\tau(x,v)=\beta(x,e^{i\tau(x)}v),
\]
which is again a fibrewise holomorphic blow-down map -- we are merely rotating every fibre separately.

\begin{introthm}\label{introthmB}
Let $(M,g)$ be a closed oriented Riemannian surface, $W$ a complex surface and $\beta\colon DM\to W$ a fibrewise holomorphic blow-down map. Then the following are equivalent:
\begin{enumerate}
	\item\label{introthmBi} There exists $\lambda\colon SM\to \R$ with  $\deg(\lambda)\le 1$ and a rotation $\tau\colon M\to \R$ such that $d\beta_\tau(X+\lambda V)=0$ on $SM$.
	\item\label{introthmBii} The following properties are satisfied:
	\begin{enumerate}[label=\rm (\alph*)]
		\item\label{introthmBiia} $P=\beta(SM)$ is totally real and $\mu(\beta)=4$;
		\item\label{introthmBiib} $q:=\beta(\cdot,0)\colon M\to W$ is holomorphic and, writing $Q=q(M)$,  the complex lines $d\beta_{(x,0)}(T_0D_xM)$ ($x\in M$) form a holomorphic subbundle of $TW|_Q$.
	\end{enumerate}
\end{enumerate}
The function $\lambda$ is unique up to the involution $\lambda\mapsto -\lambda\circ \mathsf{f}$.\\[.2em]
{\rm [Proof contained in Section \ref{sec_tts}.]}
\end{introthm}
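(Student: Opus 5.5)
The plan is to pass through the transport twistor space $Z(g,\lambda)$ mentioned in the abstract. For a thermostat $(g,\lambda)$ with $\deg(\lambda)\le 1$ we equip $DM$ with the (degenerate at $SM$) almost complex structure whose $(1,0)$-distribution is spanned by the vertical $\partial_{\bar\omega}$-direction together with a suitable extension of $\eta_-+\lambda$-correction, where $X=\eta_++\eta_-$ is the usual splitting. Concretely, in fibre coordinates $\omega=re^{i\theta}$ we want a complex vector field $\mathbb X$ on $DM$ that restricts on $SM$ to $X+\lambda V$ and whose coefficients extend polynomially into the interior. This is possible precisely because $\lambda$ has degree $\le 1$: writing $\lambda=\lambda_{-1}+\lambda_0+\lambda_1$ in Fourier modes, $\lambda V$ extends by $\omega\lambda_1$, $\bar\omega\lambda_{-1}$ terms. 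The key step is then a lemma: a fibrewise holomorphic map $\beta$ satisfies $d\beta(X+\lambda V)=0$ on $SM$ iff $\beta$ is holomorphic for this structure on $DM$ (i.e.\ $d\beta(\mathbb X)=0$ in the interior as well). One direction is a maximum-principle/Fourier argument: fibrewise holomorphic functions $f=\sum_{k\ge0}f_k$, and $(X+\lambda V)f=0$ on $SM$ decomposes into modes; the equations for nonnegative modes are exactly the holomorphicity equations of $\mathbb X f$ inside.

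For (i)$\Rightarrow$(ii): after replacing $\beta$ by $\beta_\tau$ (rotation does not affect (ii)), $\beta$ is holomorphic on $Z(g,\lambda)$. Total reality of $P$ follows because $d\beta$ maps $T(SM)$ onto $TP$ with kernel spanned by $X+\lambda V$, and the complex structure sends the vertical direction $V$ to the radial direction, which is transversal to $SM$; a short linear-algebra computation shows $TP\cap JTP=0$. Evaluating the mode equations at the zero section ($\omega=0$) gives that $q=\beta(\cdot,0)$ is annihilated by $\bar\partial$ (the degree $-1$ mode of $X$), and at first order in $\omega$ the derivative $\partial_\omega\beta(x,0)$ satisfies a $\bar\partial$-equation twisted by $\lambda_{-1}$, i.e.\ the lines $d\beta_{(x,0)}(T_0D_xM)$ form a holomorphic subbundle of $TW|_Q$; here one uses that $\lambda_{\pm1}$ comes from $*d\sigma$-type terms only through the rotation freedom. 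The Maslov index is computed by a topological count: the disk $\beta(D_xM)$ has boundary loop a full fibre circle, and the trivialisation of $TW$ along it versus $TP$ is given by $T(SM)/\langle X+\lambda V\rangle$, which rotates twice relative to the fibre; I would compute $\mu=4$ first in the model $\C P^1\times\C P^1$/constant curvature case and then argue it is a homotopy invariant of the data.

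For (ii)$\Rightarrow$(i): since $P$ is totally real and $SM\to P$ is a circle bundle, the line field $\ker d\beta|_{SM}$ is a nonvanishing direction field in $T(SM)$ transverse to $V$ (transversality because each fibre circle embeds into $P$). Write it as spanned by $X+a H+\lambda V$, where $H$ is the horizontal field; we need to show that after a rotation $\tau$ one has $a=0$ and $\deg\lambda\le1$. The idea is to use Maslov index $4$: the fibrewise Fourier expansion of the normalised generator is controlled by the winding of $TP$ along the boundary of the holomorphic disk, and $\mu=4$ forces the coefficients to be trigonometric polynomials of degree $\le 2$ in the "$X$-direction", i.e.\ degree $\le1$ in $\lambda$ and in $a$. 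Condition (b) then pins down the degree-one part of $a$: holomorphicity of $q$ and of the line subbundle give a $\bar\partial$-equation whose solution is exactly a rotation $\tau(x)$, killing $a$.

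The main obstacle is this last direction: extracting from the global Maslov condition and (b) the pointwise polynomial structure of the flow generator. I would first attempt it via the Riemann–Hilbert/Fourier argument on each disk $\beta(D_xM)$: the pullback of $TW$ is trivial, $TP$ gives a loop in the Lagrangian Grassmannian of winding $4$, and the partial indices $(2,2)$ or $(1,3)$ etc.\ must be analysed; (b) is presumably what excludes unbalanced splittings. Uniqueness of $\lambda$ up to $\lambda\mapsto-\lambda\circ\mathsf f$ comes from the fact that the line field determines $X+\lambda V$ up to scaling and the only residual choice is orientation reversal of $v$.
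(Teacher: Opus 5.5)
\textbf{The main gap is in (ii)$\Rightarrow$(i).} The mechanism you propose for constraining the kernel field $F$ of $d\beta|_{SM}$ does not exist, and you never build the object that actually carries the constraint.

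\begin{itemize}
\item The Maslov index is a single integer attached to one disk. By itself it cannot force the $v$-dependence of the $H$- and $V$-components of $F$ to be trigonometric polynomials.
\item A partial-index analysis of $(u^*TW,u^*TP)$ does not help either. For an immersed disk of index $4$, splitting off $(T\DD,T\partial\DD)$ with a complementary pair of index $2$ is automatic by Lemma \ref{lem_split}, so (b) plays no role there. More importantly, this pair depends only on $u=\beta|_{D_xM}$, whereas the $X$/$H$-components of $F$ are determined by the horizontal derivatives of $\beta$ along the fibre.
\item Nor does (b) produce a $\bar\partial$-equation whose solution is the rotation.
\end{itemize}

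The missing idea is to pull back the complex structure of $W$. On $DM^\circ$ set $\D:=\beta^*T^{0,1}W$. It extends smoothly to $SM$ as the common kernel of $\beta^*\eta_1,\beta^*\eta_2$ for a local $(1,0)$-coframe. These forms stay independent on $SM$ because $P$ is totally real and $\ker d\beta|_{SM}=\R F$, and one gets $\D\cap\bar\D=\C F$ there.

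In isothermal coordinates, $\D=\spn_\C(\partial_{\bar z}+f_1\partial_z+f_2\partial_\mu,\partial_{\bar\mu})$ with $f_1,f_2$ holomorphic in $\mu$ and $|f_1|=1$ on $|\mu|=1$.
\begin{itemize}
\item Condition (b) gives $f_1=\mu^2h_1$ and $f_2=\mu h_2$ (Lemma \ref{lem_zeroD}).
\item $\mu(\beta)=4$ gives $\deg(f_1|_{S^1})=2$ (Lemma \ref{lem_maslovD}).
\item Hence $h_1$ is zero-free, holomorphic and unimodular on the boundary, so $h_1=e^{2i\tau(z)}$ is independent of $\mu$.
\end{itemize}
This is exactly why, after rescaling by a positive function, $F=(\cos\tau)X+(\sin\tau)H+\lambda V$ with $\tau=\tau(x)$. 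Your normalisation $X+aH+\lambda V$ is not available a priori, since the $X$-component may vanish.

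After undoing $R_\tau$, $h_2$ has boundary values $-\mu^2\sigma_z+\sigma_{\bar z}+ie^\sigma\mu\lambda$. Its $\mu$-holomorphic extension kills the Fourier modes of $\lambda$ of degree $\le -2$, and reality of $\lambda$ then gives $\deg\lambda\le 1$. Your Riemann--Hilbert idea could only be rescued by feeding in $\partial_x\beta,\partial_y\beta$ along a fibre as elements of $T_u\mathcal M_4(P)$ in the normal form of Lemma \ref{lem_normalformframe}. That amounts to the same computation.

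\textbf{In (i)$\Rightarrow$(ii) the Maslov count is also unjustified.} Your homotopy argument to the model $\C P^1\times\C P^1$ presupposes that $\mu(\beta)$ is an invariant of $(g,\lambda)$ alone. For an arbitrary $(W,P,\beta)$ there is no family of blow-down maps connecting it to the model, so this is precisely what has to be proved.

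The missing step runs as follows.
\begin{itemize}
\item Because $\beta$ is a holomorphic blow-down map, $d\beta$ has full rank on $\spn\{H,V,V_\perp\}$.
\item It therefore induces an isomorphism $T_\C DM/\D_\lambda\to T_\C W/T^{0,1}W$ up to $SM$.
\item So $d\beta^\top$ identifies $(\Lambda^{1,0}W,\Lambda^{1,0}_\R W)$ with $(\Lambda^{1,0}_\lambda,\Lambda^{1,0}_{\lambda,\R})$ along each fibre disk.
\item The latter pair has index $-4$ by a direct computation on $Z(g,\lambda)$, e.g.\ with the frame $\omega^2\hat\Omega$ from Lemma \ref{lemdomega}, which gives $\mu(\beta)=4$.
\end{itemize}
The same isomorphism is what your ``short linear-algebra computation'' for total reality really rests on.

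Finally, your remark that $\lambda_{\pm1}$ must be of $*d\sigma$-type is beside the point. For every $\lambda$ with $\deg\lambda\le 1$ one has $f_1=\mu^2$ and $f_2(z,0)=0$, so (b) follows from Lemma \ref{lem_zeroD} with no condition on $\lambda_{\pm1}$.
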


Of course, one still needs to determine when the thermostats produced by the theorem are actually conformally magnetic (rather than just having degree $\le 1$), but we will postpone this discussion for the moment.

Fibrewise holomorphic first integrals arise frequently in geometric inverse problems \cite{PSU23} and have an appealing interpretation in terms of  so-called {transport twistor spaces}---let us explain these:  An {\it involutive structure} on $DM$ is a complex subbundle $\D\subset T_\C DM = TDM\otimes \C$, which is closed under taking Lie brackets. Suppose that $\lambda\colon SM\to \R$ is smooth and there exists an involutive structure $\D$ of rank $2$ that is orientation compatible and satisfies
\begin{equation}\label{definvol}
	T^{0,1}(D_x M) \subset \D \quad (x\in M)\quad \text{ and } \quad \D\cap \overline{ \D} =\begin{cases} \C(X+\lambda V)& SM\\
	0 & DM^\circ.
	\end{cases}
\end{equation}
Then the tuple $(DM,\D)$ is called the transport twistor space of $(g,\lambda)$.  Since $\D$ does not contain any real vectors over $DM^\circ$, it must be the $(0,1)$-bundle of a complex structure and with respect to this we understand the orientation compatibility. For $\lambda=0$ we constructed transport twistor space in \cite{BoPa23}, but the construction extends seamlessly to $\deg(\lambda)\le 2$. The involutive structure $\D=\D_\lambda$ is then unique and we write the resulting transport twistor space as \[Z(g,\lambda)=(DM,\D_\lambda).\]
We may think of this as a complex surface whose complex structure degenerates at the boundary $\partial DM$. 
A smooth map
\[
	\beta\colon Z(g,\lambda)\to W
\]
is {\it holomorphic} (that is, $\beta_*\D\subset T^{0,1}W$) if and only if it is fibrewise holomorphic and restricts to a first integral on $\partial Z(g,\lambda)=SM$. In this sense the blow-down map $\beta_\tau$ in Theorem \ref{introthmB} can be thought of as a genuinely  holomorphic map.

The action of the group $\Aut(M)\times \Z_2$ defined below \eqref{defaction} has the following interpretation: Namely,  two functions $\lambda_1,\lambda_2\in C^\infty(SM,\R)$ with $\deg(\lambda_k)\le 2$ lie on the same orbit of the group action if and only if there is a biholomorphism $\Phi\colon Z(g,\lambda_{1})\xrightarrow{\sim} Z(g,\lambda_2)$ that sends fibres into fibres (see Proposition \ref{prop_biholo}).

We can now sketch the proof of Theorem \ref{introthmB}: Pulling back the $(0,1)$-bundle of $W$ via $\beta$ produces an involutive structure $\D$ on $DM$ and we may ask when this comes from a transport twistor space. For this we give a precise characterisation (see Propositon \ref{prop_charZ}) that may be of independent interest.

\subsection{Blow-downs in the model cases}  In order to apply Theorem \ref{introthmB}, we need to produce fibrewise holomorphic blow-down maps. To this end we first find explicit holomorphic blow-down maps in the model cases ($g$ of constant curvature and $\lambda\equiv \mathrm{const.}$).

\begin{introthm}\label{introthmC}
Let $(M,g)$ be a closed oriented surface of constant curvature $K$. Suppose $\lambda\equiv \mathrm{const.}$ with $K+\lambda^2>0$.
Then there exists a holomorphic blow-down map
\[
	\beta\colon Z(g,\lambda)\to W
\]
into a complex surface $W$  such that $Q=\beta(M)$ is complex and $P=\beta(SM)$ is totally real--- 
see Tables \ref{table1} and \ref{table2}. Moreover, there is a ruling $\rho\colon W\to M$ such that $\rho\circ \beta(x,0)=x$ for all $x\in M$.\\[.2em]
{\rm [Proof contained in Section \ref{sec_model}.]}
\end{introthm}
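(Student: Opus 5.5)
Work on the universal cover, where the statement becomes explicit: $\tilde M$ is $S^2$, $\C$ or the disk with its standard constant curvature metric. The construction should be equivariant under the deck group $\Gamma\subset \mathrm{Isom}^+(\tilde M)$ and then descend. Each $\lambda$-geodesic in $\tilde M$ bounds a geodesic disk of a fixed radius $r$ (with $\cot r=\lambda/\sqrt{K}$ in the spherical case, and analogous formulas when $K\le 0$). The condition $K+\lambda^2>0$ is exactly what makes these curves closed. So the first integral $\beta|_{SM}$ should send $(x,v)$ to the \emph{centre} of the $\lambda$-circle through $x$ with direction $v$, together with enough fibre data to make $\beta$ a diffeomorphism onto its image off the boundary.

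\textbf{Choice of $W$.} Take $W$ to be the quotient by $\Gamma$ of a ruled surface over $\tilde M$ on which $\mathrm{Isom}^+(\tilde M)$ acts holomorphically. The candidate is the projectivised bundle $\P(\O\oplus K^{-1})$, or equivalently $\P(TM\oplus \C)$. Concretely, realise $\tilde M$ as a quadric or orbit in $\C P^1\times \C P^1$ (for $K>0$) or in the space of circles, and use $\mathrm{PSL}_2$-equivariance. The ruling $\rho$ should be the projection to the base point, so that $\rho\circ\beta(x,0)=x$ holds by construction. Define $\beta(x,\omega v)$ for $\omega\in\DD$ as follows. Place the point on the fibre $\rho^{-1}(x)\cong \C P^1$ via a Möbius map in $\omega$ that sends $\omega=0$ to the zero section $Q$. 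On the boundary $|\omega|=1$ it should recover the centre data in a suitable affine chart. Fibrewise holomorphicity then holds automatically, and $Q=\beta(M)$ is a holomorphic section, hence complex.

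\textbf{Holomorphicity and totally real boundary.} It remains to show that $\beta$ is holomorphic for $\D_\lambda$. Since holomorphicity equals fibrewise holomorphicity plus $d\beta(X+\lambda V)=0$ on $SM$, it suffices to check that $\beta|_{SM}$ is constant along $\lambda$-circles. This holds by the centre interpretation, given a correct choice of fibre coordinate, namely $\rho$ composed with the centre map paired with the base point. The map $\beta|_{SM}\to P$ is then an $S^1$-fibration, and $P$ is the image of the space of $\lambda$-circles, a surface diffeomorphic to $M$. Total reality of $P$ is checked pointwise using homogeneity: by $\mathrm{Isom}^+$-equivariance it suffices to verify it at one point. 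There it reduces to $P$ being transverse to the complex fibre and not a complex curve, which I expect to be a short computation. One should also check that the Maslov index is $4$, consistent with Theorem~\ref{introthmB}.

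\textbf{Blow-down property.} Finally, verify that $DM\to [W,P]$ is a diffeomorphism. In the interior this means $\beta$ is injective and immersive on $DM^\circ$, with image $W\setminus(P\cup \text{curve at infinity})$. At the boundary one needs the normal derivative of $\beta$ to be nondegenerate on each $S^1$-fibre, so that the radial blow-up coordinate matches $1-|\omega|$. I expect this boundary analysis to be the main obstacle, along with arranging the formulas uniformly in the sign of $K$ so that everything descends to the quotient. The three cases $K>0$, $K=0$, $K<0$ would be handled with explicit formulas, recorded in the tables. Injectivity in the interior likely follows from the fact that distinct points of a fibre disk correspond to distinct circles through $x$, or from a degree argument.
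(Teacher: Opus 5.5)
Your construction cannot work as stated. You take $\rho$ to be the base-point projection and place $\beta(x,\omega v)$ in the fibre $\rho^{-1}(x)$ via a M\"obius map in $\omega$, so that $\rho\circ\beta=\pi$ on all of $DM$. The theorem only asserts $\rho\circ\beta(x,0)=x$ on the zero section, and this identity cannot hold on all of $DM$. Along $SM$ it would give $d\rho\big(d\beta(X+\lambda V)\big)=d\pi(X+\lambda V)=d\pi(X)\neq 0$. Hence $d\beta(X+\lambda V)\neq 0$, and $\beta$ is not holomorphic for $\D_\lambda$ (Lemma \ref{lemholmap}).

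Geometrically, a $\lambda$-circle passes through many base points, so its image would have to lie in several disjoint fibres of $\rho$. Pairing the centre with the base point, as you suggest for the fibre coordinate, fails for the same reason: the base point is not a first integral. Your $\beta$ is in fact an embedding of $DM$ (injective on each $D_xM$, and distinct fibres go to distinct fibres). So $\beta(SM)$ would be a $3$-dimensional circle bundle rather than a surface, and nothing is blown down. For a genuine blow-down, $\beta(DM^\circ)$ is all of $W\setminus P$, not $W\setminus(P\cup\text{curve at infinity})$.

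Your candidate $W=\P(\O\oplus K^{-1})=\P(TM\oplus\C)$ is the fibrewise compactification of $TM$ that goes with this embedding picture, and it is the wrong surface. It is the split ruled surface: the Hirzebruch surface $\P(\O\oplus\O(2))$ in genus $0$ and $M\times\C P^1$ in genus $1$. The correct targets are $\C P^1\times\C P^1$ and Atiyah's non-split surface (Remark \ref{rmk_atiyah}). The target is determined up to biholomorphism by $Z(g,\lambda)$: compose two blow-down maps and extend across $P$, as in the proof of Theorem \ref{introthmA}. So no map into your $W$ can work.

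The missing idea is that the ruling coordinate must itself be a first integral. $\rho\circ\beta$ is a fibrewise holomorphic extension of the \emph{centre} map $SM\to M$, which reduces to the base point only at $v=0$. The paper finds it by solving $\Xi_{\sigma,\lambda}H=\partial_{\bar\mu}H=0$ in isothermal coordinates with the ansatz $H_\gamma(z,\mu)=(z+\gamma\mu)/(1\mp\gamma\bar z\mu)$ for $K=\pm1$. This is a M\"obius map in $\mu$ that moves the point in the \emph{base}, not in the fibre. Holomorphicity then reduces to a quadratic equation for $\gamma$, e.g.\ $\gamma^2+2i\lambda\gamma+1=0$ when $K=1$. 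For $K=0$ one has $H_1=z+\tfrac{i}{\lambda}\mu$ and $H_2=\bar z-\tfrac{i}{\lambda\mu}$.

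One solution gives $\rho\circ\beta$: it equals the centre $c$ on $SM$ and equals $z$ at $\mu=0$. The other gives the fibre coordinate, which on $SM$ equals $-1/\bar c$, $\bar c$ and $1/\bar c$ in genus $0$, $1$ and $\ge 2$. So $P$ is the graph of an anti-holomorphic section and hence totally real. Meanwhile $\beta(M)$ is the diagonal (resp.\ $\xi=\infty$ in genus $1$). The equivariance of $(H_1,H_2)$ under the deck group, acting on both factors, is what dictates $W$ (Table \ref{table1}).

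The blow-down property then reduces to three explicit checks rather than a delicate boundary analysis:
\begin{enumerate}
\item $\beta^*F$ vanishes exactly to first order at $|\mu|=1$ for a defining function $F$ of $P$ (Lemma \ref{lem_lift});
\item the Jacobian is a positive multiple of $1-|\mu|^4$ (Lemma \ref{lem_melroselocdiff});
\item some point of $W\setminus P$ has a single preimage (Remark \ref{rmk_covering}).
\end{enumerate}
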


A \textit{ruling} is a holomorphic map $\rho\colon W\to M$, all of whose fibres are diffeomorphic\footnote{By the Fischer--Grauert theorem \cite[Theorem 6.2.4]{Huy05}, $\rho$ is then automatically a holomorphic $\C P^1$-bundle. As such, every ruled surface over $M$ is isomorphic the the projectivisation of a rank $2$ holomorphic vector bundle $E\to M$.}  to $\C P^1$. In the given context it has the following geometric meaning: if $p\in P$, then $\beta^{-1}(\{p\})$ is an orbit of the magnetic flow, that is, $\pi(\beta^{-1}(p))\subset M$ lifts to the universal cover of $M$ as boundary of a geodesic disk---the point $\rho(p)\in M$ is the projection of the centre of this disk.

All  of the objects in the theorem are constructed explicitly in Section \ref{sec_model}. The starting point in each case is a judicious ansatz for $\beta$ in isothermal coordinates; the appropriate surface $W$ then appears naturally if one insists on the correct invariance properties and turns out to be a certain ruled surface. In all cases it is diffeomorphic to $M\times S^2$; however if $M$ has genus $\ge 1$ it is {\it not} biholomorphic to $M\times \C P^1$.  In fact, if $M$ has genus $1$, one can identify $W$ with {\it Atiyah's ruled surface} over $M$ (see Remark \ref{rmk_atiyah}).

We note that the totally real surface $P\subset W$ does not depend on the magnetic field strength $\lambda$. From this observation we deduce the following corollary:

\begin{corollary}
	Let $(M,g)$ be a closed oriented Riemannian surface  of constant curvature $K$ and suppose that $\lambda_1,\lambda_2\in \R$ both satisfy $K+\lambda_k^2 >0$ ($k=1,2$). Then there exists a biholomorphism:
	\[
		\Phi\colon Z(g,\lambda_1)\xrightarrow{\sim} Z(g,\lambda_2)
	\]
\end{corollary}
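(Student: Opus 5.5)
The plan is to use Theorem \ref{introthmC} twice and compose one blow-down map with the inverse of the other. For $k=1,2$ it gives holomorphic blow-down maps $\beta_k\colon Z(g,\lambda_k)\to W_k$ with $P_k=\beta_k(SM)$ totally real and $Q_k=\beta_k(M)$ complex. The key input is the remark just before the corollary: $W$ and $P$ can be chosen independently of the field strength $\lambda$. More precisely, I would check in the explicit constructions of Section \ref{sec_model} (Tables \ref{table1} and \ref{table2}) that, for fixed $(M,g)$, the ruled surface $W$ and the totally real surface $P$ depend only on $K$. The value of $\lambda$ only changes $\beta$ itself, in particular the holomorphic section $Q=\beta(M)$. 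So we may take $W_1=W_2=:W$ and $P_1=P_2=:P$.

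Next I would use the blow-down property. Each $\beta_k$ factors as a diffeomorphism $DM\xrightarrow{\sim}[W,P]$ followed by the blow-down $[W,P]\to W$. So I define
\[
	\Phi:=\tilde\beta_2^{-1}\circ\tilde\beta_1\colon DM\to DM,
\]
where $\tilde\beta_k\colon DM\xrightarrow{\sim}[W,P]$ are the lifts. This is a diffeomorphism satisfying $\beta_2\circ\Phi=\beta_1$. On the interior, $\beta_k$ restricts to a diffeomorphism $DM^\circ\to W\setminus P$, because the blow-down map is a diffeomorphism off the front face. Both $\beta_k$ are holomorphic there for the complex structures determined by $\D_{\lambda_k}$. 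Hence $\Phi|_{DM^\circ}$ is biholomorphic, i.e.\ $\Phi_*\D_{\lambda_1}=\D_{\lambda_2}$ on $DM^\circ$.

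It remains to get the intertwining on the boundary $SM$. Since $\Phi$ is smooth up to the boundary and both involutive structures are smooth subbundles of $T_\C DM$, the identity $\Phi_*\D_{\lambda_1}=\D_{\lambda_2}$ extends from the dense set $DM^\circ$ to all of $DM$ by continuity. Therefore $\Phi$ is a biholomorphism $Z(g,\lambda_1)\xrightarrow{\sim}Z(g,\lambda_2)$. As a consistency check, on $SM$ it maps orbits of $X+\lambda_1V$ to orbits of $X+\lambda_2V$: both are fibres of $SM\to P$, i.e.\ preimages $\beta_k^{-1}(p)$.

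The main obstacle is the first step: verifying that the explicit $W$ and $P$ really are independent of $\lambda$, including the identification of ruled surfaces across the different formulas in the tables. The blow-up formalism of Section \ref{sec_blow}, which makes $\tilde\beta_2^{-1}$ smooth up to the boundary, is also needed. Note that $\Phi$ need not preserve fibres. The fibres $D_xM$ map to holomorphic disks in $(W,P)$ centred at $Q_1$ versus $Q_2$, and these sections differ. This is consistent with Proposition \ref{prop_biholo}, since $\lambda_1$ and $\lambda_2$ generally lie in different $\Aut(M)\times\Z_2$-orbits.
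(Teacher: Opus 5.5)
Your argument is the paper's: Theorem \ref{introthmC} produces the same pair $(W,P)$ for $\lambda_1$ and $\lambda_2$, and $\Phi=\beta_2^{-1}\circ\beta_1$ is a biholomorphism of the interiors that extends to a diffeomorphism up to the boundary because both maps are blow-downs along $P$. One correction to your side remark: $Q$ is also independent of $\lambda$ (see Table \ref{table2}; in fact $\beta(x,0)$ itself does not depend on $\lambda$), so $\Phi$ fixes the zero section, and it fails to preserve fibres because the disks $\beta_1(D_xM)$ and $\beta_2(D_xM)$ through the same point $q(x)$ are different, with different tangent lines at $q(x)$, not because $Q_1\neq Q_2$.
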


\begin{proof}
Let $\beta_1,\beta_2$ be the maps from Theorem \ref{introthmC}, applied to $\lambda_1$ and $\lambda_2$, respectively. Since $\beta_1(SM)=P=\beta_2(SM)$,  the composition $\Phi=\beta_2^{-1}\circ \beta_1\colon Z(g,\lambda_1)^\circ \to Z(g,\lambda_2)^\circ$ defines a biholomorphism between the interiors. Since both $\beta_1$ and $\beta_2$ are blow-down maps, $\Phi$ extends to a diffeomorphism up to the boundary.
\end{proof}

This is in contrast to the {\it biholomorphism rigidity} observed in \cite{BMP25}. One can identify the restriction $\Phi|_{SM}$  as the flow (at a fixed time) of the horizontal vector field $H=[V,X]$; a map of this type has already appeared as exotic  biholomorphism  over $\R^2$ in the just cited article.

\begin{table}
\renewcommand{\arraystretch}{2}
\centering
\begin{tabular}{c|c|c}
Genus &  $M$ &$W$ \\
\hline
$0$ & $\C \cup \{\infty\}$ & $\C P^1\times \C P^1$ \\
$1$ & $\C/\Lambda$ & $(\C \times \C P^1)/\Lambda$ \\
$\ge 2$ & $\DD^\circ/\Gamma$ & $(\DD^\circ\times \C P^1)/\Gamma$ \\
\end{tabular} 

\vspace{1em}
\caption{Here $\Lambda\le \C$ is a lattice, acting on $\C\times \C P^1$ by $(w,\xi)\mapsto (w+\omega,\xi+\bar \omega)$ ($\omega\in \Lambda$) and $\Gamma\le \mathrm{Aut}(\DD)$ a cocompact Fuchsian group, acting on $\DD^\circ\times \C P^1$ by $(w,\xi)\mapsto (A(w),A(\xi))$ ($A\in \Gamma$). The rulings  are induced by the projection onto the first factor.}\label{table1}

\renewcommand{\arraystretch}{2.3}
\begin{tabular}{c|c|c|c}
Genus & Q& $P$ &$\Upsilon$\\
\hline
$0$ & $w_1=w_2$ &
$w_1\bar w_2 +1 =0$ &$\displaystyle \frac{1}{2i}\frac{dw_1\wedge dw_2}{(w_1-w_2)^{2}}$ \\
$1$ & $\xi=\infty$ &  $w = \bar \xi$ 
& $\displaystyle \frac{1}{2i}{dw\wedge d\xi}$\\
$\ge 2$ &  $w=\xi$ &  $w\bar \xi - 1 =0$ & $\displaystyle \frac{1}{2i}\frac{dw \wedge d\xi}{(w-\xi)^{2}}$\\[.1em]
\end{tabular} 
\vspace{1em}
\caption{Defining equations for $Q$ and $P$  and the meromorphic $2$-form $\Upsilon$.  Everything is displayed in affine coordinates on universal cover, where $\xi$ is the fibre variable for genus $\ge 1$.}\label{table2}
\end{table}

 \subsection{Perturbations and blow-ups}\label{sec_introperturb} Given a complex surface $W$, our goal is to find conditions on a  totally real surface $P\subset W$ that ensure  the existence  of a fibrewise holomorphic blow-down map $\beta\colon DM\to W$ with $\beta(SM)=P$. 
 Guided by the examples above we propose the following setting:  Let
\[
	\rho\colon W\to M
\]
be a {\it ruled surface} and let $q\colon M\to W$ be a holomorphic section with image $Q=q(M)$. Assume that $-2Q$ is a {\it canonical divisor}: equivalently, there exists a meromorphic $2$-form $\Upsilon$ on $W$ whose divisor equals
\begin{equation}\label{divisorcondition}
	\operatorname{div}(\Upsilon)=-2Q.
\end{equation}
This means that $\Upsilon$ is holomorphic and nowhere vanishing on $W\backslash Q$ and that it has a double pole along $Q$. Such a $2$-form is unique up to multiplication by a nonzero complex number and we assume from now on that it has been fixed. We also assume that there is no residue along $Q$:
\[
 \operatorname{Res}_Q(\Upsilon)=0	
\]
We remark in passing that in general there is no invariant notion of residue for poles of order $\ge 2$, but in the present setting  it can be defined in terms of the ruling (cf.~Section \ref{sec_residue}). The tuples $(W,Q,\Upsilon)$ displayed in Tables \ref{table1} and \ref{table2} above are easily checked to satisfy all of the preceding requirements.

Given such a meromorphic $2$-form $\Upsilon$, the open manifold $W\backslash Q$ comes with a natural symplectic structure:

\begin{lemma}\label{lemimups}
	The imaginary part $\Im \Upsilon$ is a symplectic form on $W\backslash Q$.
\end{lemma}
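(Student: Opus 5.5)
On $W\backslash Q$ the form $\Upsilon$ is a holomorphic $(2,0)$-form that vanishes nowhere, because $\operatorname{div}(\Upsilon)=-2Q$. The plan is to show that this already forces $\Im\Upsilon$ to be symplectic, using two properties.

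\emph{Closedness.} A holomorphic $(2,0)$-form on a complex surface is automatically closed. Indeed, $d\Upsilon=\partial\Upsilon+\bar\partial\Upsilon$. The term $\partial\Upsilon$ is a $(3,0)$-form, which vanishes in complex dimension $2$. The term $\bar\partial\Upsilon$ vanishes by holomorphicity. Since $d$ is a real operator, it follows that $d(\Im\Upsilon)=\Im(d\Upsilon)=0$. All of this is local and holds on $W\backslash Q$.

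\emph{Nondegeneracy.} This is pointwise linear algebra. Near $p\in W\backslash Q$ choose local holomorphic coordinates $(z_1,z_2)$ and write $\Upsilon=f\,dz_1\wedge dz_2$ with $f(p)\neq 0$. After a complex linear change of coordinates we may take $\Upsilon_p=dz_1\wedge dz_2$; for instance, rescale $z_1$ by $f(p)$. Writing $z_k=x_k+iy_k$, we expand
\[
dz_1\wedge dz_2=(dx_1\wedge dx_2-dy_1\wedge dy_2)+i(dx_1\wedge dy_2+dy_1\wedge dx_2).
\]
This gives $\Im\Upsilon_p=dx_1\wedge dy_2+dy_1\wedge dx_2$. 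Its square is
\[
(\Im\Upsilon_p)^2=2\,dx_1\wedge dy_2\wedge dy_1\wedge dx_2,
\]
which is a nonzero volume form, so $\Im\Upsilon_p$ is nondegenerate. A slicker route gives the same conclusion: for a $(2,0)$-form, $\Upsilon\wedge\Upsilon=0$ since it is a $(4,0)$-form. Hence
\[
(\Im\Upsilon)^2=-\tfrac14(\Upsilon-\bar\Upsilon)^2=\tfrac12\Upsilon\wedge\bar\Upsilon,
\]
and $\Upsilon\wedge\bar\Upsilon=|f|^2\,dz_1\wedge dz_2\wedge d\bar z_1\wedge d\bar z_2$ is a nonvanishing multiple of the volume form wherever $f\neq0$.

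Combining the two properties, $\Im\Upsilon$ is closed and nondegenerate on $W\backslash Q$, hence symplectic. I do not anticipate a real obstacle. The only point needing care is that nonvanishing of $\Upsilon$ off $Q$ is exactly the content of the divisor condition \eqref{divisorcondition}. The residue condition $\operatorname{Res}_Q(\Upsilon)=0$ is not needed for this lemma.
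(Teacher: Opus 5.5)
Your proof is correct and is essentially the paper's argument: closedness because $d\Upsilon$ is of type $(3,0)$ off $Q$ and so vanishes, and nondegeneracy from $(\Im\Upsilon)^2=\tfrac12\,\Upsilon\wedge\bar\Upsilon$, which is nonvanishing wherever $\Upsilon$ is. Your explicit coordinate computation is just an extra check, and you are right that the residue condition plays no role here.
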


\begin{proof}
	On $W\backslash Q$, the form $\Upsilon$ is of type $(2,0)$ and satisfies $\bar \partial \Upsilon =0$, hence $d\Upsilon$ is of type $(3,0)$ and thus vanishes for dimension reasons. Hence also $\Im \Upsilon = (\Upsilon - \bar \Upsilon)/2i$ is closed on $W\backslash Q$. Moreover, $\Im \Upsilon \wedge \Im \Upsilon =  (\Upsilon\wedge \bar \Upsilon)/2$ and thus its vanishing locus coincides with that of $\Upsilon$. But $\Upsilon$ is nowhere vanishing on $W\backslash Q$ and therefore $\Im \Upsilon$ is non-degenerate.
\end{proof}

This symplectic structure allows to identify conformal magnetic systems amongst thermostats with $\deg(\lambda)\le 1$ (see Section \ref{sec_pfB} for a proof):

\begin{lemma}\label{lem_confmag} Let $\lambda\colon SM\to \R$ be smooth with $\deg(\lambda)\le 1$ and let \(\beta\colon Z(g,\lambda)\to W\) be a holomorphic blow-down map into a complex surface with $\beta(M)=Q$. Then the following are equivalent:
\begin{enumerate}
	\item $\lambda$ is conformally magnetic;
	\item there exists  $c\in \C\backslash \{0\}$ such that $P=\beta(SM)$ is Lagrangian for $\Im(c\Upsilon)$.
\end{enumerate}
\end{lemma}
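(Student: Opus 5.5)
Pull everything back to the transport twistor space and compute there. Set $\Omega:=\beta^*\Upsilon$. On $DM^\circ$, $\beta$ is a local biholomorphism away from $M$ (it is a diffeomorphism $DM^\circ\setminus M\to W\setminus(P\cup Q)$ by the blow-down property), so $\Omega$ is a $\D$-holomorphic $(2,0)$-form on $DM^\circ\setminus M$. Its singularity along the zero section is the double pole of $\Upsilon$ pulled back under the fibrewise holomorphic map. Since $P=\beta(SM)$ and $\beta|_{SM}$ is a circle bundle $SM\to P$ whose fibres are the flow orbits, $P$ is Lagrangian for $\Im(c\Upsilon)$ if and only if $\iota^*\Im(c\,\Omega)=0$ on $SM$, where $\iota\colon SM\hookrightarrow DM$. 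Because $X+\lambda V$ spans the kernel of $d\beta|_{SM}$ and $\Omega$ extends smoothly to $SM$ (being a pullback of a smooth form near $P$), the restriction $\iota^*\Omega$ is a $2$-form on $SM$ annihilating $X+\lambda V$. The strategy is therefore to write down \emph{all} such forms explicitly and compare with the conformally magnetic condition.

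\textbf{Step 1: classify $\D_\lambda$-holomorphic $(2,0)$-forms with a double pole on $M$.} In the frame $X,H,V$ on $SM$, extended to $DM$ via the polar coordinate $\omega=re^{i\theta}$, I would use the explicit description of $\D_\lambda$ (from the construction of $Z(g,\lambda)$ for $\deg\lambda\le 2$) to write a basis of $\D_\lambda^\perp\subset T^*_\C DM$. This basis consists of $d\omega$-type fibre forms together with a form $\eta$ that restricts on $SM$ to the complexified dual of the horizontal distribution, something like $\eta=e^{-i\theta}(\cdots)$ built from the dual coframe. A $(2,0)$-form is then $\Omega=f\,\eta\wedge d\omega$. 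The equation $\bar\partial\Omega=0$, together with $\operatorname{div}=-2\cdot(\text{zero section})$ and the vanishing residue (transported from $\operatorname{Res}_Q\Upsilon=0$), should force $f(x,\omega)=a(x)\omega^{-2}+\text{(holomorphic in }\omega)$ with no $\omega^{-1}$ term. The holomorphic part is then pinned down, fibre by fibre, by the $\bar\partial$-equation in the horizontal directions.

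\textbf{Step 2: restrict to $SM$ and read off $\lambda$.} Restricting to $r=1$, one gets $\iota^*\Omega=e^{i\phi}\,(\text{explicit combination of } \alpha\wedge\psi,\ \beta_1\wedge\psi)$ in the dual coframe $(\alpha,\beta_1,\psi)$ to $(X,H,V)$. Imposing $i_{X+\lambda V}\iota^*\Omega=0$ yields a linear relation between $\lambda$ and the coefficients. Writing $\lambda=\lambda_0(x)+\langle A_x,v\rangle$ with $A$ a $1$-form (this uses $\deg\lambda\le1$), the relation should take the form that $\iota^*\Omega$ equals a nonvanishing complex multiple $c^{-1}$ of a real form, up to an error governed by $d(*A)$. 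More precisely, I expect $d\Im(c\,\iota^*\Omega)$ to be proportional to $d\!\star\!A$ pulled back, i.e.\ to the obstruction for $A=-*d\sigma$. Then $\Im(c\,\iota^*\Omega)=0$ for some $c$ if and only if $A$ is co-exact in the required sense, i.e.\ $\lambda$ is conformally magnetic. An alternative, which may be cleaner, is to use the reduction available in the conformally magnetic case. If $\lambda=\lambda_0-*d\sigma$, the scaling map $\mathrm{sc}_\sigma$ induces a biholomorphism onto $Z(e^{2\sigma}g,e^{-\sigma}\lambda_0)$. For genuinely magnetic systems the form $\Im(c\Omega)|_{SM}$ should be identified with the restriction of the twisted symplectic form $\omega_\lambda$ contracted along the flow, which vanishes on the orbit-space $P$. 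For the converse, reverse the computation.

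\textbf{Step 3: where the difficulty lies.} The main obstacle is Step 1: showing that holomorphy plus the pole condition determine $\Omega$ enough to make the restriction to $SM$ explicit, including the boundary regularity as $r\to1$, where $\D_\lambda$ degenerates. I would handle this by expanding $f$ in fibrewise Fourier modes $f=\sum_k f_k(x,r)e^{ik\theta}$. Holomorphy in the fibre kills the negative modes beyond the pole. The zero-residue condition removes the $\omega^{-1}$ term. This leaves $\Omega$ determined by $a(x)$ and a holomorphic correction whose boundary trace enters only through $\lambda$'s linear part. Nonvanishing of $\Upsilon$ off $Q$ guarantees $a\neq0$, which is what allows normalising $c$ in (ii).
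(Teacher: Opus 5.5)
Your Step 2 rests on an expectation that cannot hold, and the two steps that actually carry the proof are missing.

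\textbf{The reality argument behind (ii)$\Rightarrow$(i).} The restriction $\iota^*\beta^*\Upsilon=(\beta|_{SM})^*(\Upsilon|_{P})$ is the pullback of a $2$-form on the surface $P$, hence closed. So $d\Im(c\,\iota^*\beta^*\Upsilon)\equiv 0$ for every $c$ and every $\lambda$. It therefore cannot be proportional to $d{*}A$, and no statement about this differential could give the pointwise vanishing $\Im(c\,\iota^*\beta^*\Upsilon)=0$ that the Lagrangian condition demands.

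The missing idea is to compare $\beta^*\Upsilon$ with the canonical form $\Omega$ of Lemma \ref{lemdomega}. Write $\beta^*\Upsilon=F\,\Omega$ on $DM\setminus 0$. Then:
\begin{enumerate}
\item $F$ is fibrewise holomorphic and bounded near the zero section, since the pole of $\beta^*\Upsilon$ is at most of second order and that of $\Omega$ exactly of second order. It therefore extends holomorphically across $\omega=0$ on each fibre.
\item $F$ is nonvanishing on $SM$, because $P$ is totally real and $\Upsilon\neq 0$ near $P$.
\item Since $\Omega$ annihilates $X+\lambda V$ and $\Omega(H,V)=1$, its restriction to $TSM$ is real. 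Hence $P$ is Lagrangian for $\Im(c\Upsilon)$ if and only if $cF$ is real on $SM$.
\end{enumerate}
A holomorphic function on a disk with real boundary values is constant. So, after possibly replacing $c$ by $-c$, we get $cF=e^{-u}$ with $u\colon M\to\R$. Holomorphy of $e^{-u}\Omega$ is the transport equation $(X+\lambda V)u=V\lambda$, whose Fourier degree $-1$ part reads $\lambda_{-1}=i\eta_-u$. This is exactly conformal magneticity with $\sigma=-u$, as in Lemma \ref{lem_confmagZ}. No classification of forms is needed, and $\operatorname{Res}_Q\Upsilon=0$ plays no role. Your claim that the residue condition removes an $\omega^{-1}$ term is unjustified, and in any case irrelevant to reality on $SM$.

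\textbf{The global uniqueness argument behind (i)$\Rightarrow$(ii).} If $\lambda=\lambda_0-*d\sigma$, then $e^{\sigma}\Omega$ is a holomorphic $(2,0)$-form on $Z(g,\lambda)\setminus 0$ that is real on $SM$. Your scaling-map heuristic points at this, but note the correct picture: in the magnetic case $\Omega|_{SM}$ is, up to sign conventions, $\omega_\lambda|_{SM}$. That is a real form which descends to a symplectic form on $P$; it is its imaginary part that vanishes, not the form itself.

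You must still show that $\beta^*\Upsilon$ is a constant multiple of $e^{\sigma}\Omega$. The $\bar\partial$-equation cannot ``pin this down fibre by fibre'': it is a transport equation along the flow. Two solutions differ by a nowhere vanishing fibrewise holomorphic first integral, i.e.\ a $\D_\lambda$-holomorphic function. Ruling out non-constant ones is a global statement, and your plan never uses compactness. The paper does it as follows:
\begin{enumerate}
\item push the ratio forward to a holomorphic function on $W\setminus(P\cup Q)$;
\item show it is bounded near $P$, and bounded near $Q$ because the double poles cancel;
\item extend it to the compact surface $W$, where it must be constant.
\end{enumerate}
Without this step, or an equivalent Liouville-type argument on $Z(g,\lambda)$, the implication (i)$\Rightarrow$(ii) is not established.
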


Let $[F]\in H_2(W)$ be the fibre class of the ruling, that is, the one induced by an arbitrary fibre $F=\rho^{-1}(\{x_0\})$. We consider the following class of surfaces, where we write `$\cdot$' for the intersection pairing on $H_2(W)$:

\begin{definition}[Adapted Surface]\label{def_adapted}
 	An embedding $p\colon M\hookrightarrow W$ will be called \textit{adapted} (to the ruled surface and the distinguished section), if its image $P=p(M)$ satisfies the following properties:
 	\begin{enumerate}[label=(\rm \alph*)]
 		\item $P\cap Q = \emptyset$;
 		\item $P$ is totally real;
 		\item $[P]\cdot [F] = 1$.
\end{enumerate} 	
In this situation we will simply speak of $P\subset W$ as an {\it adapted surface}.
\end{definition}

Again, the surfaces in Table \ref{table2} fall into this category (for the intersection property, one realises $P$ as image of a {\it section} $p\colon M\to W$). Moreover, with the normalisation in the table,
\[
	P \text{ is Lagrangian for } \Im \Upsilon.
\]
Note that on a K{\"a}hler manifold every Lagrangian is totally real---in our setting the two conditions are not related since $\Im \Upsilon$ is not of type $(1,1)$. 
We define
\begin{equation}\label{defpp}
	\mathbf{P}:=\{P\subset W: \text{adapted surface that is Lagrangian for} \Im \Upsilon\}
\end{equation}
and equip this with the $C^\infty$-topology.

\begin{introthm}\label{introthmD} The following properties describe an open subset  $\mathbf{P}_*\subset \mathbf P$:  there exists a fibrewise holomorphic blow-down map $\beta\colon DM\to W$ such that
\begin{enumerate}
	\item  $\beta^{-1}(P)=SM$; 
	\item $\beta(x,0)=q(x)$ and the complex lines $d\beta(T_0D_xM)\subset T_{q(x)}W$ ($x\in M$) form a holomorphic subbundle  $L\subset TW|_Q$. 
\end{enumerate}
Moreover, for a fixed $P\in \mathbf{P}_*$ the set of holomorphic line subbundles $L\subset TW|_Q$ that can be realised this way locally forms a $1$-dimensional manifold.
\\[.2em]
{\rm [Proof contained in Section \ref{sec_diskfam}.]}
\end{introthm}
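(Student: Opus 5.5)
We treat this as a deformation problem for families of holomorphic disks with boundary on $P$, in the spirit of LeBrun--Mason. The starting point is the model case of Theorem \ref{introthmC}. There we have $P_0\in\mathbf P$ and a holomorphic blow-down $\beta_0$ whose fibres $\beta_0(D_xM)$ are embedded holomorphic disks in $(W,P_0)$ of Maslov index $\mu=4$ (by Theorem \ref{introthmB}). We take $\mathbf P_*$ to be the set of $P\in\mathbf P$ satisfying two conditions:
\begin{itemize}
\item the moduli space of disks in the class of the model disks is a smooth manifold;
\item there is a two-dimensional submanifold of it, transverse to the evaluation data, whose disks foliate $W\setminus P$ minus $Q$ in the required way.
\end{itemize}
Openness will come from implicit function theorem arguments near any such $P$, not only near $P_0$.

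\emph{Step 1: disk moduli space.} Fix $P\in\mathbf P$ near a reference point of $\mathbf P_*$ and consider holomorphic disks $f\colon(\DD,\partial\DD)\to(W,P)$ in the relative class of the model disks. They have Maslov index $4$, and since $[P]\cdot[F]=1$ and $P\cap Q=\emptyset$, each disk meets $Q$ exactly once, transversally. The linearised Cauchy--Riemann operator splits into a tangential and a normal part (a rank $2$ totally real boundary problem). Its partial indices in the model case can be computed explicitly; we expect them to be $(2,2)$ or $(2,1,\dots)$-type, all $\ge -1$. Hence the operator is surjective and the unparametrised moduli space $\mathcal M(P)$ has dimension $\mu+n-3=4+2-3=3$. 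This is the three-dimensional moduli space mentioned in the introduction. Surjectivity is an open condition, so $\mathcal M(P)$ deforms smoothly with $P$.

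\emph{Step 2: normalise at $Q$ and find the 1-parameter family.} Each disk $f\in\mathcal M(P)$ meets $Q$ in a single point $q(x)$. We normalise $f(0)=q(x)$ and fix the rotation by a choice tied to $\rho$, using the ruling and the Lagrangian condition. This gives a map $\mathrm{ev}\colon\mathcal M(P)\to M$, which is a submersion with 1-dimensional fibres; the fibre is the extra degree of freedom, e.g.\ $\lambda$ in the model. The tangent line $d f_0(T_0\DD)\subset T_{q(x)}W$ defines a map into the projectivised bundle $\P(TW|_Q)$.

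A holomorphic subbundle $L$ as in (ii) corresponds to a section $M\to\mathcal M(P)$ of $\mathrm{ev}$ such that the induced line field on $Q$ is holomorphic. We use the Lagrangian condition for $\Im\Upsilon$, together with $\operatorname{Res}_Q\Upsilon=0$, to show the following: the disks whose tangent lines form a holomorphic family are exactly the leaves of a natural codimension-one foliation of $\mathcal M(P)$. This foliation is defined by a closed 1-form obtained by integrating $\Upsilon$-data over the disks, and its leaves are sections of $\mathrm{ev}$.

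In the model this foliation has compact leaves, all sections of $\mathrm{ev}$, parametrised by $\lambda$. A leaf that is a compact section persists under perturbation of the closed 1-form; the perturbation is small and its leaves stay transverse to the fibres of $\mathrm{ev}$. The available leaves then form a 1-parameter family, which gives the final claim.

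\emph{Step 3: blow-down property.} For a chosen leaf we define $\beta(x,\omega v)=f_x(\omega)$, which is fibrewise holomorphic. We then show that $\beta|_{DM^\circ}$ is a diffeomorphism onto $W\setminus P$, with the Melrose blow-up structure at $SM$ and $\beta^{-1}(P)=SM$. For the model this holds by Theorem \ref{introthmC}. Under $C^1$-small perturbation, a local diffeomorphism onto its image stays one, including transversality at the boundary, which is a nondegeneracy of the normal derivative there. Degree and injectivity are stable by compactness, so these properties are open.

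The main obstacle is Step 2. We must identify the holomorphicity of $L$ with a foliation condition and prove persistence of compact leaves. Holonomy of a perturbed codimension-one foliation could in general destroy closed leaves, so the closed 1-form description is essential here. We would first try to prove closedness via Stokes' theorem applied to $\Im\Upsilon$ on the 3-chains swept out by disks, using that $P$ is Lagrangian and that the residue vanishes.
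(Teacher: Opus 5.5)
\paragraph{Step 2 leaves the core unproved.} You assert the foliation there rather than derive it. The missing idea is what makes the holomorphicity condition on $L$ finite-dimensional.

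Because $\operatorname{div}(\Upsilon)=-2Q$, adjunction gives $K_Q\otimes NQ\cong\mathcal O$, i.e.\ an isomorphism $\psi\colon NQ\to TQ$. Since $Q$ is compact, the holomorphic line subbundles $L\subset TW|_Q$ transverse to $TQ$ then form an affine line $\{L^\zeta:\zeta\in\C\}$, modelled on $H^0(Q,\Hom(NQ,TQ))\cong\C$. So holomorphicity of the tangent lines along a section $\delta\colon Q\to\mathcal N(P)$ reduces to constancy of a \emph{globally defined} map $\varphi\colon\mathcal N(P)\to\C$, with $\varphi([u])=\zeta$ iff $u'(0)\in L^\zeta$.

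The Lagrangian condition and $\operatorname{Res}_Q\Upsilon=0$ enter through an explicit first-variation formula,
$d\varphi_u(\xi)=\langle\operatorname{Res}_Q\Upsilon,\psi(u'(0))\rangle\gamma_0(\xi)-\gamma_1(\xi)$,
where $\gamma_0+\gamma_1\omega+O(\omega^2)$ is the Taylor expansion of the holomorphic function $\langle\omega^2\Upsilon(u(\omega)),u'(\omega)\wedge\xi(\omega)\rangle$. For horizontal variations, the Lagrangian condition gives the corresponding function (for the relevant frame vector) imaginary boundary values. It is therefore constant, so $\gamma_1|_{\mathcal H}=0$, while $\gamma_1\neq 0$ in the vertical direction. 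Hence $d\varphi$ has constant rank one with kernel $\mathcal H$.

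Your ``closed $1$-form obtained by integrating $\Upsilon$-data'' is never constructed. A Stokes argument on swept $3$-chains would also have to handle the double pole of $\Upsilon$ along $Q$, which every disk crosses.

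\paragraph{The persistence step is false as stated.} Even granting such a form, closedness does not protect compact leaves. Take $Q\times(-2,2)$ with $Q$ of genus $\ge 1$, and $\alpha_\epsilon=dh+\epsilon\,\pi^*\theta$ with $\theta$ closed and $[\theta]\neq 0$. These forms are closed, $C^\infty$-close to $dh$ and transverse to the fibres. Yet they have no compact leaf transverse to the fibres: such a leaf is a finite cover $p\colon\Sigma\to Q$ with $\epsilon\,p^*\theta=-d(h|_\Sigma)$, and injectivity of $p^*$ on real cohomology forces $[\theta]=0$.

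What is needed is exactness, and this is exactly what the global map $\varphi$ supplies. The paper proceeds as follows.
\begin{enumerate}
\item It transports a neighbourhood of the leaf into $\mathcal N(P')$ by a $C^1$-embedding $\Psi$ with $\pi_Q\circ\Psi=\pi_Q$.
\item It notes that $f=\varphi_{P'}\circ\Psi$ is $C^1$-close to $\varphi_P$ and still has constant rank one, since $P'$ is again Lagrangian.
\item It takes the connected level set of a transverse component $f_2$ through a point of the old leaf. This set is compact, covers $Q$, and is one-sheeted by a homotopy argument.
\item Rank one gives $\ker df=\ker df_2$ along this set, so $f$ is constant on it.
\end{enumerate}
The same map $\varphi$ also gives the final claim: the realisable $L$ form an open piece of the immersed curve $\varphi(\mathcal N(P))$.

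\paragraph{Step 3.} You obtain the blow-down property only by perturbation. This is delicate, because $\beta$ degenerates along $SM$ and the target $[W,P']$ moves with $P'$. The paper instead proves directly that every disk family yields a blow-down map. It verifies Lemma~\ref{lem_lift2} using horizontality $T\Sigma=\mathcal H$, and gets injectivity from the unique transverse intersection with $Q$.
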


\begin{figure}[h!]
\begin{center}
\begin{tikzpicture}[>=Stealth,thick]

\begin{scope}

    \draw[very thick] (0,2) -- (3,2);
     \draw[red, very thick] (0,0) -- (3,0);
      \draw[red, very thick] (0,4) -- (3,4);
    
    \draw[blue] (2.2,0) -- (2.2,4);
    \draw[dashed] (0,0) -- (0,4);
     \draw[dashed] (3,0) -- (3,4);
    
    \fill[blue] (2.2,4) circle (2pt);
    
    \fill[blue] (2.2,0) circle (2pt);
    
    \node[red] at (0.6,3.6) {$SM$};
    \node at (0.6,2.3) {$M$};
    \node[below] at (1.5,-.1) {$Z$};
    
    \draw[blue,->] (2.2,4.8)  -- (2.2,4.2);
    \node[blue] at (2.2,5.1) {$D_xM$};
\end{scope}

\draw[->] (4,2) to[bend left=30] (6,2);
\node at (5,1.9) {$\beta$};

\begin{scope}[shift={(7,0)}]

   
    \draw[dashed, black] (0,0) -- (0,4);
     \draw[dashed, black] (3,0) -- (3,4);
    
    \draw[black, very thick] (0,2) -- (3,2); 
    \node at (0.5,2.3) {$Q$}; 
    
    \draw[dashed] (0,4) -- (3,4);
    \draw[dashed] (0,0) -- (3,0);
    
    \draw[red, very thick] (0,3) .. controls (1.2,3.2) and (1.8,2.5) .. (3,2.7);
    \node[red] at (2,3) {$P$};
    
    \draw[blue] (1.2,0) .. controls (0.8,1.2) and (1.3,2.0) .. (1.5,2.85);
    \fill[blue] (1.5,2.85) circle (2pt);
    \fill[blue] (1,2.96) circle (2pt);
    \draw[blue] (1.2,4) .. controls (1.3,3.6) .. (1,2.96);
     \draw[blue, ->] (1.2,4.8) -- (1.2,4.2);
    \node[blue] at (1.2,5) {$\Delta_q$};
    
    \draw[->] (2.5,4.8) -- (2.5,4.2);
    \node at (2.5,5.1) {$\C P^1$};
     \draw (2.5,0) -- (2.5,4);

    \node[below] at (1.5,-.1) {$W$};

\end{scope}

\end{tikzpicture}
\caption{Holomorphic blow-down maps and disk families}
\end{center}
\end{figure}

In order to prove the theorem, we consider families of holomorphic disks:

\begin{definition}[Disk families]\label{def_diskfamily} Given an adapted surface $P\subset W$, we define  $\mathcal F(P)$ as the set of all continuous families  $(\Delta_q:q\in Q)$, such that:
\begin{enumerate}[label=(\rm \alph*)]
	\item\label{diskfamilya} $(\Delta_q,\partial \Delta_q)\subset (W,P)$ is an {immersed} holomorphic disk with $\Delta_q\cap P = \partial \Delta_q$ and such that $\Delta_q\pitchfork Q=\{q\}$ is an {embedded point with a transversal intersection};
	\item\label{diskfamilyb} $L_q:=T_q\Delta_q\subset T_qW$  defines a holomorphic line subbundle of $TW|_Q$.
\end{enumerate}
\end{definition}

Any fibrewise holomorphic blow-down map $\beta\colon DM\to W$ as in Theorem \ref{introthmD} defines such a disk family by assigning to $q=q(x)\in Q$ the disk $\Delta_q=\beta(D_xM)$ {(see also Remark \ref{rk_immdisk}). 
 Note that the boundary $\partial \Delta_q$ might not be embedded: If $d\beta(X+\lambda V)=0$ and  there is a $\lambda$-geodesic with a self-intersection in $x\in M$, then the corresponding orbit intersects $S_xM$ in more than one point, and all these points are collapsed under the $\beta$-map. This already happens in the model cases  if $K + \lambda^2$ is small.}

Also the converse is true: any disk family $(\Delta_q:q\in Q)\in \mathcal F(P)$ can be realised by a such a blow-down map (cf.~Proposition \ref{prop_defbeta}), that is,
\[
	\mathbf{P}_*=\{P\in \mathbf{P}: \mathcal F(P)\neq \emptyset\}.
\]

We are thus led to consider the following moduli-space of (unparametrised) {immersed} holomorphic disks: 
\[
	\mathcal N(P):=\left\{(\Delta,\partial \Delta)\subset (W,P):  \begin{array}{l} \text{$\Delta$ {immersed holomorphic disk}, }\\
	 \Delta\cap P = \partial \Delta \text{ and }   |\Delta\pitchfork Q|=1
	 \end{array}  \right\}.
\] 
For an adapted surface $P$ these disks have total Maslov index $4$, such that $\mathcal N(P)$ becomes a $3$-dimensional manifold.  Moreover, the disk families in $\mathcal F(P)$ can be shown to correspond to the level sets of a natural map 
\[
	\varphi\colon \mathcal N(P)\to \mathcal{L}=\{L\subset TW|_Q \text{ holomorphic line subbunde}: L\cap TQ=0\}.
\]
This assigns to $\Delta$ the unique holomorphic line bundle $L\subset TW|_Q$ with $L_q=T_q \Delta$, where $\Delta\cap Q=\{q\}$. In the given setting, $\mathcal{L}\cong \C$ and $\varphi$ has constant rank $1$, such that the level sets are $2$-dimensional. 

The openness of $\mathbf{P}_*$ then corresponds to the persistence of closed level sets under small perturbations of $P$. Moreover, the set of line bundles in Theorem \ref{introthmD} is an open subset of $\varphi(\mathcal N(P))$, which is an immersed $1$-dimensional manifold.

\subsection{Proof of Theorem \ref{introthmA}}\label{sec_pfA} Combining Theorems \ref{introthmB},\ref{introthmC} and \ref{introthmD}, we can conclude Theorem \ref{introthmA}:
 We start with a closed oriented Riemannian surface $(M,g)$ of constant curvature and a constant $\lambda_0\in \R$ such that $K + \lambda_0^2>0$. By Theorem \ref{introthmC}, this comes with a tuple $(W,Q,P,\Upsilon)$  as in Table \ref{table1} and \ref{table2}. In reference to this data one defines the sets $\mathbf{P}$ as in \eqref{defpp} and $\mathbf{P}^*\subset\mathbf{P}$ as in Theorem \ref{introthmD}.
We then have $P\in \mathbf{P}_*$ due to Theorem \ref{introthmB}. 

Weinstein's tubular neighbourhood theorem yields a symplectomorphism between neighbourhood of $P\subset W\backslash Q$ with a neighbourhood of the zero section in $T^*P$, where all nearby Lagrangians are graphs of closed $1$-forms. This gives  a  parametrisation of all $\Im \Upsilon$-Lagrangians close to $P=P_0$:
\[
	\mathbf A=\{\alpha\in \Omega^1(M,\R):d\alpha=0, ||\alpha||_{C^k(M)}<\epsilon\} \to  \mathbf{P},\quad \alpha\mapsto P_\alpha.
\]If $k\in \mathbb N$ is sufficiently large and $\epsilon>0$ sufficiently small, this map indeed lands in $\mathbf{P}$, since adaptedness is an open condition in the $C^\infty$-topology. By Theorem \ref{introthmD} we may even assume that $P_\alpha\in \mathbf{P}_*$. Let $\ell\in \R$ ($|\ell|<\epsilon$) parametrise the additional degree of freedom we have in choosing the holomorphic line bundle $L\subset TW|_{Q}$. Given $\alpha\in \mathbf{A}$ and $\ell\in (-\epsilon,\epsilon)$, Theorems \ref{introthmB} and \ref{introthmD} combined yield a function
\[
	\lambda = \lambda_{\alpha,\ell} \in C^\infty(SM,\R),\quad \deg(\lambda)\le 1,
\]
such that the associated transport twistor space admits a holomorphic blow-down map \(\beta_{\alpha,\ell}\colon Z(g,\lambda)\to W\), mapping $SM$ onto $P_\alpha$. 
 As discussed before Theorem \ref{introthmB}, the thermostat $(g,\lambda)$ is automatically Zoll and since $P_\alpha$ is $\Im \Upsilon$-Lagrangian,  Lemma \ref{lem_confmag} guarantees that it is conformally magnetic. This gives a map as in Theorem \ref{introthmA}:
 \[
 	\mathcal U:= \mathbf{A}\times (-\epsilon,\epsilon)\to \mathbf{\Lambda}_{\mathrm{mag}}(g),\quad (\alpha,\ell)\mapsto [\lambda_{\alpha,\ell}].
 \]
It remains to verify that the thermostats produced this way are generically non-trivial. Suppose that $[\lambda]$ is trivial, then by Proposition \ref{prop_biholo}
there is a biholomorphism $Z(g,\lambda)\cong Z(g,\lambda_0')$, where $\lambda_0'\equiv \mathrm{const.}$ with $K+(\lambda_0')^2>0$. Combined with Theorem \ref{introthmC}, we get a holomorphic blow-down map $\beta\colon Z(g,\lambda)\to W$ that sends $SM$ to $P_0$ and the concatenation
\[
	\Phi=\beta_{\alpha,\ell}\circ \beta^{-1}\colon W\backslash P_0 \to W\backslash P_\alpha
\]
is a biholomorphism. Moreover, for every $w\in P_0$ the fibre $\beta^{-1}(\{w\})$ is an orbit of the thermostat flow, such that $\beta_{\alpha,\ell}(\beta^{-1}\{w\})$ contains a unique point  $\Phi(w)\in P_\alpha$. This gives a continuous extension $\Phi\colon W\to W$ and as the roles of the two blow-down maps may be reversed, we deduce that $\Phi$ extends to an automorphism of $W$ with $\Phi(P_0)=P_\alpha$. To conclude the proof, we need to show that this gives a finite dimensional constraint on $\alpha$.

Writing $\operatorname{Aut}(W)$ for the automorphism group of $W$, we claim that the following set is a finite dimensional submanifold of $\mathbf{A}$:
\[\mathbf{A}_0:=\{\alpha' \in \mathbf{A}: \exists \Phi\in \operatorname{Aut}(W): \Phi(P_0)=P_{\alpha'}\}.\]
Indeed, $\operatorname{Aut}(W)$ is a finite dimensional Lie group that acts smoothly on the Fr{\'e}chet manifold of smooth embedded totally real  surfaces inside $W$. Hence \[
	\{P'\in \mathbf{P}: \exists \Phi\in \operatorname{Aut}(W) \text{ with } \Phi(P_0)=P'\},
\]
being an open subset of the orbit of $P_0$, is a finite dimensional (immersed) submanifold of $\mathbf{P}$. Via the parametrisation $\alpha\mapsto P_\alpha$ this corresponds to $\mathbf{A}_0$. 
 \qed

\subsection{Genus zero revisited}\label{sec_genuszero} Let us consider the case of the round $2$-sphere
\[
	S^2 = \{x\in \R^3: |x|^2=1\}\subset \R^3
\]
in some more detail and compare the situation to \cite{LeMa02,LeMa10}. If there is a constant magnetic parameter $\lambda\in \R$, the blow-down maps
\[
	\beta_\lambda\colon DS^2=\{(x,v)\in S^2\times \R^3: x\cdot v =0,~ |v|\le 1\} \to \C P^1 \times \C P^1
\]
admit a clear geometric interpretation: Namely, the $\lambda$-geodesic through $(x,v)\in SS^2$ cuts the sphere into two caps, with centres $C_{\lambda,\pm}(x,v)\in S^2$. Consider the stereographic projection through the south pole $\s=(0,0,-1)$:
\[
	\pi_\s\colon S^2\to \C P^1,\quad 
	\pi_\s(x)=\begin{cases} [x_1+ix_2:1+x_3] & x\neq \s\\
	[1:0] & x=\s
	\end{cases}.
\]
Then $\beta_\lambda(x,v)=\left(\pi_\s\circ C_{\lambda,+}(x,v),\pi_\s \circ C_{\lambda,-}(x,v)\right)$ is a first integral with values in $\C P^1\times \C P^1$. It turns out that this has a fibrewise holomorphic extension to $DS^2$, which is  also a blow-down map (see Propositions \ref{prop_constructionpositive} and \ref{prop_sphereblow}). Clearly, $C_{\lambda,+}(x,v)=-C_{\lambda,-}(x,v)$ and since the stereographic projection turns the antipodal map $x\mapsto -x$ into the involution $w\mapsto -1/\bar w$, we have
\[
	  \beta_\lambda(SS^2)  = \bar \Delta := \{\bar w_1w_2+1=0\}\subset \C P^1\times \C P^1.
\]

In the absence of a magnetic field (i.e.~for $\lambda=0$),  the blow-down map $\beta_0$ becomes equivariant with respect to the following $\Z_2$-actions: the flip $(x,v)\mapsto (x,-v)$ on $DM$ and the swap $(w_1,w_2)\mapsto (w_2,w_1)$ on $\C P^1\times \C P^1$. Hence it descends to a map between the respective quotients:
\begin{equation}\label{cdiag}
\begin{tikzcd}
 	 DS^2 \arrow{d} \arrow["\displaystyle \beta_0"]{r}& \C P^1\times \C P^1 \arrow{d}{m} \\
 	DS^2/\Z_2 \arrow["\displaystyle \beta_{0,\mathbb{P}}"]{r}& \C P^2.	
 \end{tikzcd}	
\end{equation}
The right arrow equals $m(w_1,w_2)=[w_1+w_2:w_1w_2-1: i(w_1w_2+1)]$ in affine coordinates and is a branched double cover. Moreover,
\[
	\beta_{0,\P}(SS^2/\Z_2)=m(\bar \Delta)=\R P^2\subset \C P^2.
\]

LeBrun--Mason's work essentially takes place downstairs, although the right double cover is already employed in \cite{LeMa10}. One may view $DM/\Z_2$ as a twistor space in its own right: If $(M,g)$ is oriented and $\lambda\colon SM\to \R$ is odd ($\lambda(x,-v)=-\lambda(x,v)$) and has $\deg(\lambda)\le 3$, there is a natural involutive structure $\D_{\lambda,\P}\subset T_\C(DM/\Z_2)$ analogous to above
and we refer to the degenerate complex surface
\[
	Z_\P(g,\lambda)=(DM/\Z_2,\D_{\lambda,\P})
\]
as {\it projective twistor space}. It was shown in  \cite{Met21,MePa20}
 that this class of $\lambda$'s corresponds precisely to the projective structures on $M$ (i.e.~torsion free affine connections, modulo reparametrisation of their geodesics) and with this interpretation projective twistor spaces have been considered much earlier \cite{Dub83,OB-R_85}.
Coming back to the $2$-sphere and  diagram \eqref{cdiag}, one of the main results in  \cite{LeMa02} can be rephrased as follows:  There is a holomorphic blow-down map
 \[\beta_{0,\P}\colon Z_\P(S^2,g_{\mathrm{can}},0)\to \C P^2\] with $\beta_{0,\P}(\partial Z_\P)=\R P^2$ and  small perturbations $R\approx \R P^2$ (or large `docile' ones, see \cite{LeMa10}) can be realised by holomorphic blow-down maps $Z_\P(S^2,g_\mathrm{can},\lambda)\to \C P^2$ for a reversible Zoll thermostat $(g_\mathrm{can},\lambda) $ with $\deg(\lambda)\le 3$ (that is, corresponding to a Zoll projective structure). Moreover, Zoll metrics are distinguished by  $R$ satisfying a Lagrangian condition.

Let us go back upstairs and consider a totally real surface $P\approx \bar \Delta\subset \C P^1\times \C P^1$ that is realised by a holomorphic blow-down map associated to a conformally magnetic system $(g,\lambda)$ as in Theorem \ref{introthmD}. We claim that:
\begin{equation}\label{oddimpliesswap}
	\lambda \text{ odd }\quad \Rightarrow \quad P = m^{-1}(R) \text{ for some $R\subset \C P^2$}.
\end{equation}
Indeed,  $\lambda$ being odd implies that the flip $(x,v)\mapsto (x,-v)$ is an automorphism of $Z(g,\lambda)$ and pushing this forward along the blow-down map gives an automorphism $\Phi$ of $\C P^1\times \C P^1\backslash P$ that fixes the diagonal $\Delta = \{w_1=w_2\}$ pointwise. It is not hard to see that $\Phi$ extends continuously across $P$, that is,  $\Phi\in \mathrm{Aut}(\C P^1\times \C P^1)$. But the only such automorphisms that fix $\Delta$ are the identity and the swap $(w_1,w_2)\mapsto (w_2,w_1)$. We can rule out the identity (since the flip acts nontrivially on $DS^2$), and thus $P$ is swap-invariant and hence double covers a totally surface $R\subset \C P^2$, as claimed. This argument shows that our construction indeed yields  non-trivial Zoll magnetic systems on $S^2$ rather than just recovering Zoll metrics. For a converse of \eqref{oddimpliesswap} we additionally have to ask the corresponding disk family to lie in
\[
	\mathcal F_*(P) =\{(\Delta_q:q\in Q)\in \mathcal F(P): \text{each $\Delta_q$ is swap-invariant}\}.
\]
Of course, these disk families are precisely the ones that descend to $\C P^2$ and, assuming that $R=m(P)$ is {\it weakly unknotted} (that is, $R=\varphi(\R P^2)$ for a diffeomorphism $\varphi$ of $\C P^2$),  it was shown in \cite{LeMa10} and independently by Rochon \cite{Roc11} that they are unique if they exist, that is,  \[|\mathcal{F}_*(P)|\le 1.\]

Finally, it is instructive to make the functional freedom for Zoll magnetic systems appear explicitly: To this end we  make the identification $S^2\times S^2 \cong \C P^1 \times \C P^1$ implicit and note that symplectic form can be written as
\[
	\Im \Upsilon = \frac{1}{8} d\left[\frac{xdy - y dx}{1-x\cdot y}\right] \text{ on } S^2\times S^2\backslash \Delta.
\] 
(See Remark \ref{symps2} for this computation.) Weinstein's tubular neighbourhood theorem can then be implemented explicitly by the following diffeomorphism:
\[\begin{split}
	S^2\times S^2\backslash \Delta \to T^*S^2 = \{(q,p)\in S^2\times \R^3: q\cdot p = 0\},\\ (x,y) \mapsto (q,p):= \left(\frac{x-y}{|x-y|}, \frac{x+y}{|x-y|}\right).
	\end{split}
\] 
The antidiagonal $\bar \Delta = \{(x,-x):x\in S^2\}$ is mapped onto the zero section and a straightforward computation shows that
\[
	pdq  = -\frac{1}{2} \frac{xdy - ydx}{1-x\cdot y},
\]
such that the map is (up to a factor) indeed a symplectomorphism with respect to the canonical symplectic structure on $T^*S^2$. Moreover, the swap $(x,y)\mapsto (y,x)$ corresponds to $(q,p)\mapsto (-q,p)=(\mathsf{a}(q),-d\mathsf{a}_q^\top (p))$, where $\mathsf{a}\colon S^2\to S^2$ is the antipodal map. In the graph parametrisation
\[
	 \Omega^1(S^2,\R)\cap \ker d \ni \alpha \rightarrow P_\alpha 
\]
the swap invariance of $P_\alpha$ thus corresponds to the equation $\mathsf{a}^*\alpha = -\alpha$, that is, $\alpha$ must be odd with respect to the antipodal map. Of course, since we are on $S^2$ we must have $\alpha = dh$ for some function $h\colon S^2\to \R$ (say, normalised to have zero average) and we can further translate this into oddness of $h$. This recovers Guillemin's functional freedom (as was already observed in \cite{LeMa02}) and illustrates what is gained by switching on a magnetic field: The functional freedom then increases to {\it all} zero average functions $h\colon S^2\to \R$.

\subsection*{Acknowledgements} We would like to thank Jonny Evans for some helpful discussions about the moduli space of holomorphic disks. We are also grateful to Tyson Klingner for discussions on ruled surfaces. 
JB was supported by the DFG  through the SFB 1720 -- 539309657.
 GPP was supported by NSF grant DMS-2347868.

\section{Preliminaries}

\subsection{Blow-down maps}\label{sec_blow} Let $W$ be a manifold and $P\subset W$ a properly embedded submanifold  of codimension $k\ge 1$. The {\it blow-up} of $W$ along $P$ (in the sense of Melrose \cite{MelroseDAOMWC}) is a manifold with boundary, denoted $[W,P]$, together with a smooth {\it blow-down map}
\[
	\beta\colon [W,P]\to W.
\]
The map $\beta$ sends the interior of $[W,P]$ diffeomorphically onto $W\backslash P$, while over the boundary there is a natural identification
\[
	\begin{tikzcd}
		\partial [W,P] \arrow["\sim"]{r}\arrow[swap,"\beta"]{dr} & SNP\arrow{d} \\
		& P
	\end{tikzcd}
\]
with the spherical normal bundle $SNP\to P$. The latter is given by first forming  the normal bundle $NP:=TW|_P/TP$ and then taking the quotient $SNP:=(NP\backslash 0)/\R_{>0}$ with respect to fibrewise scaling; this results in an $S^{k-1}$-bundle over $P$.

The blow-up along $P$ can be constructed as follows: By the tubular neighbourhood theorem there is a diffeomorphism $n\colon NP\xrightarrow{\sim} U\subset W$ of the total space of $NP$ with a neighbourhood of $U$ of $P$ that restricts to the identity along the $0$-section. Upon choosing a fibrewise inner product  on $NP$, there is a natural embedding  $\iota\colon SNP\to NP$ as unit sphere bundle. One then defines
\[
\begin{split}
	[W,P]&:=W\backslash P\cup_\varphi \left(SNP\times [0,\infty)\right),\\
	 \beta|_{W\backslash P} &:=\Id, \\
	\beta|_{SNP\times [0,\infty)}(p,[w],r) &:= n(r\cdot\iota(p,[w])),
\end{split}	
\]
where the gluing diffeomorphism $\varphi\colon SNP\times(0,\infty)\xrightarrow{\sim}  U\backslash P \subset W\backslash P$ is the obvious one that makes $\beta$ into a continuous map. The blow-up $[W,P]$ comes with a natural smooth structure that makes $\beta$ smooth; moreover, it is independent of the involved choices in the sense that altering $n$ or $\iota$ produces  a blow-up $[W,P]'$ and a blow-down map $\beta'\colon [W,P]' \to W$ such that $\beta'=\beta\circ \psi$ for a diffeomorphism $\psi\colon [W,P]'\xrightarrow{\sim}[W,P]$. More generally, we adopt the following terminology:

 \begin{definition}\label{def_blowdown}
 	Let $Z$ be a manifold with boundary and $f\colon Z\to W$ a smooth map. We will say that $f$ is a {\it blow-down map (along the submanifold $P\subset W$),} if there exists a diffeomorphism $\psi\colon Z\xrightarrow{\sim} [W,P]$ such that $f = \beta\circ \psi$.
 \end{definition}
 
\begin{remark}
	We also speak of $f\colon Z\to W$ as a blow-down map, when $P$ has not been specified beforehand. It is then understood that $P:=f(\partial Z)$ and that this is a properly embedded submanifold of $W$. 
\end{remark}

The purpose of this section is to identify conditions on $f$ that allow us to identify it as blow-down map of a given submanifold $P\subset W$.  Recall that a {\it boundary defining function} on $Z$ is a smooth map $\rho\colon Z\to [0,\infty)$ with $\partial Z=\{\rho =0\} $ and $d\rho\neq 0$ on $\partial Z$.

\begin{lemma}[Lifts  through the blow-down map]\label{lem_lift}
	Let $Z$ be a manifold with boundary and $f\colon Z\to W$ a smooth map with the following properties:
	\begin{enumerate}
		\item $f^{-1}(P)=\partial Z$
		\item $df_x\colon N_x\partial Z\to N_{f(x)}P$ is injective for all $x\in Z$.
	\end{enumerate}
	Then $f = \beta\circ \hat f$ for a smooth map $\hat f\colon Z\to [W,P]$ that can be specified by
	\[
		 \hat f(x) := \begin{cases}
			f(x)\in W\backslash P, & x\in Z^\circ\\
			[df_x(\nu)] \in SNP=\partial [W,P], & x\in \partial Z, \nu \in T_x Z\backslash T_x\partial Z.
		\end{cases}
	\]
Moreover, if $\rho_Z$ and $\rho_{[W,P]}$ are  boundary defining functions, then $(\hat f^*\rho_{[W,P]})/\rho_Z$ extends to a nowhere vanishing function $ Z\to \R$.
\end{lemma}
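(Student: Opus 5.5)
The plan is to work locally and reduce to a model computation in coordinates adapted to $P$. Away from $\partial Z$ there is nothing to prove: by (i), $f$ maps $Z^\circ$ into $W\backslash P$, and $\beta$ restricted to the interior of $[W,P]$ is the identity onto $W\backslash P$. So $\hat f=f$ there is smooth and satisfies $f=\beta\circ\hat f$. The whole content is therefore near a point $x_0\in\partial Z$.

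First I would note that condition (ii) makes sense and that $\hat f$ is well defined on $\partial Z$. Since $f(\partial Z)\subset P$, the differential $df_x$ maps $T_x\partial Z$ into $T_{f(x)}P$. It therefore descends to a linear map $N_x\partial Z=T_xZ/T_x\partial Z\to N_{f(x)}P$ between the normal spaces, and this is the map that (ii) requires to be injective. For an inward-pointing $\nu$ the class $[df_x(\nu)]$ is then nonzero. I would fix the convention that $\nu$ points inward (or read $SNP$ as rays), so the ray is independent of the choice of $\nu$. Indeed, changing $\nu$ by a tangential vector or a positive scaling changes $df_x(\nu)$ modulo $TP$ only by a positive factor.

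Next I would pass to coordinates. Using the tubular map $n$, choose coordinates $(p,y)\in\R^{m-k}\times\R^k$ near $f(x_0)$ with $P=\{y=0\}$. In these coordinates $[W,P]$ is locally polar coordinates $(p,\theta,r)\in\R^{m-k}\times S^{k-1}\times[0,\infty)$, and $\beta(p,\theta,r)=(p,r\theta)$. On $Z$, choose coordinates $(z,t)$ with $t\ge0$ a boundary defining function. Write $f=(f_P,f_N)$. Then $f_N(z,0)=0$, so Taylor's theorem with integral remainder gives
\[
f_N(z,t)=t\,g(z,t),\qquad g(z,t)=\int_0^1\partial_t f_N(z,st)\,ds,
\]
with $g$ smooth. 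Moreover $g(z,0)=\partial_tf_N(z,0)$, which is exactly the normal component of $df(\partial_t)$; by (ii) this is nonzero. Shrinking the neighbourhood, $g$ is nowhere zero, and by (i) $f_N\ne0$ for $t>0$. In polar coordinates this gives
\[
\hat f(z,t)=\bigl(f_P(z,t),\,g/|g|,\,t|g|\bigr),
\]
which is manifestly smooth up to $t=0$. It agrees with $f$ in the interior and with $[df(\partial_t)]$ on the boundary, and it satisfies $\beta\circ\hat f=f$.

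Finally, for the boundary defining function claim, take $\rho_{[W,P]}=r$ locally. Then $\hat f^*r/t=|g|$, which is smooth and nowhere vanishing. Any other pair of boundary defining functions differs from these by smooth positive factors, and a partition of unity patches the local statements into the global one.

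The only step that needs care is the factorization $f_N=t\,g$ together with the fact that $g(\cdot,0)\neq 0$. This is precisely where (i) and (ii) enter; the rest is bookkeeping.
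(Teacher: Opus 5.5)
Your proof is correct and follows the paper's argument essentially verbatim. Like the paper, you factor the normal component as $f_N=\rho_Z h$ with $h\neq 0$ on $\partial Z$ by (ii), write the lift in polar coordinates as $(\mathrm{pr}_1\circ f,\, h/|h|,\, \rho_Z|h|)$, and read off the ratio of boundary defining functions as $|h|$; your remark that $\nu$ must be taken inward-pointing for $[df_x(\nu)]\in SNP$ to be well defined is a correct sharpening that the paper's statement leaves implicit.
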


\begin{proof}
	Smoothness in $Z^\circ$ is clear. Whether $\hat f$ is smooth up to the boundary is a local question, so we can restrict our attention to a neighbourhood $V$ of some $x_0\in \partial Z$ and assume that $f(V) \subset \nu(NP|_U)$ for an open set $U\subset P$ with $NP|_U\cong U\times \R^k$. Let us make these identifications implicit and simply assume that $f(V)\subset U\times \R^k$. Then $g = \mathrm{pr}_2\circ f\colon V\to \R^k$ is a smooth map and by property (i) we have $g|_{V\cap \partial Z} = 0 $. We may therefore write $g(x)=\rho_Z(x)h(x)$ for a boundary defining function $\rho_Z\colon V\to \R$ and a smooth function $h\colon V\to \R^k$. Since
	\[
		dg_x(v)=(d\rho_Z)_x(v) h(x),\quad (x,v)\in TZ|_{\partial Z},
	\]
	property (ii) implies that $h|_{V\cap \partial Z}\neq 0$. After shrinking $V$ if necessary, we may assume that $h\neq 0$ on $V$ and hence the following functions are smooth:
	\[
		r(x)=\rho_Z(x)|h(x)|\quad \text{ and } \quad  \omega(x)=\frac{h(x)}{|h(x)|},\quad x\in V.
	\]
	Hence also the map
	\[
		\hat f|_V\colon V\to U \times S^{k-1}\times [0,\infty),\quad \hat f(x)=(\mathrm{pr}_1\circ f(x), \omega(x), r(x))
	\]	
	is smooth and one checks that this locally describes the lift defined above. Locally we can take $\rho_{[W,P]} = r$ as boundary defining function on the blow-up, and obtain $f^*\rho_{[W,P]}/\rho_Z = |h|$, which has the desired property.
\end{proof}

\begin{lemma}\label{lem_melroselocdiff}
	Let $Z$ be a manifold with boundary with $\dim Z=\dim W$ and suppose that $f\colon Z\to W$ satisfies the properties of Lemma \ref{lem_lift} and additionally:
	\begin{enumerate}
	\setcounter{enumi}{2}
		\item there exist volume forms $\omega_Z$ and $\omega_W$ such that $f^*\omega_W = \rho_Z^{k-1} \omega_Z$ for a boundary defining function $\rho_Z\colon Z\to \R$ (where $k=\operatorname{codim}(P)$);
		
	\end{enumerate}
	Then the lift $\hat f\colon Z\to [W,P]$ is a local diffeomorphism. 
\end{lemma}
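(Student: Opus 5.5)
On the interior $Z^\circ$ the lift $\hat f$ agrees with $f$ under the identification $[W,P]^\circ\cong W\backslash P$. Condition (iii) gives $f^*\omega_W=\rho_Z^{k-1}\omega_Z\neq 0$ there, so $df$ is invertible and $\hat f$ is a local diffeomorphism on $Z^\circ$. It remains to treat a boundary point $x_0\in\partial Z$. By the inverse function theorem for manifolds with boundary, it suffices to show that $d\hat f_{x_0}$ is invertible: $\hat f$ maps $\partial Z$ into $\partial[W,P]$ and $Z^\circ$ into the interior, so a bijective differential gives a local diffeomorphism of manifolds with boundary. We will compare volume forms on the blow-up.

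First fix a volume form on $[W,P]$ and compute $\beta^*\omega_W$. Use the local model from the proof of Lemma \ref{lem_lift}: coordinates $(u,\omega,r)\in U\times S^{k-1}\times[0,\infty)$, where $U\subset P$ has dimension $n-k$, and $\beta(u,\omega,r)=(u,r\omega)\in U\times\R^k$. Polar coordinates give
\[
\beta^*(du\wedge dy_1\wedge\dots\wedge dy_k)=r^{k-1}\,du\wedge dr\wedge d\sigma_{S^{k-1}},
\]
so $\beta^*\omega_W=\rho_{[W,P]}^{k-1}\,\eta$ for a volume form $\eta$ on $[W,P]$ near $\partial[W,P]$, with $\rho_{[W,P]}=r$. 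Multiplying $\omega_W$ by a positive function does not change this form, only $\eta$ by a smooth positive factor.

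Next pull back along $\hat f$. Since $f=\beta\circ\hat f$,
\[
\rho_Z^{k-1}\omega_Z=f^*\omega_W=\hat f^*\beta^*\omega_W=(\hat f^*\rho_{[W,P]})^{k-1}\,\hat f^*\eta.
\]
By the last part of Lemma \ref{lem_lift}, $\hat f^*\rho_{[W,P]}=a\,\rho_Z$ with $a$ smooth and nowhere vanishing; indeed $a=|h|>0$ in the local model. Dividing by $\rho_Z^{k-1}$ on $Z^\circ$ gives $\omega_Z=a^{k-1}\hat f^*\eta$ on $Z^\circ$, and by continuity also on $\partial Z$. Because $\omega_Z$ is nowhere vanishing, $\hat f^*\eta$ is nonzero at $x_0$, which forces $\det d\hat f_{x_0}\neq 0$. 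This finishes the argument.

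The main obstacle is the boundary bookkeeping. One has to check that the identity $\beta^*\omega_W=\rho^{k-1}\eta$ holds with $\eta$ smooth and nondegenerate up to $\partial[W,P]$. One also has to justify the division by $\rho_Z^{k-1}$, which is done by working on the interior and extending by continuity, as above. Once Lemma \ref{lem_lift} supplies the comparison $\hat f^*\rho_{[W,P]}\sim\rho_Z$, the rest is formal. When $k=1$ the blow-up is a doubling-type construction, but the same computation applies with $r^0=1$.
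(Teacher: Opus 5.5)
Your proof is correct and follows the paper's own argument. You write $\beta^*\omega_W=\rho_{[W,P]}^{k-1}\eta$, pull back along $\hat f$, and use the last assertion of Lemma \ref{lem_lift} to see that $\hat f^*\eta$ is a nowhere-vanishing multiple of $\omega_Z$ up to the boundary. Beyond the paper, you also spell out the polar-coordinate computation and the inverse function theorem at boundary points, both of which the paper leaves implicit.
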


\begin{remark}\label{rmk_covering}
If $Z$ is compact, then the lift $\hat f$ is a proper local diffeomorphism and therefore  a covering map \cite[Lemma 2]{Ho75}. To show that $\hat f$ is a global diffeomorphism it thus suffices to exhibit a single $w_0\in W\backslash P$ with $|f^{-1}(\{w_0\})| = 1$.
\end{remark}

\begin{proof}[Proof of Lemma \ref{lem_melroselocdiff}]
	We have $\beta^*\omega_W=\rho_{[W,P]}^{k-1} \omega_{[W,P]}$ for a suitable boundary defining function and a volume form on $[W,P]$. The relation $\hat f = \beta^{-1}\circ f$ on $Z^\circ$ yields
	\[
		\hat f^*\omega_{[W,Z]} = \left(\frac{\rho_Z}{\hat f^*\rho_{[W,P]}}\right)^{k-1} \omega_Z
	\]
	and by the preceding lemma, the expression in the bracket extends to a nowhere vanishing function on all of $Z$, as desired. 
\end{proof}

\begin{lemma}\label{lem_lift2}	Let $Z$ be a manifold with boundary with $\dim Z=\dim W=d\ge 2$ and assume that $\operatorname{codim}(P)=2$. Suppose that $f\colon Z\to W$ satisfies:
\begin{enumerate}
	\item  $f(\partial Z)\subset P$ and $f(Z^\circ)\subset W\backslash P$;
	\item $f|_{Z^\circ}\colon Z^\circ \to W\backslash P$ is a local diffeomorphism;
	\item $L_x=\ker (df_{x}\colon T_{x}Z\to T_{f(x)}W)\subset T_x\partial Z$ and $\dim L_x=1$ for all $x\in \partial Z$;
	\item given a smooth $1$-parameter family $(x_\lambda,\nu_\lambda) \in TZ|_{\partial Z}$ with  $L_{x_\lambda} = \spn_\R \dot x_\lambda$ and $\nu_\lambda\notin T_{x_\lambda}\partial Z$  consider
	\[
		w_\lambda=df_{x_\lambda}(\nu_\lambda) \in T_{p}W,\quad p=f(x_\lambda)=f(x_0),\quad  (|\lambda|<\epsilon).
	\]
	Then $\{w_0,\dot w_0\}$ can be complemented, with vectors in $T_pP$,  to a basis of $T_pW$.
\end{enumerate}
Then there exists a local diffeomorphism  $\hat f\colon Z\to [W,P]$ with $f=\beta\circ \hat f$.
\end{lemma}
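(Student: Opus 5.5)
The plan is to reduce to the local polar-coordinate picture used in the proof of Lemma \ref{lem_lift}. I will check that the hypotheses imply those of Lemma \ref{lem_lift}, and then verify directly that the resulting lift has invertible differential at boundary points. On $Z^\circ$ there is nothing to do: $\hat f=\beta^{-1}\circ f$ is a local diffeomorphism by (ii), since $\beta$ is a diffeomorphism $[W,P]^\circ\to W\backslash P$.

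\emph{Step 1 (local normal form).} Fix $x_0\in\partial Z$ and $p=f(x_0)$. Choose a neighbourhood $V$ of $x_0$ with coordinates $(y,\rho)$, where $y\in\partial Z$ and $\rho=\rho_Z$ is a boundary defining function. Choose a tubular chart near $p$ identifying a neighbourhood with $U\times\R^2$, where $U\subset\R^{d-2}$ and $P=U\times\{0\}$. Write $f=(f_1,g)$. By (i) we have $g|_{\partial Z}=0$, so $g=\rho h$ for a smooth $h\colon V\to\R^2$.

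I claim that $h\neq 0$ on $V\cap\partial Z$. At $x\in\partial Z$, the restriction $df_x|_{T_x\partial Z}$ takes values in $T_{f(x)}P$, which has dimension $d-2$. Its kernel is $L_x$, which has dimension exactly $1$ by (iii), so $df_x$ maps $T_x\partial Z$ onto $T_{f(x)}P$. If $h(x)=0$, then $df_x(\nu)=(d f_1(\nu),0)\in T_{f(x)}P$ for a transversal $\nu$. Some $\tau\in T_x\partial Z$ then has the same image, so $\nu-\tau\in\ker df_x$ is a vector not tangent to $\partial Z$, contradicting (iii). Thus condition (ii) of Lemma \ref{lem_lift} holds, and (i) there is our (i). Lemma \ref{lem_lift} then gives a smooth lift, locally of the form
\[
\hat f=(f_1,\theta,r),\qquad \theta=h/|h|\in S^1,\quad r=\rho|h|.
\]

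\emph{Step 2 (differential at the boundary).} At $x\in\partial Z$ we have $\partial_\rho r=|h|\neq0$, while $r$ vanishes identically on $\partial Z$. So $d\hat f_x$ is invertible if and only if $(f_1,\theta)|_{\partial Z}\colon \partial Z\to U\times S^1$ has invertible differential. These two spaces both have dimension $d-1$. Since $df_1|_{T_x\partial Z}$ is onto $T_pP$ with kernel $L_x$, invertibility is equivalent to $d\theta_x(\dot x)\neq0$ for a generator $\dot x$ of $L_x$.

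\emph{Step 3 (translating (iv)), which I expect to be the main point.} Take a curve $x_\lambda$ tangent to $L$ through $x=x_0$ (so $f(x_\lambda)=p$, as stipulated in (iv)), and transversal vectors $\nu_\lambda$ with $d\rho(\nu_\lambda)=:c(\lambda)>0$. Then
\[
w_\lambda=df_{x_\lambda}(\nu_\lambda)=\bigl(df_1(\nu_\lambda),\,c(\lambda)h(x_\lambda)\bigr)\in T_pW .
\]
Since $p$ is fixed, projection onto $N_pP=T_pW/T_pP\cong\R^2$ is canonical and linear. Condition (iv) says precisely that the projections $n(\lambda)=c(\lambda)h(x_\lambda)$ satisfy that $n(0)$ and $\dot n(0)$ are linearly independent. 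Now $\dot n(0)=\dot c(0)h(x_0)+c(0)\,dh(\dot x_0)$, and the first term is parallel to $n(0)$. Hence independence is equivalent to $dh(\dot x_0)\notin\R h(x_0)$, which in turn is equivalent to $d\theta(\dot x_0)\neq0$. Some care is needed to confirm that this is independent of the choice of chart and of $\nu_\lambda$, but the rescaling argument just given handles the choice of $\nu_\lambda$.

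By the inverse function theorem for manifolds with boundary, applied to a map sending boundary to boundary with invertible differential, $\hat f$ is a local diffeomorphism near $x_0$. Together with the interior case, this proves the lemma.
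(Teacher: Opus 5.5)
Your proof is correct and follows essentially the same route as the paper. You verify the hypotheses of Lemma \ref{lem_lift} by the same rank count (your $h\neq 0$ on $\partial Z$ is the paper's $\operatorname{rk}C=1$), reduce invertibility of $d\hat f$ at boundary points to non-degeneracy along the line $L_x$, and translate condition (iv) via the fact that the derivative of $w_\lambda/|w_\lambda|$ vanishes iff $w_\lambda,\dot w_\lambda$ are dependent. The differences are cosmetic: the paper argues invariantly, using $\hat f(x_\lambda)=[w_\lambda]\in SN_pP$ and the inclusion $\ker d\hat f_x\subset L_x$, whereas you work in the polar coordinates $(f_1,\theta,r)$ and track the rescaling factor $c(\lambda)$ explicitly.
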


\begin{proof}
	Since $f$ maps $\partial Z$ into $P$, the differental at $x\in \partial Z$ is block triangular:
	\[
		df_x= \begin{bmatrix}
			A & B\\
			0 &C
		\end{bmatrix}\colon T_x\partial Z\oplus N_x\partial Z\xrightarrow{\,} T_{f(x)}P\oplus N_{f(x)} P.
	\]
	By property (iii) we have $\operatorname{rk}df_x = d-1$ and since $\operatorname{rk} A \le \dim P = d-2$, this enforces $\operatorname{rk}C=1$. Thus $f$ meets the requirements of Lemma \ref{lem_lift} and the lift $\hat f$ is smooth.
	To show that $d\hat f$ has full rank at a point  $x_0\in \partial Z$, it suffices to show that $\ker d\hat f_{x_0}\cap L_{x_0} = 0$. But $\hat f(x_\lambda)=[w_\lambda]\in SN_pP = \beta^{-1}(\{p\})$ and thus
	\[
	\begin{split}
		\dot x_0 \in \ker d\hat f_{x_0}\cap L_{x_0} \quad &\Leftrightarrow \quad \frac{d}{d\lambda}\Big|_{\lambda=0}[w_\lambda] = 0 \in T_{w_0} (SN_pP)\\[.2em]
		&\Leftrightarrow \quad \{[w_0],[\dot w_0]\} \text{ linearly dependent in } N_pP.
	\end{split}	
	\]
	The second equivalence is a general observation in $\R^k$; namely if $(w_\lambda)\subset \R^k\backslash 0$, then $(d/d\lambda)(w_\lambda/|w_\lambda|) = \left(\dot w_\lambda - w_\lambda  (\dot w_\lambda\cdot w_\lambda)/|w_\lambda|\right)/|w_\lambda|$ and this derivative vanishes if and only if $\{w_\lambda,\dot w_\lambda\}\subset \R^k$ is linearly dependent. Back to $N_pP$, we conclude the proof by noting that linear independence of $\{[w_0],[\dot w_0]\}\subset N_pP$ is equivalent to condition (iv).
\end{proof}

\subsection{Bundle pairs} Let $(X,Y)$ be a smooth manifold , together with an embedded smooth submanifold. A {\it bundle pair} $(E,F)$ over $(X,Y)$ consists of a complex vector bundle $E\to X$ together with a real subbundle $F\subset E|_Y$ such that $E|_Y = F \oplus i F$  (The bundle $F$ is then also called a {\it totally real}). 

\subsubsection{Maslov index}\label{sec_chernmaslov} If $(E,F)\to (\DD,\partial \DD)$ is a bundle pair over the disk, one defines the {\it Maslov index} $\mu(E,F)\in \Z$ as follows (cf.~\cite[Section C.3]{McSa04}): If $\rk_\C(E)=1$ and $\xi\in C^\infty(\DD,E)$ is a nowhere vanishing section, then $\mu(E,F)$ is the unique integer $k\in \Z$ such that for all $\theta\in \R$ it holds that:
 \begin{equation}\label{maslovchar}
 	e^{ik\theta/2}\xi(e^{i\theta}) \in F.
 \end{equation}
If $\rk_\C(E)=n$, one defines $\mu(E,F):= \mu(\Lambda^n E,\Lambda^n F)$. Alternatively, one has
\begin{equation}\label{maslovchern}
	\mu(E,F)=\langle 2 c_1(E,F),[\DD]\rangle,
\end{equation}
where $\langle\cdot,\cdot\rangle$ denotes the duality pairing  $H^2(\DD,\partial\DD)\times H_2(\DD,\partial \DD)\to \Z$ and $c_1(E,F)$ is the {\it relative $1$st Chern class}. The latter can be defined for a general bundle pair $(E,F)\to (X,Y)$ as class in $H^2(X,Y; \frac 12 \Z)$.

For $\rk_\C E=1$ the Chern-Weil description of the relative $1$st Chern class is
\[
	c_1(E,F) = \frac{i}{2\pi} [(\Omega,\omega)] \in H^2_{\mathrm{dR}}(X,Y;\C),
\] 
where the tuple $(\Omega,\omega)\in \Omega^2(X,i\R)\times \Omega^1(Y,i\R)$ is defined as follows: Choose a Hermitian inner product on $E$, let $\nabla$ be a unitary connection on $E$ and $\nabla'$ the induced connection on $E|_Y$. If $U\subset Y$ is an open set supporting unit sections $e_U\in \Omega^1(U,E|_Y)$ and $f_U\in \Omega^1(U,F)$, then there exist $\alpha_U\in \Omega^1(U,i\R)$ and $v_U\colon U\to S^1$ with
\[
	\nabla'e_U = \alpha_U e_U\quad \text{ and }\quad f_U = v_U e_U.
\]
 We can consider the $1$-form
	 \(
	 	\omega_U = \alpha_U + v_U^{-1} dv_U\in \Omega^1(U,i\R)
	 \)
	 and note that
	 \(
	 	\nabla' f_U = \omega_U f_U.
	 \)
Given another such tuple $(U',e_{U'},f_{U'})$, we must have $f_{U'}=\pm f_{U}$ and hence $\omega_U=\omega_{U'}$ on the overlap $U\cap U'$. Hence these local $1$-forms glue to some $\omega\in \Omega^1(Y,i\R)$. If $\Omega$ is the curvature $2$-form of $\nabla$ and $i_Y\colon Y \hookrightarrow X$ the inclusion, then \(
	 	 d\Omega = 0\) and \(i_Y^*\Omega=d\omega
	 \)
	 and thus it defines a class in the relative de Rham cohomology \cite{BoTu82}---this is the class $[(\Omega,\omega)]$ considered above. As an illustration, we include the following:

	 \begin{proof}[Proof of formula \eqref{maslovchern}]  If $(E,F)\to (\DD,\partial \DD)$ has the frame $\xi\in C^\infty(\DD,E)$, we can define a Hermitian inner product  and a unitary connection by declaring $\xi$ to be of unit length and parallel. On $U=\{z\in \partial \DD: z\neq {-1}\}$  we can take $\alpha_U=0$ and $v_U(e^{i\theta}):= e^{ik\theta/2}$, $\theta \in (-\pi,\pi)$, where $k=\mu(E,F)$. This results in 
	 $\Omega=0$ and $\omega = \frac{ik}{2} d\theta$, such that
	 \[
	 	\langle c_1(E,F),[\DD]\rangle = \frac{i}{2\pi} \left(\int_\DD\Omega - \int_{\partial \DD} \omega \right) = \frac{k}{2} = \frac 12 \mu(E,F).
	 \]
 \end{proof}

\begin{lemma}\label{lem_totallyrealequivalence} Let $W$ be a complex manifold and $P\subset W$ an embedded submanifold with $\dim_\R P = \dim_\C W$. Write $\Lambda^{p,0}_\R W\to P$ for the
subbundle of $\Lambda^{p,0}W|_P$ that is made up of $(p,0)$-forms whose restriction to $TP$ is real valued.
Then the following are equivalent:
\begin{enumerate}
	\item $P$ is totally real (that is, $TP\oplus J TP = TW$ along $P$);
	\item the restriction map $\Lambda^{1,0}W|_P\to \Lambda^1_\C P$, $\omega\mapsto \omega|_{TP}$ is an isomorphism;
	\item the restriction map $\Lambda^{1,0}_\R W|_P\to \Lambda^1 P$, $\omega\mapsto \omega|_{TP}$ is an isomorphism;
	\item $\Lambda^{1,0}W|_P=\Lambda^{1,0}_\R W \oplus i\Lambda^{1,0}_\R W$.
\end{enumerate}
In this case we have
\[
	c_1(W,P):=c_1(\Lambda^{2,0}W,\Lambda^{2,0}_\R W) = - c_1(TW,TP).
\]	
\end{lemma}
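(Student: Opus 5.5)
The equivalences are pointwise linear algebra, so I would fix $p\in P$ and write $V=T_pW$, a complex vector space of dimension $n=\dim_\C W$, with $U=T_pP$ a real subspace of real dimension $n$. A $(1,0)$-form at $p$ is the same as a complex-linear functional $V\to\C$, and restriction to $U$ is the map $r\colon \Hom_\C(V,\C)\to\Hom_\R(U,\C)$. Both spaces have complex dimension $n$.

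For (i)$\Leftrightarrow$(ii), I note that $r$ is injective exactly when $U$ spans $V$ over $\C$. Indeed, a complex-linear functional vanishing on $U$ also vanishes on $JU$, and conversely a proper complex span $U+JU\neq V$ is killed by some nonzero complex functional. By dimension count, $U+JU=V$ is equivalent to $U\cap JU=0$, which is total reality, and injectivity of $r$ is equivalent to $r$ being an isomorphism.

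For (ii)$\Rightarrow$(iii), the preimage under $r$ of the real forms $\Hom_\R(U,\R)$ is by definition $\Lambda^{1,0}_\R W|_p$, so $r$ restricts to an isomorphism there. For (iii)$\Rightarrow$(iv), I use $\Lambda^1_\C P=\Lambda^1P\oplus i\Lambda^1 P$ together with the fact that $r$ is $\C$-linear. Then $\Lambda^{1,0}_\R+i\Lambda^{1,0}_\R$ maps onto $\Lambda^1_\C P$. The sum is direct because $\omega\in \Lambda^{1,0}_\R\cap i\Lambda^{1,0}_\R$ restricts to a form on $U$ that is both real and imaginary, hence zero, and injectivity in (iii) then gives $\omega=0$. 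Counting dimensions, the sum is $2n$ real dimensional, so it equals $\Lambda^{1,0}W|_P$. For (iv)$\Rightarrow$(i), I suppose $0\neq u\in U\cap JU$, say $u=Jv$ with $v\in U$. For real $\omega\in\Lambda^{1,0}_\R$ we have $\omega(u)=i\omega(v)$, which is both real and purely imaginary, so $\omega(u)=0$. By (iv) every $(1,0)$-form then kills $u$, which is a contradiction.

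For the Chern class identity, I would first use (iv) to show that $\Lambda^{1,0}_\R W$ is a totally real subbundle of $\Lambda^{1,0}W|_P\cong (TW)^*$, where $T W$ carries its complex structure. The duality $\Lambda^{1,0}W\cong \overline{\Hom_\C(TW,\C)}$ is awkward, so instead I would use $\Hom_\C(TW,\C)=(TW)^*_\C$. Under this identification, $\Lambda^{1,0}_\R$ is exactly the annihilator-dual real form, namely the functionals that are real on $TP$. This makes the bundle pair $(\Lambda^{1,0}W,\Lambda^{1,0}_\R W)$ the complex dual of $(TW,TP)$. Taking top exterior powers, $(\Lambda^{n,0}W,\Lambda^{n,0}_\R W)$ is the dual of $(\Lambda^n TW,\Lambda^n TP)$; here $n=2$, although nothing depends on that. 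The relative first Chern class is additive and changes sign under duality. I would check the sign change most cleanly with the Chern–Weil description above: a dual metric and connection give $\Omega^*=-\Omega$, and the dual frame of $F^*$ gives $\omega^*=-\omega$. Combining these yields $c_1(W,P)=-c_1(TW,TP)$. The only delicate point is verifying that the dual real subbundle of $TP$ inside $(TW)^*$ is precisely $\Lambda^{n,0}_\R$, including the top-power compatibility. This follows because $\Lambda^nTP$ paired with $\Lambda^n_\R$ is real, by the determinant formula.
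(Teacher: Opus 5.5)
Your argument is correct and follows essentially the paper's route: pointwise linear algebra for the equivalences, and for the Chern class identity the same two facts, namely that passing to top exterior powers changes nothing and that dualising flips the sign. You use annihilators and dimension counts where the paper writes down the $J$-linear extension $\omega_\C(w_1+Jw_2)=\omega(w_1)+i\omega(w_2)$ explicitly, and you verify the sign flip via the Chern--Weil description where the paper simply calls it standard; your aside about $\overline{\Hom_\C(TW,\C)}$ is unnecessary, since $\Lambda^{1,0}W$ is exactly $\Hom_\C((TW,J),\C)$, which is the identification you end up using.
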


\begin{proof}
	We prove the equivalence at a given point $p\in P$.\\[.4em]
	(i)$\Rightarrow$(ii): A real linear map $\omega\colon T_pP \to \C$ has a unique $J$-linear extension $\omega_\C\colon  T_pW\to \C$ defined via the decomposition in (i) by $\omega_\C(w_1+ J w_2) = \omega(w_1) + i \omega(w_2)$, where $w_1,w_2\in T_pP$. Hence (ii) holds true.\\[.4em]
	(ii)$\Leftrightarrow$(iii): Using (ii), a real linear map $\omega\colon T_pP\to \R$ has an extension  $\omega_\C\in \Lambda^{1,0}_pW$. But this automatically has to lie in $(\Lambda^{1,0}_\R)_p$,  so (iii) holds true. Vice versa, if $\omega\colon T_p P\to \C$ is real linear, then also real and imaginary part are and extending them as in (iii) yields (ii).
	\\[.4em]
	(ii)+(iii)$\Rightarrow$(iv): The restriction of $\omega_\C\in \Lambda^{1,0}_p W$ to $TP$ gives an element  $\omega \in (\Lambda^1_\C P)_p$. Let $\omega',\omega''\in (\Lambda^{1,0}_\R W)_p$ be the extensions of $\Re \omega$ and $\Im \omega$ guaranteed by (iii). Then the decomposition $\omega_\C = \omega' + i\omega''$ holds true on $T_pP$ and by (ii) this implies equality everywhere.	\\[.4em]
(iv)$\Rightarrow$(i): If $w\in T_pP \cap JT_pP$ and $\omega \in (\Lambda^{1,0}_{\R})_p$, then both $\omega(w)$ and $\omega(Jw)=i\omega(w)$ have to be real valued, which implies that $\omega(w)=0$. The decomposition in (iv) implies that $\omega_\C(w)=0$ for all $\omega_\C(w)=0$ for all $\omega_\C \in\Lambda^{1,0}_p W$ and thus $w=0$. Thus (i) holds true.	\\[.4em]
The statement regarding the relative $1$st Chern class follows from standard properties, analogous to the non-relative version: taking an exterior power does not change $c_1$ and dualising flips the sign of $c_1$.
\end{proof}

\subsubsection{Holomorphic bundle pairs} We say that $(E,F)\to (\DD,\partial \DD)$ is {\it holomorphic}, if $E\to \DD$ is a holomorphic vector bundle, that is, it comes with a first order differential operator $\bar \partial \colon C^\infty(\DD,E)\to \Omega^{0,1}(\DD,E)$ that obeys the Leibniz rule with respect to the scalar $\bar \partial$-operator. Its Fredholm theory with boundary values in $F$ is a version of the Riemann--Roch theorem \cite[Theorem C.1.10]{McSa04}:

\begin{theorem}\label{thmrr}
	Let $(E,F)\to (\DD,\partial \DD)$ be a holomorphic bundle pair. Then
	\begin{equation}\label{dbarrr}
	\bar \partial\colon  C^\infty_F(\DD,E)\to \Omega^{0,1}(\DD,E)
		\end{equation}
	has real Fredholm index $\mu(E,F)+n$, where $n$ is the complex rank of $E$. If $n=1$ and $\mu(E,F)\ge -1$, then \eqref{dbarrr} is onto.\qed
\end{theorem}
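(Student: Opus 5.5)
The plan is to reduce to an explicit model problem by holomorphic gauge transformations and homotopy invariance of the Fredholm index, and then to compute kernel and cokernel by Fourier series.

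First I reduce to a trivial bundle with a normalised boundary condition. The disk is contractible and Stein, so the holomorphic bundle $E\to\DD$ has a holomorphic frame, and I identify $E\cong \DD\times\C^n$ with $\bar\partial$ the standard operator. After this identification $F$ is a loop $\theta\mapsto F_\theta\subset\C^n$ of totally real subspaces. Working in Sobolev completions $W^{1,p}_F\to L^p$, I take as known the standard elliptic estimate for totally real boundary conditions, which makes \eqref{dbarrr} Fredholm. Its index is invariant under continuous deformation of the loop $(F_\theta)$.

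The space of totally real subspaces of $\C^n$ is $GL_n(\C)/GL_n(\R)$, which deformation retracts to $U(n)/O(n)$. Its fundamental group is $\Z$, detected by $\det^2$, i.e.\ by the Maslov index. So $(F_\theta)$ is homotopic to the split loop
\[
e^{ik\theta/2}\R\oplus\R^{n-1},\qquad k=\mu(E,F),
\]
and the index is additive over this splitting. It therefore suffices to treat $n=1$, where I must show that the index is $k+1$.

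For $n=1$ I want an exact reduction rather than just a homotopy, since I also need surjectivity. Write $F_\theta=e^{ik\theta/2}e^{ih(\theta)}\R$ with $h$ real and smooth; this is possible by the definition \eqref{maslovchar}. Let $g$ be holomorphic on $\DD$, smooth up to the boundary, with $\Re g=h$ on $\partial\DD$; take the harmonic extension of $h$ plus $i$ times its harmonic conjugate. Then
\[
e^{ig}=e^{ih}\,e^{-\Im g}\quad\text{on }\partial\DD,
\]
and the factor $e^{-\Im g}$ is positive, so it preserves real lines. Hence the map $u\mapsto e^{-ig}u$ is an isomorphism intertwining $\bar\partial$ for $F$ with $\bar\partial$ for the standard condition $u(e^{i\theta})\in e^{ik\theta/2}\R$.

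For the standard problem I compute directly. A kernel element is a holomorphic $u=\sum_{j\ge0}a_jz^j$ with $e^{-ik\theta/2}u(e^{i\theta})$ real. Comparing Fourier coefficients forces $a_j=0$ for $j>k$ and $a_{k-j}=\bar a_j$. So the kernel has real dimension $k+1$ if $k\ge 0$, and is zero if $k<0$.

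For the cokernel I solve $\bar\partial u=\eta$ in two steps. I take any solution on $\DD$, for instance via the Cauchy transform. Then I correct its boundary values with a holomorphic function, which amounts to the same Fourier bookkeeping. One finds the obstruction space has dimension $\max(0,-k-1)$, so the index is $k+1$ in all cases and the cokernel vanishes for $k\ge -1$. Equivalently, I can identify the cokernel with the kernel of the formal adjoint. This is a $\partial$-problem with boundary condition of Maslov index $-k-2$, and by the kernel count it is trivial when $-k-2<0$. This gives surjectivity for $n=1$, $\mu\ge-1$.

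The main obstacle is the analytic foundation rather than the computation: the Fredholm property with totally real boundary conditions, via the elliptic boundary estimate, and the identification of the cokernel with the adjoint boundary problem. I would take both from \cite[Appendix C]{McSa04}. The topological step $\pi_1(U(n)/O(n))\cong\Z$ via $\det^2$ is standard.
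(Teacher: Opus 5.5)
Your proposal is correct. The paper does not prove this statement. It quotes it from \cite[Theorem C.1.10]{McSa04}, which covers real-linear Cauchy--Riemann operators over arbitrary compact Riemann surfaces with boundary.

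Your argument specialises the standard proof to the holomorphic disk. For the index you use Fredholm theory with totally real boundary conditions together with a homotopy to the split loop $e^{ik\theta/2}\R\oplus\R^{n-1}$, classified by $\det^2$ on $U(n)/O(n)$. In rank one you use the exact holomorphic gauge $u\mapsto e^{-ig}u$ to reach the model condition $u(e^{i\theta})\in e^{ik\theta/2}\R$, and then read off kernel and cokernel from Fourier coefficients. Your kernel count is the same computation as the paper's Lemma~\ref{lem_normalform}. Your obstruction count $\max(0,-k-1)$ checks out: the unsolvable frequencies are exactly $k<m<0$, paired by $\phi_m=-\overline{\phi_{k-m}}$. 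So you get index $k+1$ in all cases and surjectivity for $k\ge -1$.

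What the citation buys is generality: real-linear operators, any surface, any regularity. What your route buys is a self-contained, elementary proof with explicit kernel and cokernel. The price is that the Fredholm property and the homotopy invariance of the index remain black boxes.

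One small point to tighten: you need a holomorphic frame that is smooth up to $\partial\DD$. The closed disk is not itself Stein. You get such a frame by first extending the $\bar\partial$-operator slightly past the boundary, then using triviality of holomorphic bundles on an open disk.
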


\begin{lemma}\label{lem_split}
Let 
\(
	0\to (E',F')\xrightarrow{\iota} (E,F)\xrightarrow{\pi} (E'',F'')\to 0
\)
be a short exact sequence of holomorphic bundle pairs over $(\DD,\partial \DD)$ such that 
\begin{equation}\label{h1obstr}
\bar \partial \colon C_{\Hom_\R(F'',F')}^\infty(\DD,\Hom_\C(E'',E')) \to \Omega^{0,1}(\DD,\Hom_\C(E'',E'))  
\end{equation}
is onto. Then there exists a holomorphic splitting $s_0\colon (E'',F'')\to (E,F)$. Any further holomorphic splitting is given by $s_\phi=s_0+\iota\phi$ for some $\phi$ in the kernel of \eqref{h1obstr}.
A sufficient criterion for \eqref{h1obstr} to be onto is that $\rk_\C E'=\rk_\C E''=1$ and \(\mu(E',F') \ge \mu(E'',F'')-1.\)
\end{lemma}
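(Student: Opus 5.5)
Any splitting is a holomorphic section of $\Hom$; the plan is to reduce the existence of such a section to the surjectivity assumption \eqref{h1obstr}. Start from any smooth splitting $s\colon E''\to E$ with $s(F'')\subset F$. To build it, pick a Hermitian metric on $E$ and let $s$ identify $E''$ with the orthogonal complement of $\iota E'$. Along $\partial\DD$, correct $s$ by choosing a real complement of $F'$ inside $F$ and projecting onto it; then extend this correction into the interior. The correction lives in $\Hom_\C(E'',E')$ because $\pi\circ s=\Id$ must be preserved, and any extension of a section of this bundle from the boundary works.

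Given such an $s$, the defect $\pi\circ\bar\partial_E s$ vanishes, where $\bar\partial_E s$ denotes the $\bar\partial$-operator of $\Hom(E'',E)$ applied to $s$. Indeed, $\pi\circ s=\Id$ holds and $\pi$ is holomorphic. Hence $\bar\partial s=\iota\eta$ for a unique $\eta\in\Omega^{0,1}(\DD,\Hom_\C(E'',E'))$. We look for a holomorphic splitting of the form $s_0=s-\iota\phi$ with $\phi\in C^\infty_{\Hom_\R(F'',F')}(\DD,\Hom_\C(E'',E'))$. The boundary condition $\phi(F'')\subset F'$ guarantees $s_0(F'')\subset F$, and $\pi s_0=\Id$ is automatic. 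Since $\iota$ is holomorphic, we get $\bar\partial s_0=\iota(\eta-\bar\partial\phi)$. It therefore suffices to solve $\bar\partial\phi=\eta$, which is possible by the surjectivity of \eqref{h1obstr}.

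For uniqueness, let $s_1$ be another holomorphic splitting. The difference $s_1-s_0$ is killed by $\pi$, so it factors as $\iota\phi$ with $\phi$ holomorphic. Along $\partial\DD$, $\phi$ maps $F''$ into $\iota^{-1}(F)=F'$. Here we use $F'=\iota^{-1}(F)$, which is part of what exactness of the sequence of pairs means. So $\phi$ lies in the kernel. Conversely, every element $\phi$ of the kernel gives a holomorphic splitting $s_0+\iota\phi$.

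For the sufficient criterion, the bundle $H:=\Hom_\C(E'',E')=E'\otimes (E'')^*$ is a holomorphic line bundle with totally real boundary subbundle $\Hom_\R(F'',F')$. One checks that this is totally real: the real homs of real lines, tensored up, give $H=\Hom_\R(F'',F')\otimes\C$. Its Maslov index is $\mu(E',F')-\mu(E'',F'')$. This follows from the characterisation \eqref{maslovchar}: if $\xi',\xi''$ are frames with $e^{ik'\theta/2}\xi'\in F'$ and $e^{ik''\theta/2}\xi''\in F''$, then the hom $\xi''\mapsto\xi'$ rotated by $e^{i(k'-k'')\theta/2}$ maps $F''$ to $F'$. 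The hypothesis gives Maslov index $\ge -1$, so Theorem \ref{thmrr} yields surjectivity.

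The only mildly delicate step is the construction of the initial smooth splitting respecting $F$. It is handled by the fibrewise linear algebra over $\partial\DD$ described above, followed by a collar extension.
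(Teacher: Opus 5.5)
Your proposal is correct and follows the paper's proof. Both start from a smooth splitting respecting the boundary condition and use $\pi\bar\partial s=\bar\partial(\pi s)=0$ to write $\bar\partial s=\iota\eta$. Both then remove $\eta$ by solving $\bar\partial\phi=\eta$ via the assumed surjectivity. Both obtain the sufficient criterion from Theorem~\ref{thmrr} together with $\mu(\Hom_\C(E'',E'),\Hom_\R(F'',F'))=\mu(E',F')-\mu(E'',F'')$.

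You also supply two details the paper leaves implicit. You construct the initial smooth splitting with $s(F'')\subset F$, and you verify that every holomorphic splitting has the form $s_0+\iota\phi$ with $\phi$ in the kernel.
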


\begin{proof}
	Let $s\colon (E'',F'')\to (E,F)$ be a {\it smooth} splitting and consider $s_\phi = s + \iota \phi$ for $\phi$ in the domain of \eqref{h1obstr}, which also gives a splitting. Since $\pi s=\Id$, we have $0 = \bar \partial (\pi s) = \pi \bar\partial s$ and hence $\bar \partial s = \iota \psi$ for some $\psi\in \Omega^{0,1}(\DD, \Hom_\C(E'',E'))$. Taking $\phi$ to be any solution of $\bar \partial \phi + \psi = 0$ ensures that $\bar \partial s_\phi = \bar \partial s + \iota \bar \partial \phi = 0$. The sufficient criterion follows from  Theorem \ref{thmrr} and the observation
	\[
		\mu(\Hom_\C(E'',E'),\Hom_\R(F'',F')) = \mu(E',F') - \mu(E'',F'').
	\] 
\end{proof}

\begin{lemma}\label{lem_normalform} Let $(E,F)\to (\DD,\partial \DD)$ be a bundle pair of rank $1$ and Maslov index $k=\mu(E,F)\ge 0$. Suppose further that $E$ is holomorphic. Then there exists a holomorphic nowhere vanishing section $\eta\in C^\infty(\DD,E)$  such that
\[
	C^\infty_F(\DD,E)\cap \ker \bar \partial = \left\{a \eta: a(\omega)= \sum_{j=0}^k a_j \omega^j \text{ with } a_{j} = \bar a_{k-j}\right\} \cong \R^{k+1}.
\]
\end{lemma}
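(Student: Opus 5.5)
Since $\DD$ is contractible, every holomorphic line bundle over it is holomorphically trivial. So I will first pick a nowhere vanishing holomorphic frame $\xi$ of $E$. This uses the surjectivity of $\bar\partial$ from Theorem \ref{thmrr}, or simply the solvability of $\bar\partial u=f$ on the disk. Concretely: start from a smooth frame $\xi_0$, write $\bar\partial\xi_0=\gamma\,\xi_0$, solve $\bar\partial u=-\gamma$ on $\DD$, and set $\xi=e^{u}\xi_0$. By the characterisation \eqref{maslovchar} of the Maslov index, there is a smooth function $\theta\mapsto e^{i\psi(\theta)}$ with
\[
	e^{ik\theta/2}e^{i\psi(\theta)}\xi(e^{i\theta})\in F .
\]
Here $\psi$ is a genuine $2\pi$-periodic real function, because the winding is already accounted for by $k$.

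The next step removes $\psi$ by a holomorphic gauge change. Let $h$ be holomorphic on $\DD^\circ$, smooth up to the boundary, with $\Im h=\psi$ on $\partial\DD$. This comes from the harmonic extension of $\psi$ together with its harmonic conjugate, which is smooth up to the boundary for smooth data. Then $e^{ih}$ is a nowhere vanishing holomorphic function whose boundary values are $e^{i\psi}e^{-\Im\,}$, hmm; more precisely $e^{ih}=e^{-\Im h}e^{i\Re h}$. So I instead take $h$ holomorphic with $\Re h=\psi$ on $\partial \DD$, giving $e^{ih}=e^{i\psi}\cdot(\text{positive})$ on the boundary. Setting $\eta:=e^{ih}\xi$ yields a nowhere vanishing holomorphic section with
\[
	e^{ik\theta/2}\eta(e^{i\theta})\in F\quad\text{for all }\theta .
\]

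A general section is $a\eta$ with $a$ a smooth function on $\DD$. It is holomorphic iff $a$ is holomorphic, and it satisfies the boundary condition iff $e^{-ik\theta/2}a(e^{i\theta})\in\R$ for all $\theta$. Expanding $a=\sum_{j\ge0}a_j\omega^j$ and taking boundary values, this says $\sum_j a_j e^{i(j-k/2)\theta}$ is real. Equivalently, $\sum_j a_je^{i(j-k/2)\theta}=\sum_j\bar a_je^{-i(j-k/2)\theta}$. Comparing Fourier modes gives exactly $a_j=\bar a_{k-j}$, and $a_j=0$ for $j>k$, since the corresponding modes have no partner with nonnegative index. Hence $a$ is a polynomial of degree $\le k$. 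The count of real parameters is $k+1$ in both cases: for $k$ even, the middle coefficient is real; for $k$ odd, the $(k+1)/2$ coefficient pairs are free complex numbers. This matches the index $\mu+1$ from Theorem \ref{thmrr}.

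The only delicate step is the gauge change: solving the boundary problem with boundary regularity, and making sure $\psi$ is single valued. I will handle the latter by fixing the lift via \eqref{maslovchar} before extending.
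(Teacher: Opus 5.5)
Your proof is correct and follows the paper's argument: a holomorphic frame, a gauge change by $e^{ih}$ with $h$ holomorphic and $\Re h$ equal to the periodic boundary phase, and then a comparison of Fourier coefficients of the boundary values of $a$. The paper writes this $h$ explicitly as the Fourier series $\frac12 g_0+\sum_{j\ge 1}g_j\omega^j$, which is the same as your harmonic extension plus harmonic conjugate; in a final write-up, drop the abandoned attempt with $\Im h=\psi$ and keep only the version with $\Re h=\psi$.
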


\begin{proof}
	Start with an arbitrary nowhere vanishing holomorphic section $\eta_0\in C^\infty(\DD,E)$. On $\partial \DD\backslash \{\pm 1\}$ we have $F = \spn_\R(v_\pm\eta_0)$  for smooth $v_\pm\colon \partial \DD\backslash \{\pm 1\} \to S^1$. Their squares must agree on the overlaps and so we get a smooth function $h\colon \partial \DD\to S^1$ with $h=v_\pm^2$ on $\partial \DD\backslash \{\pm 1\}$. It has degree $\deg h = k$ and therefore $h(\omega)\omega^{-k} =\exp(i g(\omega))$ for some $g\colon \partial \DD\to \R$. Write $(g_j:j\in \Z)$ for its Fourier modes and define
	\[
		\psi(\omega) = \exp( \frac{i}{2}g_0 + i \sum_{j\ge 1} g_j \omega^j),\quad \omega\in \DD.
	\]
	Then $\psi\colon \DD\to \C^\times$ is holomorphic and satisfies $\psi/\bar \psi = h\omega^{-k}$ on $\partial \DD$. Define $\eta=\psi \eta_0$ and consider $f=a \eta$ for some holomorphic $a\colon \DD\to \C$. Then $f|_{\partial \DD} \in F$ if and only if $\Im(\bar v_\pm a \psi)=0$ on $\partial \DD\backslash \{\pm 1\}$, which is to say that
	\[
		a = 	\bar a (v_\pm\psi)/(\bar v_\pm \bar \psi) = \bar a h (\psi/\bar \psi)=\bar a \omega^k.
	\]
	Expanding $a$ into a power series and comparing coefficients gives the restrictions $a_j=0$ for $j\ge k+1$ and $a_{j} = \bar a_{k-j}$ for $j=0,\dots, k$.
\end{proof}	

\begin{example}
	For the tangent bundle pair $(T\DD,T\partial \DD)$ we can take $\eta\equiv  \partial_t = i\partial_s$, where $s+it$ is the complex coordinate of $\DD$.
\end{example}

\subsection{Moduli space of holomorphic disks}\label{prelim_moduli}
Let $(W,P)$ be a complex surface together with an embedded totally real surface. We write
\[
	\mathcal B(P):=\{u\colon (\DD,\partial \DD)\looparrowright (W,P): \text{ smooth immersion up to } \partial \DD\}.
\]
This is a Fr{\'e}chet manifold with tangent space at $u\in \mathcal B(P)$ given by
\[
	T_u\mathcal B(P)=\{\xi \in C^\infty(\DD,u^*TW): \xi(\partial \DD)\subset u^*TP\}.
\]
Upon choosing a Riemannian metric on $W$ that makes $P$ totally geodesic, the associated exponential map provides us with a chart \(
	\exp_u\colon T_u\mathcal B(P)\to \mathcal B(P).
\)
If $u\in \mathcal B(P)$ is holomorphic, then $(u^*TW,u^*TP) \to (\DD,\partial \DD)$ is a holomorphic bundle pair and its Maslov index is, by definition, the {\it (total) Maslov index} of  $u$. We consider the following moduli spaces of immersed holomorphic disks:
\[
	\mathcal M_\mu(P):=\{u\in \mathcal B(P): u \text{ is holomorphic and has (total) Maslov index } \mu\}.
\]

\begin{theorem}\label{thm_modulimaster}  Let $\mu\ge 1$.
\begin{enumerate}
	\item\label{thm_modulimasteri}  The moduli space $\mathcal M_\mu(P)\subset \mathcal B(P)$ is a smooth submanifold of dimension $\mu+2$. Its tangent space at $u\in \mathcal M_\mu(P)$ is given by
\[
\qquad 	T_u\mathcal M_\mu(P)=\{\xi\in C^\infty(\DD,u^*TW) \text{ holomorphic}: \xi(\partial \DD)\subset u^*TP\}.
\]
\item\label{thm_modulimasterii}  Given a compact set $K\subset \mathcal M_\mu(P)$, if $P'$ is a totally real surface that is sufficiently close to $P$ in the $C^\infty$-topology, there exists a neighbourhood $U\subset \mathcal M_\mu(P)$ of $K$ and a $C^1$-embedding
\(
	\Psi\colon U\to \mathcal M_\mu(P').
\)
This can be chosen to have the following properties: $\mathrm{ev}_0\circ \Psi = \mathrm{ev}_0$, where $\mathrm{ev}_0(u)=u(0)$; and $(\Psi(u))_\lambda=\Psi(u_\lambda)$, where $u_\lambda(\omega)=u(e^{i\lambda} \omega)$. 
 		
\end{enumerate}

\end{theorem}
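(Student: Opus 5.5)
The approach is the standard Fredholm set-up for the nonlinear Cauchy--Riemann operator with totally real boundary conditions. The only geometric input is a splitting argument which shows that the linearisation is surjective for \emph{immersed} disks as soon as $\mu\ge 1$.

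\textbf{Step 1: the linearised operator.} I complete $\mathcal B(P)$ to the Banach manifold $\mathcal B^{k,p}(P)$ of $W^{k,p}$-maps with $kp>2$, using the charts $\exp_u$. Over it sits the Banach bundle $\mathcal E_u=W^{k-1,p}(\DD,\Lambda^{0,1}\otimes u^*TW)$, and $\bar\partial_J u=\tfrac12(du+J\,du\circ j)$ is a smooth section of $\mathcal E$. Since $J$ is integrable, at a holomorphic $u$ the vertical linearisation $D_u$ equals the holomorphic $\bar\partial$-operator of the bundle pair $(u^*TW,u^*TP)$; any torsion-type zeroth order term vanishes because $N_J=0$. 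By Theorem~\ref{thmrr}, $D_u$ is Fredholm of real index $\mu+2$.

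\textbf{Step 2: surjectivity of $D_u$.} Because $u$ is an immersion, $du$ gives an injection of holomorphic bundle pairs
\[
0\to (T\DD,T\partial\DD)\to (u^*TW,u^*TP)\to (N,N_\R)\to 0 .
\]
All three bundle pairs are holomorphic, and the quotient has rank $1$. The pair $(T\DD,T\partial\DD)$ has Maslov index $2$, so by additivity $\mu(N,N_\R)=\mu-2\ge -1$. Theorem~\ref{thmrr} then makes $\bar\partial$ onto for both rank-one pieces. A diagram chase gives surjectivity of $D_u$: given $\eta$, first solve its image in $N$, then lift the solution by a smooth splitting, and finally correct the error, which lies in $T\DD$, by solving there. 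Alternatively one can split holomorphically via Lemma~\ref{lem_split}, since $2\ge(\mu-2)-1$ when $\mu\le 5$; but the direct chase works for all $\mu$.

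\textbf{Step 3: proof of (i).} The implicit function theorem now shows that the zero set of $\bar\partial_J$ near $u$ is a manifold of dimension $\mu+2$ with tangent space $\ker D_u$. Elliptic boundary regularity makes all solutions smooth, so the result is independent of $(k,p)$. Being an immersion is an open condition, so $\mathcal M_\mu(P)$ is a smooth submanifold of $\mathcal B(P)$. The description of $T_u\mathcal M_\mu(P)$ is exactly $\ker D_u$.

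\textbf{Step 4: proof of (ii), transferring disks to $P'$.} I pick a diffeomorphism $\phi$ of $W$, $C^\infty$-close to the identity, with $\phi(P)=P'$. I push the problem back to the fixed boundary condition $P$ with the almost complex structure $J'=\phi^*J$, which is $C^\infty$-close to $J$ and still integrable. Then I set up the parametric problem $F(J',u,\xi)=\bar\partial_{J'}\exp_u(\xi)$. For $\xi$ I restrict to a closed complement $V_u\subset T_u\mathcal B$ of $\ker D_u$, and I choose $V_u$ depending smoothly on $u$. Surjectivity of $D_u$ is open, and it is uniform over the compact set $K$. Hence the uniform implicit function theorem (a Newton iteration with a uniformly bounded right inverse) gives a unique small solution $\xi(J',u)$ for all $u$ in a neighbourhood $U$ of $K$. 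The map $\Psi(u)=\phi\circ\exp_u(\xi(J',u))$ then lands in $\mathcal M_\mu(P')$; it stays an immersion for $P'$ close, and it is $C^1$-close to the inclusion, hence a $C^1$-embedding.

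\textbf{Step 5: the normalisations.} Rotation-equivariance is obtained by choosing $V_u$ equivariantly. I take $V_u$ to be the $L^2$-orthogonal complement of $\ker D_u$ for a rotation-invariant metric, or average over $S^1$; uniqueness of the small solution then forces $\Psi(u_\lambda)=\Psi(u)_\lambda$. For the property $\mathrm{ev}_0\circ\Psi=\mathrm{ev}_0$ I would additionally impose $\xi(0)=0$, choose $\phi$ to be the identity near $\mathrm{ev}_0(K)$, and replace $V_u$ by $\{\xi:\xi(0)=0\}\cap V'_u$, where $V'_u$ is a complement of $\ker D_u\cap\{\xi(0)=0\}$ within $\{\xi(0)=0\}$.

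\textbf{Main obstacle.} The hard part is the last point. The restricted operator $D_u$ on $\{\xi(0)=0\}$ remains onto only if $\mathrm{ev}_0\colon\ker D_u\to T_{u(0)}W$ is surjective. The tangential part of the kernel gives the two Möbius directions at $0$. The normal directions need a section of $(N,N_\R)$ that does not vanish at $0$, which by Lemma~\ref{lem_normalform} exists iff $\mu-2\ge0$. So I expect to run this surjectivity check carefully, with a separate argument when $\mu\le 1$: there I would fall back on reparametrising by the Möbius group to move the point $\Psi(u)^{-1}$-preimage back to $0$, at the cost of having to re-verify equivariance.
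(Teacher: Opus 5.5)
Your Steps 1--3 are the paper's argument for (i): the same Sobolev set-up, $D_u=\bar\partial_{u^*TW}$ with boundary values in $u^*TP$, and surjectivity from $0\to(T\DD,T\partial\DD)\to(u^*TW,u^*TP)\to(N,N_\R)\to0$ with $\mu(N,N_\R)=\mu-2\ge-1$.

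For (ii) your route is different. The paper moves disks along a path $J_t$ from $J$ to $\psi^*J$ by the ODE $\dot u_t=-(d_1^{\mathtt v}\mathcal S)^{-1}d_2^{\mathtt v}\mathcal S\,\dot J_t$; working in $\mathcal J^\ell$ costs it regularity. You instead keep $J'$ fixed and solve $\bar\partial_{J'}\exp_u(\xi)=0$ for $\xi$ in a complement $V_u$ of $\ker D_u$, by a uniform implicit function theorem. This is sound and somewhat cleaner:
\begin{itemize}
\item $\Psi$ comes out smooth;
\item injectivity follows from the tubular-neighbourhood property of $(u,\xi)\mapsto\exp_u\xi$;
\item rotation-equivariance follows from uniqueness once $V_u$ is chosen equivariantly. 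The paper's flow needs an equivariant right inverse for the same reason, which it leaves implicit.
\end{itemize}

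The genuine gap is the normalisation $\mathrm{ev}_0\circ\Psi=\mathrm{ev}_0$. Your criterion is right: $D_u$ stays onto on $\{\xi(0)=0\}$ if and only if $\mathrm{ev}_0\colon\ker D_u\to T_{u(0)}W$ is onto. But your threshold is off by one. The values at $0$ of holomorphic sections of $(N,N_\R)$ must span the complex line $N_0$ over $\R$; one nonvanishing section is not enough. By Lemma~\ref{lem_normalform} these values are $a_0\eta(0)$, with $a_0\in\C$ arbitrary when $\mu-2\ge1$ but only $a_0\in\R$ when $\mu=2$.

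So your argument settles the normalisation exactly for $\mu\ge3$. This includes the only case the paper uses, $\mu=4$, where Lemma~\ref{lem_ev} is precisely this surjectivity.

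The M\"obius fallback cannot handle small $\mu$. Reparametrising does not change the image of $\Psi(u)$, and the real question is whether $u(0)$ lies on any nearby $P'$-disk at all. In fact for $\mu\le2$, where $\mathrm{ev}_0$ has rank $\mu+1<4$, the normalisation is false in general. Here is a counterexample for $\mu=2$:
\begin{itemize}
\item Take $P=\{|z_1|=|z_2|=1\}\subset\C^2$ and $K=\{\omega\mapsto(\omega,1)\}$.
\item The nearby disks are $\omega\mapsto(\sigma(\omega),c)$ with $\sigma\in\mathrm{Aut}(\DD)$ and $|c|=1$, so $\mathrm{ev}_0(U)\subset\{|z_2|=1\}$.
\item $\Psi(U)$ is open in $\mathcal M_2(P')$, hence stable under small reparametrisations. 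So $\mathrm{ev}_0\circ\Psi=\mathrm{ev}_0$ forces $|z_2\circ u'|=1$ near $0$ for every $u'\in\Psi(U)$.
\item Hence $z_2\circ u'\equiv c$ and $u'(\partial\DD)\subset P'\cap\{z_2=c\}$.
\item For $P'=\{|z_1|=1,\ |z_2|=1+\epsilon\cos(\arg z_1)\}$, which is arbitrarily close to $P$, this intersection is two points. That contradicts $u'$ being an immersion up to the boundary.
\end{itemize}

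The paper's own justification needs the same input you isolated. Having $\dot J_t=0$ near $\mathrm{ev}_0(K)$ only makes $d_2^{\mathtt v}\mathcal S\,\dot J_t$ vanish near $\omega=0$. A right inverse of $\bar\partial$ is non-local, so $\dot u_t(0)=0$ (the claimed invariance of $\mathrm{ev}_0^{-1}(\{x\})$) follows only if $(d_1^{\mathtt v}\mathcal S)^{-1}$ is chosen with range in $\{\xi(0)=0\}$. That is possible exactly under your surjectivity condition. The fix is to assert the $\mathrm{ev}_0$-normalisation only for $\mu\ge3$ and drop the fallback.
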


\begin{proof} The arguments in the proof are quite standard, so we confine ourselves to a brief sketch, following McDuff--Salamon \cite{McSa04}. To set up a Banach space implicit function theorem, we pass to the Sobolev completion of $\mathcal B(P)$,
		\[
		\mathcal B^{k,p}(P):=\left\{u\in W^{k,p}(\DD,W): \begin{array}{l} u \text{ is an immersion up to $\partial \DD$}\\
		\text{and } u(\partial \DD)\subset P
		\end{array} \right\}.
		\]
		We assume that $k\in \mathbb N, p\in (1,\infty)$ and $k-2/p>1$, such $\mathcal B^{k,p}(P)\subset C^1(W,P)$. This is a smooth separable Banach manifold with tangent space
		 \(
		 T_u\mathcal B^{k,p}(P):=\{\xi\in W^{k,p}(\DD,u^*TW): \xi (\partial \DD)\subset u^*TP\}.
		 \)
		We also consider the Banach bundle  $\mathcal E^{k-1,p}\to \mathcal B^{k,p}(P)$ with fibres
		 \[
		 	\mathcal{E}_u^{k-1,p}=W^{k-1,p}(\DD,\Lambda^{0,1}\DD\otimes_\C u^*TW),\quad u\in \mathcal B^{k,p}(P),
		 \]
		 as well as the section 
		 \[
		 	\mathcal S\colon \mathcal B^{k,p}(P)\to \mathcal E^{k-1,p},\quad \mathcal S(u)=(u,\bar \partial_J(u)).
		 \] 
		 Here $\bar \partial_J(u)$ is the non-linear Cauchy--Riemann operator associated to the complex structure tensor $J$ on $W$. Writing $\omega=s+it$ for the complex coordinate on $\DD$, this is given by \[\bar \partial_J(u)=\frac 12\big(\partial_su +J(u)\partial_t u\big) d\bar \omega.\]
		Then $\mathcal S(u)=0$ if and only if $u$ is holomorphic. To prove that $\mathcal M_\mu(P)$ is a smooth submanifold of $\mathcal B^{k,p}(P)$, it thus suffices to show that $\mathcal S$ is transverse to the zero section over $\mathcal M_\mu(P)$ and apply the implicit function theorem.
		
		Transversality 	at $u\in \mathcal M_\mu(P)$ means that the so-called vertical differential \(D_u\colon T_u\mathcal B^{k,p}(P)\to \mathcal E_u^{k-1,p}\) has a bounded right inverse.
		 One defines $D_u\xi$ as composition of  \(d\mathcal S(u)\colon T_u\mathcal B^{k,p}(P) \to T_{(u,0)}\mathcal E^{k-1,p}=T_{ u} \mathcal B^{k,p}(P)\oplus \mathcal E^{k-1,p}_u\) with the projection onto the second summand. In our situation, where $J$ is integrable, one readily computes that
		\[
			D_u = \bar \partial_{u^*TW}\colon W^{k,p}_{u^*TP}(\DD,u^*TW)\to W^{k-1,p}(\DD,\Lambda^{0,1}\DD\otimes_\C u^*TW),
		\]
		that is, the standard $\bar \partial$-operator of the holomorphic vector bundle $u^*TW\to \DD$. The subscript in the domain means that $\xi(\partial \DD)\subset u^*TP$. By Theorem C.1.10 in \cite{McSa04}, this is a Fredholm operator and the dimension of kernel and cokernel are independent of the precise regularity $(k,p)$. In particular, $D_u$ being surjective already implies the existence of a bounded right inverse and we may check surjectivity over smooth sections.
		
		To prove the required surjectivity, we consider the short exact sequence  \[0\to (T\DD,T\partial \DD)\xrightarrow{du} (u^*TW,u^*TP) \to (E,F) \to 0,\] where $(E,F)$ is defined as quotient. All members of this sequence are holomorphic bundle pairs over $\DD$. Since $\mu(T\DD,T\partial \DD)=2$, the additivity of the Maslov index implies that $\mu(E,F)=\mu-2$. Choosing a smooth (not necessarily holomorphic) right splitting, we may view $E\subset u^*TW$ as complex subbundle and $F=E|_{\partial \DD}\cap u^*TP$. With respect to this splitting we have
		\[
			\bar \partial_{u^*TW} = \begin{bmatrix}
				\bar \partial_{T\DD} & *\\
				0 & \bar \partial_E
			\end{bmatrix}\colon  C^\infty_{T\partial \DD}(\DD,T\DD)\oplus C^\infty_F(\DD,E)\to \Omega^{0,1}(\DD,T\DD)\oplus \Omega^{0,1}(\DD,E),
		\]
		where the $0$-entry stems from $du(T\DD)\subset u^*TW$ being a holomorphic subbundle. In view of  Theorem \ref{thmrr}  the bound $\mu(E,F)=\mu-2 \ge -1$ guarantees that $\bar \partial_E$ is onto in this setting. Of course, also $\bar \partial_{T\DD}$ is onto and thus $\bar \partial_{u^*TW}$ must be onto as well.

		The preceding considerations imply that $\mathcal M_\mu(P)$ is a smooth submanifold of $\mathcal B^{k,p}(P)$, whose tangent space at $u$ equals the kernel of $D_u$, that is, the holomorphic sections of $u^*TW$ with boundary values in $u^*TP$. By Theorem \ref{thmrr} this has dimension $\mu+2$, as claimed ---  this concludes the proof of part (i).
		
		Now suppose that $P'\subset W$ is a totally real surface near $P$, which we take to mean that $P'=\psi(P)$ for diffeomorphism  $\psi\in \mathrm{Diff}(W)$ near $\Id$.	 Writing $J'=\psi^*J$, we can then consider the moduli space \[\mathcal M_\mu(P,J')=\{u\in \mathcal B^{k,p}_\mu(P): \bar \partial_{J'}(u)=0\},\]
		which, similar to above, is a smooth $\mu+2$-dimensional manifold. Given  $K\subset \mathcal M(P)$ compact, we now seek out an embedding $\Phi\colon  K\supset U\to \mathcal M_\mu(P,J')$
and obtain $\Psi\colon U\to \mathcal M(P')$ as $\Psi(u)=\psi\circ (\Phi(u))$.	

Having translated the perturbation problem to a fixed $P$ and a varying $J'$, we are led to consider the Banach manifold
\[
	\mathcal J^\ell(P) = 
	\{ J'\in C^\ell(W,\operatorname{End}_\R(TW)):  {J'}^2=-\Id, ~TP \cap J'TP =0  \},\quad \ell\ge k+2.
\]
We redefine $\mathcal E^{k-1,p}$ as a Banach bundle $\mathcal E^{k-1}\to \mathcal B^{k,p}(P)\times \mathcal J^\ell(P)$ with fibres
\[\mathcal E_{(u,J')}^{k-1,p}=W^{k-1,p}(\DD,\Lambda^{0,1}\DD\otimes_{J'} u^*TW)\] and let $\mathcal S$ be the section  $\mathcal S(u,J')=(u,\bar\partial_{J'}(u))$. Both the bundle projection itself and the section $\mathcal S$ are not $C^\infty$, but only $C^{\ell-k}$-regular (cf.\,\cite[Proposition 3.2.1]{McSa04}). For $u_0\in \mathcal M_\mu(P)$ we have already shown that $d_1\mathcal S(u_0,J)$ is transverse to the $0$-section and so the implicit function theorem guarantees that for nearby $J'=\psi^*J$ there is an embedding $\Phi\colon \{u_0\}\supset U\to \mathcal M(P,J')$. 
This proves part (ii) for $K=\{u_0\}$ consisting of one element.

The passage to general compact sets is done as follows:  we consider a smooth $1$-parameter family $(J_t)$ in $\mathcal J^\ell(P)$ with $J_0=J$ and $J_1=J'$ and define $\Phi$ as time-$1$-map of a suitable flow $(\Phi_t)$ on $\mathcal B^{k,p}(P)$. Upon choosing a connection on $\mathcal E^{k-1,p}$ we can consider vertical differentials also away from the zero section, and we denote them by\[d_1^\mathtt{v}\mathcal S(u,J')\colon T_u\mathcal B^{k,p}(P)\to \mathcal E^{k-1,p}_{(u,J')} \quad \text{ and }\quad d_2^\mathtt{v}\mathcal S(u,J')\colon T_{J'}\mathcal  J^\ell(P)\to \mathcal E^{k-1,p}_{(u,J')}. \]
Of course, for $u\in \mathcal M_\mu(P)$ we have $d_1^\mathtt{v}\mathcal S(u,J)=D_u$ and thus a bounded right inverse $(d_1^\mathtt{v}S)^{-1}$ exists also in a neighbourhood of $K\times \{J\}$.  Taking the vertical differential of $\mathcal S(u_t,J_t)=0$ in the $t$-variable and isolating $\dot u_t$ leads to the ODE
	\begin{equation}\label{odeut}
		\dot u_t = - \big(d_1^\mathtt{v} \mathcal S(u_t,J_t)\big)^{-1}d^\mathtt{v}_2\mathcal S(u_t,J_t)\dot J_t \in T_{u_t}\mathcal B^{k,p}.
	\end{equation}
	The right hand side defines a vector field of regularity $C^{\ell-k-1}$ on an open subset of $\mathcal B^{k,p}(P)$ and since $\ell\ge k+2$, the ODE admits a flow of regularity $C^{\ell-k-1}$.
	If $J'$ is sufficiently close to $J$ and $u_0$ lies in a small neighbourhood $U\subset \mathcal M_\mu(P)$ of $K$, the solution $(u_t)$ exists at least until $t=1$. Hence the time-$1$-map gives a well-defined $C^{\ell-k-1}$-embedding $\Phi$ from $U$ onto its image in $\mathcal B^{k,p}(P)$. By construction this satisfies $\mathcal S(\Phi(u),J')=0$ for all $u\in U$ and hence $\Phi(U)\subset \mathcal M(P,J')$, as desired. 

\todo{J: Can we achieve $\Phi\in C^\infty$? Certainly any finite $C^m$ is possible, but the neighbourhood $U$ depends on $m$ and might shrink as we take $m\to \infty$.}

	The property $\mathrm{ev}_0\circ \Psi = \mathrm{ev}_0$ can be achieved as follows: The set $\mathrm{ev}_0(K)\subset W\backslash P$ is compact and we can realise small perturbations of $P$ via diffeomorphisms $\psi$ that are $\equiv\Id$ in a neighbourhood of $\mathrm{ev}_0(K)$. Likewise, the $1$-parameter family $(J_t)$ from the preceding paragraph can be taken to be constant near $\mathrm{ev}_0(K)$.  Shrinking $U$ if necessary, we can thus assume that $\dot J_t = 0$ on $\mathrm{ev}_0(U)$ and hence the sets $\mathrm{ev}_0^{-1}(\{x\})\subset \mathcal B^{k,p}(P)$ ($x\in \mathrm{ev}_0(U)$) are invariant for the flow of \eqref{odeut}. This implies that $\mathrm{ev}_0\circ \Phi = \mathrm{ev}_0$ and likewise for $\Psi$.
	
	Finally, the equivariance with respect to rotations can be deduced from $\mathcal S$ being equivariant with respect to the following $S^1$-action:  $r_\lambda\colon \DD\to \DD,r_\lambda(\omega)=e^{i\lambda}\omega$ acts on $ \mathcal E^{k-1,p}$ by $(u,J,T)\mapsto (u\circ r_\lambda, J, T\circ d r_\lambda)$.
\end{proof}

\begin{lemma}\label{lem_normalformframe}
	If $u\in \mathcal M_4(P)$, then there exists $\eta \in C^\infty(\DD,u^*TW)$ such that $\{\eta,iu'\}$ is a holomorphic frame of $u^*TW$, and moreover, we have $\xi \in T_u\mathcal M_4(P)$ if and only if
	\begin{equation}\label{normalform4}
	\xi(\omega) = (a_0+a_1 \omega + \bar a_0 \omega^2) \eta(\omega) + (b_0+b_1 \omega+\bar b_0)iu'(\omega)
	\end{equation}
	for $(a_0,a_1,b_0,b_1)\in \C\times \R\times \C\times \R$.
	
\end{lemma}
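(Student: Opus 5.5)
The plan is to decompose $u^*TW$ along the holomorphic short exact sequence used in the proof of Theorem \ref{thm_modulimaster}, split it holomorphically, and then apply the normal form of Lemma \ref{lem_normalform} to each factor. (I read the second bracket in \eqref{normalform4} as $b_0+b_1\omega+\bar b_0\omega^2$, i.e.\ with an $\omega^2$ that appears to be missing from the statement.)

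Since $u$ is a holomorphic immersion up to the boundary, $du\colon (T\DD,T\partial\DD)\to (u^*TW,u^*TP)$ is an injective map of holomorphic bundle pairs. Let $(E,F)$ be the quotient pair, so that
\[
0\to (T\DD,T\partial\DD)\xrightarrow{du}(u^*TW,u^*TP)\xrightarrow{\pi}(E,F)\to 0 .
\]
By additivity of the Maslov index and $\mu(T\DD,T\partial\DD)=2$, we get $\mu(E,F)=4-2=2$. Both ends have rank one, and $\mu(T\DD,T\partial\DD)=2\ge \mu(E,F)-1=1$, so the sufficient criterion of Lemma \ref{lem_split} applies. This gives a holomorphic splitting $s\colon (E,F)\to (u^*TW,u^*TP)$. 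The key point is that $s$ is a map of bundle pairs, so $s(F)\subset u^*TP$ on $\partial\DD$. As a result,
\[
u^*TW=du(T\DD)\oplus s(E)
\]
is a holomorphic splitting, and along $\partial\DD$ it restricts to $u^*TP=du(T\partial\DD)\oplus s(F)$. Checking this compatibility is the step I expect to need the most care: one has to make sure the boundary condition genuinely decouples. It does, because for $\xi$ with $\xi|_{\partial\DD}\in u^*TP$ we have $\pi\xi\in F$, and then $\xi-s\pi\xi\in u^*TP\cap du(T\DD)=du(T\partial\DD)$.

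Next apply Lemma \ref{lem_normalform} to each factor.
\begin{enumerate}
\item For $(E,F)$, with $k=2$, it yields a nowhere vanishing holomorphic section $\eta_E$ such that the holomorphic sections of $E$ with boundary values in $F$ are exactly $(a_0+a_1\omega+a_2\omega^2)\eta_E$ with $a_1\in\R$ and $a_2=\bar a_0$. Set $\eta:=s(\eta_E)$.
\item For $(T\DD,T\partial\DD)$, the example following Lemma \ref{lem_normalform} gives the frame $\partial_t=i\partial_s$, whose image under $du$ is $iu'$. The admissible holomorphic sections are therefore $(b_0+b_1\omega+\bar b_0\omega^2)\,iu'$ with $b_1\in\R$.
\end{enumerate}
The pair $\{\eta,iu'\}$ is then a holomorphic frame: $iu'$ is nowhere zero because $u$ is an immersion, and $\eta$ projects to the nowhere vanishing section $\eta_E$ of the quotient.

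Finally, by Theorem \ref{thm_modulimaster}\ref{thm_modulimasteri}, $T_u\mathcal M_4(P)$ consists of the holomorphic sections $\xi$ of $u^*TW$ with $\xi(\partial\DD)\subset u^*TP$. Using the decomposition above, write $\xi=du(\zeta)+s(\zeta'')$. Because the splitting is holomorphic, $\xi$ is holomorphic if and only if $\zeta$ and $\zeta''$ both are. By the boundary compatibility established above, the boundary condition on $\xi$ holds if and only if $\zeta|_{\partial\DD}\in T\partial\DD$ and $\zeta''|_{\partial\DD}\in F$. Inserting the two normal forms gives \eqref{normalform4}. As a consistency check, the parameter space $\C\times\R\times\C\times\R$ has real dimension $6=\mu+2$, matching Theorem \ref{thm_modulimaster}.
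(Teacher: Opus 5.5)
Your proposal is correct and follows the paper's proof essentially verbatim: you form the quotient pair $(E,F)$ of Maslov index $2$, take a holomorphic splitting from the criterion in Lemma \ref{lem_split}, and apply Lemma \ref{lem_normalform} to each summand. Your explicit boundary check, that $u^*TP\cap du(T\DD)=du(T\partial\DD)$ (true because $P$ is totally real), spells out what the paper uses implicitly when it writes $F=E|_{\partial\DD}\cap u^*TP$. Your reading of the second bracket as $b_0+b_1\omega+\bar b_0\omega^2$ is also right, as its later use in the proof of Lemma \ref{lem_horizontalvertical} confirms.
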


\begin{proof}
	Define a holomorphic bundle pair $(E,F)$ such that $0\to (T\DD,T\partial \DD)\xrightarrow{du} (u^*TW,u^*TP)\to (E,F)\to 0$ is short exact. Then $\mu(E,F) - \mu(T\DD,T\partial \DD)=2-2=0$ and  Lemma \ref{lem_split} applies. Upon choosing a holomorphic splitting, we can view $E\subset u^*TW$ as holomorphic subbundle with $F=E|_{\partial \DD}\cap u^*TP$. Any holomorphic section $\xi$ of $(u^*TW,u^*TP)$ then has a unique direct sum decomposition $\xi = \xi_1 + du(\xi_2)$, where $\xi_1\in C^\infty_F(\DD,E)$ and $\xi_2\in C^\infty_{T\partial \DD}(\DD,T\DD)$ are holomorphic.
	Via Lemma \ref{lem_normalform}, both summands can be put into normal form, namely $\xi_1=a\eta$ and $\xi_2 = b(i\partial_s)$, thus arriving at \eqref{normalform4}.
\end{proof}

{
\begin{lemma}\label{lem_opentouch}
The set of $u\in \mathcal M_\mu(P)$ with $u^{-1}(P)=\partial \DD$ is open.
\end{lemma}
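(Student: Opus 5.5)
The plan is to show that the complement is closed. Fix $u_0\in\mathcal M_\mu(P)$ with $u_0^{-1}(P)=\partial\DD$; I will argue that every $u$ sufficiently $C^1$-close to $u_0$ (in particular, every $u$ in a neighbourhood of $u_0$ in the Fr\'echet/Sobolev topology from Theorem \ref{thm_modulimaster}) also satisfies $u^{-1}(P)=\partial\DD$. The argument splits into a collar near $\partial\DD$ and a compact interior region.

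\textbf{Interior.} Fix $r<1$. Then $u_0(\{|\omega|\le r\})$ is compact and disjoint from $P$, so it has positive distance from $P$. Any $u$ that is $C^0$-close to $u_0$ therefore also avoids $P$ on $\{|\omega|\le r\}$.

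\textbf{Collar.} This is the main step. Since $u_0$ is an immersion up to the boundary and holomorphic, at a boundary point $e^{i\theta}$ we have $du_0(\partial_\theta)\in TP$. Moreover, $du_0(\partial_r)=-J\,du_0(\partial_\theta)\cdot$(positive factor), since $\partial_r$ and $\partial_\theta$ are related by $i$ up to scaling and sign. Total reality gives $J\,T P\cap TP=0$, so $du_0(\partial_r)$ is transverse to $TP$, i.e.\ nonzero in $NP$. Choose a tubular neighbourhood of $P$ and write $\nu\colon U\to NP$ for the normal coordinate, with $P=\{\nu=0\}$. The function $g_0=\nu\circ u_0$ vanishes on $\partial\DD$ and has $\partial_r g_0\neq 0$ there. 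By compactness of $\partial\DD$, there are $\delta,c>0$ with $|\partial_r g_0|\ge 2c$ on the annulus $A_\delta=\{1-\delta\le|\omega|\le1\}$, where $u_0(A_\delta)\subset U$. For $u$ that is $C^1$-close to $u_0$, the function $g=\nu\circ u$ is defined on $A_\delta$, vanishes on $\partial\DD$ (because $u(\partial\DD)\subset P$), and has $|\partial_r g|\ge c$. Integrating radially, $|g(re^{i\theta})|\ge c(1-r)>0$ for $r<1$, so $u$ meets $P$ in $A_\delta$ only along $\partial\DD$. Since $\partial_r g$ is vector-valued, the integration is done as $|g(re^{i\theta})|\ge \langle g, e\rangle$ with $e$ a fixed unit direction close to $\partial_r g_0$ at that angle; $C^1$-closeness keeps $\langle\partial_r g,e\rangle\ge c$ on the segment.

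\textbf{Conclusion.} Taking $r=1-\delta$ in the interior step and combining the two regions gives the result. The topology on $\mathcal M_\mu(P)$ controls $C^1$-norms, since $\mathcal B^{k,p}\subset C^1$ with $k-2/p>1$, so the set of such $u$ is open. The only delicate point is the uniform transversality in the collar. It rests solely on holomorphicity, the immersion property up to the boundary, and total reality of $P$, and it is uniform by compactness.
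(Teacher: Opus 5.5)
Your proposal is correct and follows essentially the same argument as the paper. Both split $\DD$ into a compact interior region, handled by $C^0$-closeness, and a boundary collar. In the collar, holomorphicity gives $du(\partial_r)=-J\,du(\partial_\theta)$, total reality makes this transverse to $TP$, and compactness plus $C^1$-control keep the transversality uniform. The only difference is cosmetic: the paper uses local submersions $\rho$ cutting out $P$ and the Hadamard quotient $h=\rho\circ u/(1-|\omega|^2)$, whereas you use a tubular-neighbourhood coordinate and radial integration.
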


\begin{proof}
	Fix $u\in \mathcal M_\mu(P)$ and $\omega_0 \in \partial \DD$. Then in a small neighbourhood $U$ of $u(\omega_0)\in P$ we can write $P$ as a regular $0$-level set of a submersion $\rho\colon U\to \R^2$. Write $G=u^{-1}(U)\subset \DD$, then $\rho\circ u$ vanishes on $G\cap \partial \DD$ and thus 
	\[
		h(\omega) :=\frac{ \rho\circ u(\omega)}{1-|\omega|^2},\quad \omega\in G,
	\]
	extends to a smooth function $h\colon G\to \R^2$. For $\omega \in G\cap \partial \DD$ we write $\nu(\omega)$ for the outward pointing unit normal and note that by the holomorphicity of $u$,
	 \[h(\omega) = \frac 12 d\rho_{u(\omega)}(du_{\omega}(\nu(\omega)) = -\frac 12 d\rho_{u(\omega)} \big( J(u(\omega)) du_{\omega}(\nu(\omega)^\perp) \big).
	 \]
	 Since $u$ is an immersion and maps $\partial \DD$ into $P$, the vector $du_\omega(\nu(\omega)^\perp)$ is nonzero and lies in $TP$. Since $P$ is totally real,  the complex structure $J(u(\omega))$ sends this vector outside of $T_{u(\omega)}P = \ker d\rho_{u(\omega)}$ and we conclude that $h(\omega)\neq 0$.
	 
Hence $\omega_0$ has a compact neighbourhood on which $h$ is non-vanishing. Since $h$ depends continuously on $u$, this property is preserved by small perturbations. Covering $\partial \DD$ with such neighbourhoods, we find a neighbourhood $\mathcal U\subset \mathcal M_\mu(P)$ of $u$ and a radius $0<r<1$ with $v(\omega)\notin P$ for all $v\in \mathcal U$
and $r<|\omega|<1.$ Finally, if we assume that $u^{-1}(P)=\partial \DD$, then $u(\{|\omega|\le r\})\cap P = \emptyset$ and since $|\omega|\le r$ is compact, this is an open condition. Hence after shrinking $\mathcal U$, we obtain $v^{-1}(P)=\partial \DD$ for all $v\in \mathcal U$.
	\end{proof}
}

\section{Transport twistor space}\label{sec_tts}

In this section we discuss transport twistor spaces and prove Theorem \ref{introthmB}.  Throughout $(M,g)$ is an oriented Riemannian surface without boundary\footnote{We just assume this for simplicity---most results in this section readily carry over to the setting with boundary. Then $DM$ is a manifold with corners and $SM$ is one of its boundary hypersurfaces.}. Frequently, this is also viewed as a Riemann surface, whose tangent spaces carry the complex structure induced by $g$ and the orientation, explicitly:
\[
	(x,\omega v) := (x,\Re(\omega)  v + \Im(\omega) v^\perp),\quad (x,v)\in TM,\quad \omega\in \C.
\]

\subsection{Geometry on $SM$}
We start with a brief review of the geometry on the unit circle bundle, referring to  \cite{PSU23} for a more detailed discussion.
There is a natural frame $\{X,H,V\}$ of tangent vector fields on  $SM$, obeying the following commutator relations:
\[
	[V,X] = H,\quad [V,H]=-X,\quad [X,H]=KV. 
\]
Here $X$ is the geodesic vector field, $V$ generates the vertical rotations $(x,v)\mapsto (x,e^{it}v)$ and  $H$ is defined by the first commutator. Considering $SM$ as boundary of $DM$ there is a fourth vector field $V_\perp$ along $SM$, which is the outward pointing unit normal with respect to the Sasaki metric on $TM$. Along $SM$ the vertical $(0,1)$-bundle $T^{0,1}_v (D_xM)$ is then spanned by $V_\perp + i V$.

Given a smooth function $f\colon SM\to \C$, the $k$th Fourier mode is defined by
\[
	f_k(x,v)=\frac{1}{2\pi} \int_0^{2\pi} e^{-ikt} f(x,e^{it}v) dt,\quad (x,v)\in SM
\]
and lies in the eigenspace $\Omega_k = \{h\in C^\infty(SM): Vh=ik h\}$ ($k\in \Z$) of $-iV$. This gives a global fibrewise Fourier decomposition $f = \sum_{k\in \Z} f_k$ and we have
\[
	\deg(f)\le m\quad \Leftrightarrow\quad f_k=0 \text{ for } |k|>m.
\]
The function $f\colon SM\to \C$ is called \textit{fibrewise holomorphic} if $f_k = 0 $ for $k<0$, or equivalently, if it extends to a smooth function $F\colon DM\to \C$ whose restriction to every fibre $D_xM$ is holomorphic. The latter is taken as a definition of fibrewise holomorphicity for maps $F\colon DM\to W$ into a complex manifold.

The {\it Guillemin-Kazdhan} operators on $SM$ are defined by
\[
	\eta_\pm := \frac 12\left(X\mp i H\right)
\]
and satisfy the following commutator relations:
\[
	[V,\eta_\pm] = i \eta_\pm, \quad [\eta_+,\eta_-] = iKV/2.
\]
As a consequence
\(
	\eta_\pm\colon \Omega_{k} \to \Omega_{k\pm 1}
\)
 ($k\in \Z$).  The complex frame $\{\eta_+,\eta_-,V\}$ can be dualised to a frame $\{\tau^+,\tau^-,\psi\}$ of complex $1$-forms on $SM$, which satisfies the following structure equations:
 \begin{equation}\label{eqn_structure}
 	d\tau^\pm = \mp i \psi\wedge \tau^\pm,\quad d\psi = -\frac{iK}{2}{ \tau^+\wedge \tau^-} 
 \end{equation}

\subsection{Construction and basic properties} Given $\lambda\colon SM\to \R$  with $\deg(\lambda)\le 2$, we wish to define an involutive structure $\D\subset T_\C DM$ such that:
\begin{align}
\D\cap \bar \D &= 0 \text{ on } DM^\circ; \label{t1}\\
	\D \cap \bar \D &= \C(X+\lambda V) \text{ on } SM; \label{t2} \\
	T^{0,1}D_xM &\subset \D \text{ for all } x\in M;
	\label{t3}\\
	& \hspace{-3em} \D \text{ is orientation compatible}. \label{t4}
\end{align}
To this end, it is convenient to consider the principal  $S^1$-bundle
\[
	\mathsf{p}\colon SM\times \DD\to DM,\quad \mathsf{p}(x,v,\omega)=(x,\omega\cdot v)
\]
and define $\D$ as push-forward
\(
	\D := \mathsf{p}_*\spn_\C\left(\xi,\partial_{\bar \omega}\right),
\)
where $\xi$ is a complex vector field on $SM\times \DD$  that we still need to define. First note that the $S^1$-action $(x,v,\omega)\mapsto (x,e^{it}v,e^{-it}\omega)$ associated with $\p$ is generated by  the vector field
\[
	\V = V - i(\omega\partial_\omega - \bar \omega\partial_{\bar \omega}).
\]
We have $\mathcal L_\V \partial_{\bar \omega}=[\V,\partial_{\bar\omega}]=-i \partial_{\bar \omega}$ and hence the line bundle $\C  \partial_{\bar \omega}$ admits a push-forward along $\mathsf{p}$. This is course made up of the vertical $(0,1)$-lines $T^{0,1}D_xM$ ($x\in M$) and so condition \eqref{t3} is automatically satisfied. To ensure that also $\C \xi$ admits a push-forward and $\D$ is involutive, we impose that
\begin{equation}\label{t5}
	[\V,\xi]= i m \xi\quad  (m\in \Z)\quad \text{ and } \quad [\xi,\partial_{\bar \omega}] = 0,
\end{equation}
respectively (below, we will see that $m=-1$). Taking an oriented volume form $\mu$ on $SM\times \DD$, conditions \eqref{t1} and \eqref{t4} can then be encoded as
\begin{equation}\label{t6}
	\mu(\xi,\bar \xi,\partial_{\bar \omega},\partial_\omega, \V)>0 \quad \text{ on } |\omega|<1
\end{equation}
and finally, condition \eqref{t2} can be deduced from \eqref{t5} and  the requirement
\begin{equation}
	\xi(x,v,1) \equiv X(x,v)+\lambda(x,v)V(x,v) \quad \mod \{\partial_{\bar \omega},\V\} \text{ on } SM. \label{t7}
\end{equation}
It is now not difficult to find a suitable vector field $\xi$: 
\begin{equation}\label{def_xi}
	\xi(x,v,\omega) := \eta_- + \omega^2 \eta_+ +  i q(x,v,\omega)\partial_\omega,
\end{equation}
where $q\colon SM\times \DD\to \C$  is yet to be defined. Taking $q\equiv 0$ and $\lambda\equiv 0$, properties \eqref{t5}\eqref{t6} and \eqref{t7} can be verified by a straightforward computation---see \cite{BoPa23} for details. To extend this to thermostats, we take the fibrewise Fourier decomposition $\lambda=\sum_{|k|\le 2} \lambda_k$ and define \begin{equation}\label{defq}
	q(x,v,\omega):=\sum_{k=-2}^2 \lambda_k(x,v)\omega^{k+2}.
\end{equation}
One readily checks that $\partial_{\bar \omega}q= 0, \V q = -2i q$ and $q(x,v,1)=\lambda(x,v)$, which immediately gives the desired properties.  In summary, this leads to:
 
\begin{proposition}\label{prop_defD} Given a smooth function $\lambda\colon SM\to \R$ with $\deg(\lambda)\le 2$, and $q\colon SM\times \DD\to \C$ defined as in \eqref{defq}, the involutive structure
\[
	\D_\lambda:=\p_*\spn_\C\left(\eta_-+\omega^2\eta_++iq\partial_\omega,\partial_{\bar \omega}\right)\subset T_\C DM
\]
satisfies properties \eqref{t1},\eqref{t2},\eqref{t3} and \eqref{t4} and it is uniquely determined by them.
Moreover, if a given $\lambda\colon SM\to \R$ admits an involutive structure with these properties, then $\deg(\lambda)\le 2$.
\end{proposition}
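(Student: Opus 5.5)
The proof has three parts: existence, uniqueness, and the degree bound. Each works on the principal $S^1$-bundle $\p\colon SM\times\DD\to DM$ and pushes forward.

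\emph{Existence.} I would verify \eqref{t5}, \eqref{t6} and \eqref{t7} for $\xi=\eta_-+\omega^2\eta_++iq\partial_\omega$, using the Fourier identities $\partial_{\bar\omega}q=0$, $\V q=-2iq$ and $q(\cdot,1)=\lambda$.

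For \eqref{t5}: $[\V,\eta_\pm]=\pm i\eta_\pm$ and $[\V,\omega^2]=-2i\omega^2$ give $[\V,\eta_-+\omega^2\eta_+]=-i(\eta_-+\omega^2\eta_+)$. Also $[\V,q\partial_\omega]=(\V q)\partial_\omega+q[\V,\partial_\omega]=-2iq\partial_\omega+iq\partial_\omega$. Hence $m=-1$. Since all coefficients are holomorphic in $\omega$, we get $[\xi,\partial_{\bar\omega}]=0$.

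For \eqref{t7}: at $\omega=1$ we have $\eta_-+\eta_+=X$ and $\V=V-i(\partial_\omega-\partial_{\bar\omega})$. Therefore $iq\partial_\omega\equiv qV$ modulo $\{\V,\partial_{\bar\omega}\}$, and $q(\cdot,1)=\lambda$ gives $\xi\equiv X+\lambda V$. By \eqref{t5} this congruence propagates to all of $\p^{-1}(SM)$ via the $S^1$-action. Pushing forward, $\D\cap\bar\D\supset\C(X+\lambda V)$ on $SM$, and rank counting gives equality.

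For \eqref{t6}: the term $iq\partial_\omega$ contributes nothing modulo $\{\partial_\omega,\partial_{\bar\omega},\V\}$. So the computation reduces to the $\lambda=0$ case of \cite{BoPa23}, where the determinant is a positive multiple of $1-|\omega|^4$ (the $\eta_\pm$-parts are $\eta_-+\omega^2\eta_+$ and its conjugate). This gives \eqref{t1} and \eqref{t4}. Involutivity then follows from the bracket relations \eqref{t5} together with the fact that $\p_*$ of $\p$-invariant line fields is well defined.

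\emph{Uniqueness and degree bound.} This is the main step. Let $\D'$ be any structure satisfying \eqref{t1}--\eqref{t4}, and lift it to $\tilde\D=\p^*\D'$ modulo $\V$. By \eqref{t3}, $\tilde\D$ contains $\partial_{\bar\omega}$, so modulo $\partial_{\bar\omega},\V$ it is spanned by a single field. I would normalise this field as
\[
\tilde\xi=\eta_-+a\,\eta_++b\,V+c\,\partial_\omega .
\]
For this normalisation I use that \eqref{t1} with the orientation forces the $\eta_-$-coefficient to be nonvanishing: at $\omega=0$ the twistor structure must look like the holomorphic structure of $M$ pulled back, and I would argue this by continuity from \eqref{t4}. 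Further, $V$ can be absorbed into $\partial_\omega$ using $\V$.

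Involutivity $[\tilde\xi,\partial_{\bar\omega}]\in\tilde\D$ forces $\partial_{\bar\omega}a=\partial_{\bar\omega}c=0$. $S^1$-equivariance forces $\V a=-2ia$ and $\V c=-2ic$. Hence
\[
a=\sum_k a_k\omega^{k}, \qquad a_k\in\Omega_{k-2},
\]
and similarly $c$, where $c=\sum_k c_k\omega^k$ with $c_k\in\Omega_{k-2}$ as well. Smoothness at $\omega=0$ kills all negative powers.

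The boundary condition \eqref{t2} gives $a(\cdot,1)=1$. Expanding in Fourier modes, $a_k\in\Omega_{k-2}$ and $\sum_k a_k=1\in\Omega_0$ force $a=\omega^2$. Similarly, $c(\cdot,1)$ must equal $i\lambda$ modulo the allowed directions, so $c=i\sum_k c_k\omega^k$ with $\sum_k c_k=\lambda$ and $c_k\in\Omega_{k-2}$, $k\ge0$. Reality of $\lambda$ then forces the modes to satisfy $\lambda_{j}=\overline{\lambda_{-j}}$. Since the nonnegative powers $\omega^k$ only realise modes $\lambda_j$ with $j\ge-2$, this gives $\lambda_j=0$ for $j<-2$, hence for $|j|>2$. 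So $\deg\lambda\le2$ and $c=iq$ with $q$ as in \eqref{defq}.

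The obstacle I expect is making the normalisation rigorous: showing the $\eta_-$-coefficient is nonvanishing, and controlling the components along $\V$ and $\partial_\omega$. I would handle this by evaluating \eqref{t1} near $\omega=0$ and arguing via connectedness and \eqref{t4}.
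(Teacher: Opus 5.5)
Your existence argument is essentially what the paper does. The paper only asserts that $\partial_{\bar\omega}q=0$, $\V q=-2iq$ and $q(\cdot,1)=\lambda$ yield \eqref{t5}--\eqref{t7}, and refers to \cite{BoPa23} for $\lambda=0$.

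For uniqueness and the degree bound you take a genuinely different route. The paper (Remark \ref{proofuniquenessD}) works locally in isothermal coordinates $(z,\mu)$ on $DM$:
\begin{enumerate}
\item Lemma \ref{lem_normalformD} puts any structure satisfying \eqref{t1}--\eqref{t4} into the normal form $\D_f=\spn_\C(\partial_{\bar z}+f_1\partial_z+f_2\partial_\mu,\partial_{\bar\mu})$ with $\mu$-holomorphic $f$.
\item Lemma \ref{lem_identity} shows that $\D_f\cap\bar{\D}_f$ on $|\mu|=1$ fixes $f$ there, hence everywhere.
\item The degree bound comes from requiring the explicit boundary value $f_2=ie^{\sigma}\lambda\mu^2+\sigma_{\bar z}\mu-\sigma_z\mu^3$ to extend holomorphically.
\end{enumerate}
You instead stay on $SM\times\DD$ with the global frame $\{\eta_\pm,V,\partial_\omega,\partial_{\bar\omega}\}$. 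There the weight bookkeeping $a_k,c_k\in\Omega_{k-2}$ plays the role of the identity principle.

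Your computations are correct. The conditions $[\partial_{\bar\omega},\tilde\xi]\in\tilde\D$ and $[\V,\tilde\xi]\in\tilde\D$ give exactly $\partial_{\bar\omega}a=\partial_{\bar\omega}c=0$ and $\V a=-2ia$, $\V c=-2ic$. The condition $X+\lambda V\in\D$ at $\omega=1$ gives $a=1$ and $c=i\lambda$.

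What your route buys is a coordinate-free argument that produces the generator $\eta_-+\omega^2\eta_++iq\partial_\omega$ outright. It also shows directly why $q$ has the form \eqref{defq} and why the bound is $\deg\lambda\le 2$. The paper's route is natural there because the normal form and identity principle are reused later, in Lemmas \ref{lem_maslovD}, \ref{lem_zeroD} and Proposition \ref{prop_charZ}.

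Two points to tighten.

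First, the justification you sketch for the normalisation does not work as stated. The claim that near $\omega=0$ the structure must look like the pulled-back complex structure of $M$ is not available a priori; it is a consequence of the result. Instead, write the generator modulo $\partial_{\bar\omega},\V$ as $\alpha\eta_-+a'\eta_++c'\partial_\omega$. Exactly as in your check of \eqref{t6}, the determinant there is a nonzero multiple of $|\alpha|^2-|a'|^2$, and it vanishes iff $\D\cap\bar{\D}\neq0$. So:
\begin{enumerate}
\item On $|\omega|<1$, \eqref{t1} and \eqref{t4} give $|\alpha|>|a'|$ pointwise; no connectedness argument is needed.
\item On $|\omega|=1$, the inclusion $X+\lambda V\in\D$ from \eqref{t2} forces the $\eta_\mp$-coefficients to coincide and be nonzero at $\omega=1$. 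By $S^1$-invariance of $\tilde\D$, $\alpha\neq0$ on the whole boundary.
\end{enumerate}
This is the same mechanism as in the proof of Lemma \ref{lem_normalformD}.

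Second, rather than summing $\sum_k a_k$ on the circle, use equivariance. From $\V a=-2ia$ you get $a(x,v,e^{is})=e^{2is}a(x,e^{is}v,1)=e^{2is}$, so $a=\omega^2$ by uniqueness of the holomorphic extension. Likewise $c(x,v,e^{is})=ie^{2is}\lambda(x,e^{is}v)$, and holomorphic extendability of this is precisely the vanishing of $\lambda_j$ for $j<-2$.
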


It remains to prove uniqueness of $\D_\lambda$ and the necessity of $\deg(\lambda)\le 2$. We postpone this for the moment an refer to Remark \ref{proofuniquenessD} below for a proof.

\begin{definition}[Transport twistor space] The transport twistor space of the thermostat $(g,\lambda)$
(with $\deg(\lambda)\le 2$) is the   degenerate complex surface
\[
	Z(g,\lambda):=(DM,\D_\lambda).
\]
\end{definition}


\subsubsection{Holomorphic maps}
One of the key features of transport twistor spaces is that they allow to view fibrewise holomorphic first integrals as genuine holomorphic maps:

\begin{lemma}\label{lemholmap} Let $f\colon DM \to W$ be a smooth map into a complex manifold $W$. Then the following are equivalent:
\begin{enumerate}
	\item $f$ is fibrewise holomorphic and $df(X+\lambda V)=0$ on $SM$;
	\item $f$ is holomorphic with respect to $\D_\lambda$, that is, $df(\D_\lambda)\subset T^{0,1}W$.
\end{enumerate}
\end{lemma}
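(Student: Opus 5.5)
The plan is to work upstairs on $SM\times\DD$ and pull $f$ back along the principal bundle $\p\colon SM\times\DD\to DM$. Since $\D_\lambda=\p_*\spn_\C(\xi,\partial_{\bar\omega})$ with $\xi=\eta_-+\omega^2\eta_++iq\partial_\omega$, the condition $df(\D_\lambda)\subset T^{0,1}W$ is equivalent to
\[
d\tilde f(\partial_{\bar\omega})\in T^{0,1}W\quad\text{and}\quad d\tilde f(\xi)\in T^{0,1}W\quad\text{on } SM\times\DD,
\]
where $\tilde f=f\circ\p$. These two conditions will be treated separately.

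The first condition is exactly fibrewise holomorphicity. The map $\omega\mapsto \tilde f(x,v,\omega)=f(x,\omega v)$ parametrises the fibre $D_xM$ holomorphically, and $\p_*\partial_{\bar\omega}$ spans $T^{0,1}D_xM$. So in both directions of the lemma this part is immediate, and it remains to show that, given fibrewise holomorphicity, $d\tilde f(\xi)\in T^{0,1}W$ holds if and only if $df(X+\lambda V)=0$ on $SM$.

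For this I will work in local holomorphic coordinates $w$ on $W$. The components $F=w\circ\tilde f$ are holomorphic in $\omega$, and the condition becomes that $h:=\xi F$ vanishes for all holomorphic coordinates. Because $[\xi,\partial_{\bar\omega}]=0$ and $\partial_{\bar\omega}F=0$, the function $h=\xi F$ is holomorphic in $\omega$ on $\DD$. Moreover $[\V,\xi]=-i\xi$ and $\V F=0$, since $F$ is $\p$-invariant, so $\V h=-ih$; hence $h$ is the pullback of a section of a line bundle over $DM$.

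At $|\omega|=1$, using \eqref{t7}, the value $\xi F(x,v,1)$ equals $(X+\lambda V)F$ modulo $\partial_{\bar\omega}F$ and $\V F$, both of which vanish. The equivariance then extends this to all of $\partial\DD$. This gives both directions. If $h\equiv 0$, then evaluating at $\omega=1$ gives $df(X+\lambda V)=0$ on $SM$ for each coordinate, using that $d\tilde f$ at $\omega=1$ is $df$ on $SM$. Conversely, if $df(X+\lambda V)=0$ on $SM$, then $h$ vanishes on $|\omega|=1$. For fixed $(x,v)$ the function $h$ is holomorphic on $\DD$ and continuous up to the boundary, so the maximum principle (or Schwarz reflection) gives $h\equiv 0$ on the whole disk.

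The main obstacle is the locality of the coordinates: $\tilde f(x,v,\DD)$ need not lie in a single chart. I will handle this intrinsically. The quantity $\pi^{1,0}\,d\tilde f(\xi)$ is a section of $\tilde f^*T^{1,0}W$ over each disk $\{(x,v)\}\times\DD$. Since $\xi$ commutes with $\partial_{\bar\omega}$ and $\tilde f$ is holomorphic along the disk, this section is holomorphic with respect to the pulled-back holomorphic structure; this is a local computation in any chart. A holomorphic section of a vector bundle over $\DD$ that vanishes on $\partial\DD$ vanishes identically, either by trivialising the bundle holomorphically over the disk and applying the scalar maximum principle componentwise, or by the identity theorem after reflection. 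A secondary check is that the evaluation of $\xi$ at $\omega=1$ really projects to $X+\lambda V$ under $d\p$. This follows from \eqref{t7} together with $d\p(\V)=0$, and $d\p(\partial_{\bar\omega})$ contributes nothing because $f$ is fibrewise holomorphic.
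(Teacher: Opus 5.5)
Your proposal is correct and follows essentially the same route as the paper. Both lift to $SM\times\DD$, observe that $\pi^{1,0}\,d\tilde f(\xi)$ is holomorphic along each disk because $[\xi,\partial_{\bar\omega}]=0$, show that it vanishes on $|\omega|=1$, and conclude by the identity theorem applied fibrewise. The only cosmetic difference is the boundary vanishing: you derive it from \eqref{t7} at $\omega=1$ together with the $\V$-equivariance $\V h=-ih$, whereas the paper reads it off directly from $\D_\lambda|_{SM}=\C(X+\lambda V)\oplus T^{0,1}D_xM$.
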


\begin{proof}
	The direction (ii) $\Rightarrow$ (i) follows directly from the properties of $\D_\lambda$. For the converse we consider the lift $h=f\circ \p\colon SM\times \DD\to W$, the bundle $E=h^*T^{1,0}W\to SM\times \DD$ and
	the projection $\pi^{1,0}\colon T_\C W\to T^{1,0}W$ . Viewing \[
		g_1 =\pi^{1,0}\circ dh(\xi)\quad \text{ and } \quad g_2=\pi^{1,0}\circ dh(\partial_{\bar \omega})
	\]
	as sections of $E$,  holomorphicity of $f$ becomes equivalent to  the vanishing of $g_1$ and $g_2$. Fibrewise holomorphicity of $f$ immediately implies that
	 $g_2\equiv 0$. On $SM$ we have the direct sum decomposition $\D_{(x,v)} = \C(X+\lambda V)_{(x,v)} \oplus T_v^{0,1}D_xM$ and together with the assumption $df(X+\lambda V)=0$ we get that $\pi^{1,0}\circ df(\D)=0$ along $SM$. 
	This implies $g_1= 0$ for $|\omega|=1$. Restricted to every fibre $\{(x,v)\}\times \DD$, the vector bundle $E$ carries a natural holomorphic structure and so we may speak of fibrewise holomorphic sections. We will finish the argument by showing that $g_1$ is fibrewise holomorphic, such that the identity theorem, applies fibrewise, results in $g_1=0$ on $|\omega|\le 1$.

Checking fibrewise holomorphicity of $g_1$ is a local matter, so we can fix $(x_0,v_0,\omega_0)\in SM\times \DD$ and consider complex coordinates $w_1,\dots, w_n$ in some chart neighbourhood of the image $h(x_0,v_0,\omega_0)\in W$. With respect to these, we have $h=(h_1,\dots, h_n)$ for functions $h_1(x,v,\omega),\dots, h_n(x,v,\omega)$, defined in a  neighbourhood of $(x_0,v_0,\omega_0)$ and holomorphic in the $\omega$-variable. Moreover,
\[
	\pi^{1,0}\circ dh = \sum_{i=1}^n dh_i \otimes \partial_{w_i}
\]
and hence $g_1(x,v,\omega)=\sum_{i=1}^n (\xi h_i(x,v,\omega)) \partial_{w_i}$. Fibrewise holomorphicity of $g_1$ now means that the coefficients $\xi h_i$ are holomorphic in $\omega$ and this follows from $[\xi,\partial_{\bar\omega}]=0$ and $\partial_{\bar \omega} \xi h_i = \xi \partial_{\bar \omega} h_i  = 0$ ($i=1,\dots, n$).
\end{proof}

Next we consider biholomorphisms between transport twistor spaces. By definition, these are diffeomorphisms of the underlying manifolds-with-boun\-dary, that are holomorphic in the interior.

\begin{lemma}[Lifts of isometries] Let $\varphi\colon (M',g')\to (M,g)$ be an orientation preserving isometry between oriented Riemannian surfaces, then its lift is a biholomorphism:
	\[
		\varphi_\sharp\colon Z(\varphi^*g,\varphi_\sharp^*\lambda)\xrightarrow{\sim} Z(g,\lambda),\quad \varphi_\sharp(y,w)=(\varphi(y),d\varphi_y(w)).
	\]
\end{lemma}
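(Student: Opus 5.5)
The plan is to verify the two conditions of Lemma \ref{lemholmap} for the composition $\beta\circ\varphi_\sharp$, for any holomorphic $\beta$ out of $Z(g,\lambda)$. It is simpler, though, to check directly that $d\varphi_\sharp$ maps $\D_{\varphi_\sharp^*\lambda}$ onto $\D_\lambda$. Here $\varphi_\sharp^*\lambda=\lambda\circ\varphi_\sharp$ on $SM'$. This makes sense because $\varphi$ is an isometry, so $\varphi_\sharp$ maps $DM'$ diffeomorphically onto $DM$ and $SM'$ onto $SM$. Being a diffeomorphism of manifolds with boundary, it then suffices to show $d\varphi_\sharp(\D_{\varphi_\sharp^*\lambda})=\D_\lambda$; holomorphicity in the interior follows.

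The first step is the frame on $SM$. Because $\varphi$ is an orientation preserving isometry, $\varphi_\sharp$ intertwines the geodesic flows and the fibrewise rotations, the latter because $d\varphi$ commutes with $\perp$. Hence $d\varphi_\sharp X'=X$ and $d\varphi_\sharp V'=V$, and therefore $d\varphi_\sharp H'=H$ via $H=[V,X]$, so $d\varphi_\sharp\eta_\pm'=\eta_\pm$. Moreover, $\varphi_\sharp$ commutes with complex scalar multiplication on fibres, $\varphi_\sharp(y,\omega w)=(\varphi(y),\omega\, d\varphi_y w)$. So it is fibrewise holomorphic, and the lift $\tilde\varphi=\varphi_\sharp\times\Id_\DD$ on $SM'\times\DD$ satisfies $\p\circ\tilde\varphi=\varphi_\sharp\circ\p'$. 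Also $\tilde\varphi$ preserves $\partial_{\bar\omega}$ and $\partial_\omega$.

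Second, Fourier modes are preserved. Since $\varphi_\sharp$ commutes with $V$, we get $(\lambda\circ\varphi_\sharp)_k=\lambda_k\circ\varphi_\sharp$. In particular $\deg(\varphi_\sharp^*\lambda)\le2$, and the function $q'$ from \eqref{defq} for $\varphi_\sharp^*\lambda$ equals $q\circ\tilde\varphi$. Hence
\[
d\tilde\varphi\big(\eta_-'+\omega^2\eta_+'+iq'\partial_\omega\big)=\eta_-+\omega^2\eta_++iq\partial_\omega,
\]
so $d\tilde\varphi$ maps the generating span of Proposition \ref{prop_defD} onto the corresponding span. Pushing forward along $\p$, $\p'$ and using $\p\circ\tilde\varphi=\varphi_\sharp\circ\p'$ gives $d\varphi_\sharp(\D_{\varphi_\sharp^*\lambda})=\D_\lambda$.

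The only mildly delicate point is the identification $d\varphi_\sharp H'=H$ and the orientation. Orientation preservation is exactly what guarantees that $d\varphi$ commutes with $\perp$, hence with $V$ and the complex structure on fibres. I expect no real obstacle beyond bookkeeping.
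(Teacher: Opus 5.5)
Your argument is correct, but it takes a different route from the paper.

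\textbf{The paper's route.} It lets $\mathscr{E}$ be the involutive structure of $(\varphi^*g,\varphi_\sharp^*\lambda)$ and observes that the push-forward $(\varphi_\sharp)_*\mathscr{E}$ satisfies the defining properties \eqref{t1}--\eqref{t4} of $\D_\lambda$. It then concludes $(\varphi_\sharp)_*\mathscr{E}=\D_\lambda$ from the uniqueness part of Proposition \ref{prop_defD}.

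\textbf{Your route.} You verify the equality by hand. Lifting $\varphi_\sharp$ to $\tilde\varphi=\varphi_\sharp\times\Id_\DD$ on $SM'\times\DD$, you show that $\tilde\varphi$ carries the generators $\eta_-'+\omega^2\eta_+'+iq'\partial_\omega$ and $\partial_{\bar\omega}$ exactly to their unprimed counterparts. This uses $d\varphi_\sharp X'=X$ and $d\varphi_\sharp V'=V$ (the latter by orientation preservation), hence $H'\mapsto H$ and $\eta'_\pm\mapsto\eta_\pm$. It also uses $q'=q\circ\tilde\varphi$, which follows from the equivariance of the Fourier modes. You then push forward along $\p'$ and $\p$ via $\p\circ\tilde\varphi=\varphi_\sharp\circ\p'$.

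\textbf{What each buys.} Your argument is self-contained and does not rely on the uniqueness of $\D_\lambda$. In the paper that uniqueness is deferred to Remark \ref{proofuniquenessD}, where it is proved via the normal form and the identity theorem in isothermal coordinates. Your argument also makes the bound $\deg(\varphi_\sharp^*\lambda)\le 2$ explicit. The paper's argument is shorter and is reused verbatim for the conformal rescaling $\mathrm{sc}_\sigma$. There the canonical frame is not mapped to the canonical frame; only the line $\R(X+\lambda V)$ on $SM$ is preserved. A direct frame comparison of your type would therefore be considerably heavier in that case, and the uniqueness principle absorbs that bookkeeping.
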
	

\begin{proof}
	Let $\mathscr{E}\subset T_\C DM'$ be the involutive structure associated to $(\varphi^*g,\varphi_\sharp^*\lambda)$. Then the push-forward $\D=(\varphi_\sharp)_*\mathscr{E}$ is an involutive structure on $DM$. One readily checks that this satisfies the defining properties of $\D_\lambda$ and so holomorphicity of $\varphi_\sharp$ follows from the uniqueness part of Proposition \ref{prop_defD}.
\end{proof}

\begin{lemma}[Conformal rescaling]\label{lem_conformalrescaling}
Let $\sigma\colon M\to \R$ and $\Lambda\colon SM^\sigma=\{(x,w)\in TM: |w|_{e^{2\sigma}g} =1 \}\to \R$ be smooth functions and consider the rescaling map:
\[
		\mathrm{sc}_\sigma\colon SM^\sigma\to SM,\quad \mathrm{sc}_\sigma(x,w)=(x,e^{\sigma(x)}w).
	\]
	Then $\mathrm{sc}_\sigma$ induces an orbit equivalence between the thermostat flows associated to $(e^{2\sigma}g,\Lambda)$ and $(g,\lambda)$,  where (with $*$ denoting the Hodge star of $g$):
\[
		\lambda(x,v) = e^\sigma(x)\Lambda(x,e^{-\sigma(x)} v) - *d\sigma_x(v),\quad (x,v)\in SM.
	\]
If $\deg(\Lambda)\le 2$, then also $\deg(\lambda)\le 2$ and the extension $\mathrm{sc}_\sigma\colon DM^\sigma\to DM$ is a biholomorphism $Z(e^{2\sigma}g,\Lambda)\xrightarrow{\sim} Z(g,\lambda)$. 	
\end{lemma}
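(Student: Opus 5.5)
The plan is to split the lemma into a dynamical statement (orbit equivalence with the stated formula for $\lambda$) and a twistor-theoretic statement (biholomorphism). The second will be deduced from the first via the uniqueness part of Proposition \ref{prop_defD}.

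\emph{Step 1: orbit equivalence.} Let $\tilde g=e^{2\sigma}g$. Take a unit speed $\tilde g$-curve $\gamma$ solving $\tilde\nabla_{\dot\gamma}\dot\gamma=\Lambda(\gamma,\dot\gamma)\dot\gamma^{\perp}$. Reparametrise it by $g$-arclength, $c(s)=\gamma(t(s))$ with $dt/ds=e^{-\sigma}$, so that $\dot c=e^{\sigma}\dot\gamma$; this is exactly $\mathrm{sc}_\sigma$ applied to $(\gamma,\dot\gamma)$. Next I use the standard transformation law for geodesic curvature under a conformal change, $\kappa_{\tilde g}=e^{-\sigma}(\kappa_g+\partial_\nu\sigma)$, where $\nu=\dot c^{\perp}$ is the $g$-unit normal. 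Note that rotation by $\pi/2$ is conformally invariant, so $\perp$ is the same for both metrics up to scaling. Solving for $\kappa_g$ gives
\[
\kappa_g(c,\dot c)=e^{\sigma}\Lambda(c,e^{-\sigma}\dot c)-d\sigma(\dot c^{\perp}).
\]
With the Hodge star convention $*d\sigma(v)=d\sigma(v^\perp)$ for $1$-forms on a surface, this is the claimed formula for $\lambda$. I expect the main nuisance here to be sign conventions (the orientation of $\perp$ versus $*$). I would check them on the flat plane with $\sigma$ linear, where everything can be computed explicitly. Infinitesimally, the conclusion is $d\,\mathrm{sc}_\sigma(\tilde X+\Lambda\tilde V)=e^{-\sigma}(X+\lambda V)$, i.e.\ the two generators agree up to the positive factor $e^{-\sigma}$.

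\emph{Step 2: degree.} The map $v\mapsto\Lambda(x,e^{-\sigma(x)}v)$ is the composition of $\Lambda(x,\cdot)$ with a linear map, so it is still a polynomial of degree $\le2$ on $T_xM$. The term $*d\sigma_x(v)$ is linear in $v$. Hence $\deg(\lambda)\le 2$, and $\D_\lambda$ is defined.

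\emph{Step 3: biholomorphism.} The extension $\mathrm{sc}_\sigma\colon DM^\sigma\to DM$, $(x,w)\mapsto(x,e^{\sigma(x)}w)$, is a diffeomorphism of manifolds with boundary. It maps $SM^\sigma$ onto $SM$, and it is orientation preserving because it is isotopic to the identity through the maps $\mathrm{sc}_{t\sigma}$. On each fibre it is multiplication by a positive real scalar, hence complex linear. Now set $\D':=(\mathrm{sc}_\sigma)_*\D_\Lambda$ and check the defining properties of $\D_\lambda$ one by one:
\begin{itemize}
\item $\D'$ is involutive, since involutivity is preserved by pushforward under a diffeomorphism.
\item \eqref{t3} holds because the fibre maps are holomorphic, so $T^{0,1}D_xM\subset\D'$.
\item \eqref{t1} and \eqref{t4} transfer directly, because $\mathrm{sc}_\sigma$ is an orientation preserving diffeomorphism of the interiors.
\item \eqref{t2} holds by Step 1: the real line $\C(\tilde X+\Lambda\tilde V)$ is pushed to $\C(X+\lambda V)$ along $SM$.
\end{itemize}
By the uniqueness part of Proposition \ref{prop_defD} (whose proof is deferred to Remark \ref{proofuniquenessD}), $\D'=\D_\lambda$. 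Therefore $\mathrm{sc}_\sigma\colon Z(e^{2\sigma}g,\Lambda)\to Z(g,\lambda)$ is a biholomorphism.

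The only genuinely delicate point is relying on this deferred uniqueness statement. If it were unavailable, I would instead verify $\D'=\D_\lambda$ directly: lift $\mathrm{sc}_\sigma$ to $SM^\sigma\times\DD\to SM\times\DD$ and compute $d\,\mathrm{sc}_\sigma(\eta^\sigma_-+\omega^2\eta^\sigma_++iq^\sigma\partial_\omega)$ modulo $\partial_{\bar\omega}$ and $\V$, using the known conformal transformation rules for $\eta_\pm$.
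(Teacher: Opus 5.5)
Your architecture matches the paper's. You compute how the curvature equation transforms under $g\mapsto e^{2\sigma}g$, reparametrise to $g$-arclength, and then identify $(\mathrm{sc}_\sigma)_*\D_\Lambda$ with $\D_\lambda$ via the uniqueness part of Proposition~\ref{prop_defD}. Steps~2 and~3 are fine, and more explicit than the paper's ``obvious in view of the orbit equivalence''.

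The problem is Step~1, which contains two sign errors that cancel only cosmetically. First, take $\nu=\dot c^{\perp}$ and define $\kappa$ by $\nabla_{\dot c}\dot c=\kappa\,\dot c^{\perp}$, as in \eqref{ODE}. Then the correct law is $\kappa_{e^{2\sigma}g}=e^{-\sigma}(\kappa_g-\partial_\nu\sigma)$. The version with $+\partial_\nu\sigma$ is the one where $\partial_\nu$ is taken along the normal \emph{opposite} to the one defining the sign of $\kappa$ (the outward-normal convention of Gauss--Bonnet). A quick test: take $g=|dz|^2$ and $\sigma=-\log|z|$. The circles $|z|=r$ are geodesics of the flat cylinder $|dz|^2/|z|^2$, yet traversed counterclockwise they have $\kappa_g=1/r$ and $\partial_\nu\sigma=1/r$. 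So the function you should obtain is $\lambda(x,v)=e^{\sigma(x)}\Lambda(x,e^{-\sigma(x)}v)+d\sigma_x(v^{\perp})$, not $e^{\sigma}\Lambda-d\sigma(v^\perp)$.

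Second, your convention $*d\sigma(v)=d\sigma(v^\perp)$ is the opposite of the one the paper fixes. The paper uses $d\sigma_x(v^\perp)=-*d\sigma_x(v)$, stated in its proof of this very lemma and equivalent to $H=VX=-*d$ on $\Omega_0$ in the proof of Lemma~\ref{lem_confmagZ}; this is the standard $*dx=dy$. Read with the paper's $*$, the function you derived equals $e^{\sigma}\Lambda+*d\sigma(v)$. That is not the $\lambda$ of the statement. For it Step~3 would break, since $(\mathrm{sc}_\sigma)_*(\tilde X+\Lambda\tilde V)$ is not parallel to $X+\lambda V$ unless $d\sigma=0$.

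Correcting the first sign and keeping the paper's $*$ gives exactly $\lambda=e^{\sigma}\Lambda-*d\sigma(v)$. After that, your Steps~2--3 go through unchanged.

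Be aware that the paper's own displayed conformal-connection formula has sign slips at this same point, so comparing against it will not catch the error. The correct formula is $\nabla^\sigma_{\xi_1}\xi_2=\nabla_{\xi_1}\xi_2+d\sigma(\xi_1)\xi_2+d\sigma(\xi_2)\xi_1-g(\xi_1,\xi_2)\,\mathrm{grad}^g\sigma$. With it, the tangential terms cancel after reparametrisation and one lands on the stated formula.
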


\begin{proof}
	We have to find $\lambda\colon SM\to \R$ with
	\[
		(\mathrm{sc}_\sigma)_*(X^\sigma+\Lambda V^\sigma)\in \R(X+\lambda V).
	\]
	Here we use the superscript $\sigma$ for all objects related to $e^{2\sigma}g$. The preceding display just means that a $\Lambda$-geodesics $\Gamma$ can be reparametrised to a $\lambda$-geodesic, that is, $\mathrm{sc}_\sigma$ induces an orbit equivalence. Recall that the equation for $\Gamma$ is:
	\[
		\nabla^\sigma_{\dot \Gamma}\dot \Gamma = \Lambda(\Gamma,\dot\Gamma)\Gamma^\perp.
	\]
	The Levi--Civita connection of $e^{2\sigma}g$ is given by $\nabla^\sigma_{\xi_1} \xi_2 =\nabla_{\xi_1}{\xi_2} + g(\xi_1,\xi_2)\mathrm{grad}^g\sigma - d\sigma(\xi_1)\xi_2+d\sigma(\xi_2)\xi_1$ for vector fields $\xi_1$ and $\xi_2$ on $M$. Hence
	\[
		\nabla^\sigma_{\dot \Gamma}\dot \Gamma= \nabla_{\dot \Gamma}\dot \Gamma + g(\dot \Gamma,\dot \Gamma) \cdot \mathrm{grad}^g \sigma - 2 d\sigma(\dot \Gamma)\dot\Gamma = \nabla_{\dot \Gamma} \dot \Gamma + d\sigma(\dot \Gamma^\perp)\dot\Gamma^\perp - d\sigma(\dot\Gamma)\dot \Gamma,
	\]
	which leads to the following equivalent ODE:
	\[
		\nabla_{\dot \Gamma}\dot\Gamma = \left( \Lambda(\Gamma,\dot\Gamma) \dot \Gamma^\perp  - d\sigma(\dot \Gamma^\perp) \right)\dot \Gamma^\perp+ d\sigma(\dot \Gamma)\dot \Gamma.
	\]
	If $\gamma(t)=\Gamma(\psi(t))$ is a reparametrisation to $g$-unit speed, then  $\psi'(t)=e^{\sigma(\gamma(t))}$
	 and $\psi''(t)=e^{\sigma(\gamma(t))}d\sigma(\dot \gamma(t))$ and thus a direct computation shows:
	\[
	\begin{split}
		\nabla_{\dot \gamma}\dot \gamma&= \nabla_{\dot \Gamma}\dot \Gamma(\psi(t))\cdot \psi'(t)  + \dot \Gamma(\psi(t))\cdot\psi''(t)
=\left( e^{\sigma} \Lambda(\gamma,e^{-\sigma}{\dot \gamma})  - d\sigma(\dot \gamma^\perp)\right)\dot \gamma^\perp.
	\end{split}	
	\]
	But $d\sigma_x(v^\perp)=-*d\sigma_x(v)$ and so we get the claimed expression for $\lambda$. 
	
	Clearly, $
	\deg(\lambda)\le \max(\deg(\Lambda),1)$ and so for $\deg(\Lambda)\le 2$ both thermostats come with a transport twistor space. The strategy to show that $\mathrm{sc}_\sigma$ is a biholomorphism is the same as in the preceding lemma: We let $\mathscr{E}$ be the involutive structure of $(e^{2\sigma} g,\Lambda)$ and check that $(\mathrm{sc}_\sigma)_*\mathscr{E}$ satisfies the defining properties of $\D_\lambda$---this is obvious in view of the orbit equivalence. 
\end{proof}

\begin{lemma}[Flips]
The flip $\mathsf{f}\colon Z(g,-\lambda\circ \mathsf{f})\to Z(g,\lambda),~ \mathsf{f}(x,v)=(x,-v)$ is a biholomorphism for every $\lambda\in C^\infty(SM,\R)$ with $\deg(\lambda)\le 2$.
\end{lemma}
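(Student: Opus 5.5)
The plan is the same uniqueness argument used for the lifts of isometries and for the conformal rescaling lemma. Put $\lambda' := -\lambda\circ \mathsf{f}$. First, $\deg(\lambda')\le 2$, since composing with $\mathsf{f}$ replaces the Fourier mode $\lambda_k$ by $(-1)^k\lambda_k$. Hence $Z(g,\lambda')$ is defined. Let $\mathscr{E}:=\D_{\lambda'}$ and consider the push-forward $\D:=\mathsf{f}_*\mathscr{E}\subset T_\C DM$. Because $\mathsf{f}$ is a diffeomorphism of $DM$ that preserves $SM$, the bundle $\D$ is again an involutive structure. By the uniqueness part of Proposition \ref{prop_defD}, it then suffices to check properties \eqref{t1}--\eqref{t4} for $\D$ with respect to $\lambda$. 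This gives $\D=\D_\lambda$, so $\mathsf{f}$ is holomorphic, and its inverse $\mathsf{f}$ is holomorphic by the symmetric argument, because $\lambda\mapsto -\lambda\circ\mathsf{f}$ is an involution.

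The checks go as follows. Property \eqref{t1} is preserved by any diffeomorphism, since push-forward commutes with complex conjugation. For \eqref{t3}, on each fibre $D_xM$ the map $\mathsf{f}$ is multiplication by $-1$, which is holomorphic in $\omega$, so $\mathsf{f}_*T^{0,1}D_xM=T^{0,1}D_xM$. For \eqref{t4}, $\mathsf{f}$ is orientation preserving on $DM$: it is the identity on the base and a rotation by $\pi$ on the fibres. So the complex structure defined by $\D$ in the interior induces the same orientation as the one defined by $\mathscr{E}$, and \eqref{t4} transfers.

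The only substantive step is \eqref{t2}. We need $d\mathsf{f}(X+\lambda'V)\in\C(X+\lambda V)$ along $SM$. On $SM$ we have $d\mathsf{f}(V)=V\circ\mathsf{f}$, because rotations commute with $v\mapsto -v$. On the other hand $d\mathsf{f}(X)=-X\circ\mathsf{f}$, since the geodesic through $(x,-v)$ is the reversed geodesic. Hence
\[
	d\mathsf{f}\big(X+\lambda' V\big)_{(x,v)} = -X_{(x,-v)} - \lambda(x,-v)V_{(x,-v)} = -(X+\lambda V)_{(x,-v)}.
\]
Equivalently, $\mathsf{f}$ maps $\lambda'$-geodesics to time-reversed $\lambda$-geodesics: if $\gamma$ solves \eqref{ODE} for $\lambda'$, then $t\mapsto\gamma(-t)$ solves it for $\lambda$. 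So $\D\cap\bar\D=\mathsf{f}_*\C(X+\lambda'V)=\C(X+\lambda V)$ on $SM$.

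I expect the main care to be needed in \eqref{t4}, making sure the orientation convention does not flip, and in the sign bookkeeping in \eqref{t2}, rather than in any real difficulty. Everything else rests on the uniqueness statement in Proposition \ref{prop_defD}.
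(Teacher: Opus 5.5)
Your proposal is correct and is the same argument the paper uses. The paper's proof records $\mathsf{f}_*(X+\lambda V)=-X+(\lambda\circ\mathsf{f})V$ on $SM$ together with fibrewise holomorphicity of $\mathsf{f}$, and then, as in the two preceding lemmas, appeals to the uniqueness part of Proposition \ref{prop_defD}; your degree, orientation and sign checks fill in details the paper leaves implicit, and they are all right.
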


\begin{proof}
On $SM$ we have $\mathsf{f}_*(X+\lambda V) = -X + (\lambda \circ \mathsf{f}) V$ and since $\mathsf{f}$ is fibrewise holomorphic, the proof is completed as in the preceding two lemmas.
\end{proof}

Recall the action of the group $\Aut(M)\times \Z_2$ defined below \eqref{defaction}.

\begin{proposition}\label{prop_biholo} \,
\begin{enumerate}	
	\item\label{prop_biholo1} Suppose that $\lambda \in C^\infty(SM,\R)$ with $\deg(\lambda)\le 2$ and $\varphi\in \Aut(M)$ with $\varphi^*g=e^{2u}g$. Then the following maps are biholomorphisms: \[
\varphi_\sharp\colon Z(g,\lambda\triangleleft \varphi)\to Z(g,\lambda),\quad \mathsf{f}\colon Z(g,-\lambda\circ \mathsf{f}) \to Z(g,\lambda).
\]
Here $\varphi_\sharp(x,v)=(\varphi(x),e^{-u(x)}d\varphi_x(v))$  and $\mathsf{f}(x,v)=(x,-v)$.

\item\label{prop_biholo2} Let $\lambda_1,\lambda_2\in C^\infty(SM,\R)$ with $\deg(\lambda_k)\le 2$ ($k=1,2$) and suppose that
\[
	\Phi\colon Z(g,\lambda_1)\to Z(g,\lambda_2)
\]
 is a biholomorphism that sends fibres into fibres. Then there exists $\varphi\in \Aut(M)$ such that either $\Phi=\varphi_\sharp$ and $\lambda_1 = \lambda_2\triangleleft \varphi$ or 
 $\Phi=\varphi_\sharp\circ \mathsf{f}$ and $\lambda_1 =  - (\lambda_2\triangleleft \varphi)\circ \mathsf{f}$.
\end{enumerate}

\end{proposition}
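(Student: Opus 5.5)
Part (i) follows from the rescaling, isometry and flip lemmas above. Write $\varphi^*g=e^{2u}g$. Then $\varphi\colon (M,e^{2u}g)\to (M,g)$ is an orientation preserving isometry, so its lift $\tilde\varphi(y,w)=(\varphi(y),d\varphi_y(w))$ is a biholomorphism
\[
Z(e^{2u}g,\tilde\varphi^*\lambda)\to Z(g,\lambda)
\]
by the lemma on lifts of isometries. Precomposing with the inverse of $\mathrm{sc}_u\colon Z(e^{2u}g,\Lambda)\to Z(g,\lambda')$, with $\Lambda=\tilde\varphi^*\lambda$, gives $\varphi_\sharp=\tilde\varphi\circ \mathrm{sc}_u^{-1}$. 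The conformal rescaling lemma identifies $\lambda'(x,v)=e^{u}\lambda(\varphi(x),e^{-u}d\varphi_x v)-*du_x(v)$, and this is exactly $\lambda\triangleleft\varphi$ from \eqref{defaction}. The statement about $\mathsf f$ is the flip lemma.

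For part (ii), I first argue that $\Phi$ covers a map $\varphi\colon M\to M$, that is, $\Phi(D_xM)=D_{\varphi(x)}M$. The map $\varphi$ is a diffeomorphism, since $\Phi^{-1}$ also sends fibres into fibres by a dimension and compactness argument. Next, $\Phi$ restricted to $D_xM$ is a diffeomorphism of closed disks that is holomorphic in the interior, because $\D_\lambda\supset T^{0,1}D_xM$. It is therefore a Möbius automorphism $v\mapsto e^{i\theta}(v-a)/(1-\bar a v)$ in the unit-disk coordinate, depending smoothly on $x$. Since $\Phi$ maps $SM$ to $SM$ and $\Phi_*(X+\lambda_1V)\in \R(X+\lambda_2V)$ there, $\Phi$ is an orbit equivalence of the thermostat flows.

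The key step is to show that $\varphi$ is conformal (or anticonformal) and that $a\equiv 0$. I would use the zero section: $\Phi(x,0)=(\varphi(x),a(x))$, and I intend to show $a=0$ by looking at horizontal directions. The cleanest route is to compare $\D_{\lambda_1}$ and $\Phi^*\D_{\lambda_2}$ via the generator $\xi=\eta_-+\omega^2\eta_++iq\partial_\omega$. Over the central point $\omega=0$, the span of $\D$ modulo vertical vectors is $\C\eta_-$, which is the $(0,1)$-direction of $M$ at $x$ (or of $-M$ after the flip). Holomorphicity of $\Phi$ then forces $d\varphi$ to map the horizontal $(0,1)$-line at the image point to a fixed line. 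More precisely, for $\omega\in D_x$ the horizontal part of $\D$ at the point $\omega$ is spanned by $\eta_-+\omega^2\eta_+$. This is the $(0,1)$-line of a complex structure on $T_xM$ that depends on $\omega$ as a Beltrami coefficient $\omega^2$. Pushing forward by $\Phi$ must match the family $\omega'\mapsto \omega'^2$ at $\omega'=\Phi(\omega)$. Let $\mu_\varphi$ be the Beltrami coefficient of $\varphi$. This gives a functional identity $\mathcal M_{\mu_\varphi}(\omega^2 \cdot e^{2i\cdot})=\Phi(\omega)^2$, with $\mathcal M$ a Möbius composition rule for Beltrami coefficients. Comparing both sides as holomorphic or antiholomorphic functions of $\omega$ should force $a=0$ and $\mu_\varphi=0$, which gives $\varphi$ conformal and $\Phi$ a rotation, $\Phi(x,v)=(\varphi(x),e^{i\theta(x)}e^{-u}d\varphi_x v)$. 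The alternative is that $\varphi$ is anticonformal, and this case is converted to the conformal one by composing with $\mathsf f$, adjusting orientations. I expect this matching of Beltrami coefficients, and the exclusion of $a\ne0$, to be the main obstacle.

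Finally I eliminate the rotation $\theta$. Compose with $\varphi_\sharp^{-1}$ from (i) to reduce to $\Phi(x,v)=(x,e^{i\theta(x)}v)$, a biholomorphism $Z(g,\lambda_1)\to Z(g,\lambda_2')$. On $SM$ it must map $X+\lambda_1V$ into $\R(X+\lambda_2'V)$. Now $d\Phi(X)=\cos\theta X+\sin\theta H+(\text{vertical})$, so the $H$-component forces $\sin\theta\equiv0$. Hence $\theta\in\{0,\pi\}$ is constant by continuity and connectedness, giving $\Phi=\mathrm{Id}$ or $\mathsf f$. Uniqueness of the orbit-equivalence function then yields $\lambda_1=\lambda_2\triangleleft\varphi$ in the first case and $\lambda_1=-(\lambda_2\triangleleft\varphi)\circ\mathsf f$ in the second.
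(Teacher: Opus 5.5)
Part (i) is the paper's argument. In part (ii), projecting $\D$ to $M$ is a sound alternative to the paper's route. As written, however, the decisive step is only hoped for, and the one concrete thing you say about it is wrong.

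\textbf{The anticonformal case.} $\mathsf f$ covers $\mathrm{Id}_M$ and is just a fibrewise rotation by $\pi$. Composing with it therefore cannot turn an anticonformal $\varphi$ into a conformal one. The two alternatives in the statement are the fibre rotations by $0$ and $\pi$, not conformal versus anticonformal. What is needed is that orientation reversal is impossible, and your own identity gives this together with everything else.

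\textbf{Completing the key step.} By Lemma \ref{lem_coordinateD}, $d\pi(\D_\lambda)_{(z,\mu)}=\C(\partial_{\bar z}+\mu^2\partial_z)$ in isothermal coordinates. So $d\pi\circ d\Phi=d\varphi\circ d\pi$ and $d\Phi(\D_{\lambda_1})=\D_{\lambda_2}$ give, for the fibre map $\mu\mapsto\Phi_x(\mu)$,
\[
\Phi_x(\mu)^2=T_x(\mu^2),\qquad T_x(\nu)=\frac{\varphi_{\bar z}+\varphi_z\,\nu}{\overline{\varphi_z}+\overline{\varphi_{\bar z}}\,\nu},\qquad |\mu|\le 1 .
\]
\begin{enumerate}
\item If $|\varphi_{\bar z}|>|\varphi_z|$, then $T_x$ has a pole at $\nu_0=-\overline{\varphi_z}/\overline{\varphi_{\bar z}}$ with $|\nu_0|<1$. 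At $\mu^2=\nu_0$ the image line $d\varphi_x\big(\C(\partial_{\bar z}+\mu^2\partial_z)\big)=\C\partial_{z'}$ is not of the form $\C(\partial_{\bar z'}+\mu'^2\partial_{z'})$ with $|\mu'|<1$. Hence $\varphi$ preserves orientation.
\item The right-hand side is even in $\mu$, so injectivity of $\Phi_x$ forces $\Phi_x(-\mu)=-\Phi_x(\mu)$.
\item An odd disk automorphism is a rotation, so $a=0$ and $\Phi_x(\mu)=e^{i\theta}\mu$.
\item Then $T_x(\nu)=e^{2i\theta}\nu$, which gives $\varphi_{\bar z}=0$ and $e^{2i\theta}=\varphi_z/\overline{\varphi_z}$.
\end{enumerate}
Thus $\Phi=\varphi_\sharp$ or $\Phi=\varphi_\sharp\circ\mathsf f$ on each fibre, with the sign constant by continuity. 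Your closing $\sin\theta$ argument is correct but no longer needed.

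\textbf{Comparison with the paper.} The paper works only on $SM$. There $\Phi_*V\in\R V$ and $\Phi_*(X+\lambda_1V)\in\R(X+\lambda_2V)$ give invariance of $\ker\beta=\spn_\R(X,V)$, hence $\Phi|_{SM}(x,v)=(\varphi(x),\pm A(x)v/|A(x)v|)$. It then imports \cite[Lemma 3.3]{BMP25} (such a radial projection extends holomorphically over the disk only if $A(x)$ is $\C$-linear) and applies the fibrewise identity theorem.

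Restricted to $|\mu|=1$, your identity is exactly that boundary relation: $w\parallel d\varphi_x v$. Using it on the whole disk replaces the external lemma with the elementary evenness argument. In one step it yields orientation preservation, $\Phi(x,0)=(\varphi(x),0)$, conformality and the sign. The paper's version uses only the boundary behaviour of $\Phi$ and produces the explicit boundary formula directly, at the cost of that citation.
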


\begin{proof} 
For part \ref{prop_biholo1} we  write $\Lambda(x,v)=\lambda(\varphi(x),d\varphi_x(v))$ and obtain the commutative diagram
\[
	\begin{tikzcd}
		Z(e^{2u}g,\Lambda)\arrow["d\varphi"]{r} \arrow["\mathrm{sc}_u"]{d}& Z(g,\lambda)\\
	Z(g,\lambda\triangleleft \varphi)\arrow[swap,"\varphi_\sharp"]{ur} 
	\end{tikzcd},
\]
where  $\lambda\triangleleft \varphi$ is defined as in \eqref{defaction}. The preceding lemmas thus imply that $\varphi_\sharp$ and $\varphi_\sharp\circ \mathsf{f}$ are biholomorphisms as claimed. 

For part  \ref{prop_biholo2} suppose that $\Phi\colon Z(g,\lambda_1)\to Z(g,\lambda_2)$ is a biholomorphism with $\pi\circ \Phi=\varphi\circ \pi$ 
for some  diffeomorphism $\varphi\colon  M\to M$.  Then $(\Phi|_{SM})_*V\in \R V$ and $(\Phi_*)|_{SM}(X+\lambda_1 V) \in \R(X+\lambda_2 V)$, which means that  $(\Phi|_{SM})_*$ leaves the $2$-plane  $\spn_\R(X,V)=\ker {\beta}$ invariant. Here $\beta$ is the $1$-form defined by
\[
	\beta_{(x,v)}(\vartheta) = \langle v^\perp, d\pi_{(x,v)}\vartheta \rangle_{g(x)},\quad (x,v)\in SM,\vartheta\in T_{(x,v)}SM
\]
and the invariance can be expressed as $(\Phi|_{SM})^*\beta=a\beta$ for a function $a\colon SM\to \R\backslash 0$.
	Given $(x,v)\in SM, \vartheta\in T_{(x,v)}SM$ we set $(y,w)=\Phi(x,v)$ and then have
	\[
	\begin{split}
	\langle w^\perp, d\varphi_x (d\pi_{(x,v)}(\vartheta))\rangle_{g(y)} = 	(\Phi|_{SM})^*\beta_{(x,v)}(\vartheta)= a(x) \beta_{(x,v)}(\vartheta) \\= a(x) \langle v^\perp, d\pi_{(x,v)}(\vartheta)\rangle_{g(x)}.
	\end{split}
	\]
	Write $d\varphi_x^\top \colon T_yM\to T_xM$ for the adjoint with respect to $g$, then we must have
	\[
		d\varphi_x^\top(w^\perp) = a(x) v^\perp.
	\]
	Writing $A(x)v=-\big((d\varphi_x^{-\top}) v^\perp\big)^\perp$, this gives $w = a(x) A(x)v$ and taking norms on both sides, we see that $|a(x)|=1/|A(x)v|_{g(y)}$. Hence
	\[
		\Phi(x,v) = \left(\varphi(x),\pm \frac{A(x) v}{|A(x)v|_g} \right),\quad (x,v)\in SM.
	\]
	Since  $v\mapsto A(x)v/|A(x) v|$ has a holomorphic extension $D_xM \to D_{\varphi(x)}M$, the operator $A(x)\colon T_xM\to T_{\varphi(x)}M$ must be $\C$-linear \cite[Lemma 3.3]{BMP25}. Hence also $d\varphi_x^{-\top}$ and thus $d\varphi_x$ are $\C$-linear, which means that $\varphi$ is a biholomorphism. It follows that 
 $\varphi^*g=e^{2u} g$ for some $u\colon M\to \R$ and $A(x) = d\varphi_x^{-\top} = e^{-2u(x)} d\varphi_x$. We conclude that either $\Phi=\varphi_\sharp$ or $\Phi=\varphi_\sharp \circ \mathsf{f}$ on $SM$. The identity theorem, applied fibrewise, shows that equality holds on all of $DM$. If $\Phi=\varphi_\sharp$, then combined with the first part of the proposition we see that $\Id \colon Z(g,\lambda_2 \triangleleft \varphi) \to Z(g,\lambda_1)$ is a biholomorphism and thus $\lambda_1 = \lambda_2\triangleleft \varphi$. If $\Phi=\varphi_\sharp\circ \mathsf{f}$ one argues similarly. 
\end{proof}

\subsubsection{Isothermal coordinates}\label{sec_isocord}
Suppose that over some open set $U\subset M$ we have a complex coordinate $z=x+iy$. Then on $U$ we have $g=e^{2\sigma(z)} |dz|^2$ for some smooth function $\sigma\colon U\to \R$, that is, $z$ is an isothermal coordinate. The unit vector field $\mathbf{1}(z)\equiv  e^{-\sigma(z)}\partial_x$ locally triviales $SM$ and we thus obtain a local coordinate system for $DM$:
\[
	U\times \DD\xrightarrow{\sim} DM|_U,\quad (z,\mu)\mapsto (z,\mu\cdot \mathbf{1}(z)).
\]
We also view $U$ as a subset of $\C$  and refer to $(z,\mu)$ as {\it isothermal coordinates} for $SM$ and $DM$, respectively. In these coordinates the frame $\{X,V,H\}$ takes the following form (cf.~\cite[Lemma 3.5.6]{PSU23}):
\begin{eqnarray}
	X &=& e^{-\sigma} \big[\mu \partial_z + \bar \mu \partial_{\bar z}   - (\mu \sigma_z - \bar \mu \sigma_{\bar z})(\mu \partial_\mu -\bar \mu \partial_{\bar \mu})\big]\\
	H &=& ie^{-\sigma} \big[\mu \partial_z - \bar \mu \partial_{\bar z}   - (\mu \sigma_z + \bar \mu \sigma_{\bar z})(\mu \partial_\mu -\bar \mu \partial_{\bar \mu})\big]\\
	V & =& i (\mu \partial_\mu - \bar \mu \partial_{\bar\mu})
\end{eqnarray}
Given $\lambda\colon SM\to \R$, we also  write
 $\lambda(z,\mu)\equiv \lambda(z,\mu\cdot \mathbf{1}(z))$  in isothermal coordinates and derive the following expression for the generator of the associated thermostat flow:
 \begin{equation}\label{Fcoordinates}
 	X+\lambda V= e^{-\sigma}\big[\mu \partial_z + \bar \mu \partial_{\bar z}   + (-\mu \sigma_z + \bar \mu \sigma_{\bar z} + ie^\sigma \lambda)(\mu \partial_\mu -\bar \mu \partial_{\bar \mu})\big].
 \end{equation}

\begin{lemma}\label{lem_coordinateD}
	Suppose that $\deg(\lambda)\le 2$ and let $f_2\colon U\times \DD\to \C$ be the unique $\mu$-holomorphic function with
	\[
		f_2(z,\mu) = (-\mu \sigma_z + \bar \mu \sigma_{\bar z} + ie^\sigma \lambda)\mu^2 ,\quad |\mu|=1.
	\]
	Then 
	\(
		\D_\lambda = \spn_\C(\partial_{\bar z} + \mu^2 \partial_z + f_2(z,\mu) \partial_\mu, \partial_{\bar \mu})
	\) in the $(z,\mu)$-coordinates for $DM$.
\end{lemma}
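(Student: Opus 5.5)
The plan is to use the uniqueness part of Proposition \ref{prop_defD}. Since $\D_\lambda$ is the unique involutive structure satisfying \eqref{t1}--\eqref{t4}, it suffices to check four things about $\D':=\spn_\C(\partial_{\bar z}+\mu^2\partial_z+f_2\partial_\mu,\partial_{\bar\mu})$ on $U\times\DD$: it is involutive; it contains $T^{0,1}D_xM=\C\partial_{\bar\mu}$; its intersection with $\overline{\D'}$ is trivial for $|\mu|<1$ and equals $\C(X+\lambda V)$ on $|\mu|=1$; and it is orientation compatible. Uniqueness is a local statement, so this identifies $\D'$ with $\D_\lambda|_U$. Alternatively, one can push forward the explicit generator $\xi=\eta_-+\omega^2\eta_++iq\partial_\omega$ along $\p$ in the coordinates $(z,\mu)$ and compare directly. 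I would use that computation only as a cross-check, since the uniqueness route avoids handling the $S^1$-quotient.

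First, $f_2$ must be well defined. On $|\mu|=1$ we have $\bar\mu=\mu^{-1}$, and $\deg(\lambda)\le 2$ lets us write $\lambda=\sum_{|k|\le2}\lambda_k(z)\mu^k$. The boundary datum then becomes $-\sigma_z\mu^3+\sigma_{\bar z}\mu+ie^\sigma\sum_k\lambda_k\mu^{k+2}$, a polynomial in $\mu$ of degree at most $4$ with no negative powers. This gives the holomorphic extension $f_2$ explicitly, and it is unique by the maximum principle. Involutivity follows at once. Since $f_2$ and $\mu^2$ are holomorphic in $\mu$, we get $[\partial_{\bar\mu},\partial_{\bar z}+\mu^2\partial_z+f_2\partial_\mu]=0$, so the span is closed under brackets.

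Second, I would compute the intersection with the conjugate. A real vector in $\D'$ modulo $\partial_{\bar\mu}$ has the form $a(\partial_{\bar z}+\mu^2\partial_z)+\dots$, and its conjugate $\bar a(\partial_z+\bar\mu^2\partial_{\bar z})+\dots$ must lie in $\D'$. Comparing the $\partial_z,\partial_{\bar z}$ coefficients gives $a\mu^2=\bar a$ and $\bar a\bar\mu^2=a$. Together these force $|\mu|=1$ (unless $a=0$), so $\D'\cap\overline{\D'}=0$ in the interior. On $|\mu|=1$ take $a=\bar\mu$. Then
\[
\bar\mu(\partial_{\bar z}+\mu^2\partial_z+f_2\partial_\mu)=\mu\partial_z+\bar\mu\partial_{\bar z}+(-\mu\sigma_z+\bar\mu\sigma_{\bar z}+ie^\sigma\lambda)\mu\partial_\mu .
\]
Adding the appropriate multiple of $\partial_{\bar\mu}$ produces the term $-(\dots)\bar\mu\partial_{\bar\mu}$. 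Since the coefficient in brackets is purely imaginary ($\lambda$ is real), the result is exactly $e^{\sigma}(X+\lambda V)$ by \eqref{Fcoordinates}. This gives \eqref{t2}.

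The main point of care is orientation compatibility \eqref{t4}. In the interior, $\D'$ is the $(0,1)$-bundle of the complex structure whose holomorphic coordinates are annihilated by it, and I need its orientation to agree with the one fixed by $\D_\lambda$. I would check this by continuity in the parameters. The condition is open and closed along the path $t\mapsto(t\sigma,t\lambda)$, so it reduces to the flat case $\sigma=0,\lambda=0$. There $\D'=\spn(\partial_{\bar z}+\mu^2\partial_z,\partial_{\bar\mu})$, and one can compare the orientation directly with \cite{BoPa23} by evaluating a volume form at $\mu=0$, where $\D'$ is simply $\spn(\partial_{\bar z},\partial_{\bar\mu})$, the standard complex orientation. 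Once this check is done, uniqueness in Proposition \ref{prop_defD} gives $\D'=\D_\lambda$.
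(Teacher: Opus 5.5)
Your proposal is correct and is essentially the paper's own argument. The paper checks, using \eqref{Fcoordinates}, that the proposed span satisfies \eqref{t1}--\eqref{t4} and then concludes by the uniqueness in Proposition \ref{prop_defD}; that uniqueness is local and is proved in Remark \ref{proofuniquenessD} without this lemma, so there is no circularity. One simplification: your continuity argument for orientation can be replaced by the observation (as in the proof of Lemma \ref{lem_normalformD}) that for $\omega_1=dz-\mu^2d\bar z$ and $\omega_2=d\mu-f_2d\bar z$ one has $\omega_1\wedge\omega_2\wedge\bar\omega_1\wedge\bar\omega_2=(1-|\mu|^4)\,dz\wedge d\mu\wedge d\bar z\wedge d\bar\mu$, independently of $f_2$.
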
 

\begin{proof}

Using \eqref{Fcoordinates} it is straightforward to  checks that the proposed involutive structure satisfies the defining properties of $\D_\lambda$.
\end{proof}

\begin{remark}\label{rmk_Xi} 
If $\deg(\lambda)\le 1$, that is, $\lambda(z,\mu)=\lambda_{-1}(z) \mu^{-1} + \lambda_0(z)\mu + \lambda_1(z) \mu$, then we can further write  $X+\lambda V = \bar \mu \Xi$, where
\[
	\Xi= e^{-\sigma}\Big[\partial_{\bar z} + \mu^2\partial_z +(ie^\sigma\lambda_{-1}+\sigma_{\bar z} + ie^\sigma \lambda_0 \mu -\sigma_z\mu^2 + ie^\sigma\lambda_{-1})(\mu\partial_\mu - \bar \mu \partial_\mu)\Big].
\]
\end{remark}

\subsubsection{Holomorphic forms}
The bundle $\Lambda^{p,0}_\lambda \to DM$ ($p=1,2$) is defined by
\begin{equation}\label{10bundle}
	\begin{split}
	 (\Lambda^{p,0}_\lambda)_{(x,v)} &:= \{\omega \in \Lambda^p_\C DM_{(x,v)}: i_\xi\omega =0 \text{ for } \xi\in (\D_\lambda)_{(x,v)}\}.
	\end{split}
\end{equation}
Here $\Lambda^p_\C$ stands for the bundle of $\C$-valued $p$-forms.
Sections of $\Lambda_\lambda^{p,0}$ are referred to as $(p,0)$-forms. (One may also define bundles of $(p,q)$-forms, e.g.~by $\Lambda_\lambda^{0,1}=\Lambda^1_\C/\Lambda_\lambda^{1,0}$, see \cite{Tre92} --- here these will not be needed.)

\begin{lemma}\label{lemdomega}
	There exists a unique $(2,0)$-form $\Omega$ on $Z(g,\lambda)\backslash 0$ with the following properties:
	\begin{enumerate}
		\item $\Omega(H,V)=1$ on $SM$
		\item $\Omega$ is fibrewise holomorphic, that is, $i_\zeta d\Omega = 0$ for all $\zeta\in T^{0,1}D_xM$.
	\end{enumerate}
	Moreover, its lift $\hat \Omega = \p^* \Omega$ to $SM\times (\DD\backslash 0)$ satisfies:
	\begin{equation}\label{domega}
		\iota_\xi d\hat \Omega = r \hat \Omega,\quad r(x,v,\omega)=\sum_{|k|\le 2} ik\omega^{k+1} \lambda_k.
	\end{equation}
\end{lemma}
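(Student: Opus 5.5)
Everything will be done upstairs on $SM\times(\DD\backslash 0)$ via the principal $S^1$-bundle $\p$, and then pushed down. The bundle $\Lambda^{2,0}_\lambda$ has complex rank $1$, because $\D_\lambda$ has rank $2$ in a $4$-dimensional space. A $(2,0)$-form on $DM\backslash 0$ therefore corresponds to a $2$-form $\hat\Omega$ on $SM\times(\DD\backslash 0)$ with four properties: it is basic for $\p$ ($i_\V\hat\Omega=0$, $\mathcal L_\V\hat\Omega=0$); it annihilates $\xi$ and $\partial_{\bar\omega}$; it satisfies the normalisation; and it satisfies the fibrewise holomorphicity condition.

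\textbf{Ansatz.} Use the dual coframe $\{\tau^+,\tau^-,\psi\}$ together with $d\omega,d\bar\omega$. Annihilating $\partial_{\bar\omega}$ and $\V$ forces $\hat\Omega$ to be built from $1$-forms killing $\V$ and $\partial_{\bar\omega}$. Natural candidates are
\[
\theta_1=\tau^+-\omega^{2}\tau^-\ \text{(or a suitable combination)},\qquad \theta_2=d\omega+i\omega\psi-iq\,(\cdots),
\]
chosen to vanish on $\xi=\eta_-+\omega^2\eta_++iq\partial_\omega$ and on $\V$. I would then set $\hat\Omega=c(x,v,\omega)\,\theta_1\wedge\theta_2$. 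Since $\Lambda^{2,0}$ is a line, any $(2,0)$-form is of this shape, so the work lies in determining $c$.

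\textbf{Equations for $c$.} $S^1$-invariance translates into $\V c=i k c$ for a specific weight $k$, which is fixed by the weights of $\theta_1,\theta_2$. Fibrewise holomorphicity, $i_{\partial_{\bar\omega}}d\hat\Omega=0$, reduces to $\partial_{\bar\omega}c=0$, after checking that $i_{\partial_{\bar\omega}}d\theta_j$ lies in the span of the $\theta$'s. The normalisation $\Omega(H,V)=1$ on $SM$ fixes the boundary values of $c$ at $|\omega|=1$, expressed in terms of $\lambda$ through $q(\cdot,1)=\lambda$. A holomorphic function in $\omega$ on the punctured disk is not determined by its boundary values alone. Equivariance, however, reduces $c$ to $c(x,v,\omega)=\omega^{k}\tilde c(x,e^{i\arg}\ldots)$. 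Because $\V$ mixes $V$ with the rotation of $\omega$, the value of $c$ at $(x,v,\omega)$ is determined by its value at $(x,e^{i\arg\omega}v,|\omega|)$. Holomorphicity in $\omega$ combined with this rotational equivariance then forces $c$ to be the unique extension, namely a monomial $\omega^{k}$ times a function on $SM$ determined by the normalisation. This gives both existence and uniqueness. The puncture at $0$ explains why the statement is on $Z\backslash 0$, since the monomial may have negative degree.

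\textbf{Formula \eqref{domega}.} Compute $d\hat\Omega$ using the structure equations \eqref{eqn_structure} together with $dq$. Then $i_\xi d\hat\Omega$ is a $2$-form which, by Cartan's formula, equals $\mathcal L_\xi\hat\Omega-d(i_\xi\hat\Omega)=\mathcal L_\xi\hat\Omega$. Involutivity of $\D$ shows that $\mathcal L_\xi$ preserves the annihilator line, because $[\xi,\partial_{\bar\omega}]=0$ and $[\V,\xi]=-i\xi$. Hence $\mathcal L_\xi\hat\Omega=r\hat\Omega$ for some function $r$, and one only needs to evaluate $r$, for instance on the pair $(\partial_\omega,\text{horizontal})$. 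The only nonconstant contribution comes from $\partial_\omega q=\sum(k+2)\lambda_k\omega^{k+1}$, corrected by the weight term $2\lambda_k\omega^{k+1}$ from the $\omega$-dependence of $\theta$'s coefficients. This yields $r=\sum ik\lambda_k\omega^{k+1}$.

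I expect the main obstacle to be the bookkeeping in this last step. The choice of $\theta_1,\theta_2$ and of the weight of $c$ must be made so that these cancellations produce exactly $ik$ rather than $i(k+2)$. I would fix these choices by first checking the case $\lambda=0$, where $r$ must vanish.
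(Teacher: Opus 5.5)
Your existence construction is essentially the paper's. With $(\cdots)=\tau^-$, your $\theta_1,\theta_2$ are the paper's $\gamma_1,\gamma_2$, and the weight bookkeeping gives $c=-\tfrac12\omega^{-2}$. To reduce fibrewise holomorphicity to $\partial_{\bar\omega}c=0$, you need $i_{\partial_{\bar\omega}}d\theta_j=0$, not just membership in the span of the $\theta$'s; this does hold, because $\partial_{\bar\omega}q=0$.

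Your route to \eqref{domega} differs from the paper's but is valid and somewhat more economical. The paper rewrites $\hat\Omega=-\tfrac12 d(\omega^{-1}\tau^++\omega\tau^-)+\tfrac{iq}{2\omega^2}\tau^+\wedge\tau^-$ and computes $d\hat\Omega$ in full. In your approach, $i_\zeta\mathcal L_\xi\hat\Omega=i_{[\zeta,\xi]}\hat\Omega=0$ for $\zeta\in\{\xi,\partial_{\bar\omega},\V\}$ gives proportionality. One then finds $\mathcal L_\xi\gamma_1=2\omega\gamma_2$, $\mathcal L_\xi\gamma_2\equiv i(\partial_\omega q)\gamma_2 \bmod \gamma_1$ and $\xi(\omega^{-2})=-2iq\omega^{-3}$. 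Hence $r=i\partial_\omega q-2iq/\omega=\sum_k ik\lambda_k\omega^{k+1}$. Your proposed sanity check at $\lambda=0$ cannot separate $ik$ from $i(k+2)$, since everything is linear in $\lambda$.

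The genuine gap is uniqueness. You first assert that a holomorphic function on the punctured disk is not determined by its boundary values. You then claim instead that $S^1$-equivariance together with $\omega$-holomorphicity forces $c$ to be $\omega^{k}$ times a function on $SM$. Both assertions are false.

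Equivariance only says that $\omega^{2}c$ descends to a function on $DM\backslash 0$; it says nothing about how that function varies within a fibre. For example, take $c'=-\tfrac12\omega^{-2}(1+\omega a)$ with $a\in\Omega_1$. It has the correct weight, since $\V(\omega a)=\omega Va-i\omega a=0$, and it is $\omega$-holomorphic. So it defines a fibrewise holomorphic $(2,0)$-form $\Omega'$ on $DM\backslash 0$, yet it is not of your monomial form. It is excluded only by the normalisation, since $\Omega'(H,V)=1+a$ on $SM$.

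Uniqueness therefore rests on exactly the fact you dismissed. Any competitor has the form $f\Omega$ with $f$ fibrewise holomorphic on $DM\backslash 0$, because $i_\zeta d(f\Omega)=(\zeta f)\Omega$ for $\zeta\in T^{0,1}D_xM$. The normalisation gives $f\equiv 1$ on $SM$. A function holomorphic on $0<|\omega|<1$, continuous up to $|\omega|=1$ and equal to $1$ there, is identically $1$, by Schwarz reflection across the circle and the identity theorem. This is the paper's argument, and it should replace your equivariance step.
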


\begin{proof}
	We first construct a frame for $\p^*\Lambda_\lambda^{1,0}\subset \Lambda^1_\C(SM\times \DD)$: this should consist of $1$-forms $\gamma_1,\gamma_2$ that annihilate $\xi,\partial_{\bar \omega}$ and $\V$. Using the frame $\{\tau^+,\eta^-,\psi\}\subset \Lambda^1_\C SM$, it is not hard to make the following guess:
	\[
		\gamma_1 = \tau^+ - \omega^2 \tau^-,\quad \gamma_2 = d\omega + i \omega \psi - i q \tau^-.
	\] 
	Here $q$ is defined as in \eqref{defq}. It  is straightforward to verify $i_\zeta \gamma_k = 0$ for $\zeta \in \{\xi,\partial_{\bar \omega}, \V\}$ as well as
	\(
		  \mathcal L_V(\gamma_k)=-i \gamma_k.
	\)
	The last property does not immediately allow for a push-forward along $\p$, but this can be remedied by introducing the factor $1/\omega$. This leads to the following definition on $\omega\neq 0$:
	\begin{equation}\label{defhatom}
		\hat \Omega  := -\frac{1}{2}\frac{ \gamma_1\wedge \gamma_2}{\omega^2}
	\end{equation}
	Clearly, $i_{\zeta} \hat \Omega = 0$ for $\zeta\in \{\xi,\partial_{\bar \omega},\V\}$ and $\mathcal L_\V\hat \Omega = 0$, such that $\Omega:=\p_*\hat \Omega$ is a well-defined $(2,0)$-form away from the zero section. On $SM$ we have $H=i(\eta_+-\eta_-)$ and thus
	\[
		\Omega(H,V) = -\frac{1}{2} \det\left.\begin{bmatrix}
		\langle \gamma_1,H\rangle & \langle \gamma_1,V\rangle\\
		\langle \gamma_2,H\rangle & \langle \gamma_2,V\rangle
		\end{bmatrix} \right|_{\omega=1} = -\frac{1}{2} \det\begin{bmatrix}
		2i & 0\\
		\lambda & i
		\end{bmatrix} = 1.
	\]
	Let us rewrite $\hat \Omega$ using the structure equations \eqref{eqn_structure}:
	\[
	\begin{split}
		\hat \Omega &= -\frac 12\left(\frac{1}{\omega} \tau^+ -\omega\tau^-\right)\wedge\left(\frac{d\omega}{\omega} + i\psi - i\frac{q}{\omega} \tau^-\right)
		\\
		&= -\frac{1}{2} \left(-\frac{d\omega}{\omega^2} \wedge\tau^+ - \frac{i}{\omega} \psi\wedge \tau^+ + d\omega \wedge \tau^- + i\omega \psi\wedge \tau^-- \frac{iq}{\omega^2} \tau^+\wedge \tau^-\right) \\
		& = -\frac{1}{2} \left(-\frac{d\omega}{\omega^2} \wedge\tau^+ + \frac{1}{\omega} d \tau^+ + d\omega \wedge \tau^- + \omega d\tau^-- \frac{iq}{\omega^2} \tau^+\wedge \tau^-\right)\\
		&= -\frac{1}{2} d\left(\frac{1}{\omega} \tau^+ + \omega\tau^-\right) + \frac{iq}{2\omega^2} \tau^+\wedge \tau^-
	\end{split}	
	\] 
	The form $i\tau^+\wedge \tau^-/2 = (\pi\circ \p)^*d\mathrm{vol}_g$ is closed and hence:
	\[
		d\hat \Omega =\sum_{|k|\le 2} ik\omega^k \lambda_k\cdot \left(\psi-i \frac{d\omega}{\omega}\right)\wedge \left(\frac{i}{2} \tau^+\wedge \tau^-\right) = \frac{r}{2\omega^2} (d\omega + i\psi)\wedge \tau^+\wedge \tau^-
	\]
	Since the contraction with $\partial_{\bar \omega}$ is zero, we conclude that $\Omega$ is fibrewise holomorphic, as claimed. For \eqref{domega} we observe that
	\[
		i_\xi(d\omega + i\psi) = iq\quad \text{ and } \quad i_\xi(\tau^+\wedge \tau^-) = -\tau^+ +\omega^2\tau^-,
	\]
	such that
	\[
	i_\xi d\hat \Omega = \frac{r}{2\omega^2} \Big(iq \tau^+\wedge \tau^- + (d\omega +i\psi)\wedge(\tau^+-\omega^2\tau^-) \Big) = \frac{r}{2\omega^2} \gamma_2\wedge \gamma_1 = r\hat \Omega.
	\]
	It remains to prove uniqueness: Any other such $(2,0)$-form must take the form $f\Omega$ for a function $f\colon DM\backslash 0 \to \C$  that satisfies $f\equiv 1$ on $SM$ and is fibrewise holomorphic. By the identity principle, applied on each punctured disk $D_xM\backslash 0$ separately, we conclude that $f\equiv 1$ on $DM\backslash 0$.
\end{proof}

\begin{lemma}\label{lem_totallyreal} Define  
the vector bundle
\(
	\Lambda^{1,0}_{\lambda,\R}\to SM\) by \(\Lambda^{1,0}_{\lambda,\R}:=\{\omega \in \Lambda^{1,0}_\lambda: \omega \text{ real valued on } TSM\}.
\)
Then:
	\begin{enumerate}
		\item $\Lambda^{1,0}_{\lambda,\R} = \{\omega \in \Lambda^1SM: \omega(X+\lambda V)=0\}$ over $SM$;
		\item $\Lambda^{1,0}_{\lambda}=\Lambda^{1,0}_{\lambda,\R} \oplus i\Lambda^{1,0}_{\lambda,\R}$ over $SM$ --- in particular, $(\Lambda^{1,0}_\lambda,\Lambda^{1,0}_{\lambda,\R})$ is a bundle pair over $(DM,SM)$;
		\item the Maslov index of $(\Lambda_\lambda^{1,0},\Lambda^{1,0}_{\lambda,\R})$ along each disk $D_xM$ equals $-4$.
	\end{enumerate}
\end{lemma}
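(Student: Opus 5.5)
The plan is to work pointwise along $SM$ for (i) and (ii), and then compute the Maslov index in (iii) via the top exterior power, using the $(2,0)$-form $\Omega$ of Lemma \ref{lemdomega}.

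For (i) and (ii), fix $(x,v)\in SM$. By \eqref{t2}, \eqref{t3} and the discussion in the proof of Lemma \ref{lemholmap}, we have $\D_\lambda=\C(X+\lambda V)\oplus\C(V_\perp+iV)$ there. Moreover, $T_\C DM=T_\C SM\oplus \C(V_\perp+iV)$, since $V_\perp$ is transverse to $SM$.
\begin{itemize}
\item First I would show that restriction to $T_\C SM$ is injective on $\Lambda^{1,0}_\lambda$. A form killing $V_\perp+iV$ is determined by its values on $T_\C SM$.
\item Conversely, any complex $1$-form $\alpha$ on $T_{(x,v)}SM$ with $\alpha(X+\lambda V)=0$ extends uniquely to an element of $\Lambda^{1,0}_\lambda$, by setting $\omega(V_\perp):=-i\alpha(V)$.
\item Hence restriction gives an isomorphism of $\Lambda^{1,0}_\lambda|_{SM}$ onto $\{\alpha\in\Lambda^1_\C SM:\alpha(X+\lambda V)=0\}$.
\end{itemize}
This identification proves (i), since real-valuedness on $TSM$ singles out the real annihilator of $X+\lambda V$. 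It also proves (ii): the complex annihilator of the real vector $X+\lambda V$ is the complexification of the real one, so $\Lambda^{1,0}_\lambda=\Lambda^{1,0}_{\lambda,\R}\oplus i\Lambda^{1,0}_{\lambda,\R}$. Equivalently, one can use the criterion (iv)$\Rightarrow$ of Lemma \ref{lem_totallyrealequivalence}, applied to this linear-algebra situation.

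For (iii), the Maslov index of a bundle pair is by definition that of its top exterior power $(\Lambda^{2,0}_\lambda,\Lambda^2\Lambda^{1,0}_{\lambda,\R})$. Along $SM$, the form $\Omega$ from Lemma \ref{lemdomega} is a section of $\Lambda^{2,0}_\lambda$. Its restriction to $TSM$ annihilates $X+\lambda V$ and satisfies $\Omega(H,V)=1$. So $\Omega|_{TSM}$ is real, i.e.\ $\Omega$ lies in the real subbundle on the boundary.

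The remaining task is to understand $\Omega$ near the zero section along a fibre $D_xM$.
\begin{itemize}
\item Fix $v\in S_xM$ and parametrise the fibre by $\omega\mapsto \p(x,v,\omega)$.
\item By \eqref{defhatom}, $\omega^2\hat\Omega=-\tfrac12\gamma_1\wedge\gamma_2$ along $\{(x,v)\}\times\DD$.
\item I would show that this quantity defines a smooth, nowhere vanishing section $\zeta$ of $\Lambda^{2,0}_\lambda$ over $D_xM$, including at the centre.
\end{itemize}
The cleanest way to check this is in the isothermal coordinates of Lemma \ref{lem_coordinateD}. There $\Lambda^{1,0}_\lambda$ is annihilated by $\partial_{\bar z}+\mu^2\partial_z+f_2\partial_\mu$ and $\partial_{\bar\mu}$, so it has the smooth frame
\[
dz-\mu^2d\bar z,\qquad d\mu-f_2\,d\bar z.
\]
Their wedge is a smooth nonvanishing section. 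Comparing with $\omega^2\Omega$ under $\mu=\omega\cdot(\text{phase of }v)$ should show that $\omega^2\Omega$ equals this wedge up to a nonvanishing constant factor on the fibre.

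Granting this, on $|\omega|=1$ with $\omega=e^{i\theta}$ we have $e^{-2i\theta}\zeta(e^{i\theta})=\Omega\in\Lambda^2\Lambda^{1,0}_{\lambda,\R}$. Comparing with the defining property \eqref{maslovchar}, $e^{ik\theta/2}\zeta(e^{i\theta})\in F$, gives $k/2=-2$, i.e.\ $\mu=-4$. A consistency check is that Lemma \ref{lem_totallyrealequivalence} then gives $-\mu(\Lambda^{1,0},\cdot)=4$ for the tangent pair, matching the $\mu(\beta)=4$ appearing in Theorem \ref{introthmB}.

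I expect the main obstacle to be the bookkeeping at the zero section. One has to verify that $\omega^2\Omega$ really extends smoothly and without zeros across $\omega=0$ in the fibre parametrisation, and that the equivariance weights under $\V$ are correctly accounted for. The isothermal-coordinate frame above is my first line of attack for both points.
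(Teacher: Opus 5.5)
Your proposal is correct and follows essentially the same route as the paper. For (i) and (ii) you use the same frame description of $\D_\lambda|_{SM}$ and extend forms by $\omega(V_\perp)=-i\omega(V)$; for (iii) you use the same nowhere vanishing section $\omega^2\hat\Omega$, which lies in the real line after multiplication by $e^{-2i\theta}$ on the boundary, giving index $-4$. The zero-section issue you flag is settled at once, since $\omega^2\hat\Omega=-\tfrac12\gamma_1\wedge\gamma_2$ and $\gamma_1,\gamma_2$ frame $\p^*\Lambda^{1,0}_\lambda$ on all of $\{(x,v)\}\times\DD$. Your isothermal check also goes through, with constant factor $-\tfrac12 e^{\sigma(z)}e^{-2i\theta}$ along the fibre.
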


\begin{proof}
	The bundle $TDM|_{SM}$ is framed by $\{X+\lambda V, H, V, V_\perp\}$ and $\D_\lambda|_{SM}$ is framed by $X + \lambda V$ and $V_\perp+iV$. Hence for $\omega\in \Lambda^1_\C DM|_{SM}$ it holds that:
	\[	
\omega\in \Lambda^{1,0}_\lambda\quad \Leftrightarrow \quad	\omega(X+\lambda V)=0~ \text{ and }~ 	\omega(V_\perp) = -i \omega(V)
	\]
	Now the inclusion `$\subset$' in (i) is immediate, and for `$\supset$' we extend a given $\omega\colon TSM\to \R$ with $\omega(X+\lambda V)=0$ to an element in $\Lambda^{1,0}_{\lambda, \R}$ by defining the action on $V_\perp$ as in the preceding display. The decomposition in (ii) is achieved by taking the  real and imaginary parts of $\omega(H)$ and $\omega(V)$ and noting that $\omega$ is completely determined by the action on $H$ and $V$.
For the Maslov index we may equivalently consider \[(\p^*\Lambda^{2,0}_\lambda,\p^*\Lambda^{2,0}_{\lambda,\R})\] along a disk $\{(x,v)\}\times \DD$ (here $\Lambda^{2,0}_{\lambda,\R}$ contains the $(2,0)$-forms with real values on $TSM$). We check characterisation \eqref{maslovchar} using the the $2$-form $\hat \Omega$ from  \eqref{defhatom}: 
 $\omega^2 \hat \Omega$ is a nowhere vanishing section and $\omega^{-2} (\omega^2\hat \Omega)$ lies in $\Lambda^{2,0}_{\lambda,\R}$ along $|\omega|=1$, hence the Maslov index must be $-4$.
\end{proof}

\begin{lemma}
Let $u\colon DM\to \C$ be fibrewise holomorphic. Then the following are equivalent:
\begin{enumerate}
	\item $(X+\lambda V) u  = V\lambda$ on $SM$
	\item $\Upsilon = e^{-u}\Omega$ is holomorphic (that is, $i_\zeta d\Upsilon =0$ for all $\zeta\in \D_\lambda$).
\end{enumerate}
\end{lemma}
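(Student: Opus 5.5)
Everything will be computed on the lift to $SM\times(\DD\backslash 0)$ via $\p$, where $\D_\lambda$ is spanned by $\xi$ and $\partial_{\bar\omega}$. Write $\hat\Upsilon=\p^*\Upsilon = e^{-\hat u}\hat\Omega$ with $\hat u=u\circ\p$. Since $\p$ is a submersion and $\hat\Upsilon$ is $\V$-invariant and annihilated by $\V$, holomorphicity of $\Upsilon$ is equivalent to
\[
i_\zeta d\hat\Upsilon=0\quad\text{for }\zeta\in\{\xi,\partial_{\bar\omega}\}.
\]

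\emph{The $\partial_{\bar\omega}$-direction.} We have
\[
d\hat\Upsilon=e^{-\hat u}\bigl(d\hat\Omega-d\hat u\wedge\hat\Omega\bigr).
\]
Lemma \ref{lemdomega} gives $i_{\partial_{\bar\omega}}d\hat\Omega=0$ and $i_{\partial_{\bar\omega}}\hat\Omega=0$. Fibrewise holomorphicity of $u$ gives $\partial_{\bar\omega}\hat u=0$. Hence this condition holds automatically.

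\emph{The $\xi$-direction.} Using \eqref{domega} and $i_\xi\hat\Omega=0$,
\[
i_\xi d\hat\Upsilon=e^{-\hat u}\,(r-\xi\hat u)\,\hat\Omega.
\]
As $\hat\Omega\neq 0$, condition (ii) is equivalent to $\xi\hat u=r$ on $SM\times(\DD\backslash0)$.

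\emph{Reduction to the boundary circle.} The function $\xi\hat u-r$ is holomorphic in $\omega$. For $\xi\hat u$ this follows from $[\xi,\partial_{\bar\omega}]=0$ and $\partial_{\bar\omega}\hat u=0$, as in Lemma \ref{lemholmap}; $r$ is polynomial in $\omega$. Moreover $\xi\hat u-r$ is smooth on all of $SM\times\DD$. By the identity theorem on each disk, $\xi\hat u=r$ everywhere iff it holds at $|\omega|=1$. Using the $\V$-equivariance ($\V\hat u=0$, and $r$ has weight $-1$ matching $\xi$), it suffices to check this at $\omega=1$.

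\emph{Evaluation at $\omega=1$.} By \eqref{t7}, $\xi\equiv X+\lambda V$ modulo $\partial_{\bar\omega}$ and $\V$. Both of these annihilate $\hat u$, so $\xi\hat u|_{\omega=1}=(X+\lambda V)u$. On the other side,
\[
r(x,v,1)=\sum_k ik\lambda_k=V\lambda,
\]
since $V\lambda_k=ik\lambda_k$. So the condition becomes exactly (i).

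\emph{Main obstacle.} The delicate step is the reduction from the interior to the boundary. One needs $\xi\hat u-r$ to extend holomorphically across $\omega=0$, not merely on the punctured disk; this holds because both $\xi\hat u$ and $r$ are defined on all of $SM\times\DD$. One must also check that the weight bookkeeping under $\V$ lets the values at $|\omega|=1$ reduce to $\omega=1$. This uses $[\V,\xi]=-i\xi$ and $\V r=-ir$, which follows from $\V(\omega^{k+1}\lambda_k)=i(k-k-1)\omega^{k+1}\lambda_k$.
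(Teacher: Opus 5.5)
Your argument follows the paper's proof. You reduce (ii) to $i_\xi d(e^{-\hat u}\hat\Omega)=e^{-\hat u}(r-\xi\hat u)\hat\Omega=0$, read off (i) at $\omega=1$ via \eqref{t7}, and extend the vanishing to $|\omega|=1$ using $(\V+i)(\xi\hat u-r)=0$. You then conclude with the fibrewise identity principle.

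One claim is false, however. $r=\sum_{|k|\le 2}ik\omega^{k+1}\lambda_k$ is not a polynomial in $\omega$ when $\lambda_{-2}\neq 0$, because the $k=-2$ term is $-2i\lambda_{-2}\omega^{-1}$. So $\xi\hat u-r$ need not be smooth across $\omega=0$, and your stated resolution of the ``main obstacle'' does not hold as written.

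No extension across $\omega=0$ is actually needed, since $\Omega$ and $\Upsilon$ only live on $Z(g,\lambda)\backslash 0$. Suppose a function is holomorphic on the connected punctured disk $\{0<|\omega|<1\}$, continuous up to $|\omega|=1$, and vanishes on that circle. Then it vanishes identically, by Schwarz reflection across the circle followed by the ordinary identity theorem. This is exactly how the paper concludes $\xi h+rh\equiv 0$ on $0<|\omega|\le 1$.

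Alternatively, in the direction (i)$\Rightarrow$(ii), where you invoke the identity theorem, (i) itself forces $\lambda_{-2}=0$. For fibrewise holomorphic $u$, $(X+\lambda V)u$ has no Fourier modes below $-1$, while $(V\lambda)_{-2}=-2i\lambda_{-2}$. With this observation your polynomial claim becomes true there and your argument goes through verbatim.
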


\begin{proof}
Writing $f=e^{-u}$ and $h=\p^*f$, holomorphicity of $\Upsilon$ is equivalent to 
\[
	0 = i_\xi d(h \hat \Omega) = (\xi h + rh) \hat \Omega  \text{ on } \omega\neq 0,
\] 
where $\hat \Omega$ and $r$ are as in Lemma \ref{lemdomega}. For $\omega=1$ we have
\[
	\xi h + rh = (X+\lambda V) f + (V\lambda) f =\left[ -(X+\lambda V) u + V\lambda\right]e^{-u}
\]
and this shows that (ii) $\Rightarrow$ (i). For the reverse direction, we may start with the assumption that $\xi h + rh = 0$ for $\omega = 1$. Using \eqref{t5} one checks that $(\V+i)(\xi h + rh) = 0$ and hence $\xi h + rh$ vanishes on $|\omega|=1$. Finally, $ \partial_{\bar \omega}(\xi h + rh)=0$ and the identity principle, applied fibrewise, yields $\xi h + rh\equiv 0$ on  $0<|\omega|\le 1$.

\end{proof}

\begin{lemma}\label{lem_confmagZ}
	Let $\deg(\lambda)\le 2$. Then the following are equivalent:
	\begin{enumerate}
		\item $\lambda$ is conformally magnetic; 
		\item there is holomorphic $(2,0)$-form $\Upsilon$ on $Z(g,\lambda)\backslash 0$ such that $\Upsilon(H,V)>0$ on $SM$ and $|v|^2\Upsilon$ is bounded.
	\end{enumerate}
\end{lemma}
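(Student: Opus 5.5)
The plan is to reduce everything to the holomorphicity criterion of the preceding lemma, applied to a multiple of the form $\Omega$ from Lemma \ref{lemdomega}. Any $(2,0)$-form on $Z(g,\lambda)\backslash 0$ can be written as $\Upsilon=f\Omega$ with $f\colon DM\backslash 0\to\C$, since $\Lambda^{2,0}_\lambda$ has rank one and $\Omega$ is nowhere vanishing there.

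\emph{Direction (ii) $\Rightarrow$ (i).}
\begin{enumerate}
\item First I show that $f$ is fibrewise holomorphic on $DM\backslash 0$. Holomorphicity of $\Upsilon$ includes $i_\zeta d\Upsilon=0$ for $\zeta\in T^{0,1}D_xM$. Since $\Omega$ is already fibrewise holomorphic by Lemma \ref{lemdomega}, this forces $\partial_{\bar\omega}\p^*f=0$.
\item Next I remove the singularity at the zero section. By \eqref{defhatom}, $\hat\Omega$ carries the factor $\omega^{-2}$, and $\omega^2\hat\Omega$ is a nowhere vanishing frame up to $\omega=0$. Hence $|v|^2\Omega$ is bounded and bounded away from zero near the zero section, so boundedness of $|v|^2\Upsilon$ means exactly that $f$ is bounded near $|v|=0$. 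Riemann's removable singularity theorem, applied on each punctured disk $D_xM\backslash 0$, extends $f$ to a fibrewise holomorphic function on all of $DM$.
\item On $SM$ we have $f=\Upsilon(H,V)>0$, because $\Omega(H,V)=1$. A holomorphic function on the closed disk with real boundary values has harmonic imaginary part vanishing on $\partial\DD$. So $\Im f\equiv 0$ on each fibre, and a real holomorphic function is constant.
\item Thus $f=e^{-u}$ with $u=u(x)$ a smooth real function on $M$. The preceding lemma now applies and gives $(X+\lambda V)u=V\lambda$ on $SM$. Since $Vu=0$, this reads $Xu=V\lambda$.
\item I now compare fibrewise Fourier modes. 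Since $Xu=du_x(v)$ lies in $\Omega_{-1}\oplus\Omega_1$ and $V\lambda=\sum_{|k|\le 2}ik\lambda_k$, we get $\lambda_{\pm2}=0$. The remaining identity is $V(\lambda_1+\lambda_{-1})=du_x(v)$.
\item Finally I identify the degree-one part. The term $-*d\sigma_x(v)$ equals $d\sigma_x(v^\perp)$ up to the sign fixed in Lemma \ref{lem_conformalrescaling}, and $V$ rotates $v$ by $\pi/2$. Hence $V(-*d\sigma_x(v))=\pm d\sigma_x(v)$. Since $V$ is injective on $\Omega_{-1}\oplus\Omega_1$, the degree-one part of $\lambda$ must equal $-*d\sigma$ with $\sigma=\pm u$. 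The zero mode $\lambda_0$ is real because $\lambda$ is real. Therefore $\lambda=\lambda_0-*d\sigma$ is conformally magnetic.
\end{enumerate}

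\emph{Direction (i) $\Rightarrow$ (ii).} This runs the same computation backwards. Given $\lambda=\lambda_0-*d\sigma$, choose $u=\pm\sigma$, with the sign from step 6, so that $Xu=V\lambda$ on $SM$. The preceding lemma then shows that $\Upsilon:=e^{-u}\Omega$ is holomorphic. We have $\Upsilon(H,V)=e^{-u}>0$ on $SM$, and $|v|^2\Upsilon$ is bounded by the pole-order observation in step 2.

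The main obstacle is purely bookkeeping: fixing the correct sign in $V(*d\sigma(v))$ relative to the convention for $\perp$ and the Hodge star. The conceptual content is the reduction of $f$ to a function on $M$, via removable singularities and the real-boundary-value argument.
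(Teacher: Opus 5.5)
Your proposal is correct and follows essentially the same route as the paper. You write $\Upsilon=f\Omega$, use boundedness and positivity on $SM$ to force $f=e^{-u(x)}$, then invoke the preceding lemma and compare fibrewise Fourier modes. The paper does the same via $\lambda_{-2}=0$, $\lambda_{-1}=-i\eta_-\sigma$ and $H=VX=-*d$ on $\Omega_0$. You also spell out details the paper leaves implicit, namely the removable singularity and the real-boundary-value argument. The sign you left open resolves as $\sigma=-u$, since $V(-*d\sigma)=-X\sigma$.
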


\begin{proof}
Recall that $\lambda$ is called conformally magnetic, if it is of the form $\lambda(x,v) = \lambda_0(x) -*d\sigma_x(v)$ for real functions $\lambda_0,\sigma\colon M\to \R$. Equivalently,
\[
	\lambda_{-2} = 0 \quad \text{ and } \quad \lambda_{-1}=-i\eta_-\sigma
\]
for some real valued $\sigma\colon M\to \R$, viewed as element in $\Omega_0$. This follows from the identity  $i(\eta_+-\eta_-)=H = [V,X] = VX = -*d$ on $\Omega_0$. If this is the case, then $u=-\sigma$ solves $(X+\lambda V)u=V\lambda$ and by the preceding lemmas $\Upsilon =e^{-u}\Omega$ satisfies the requirements of (ii).

For the converse, assume that $\Upsilon$ as in (ii) exists. Then we must have $\Upsilon = f\Omega$ for a fibrewise holomorphic map $f\colon DM\to \R$ with real and positive values on $SM$. This implies that $f(x,v)=\exp(-\sigma(x))$ for some $\sigma\colon M\to \R$. Reversing the steps in the first paragraph, we see that $\lambda$ is conformally magnetic.
\end{proof}

\subsection{Local normal form} For an open set $U\subset \DD$ we consider involutive structures on $U\times \DD$ that have the following {\it normal form} in coordinates $(z,\mu)$:
\[
\begin{split}
	\D_f := \spn_\C\left(\partial_{\bar z} + f_1 \partial_z+ f_2\partial_\mu, \partial_{\bar \mu}\right) = \ker\left(dz- f_1 d\bar z\right)\cap \ker\left(d\mu- f_2 d\bar z\right)
	\end{split}.
\]	
Here we assume that $f_1,f_2\colon U\times \DD\to \C$ are smooth, holomorphic in the $\mu$-variable and satisfy
\(
|f_1(z,\mu)| <1 \text{ on } |\mu|<1\) and \(|f_1(z,\mu)| =1 \text{ on } |\mu|=1.
\)
The main example is of course the description in isothermal coordinates given above, but it is worthwhile to investigate the normal form in this generality.

\begin{lemma}[Normal form]\label{lem_normalformD}
Let  $\D\subset T_\C(U\times \DD)$ be an involutive structure with $\partial_{\bar \mu}\in \D$, \[\dim_\C\D\cap \bar \D = 0 \text{ on } |\mu|<1\quad \text{ and } \quad \dim_\C\D\cap \bar \D =1 \text{ on } |\mu|=1\]
and such that $\D$ is orientation compatible on $|\mu|<1$. Then $\D$ can be put into normal form $\D=\D_f$ for a suitable function $f\colon U\times \DD\to \C^2$ as above. Moreover, any $\D_f$ satisfies the just stated properties. 
\end{lemma}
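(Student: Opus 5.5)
The plan is to build the normal form directly from a basis of $\D$. Since $\D$ has rank $2$ and contains $\partial_{\bar\mu}$, any complement vector can be written as $\zeta = a\partial_{\bar z} + b\partial_z + c\partial_\mu + e\partial_{\bar \mu}$. The $\partial_{\bar\mu}$-component can be subtracted off, so we may assume $e=0$.

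The first step is to show that $a\neq 0$ everywhere. If $a=0$ at some point, then the fibre of $\D$ there would be spanned by $b\partial_z+c\partial_\mu$ and $\partial_{\bar\mu}$. We treat the two ways this can happen separately.
\begin{itemize}
\item If $b=0$, then $\D$ is the vertical plane $\spn_\C(\partial_\mu,\partial_{\bar\mu})$. This plane is conjugation invariant, so $\dim_\C\D\cap\bar\D=2$, which contradicts both hypotheses.
\item If $b\neq0$, then $\D\cap\bar\D$ would have to be computed. Instead I expect the cleaner route is this: for $|\mu|<1$, $\D$ is the $(0,1)$-bundle of an almost complex structure $J$, and the fibre $\{z\}\times\DD$ is $J$-holomorphic because $\partial_{\bar\mu}\in\D$. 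So $\D$ projects onto $T^{0,1}$ of the quotient direction. The quotient $\D/\C\partial_{\bar\mu}$ injects into $T_\C U$ via $d\pi$, with image a line $\ell$ satisfying $\ell\cap\bar\ell=0$ on $|\mu|<1$. Orientation compatibility then forces $\ell=\C(\partial_{\bar z}+f_1\partial_z)$ with $|f_1|<1$, since $|f_1|>1$ gives the opposite orientation. In particular the $\partial_{\bar z}$-coefficient is nonzero.
\end{itemize}
On $|\mu|=1$ we argue by continuity: $\ell$ is still a line, $\ell=\C(a\partial_{\bar z}+b\partial_z)$, and $|b/a|\le 1$ as a limit. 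The case $a=0$ is excluded because it would require $|b/a|\to\infty$. Normalising $a=1$ gives the generator $\partial_{\bar z}+f_1\partial_z+f_2\partial_\mu$, which is unique modulo $\partial_{\bar\mu}$. This yields $\D=\D_f$ and the kernel description by duality.

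Next, holomorphicity in $\mu$ comes from involutivity. The bracket $[\partial_{\bar\mu},\partial_{\bar z}+f_1\partial_z+f_2\partial_\mu]=(\partial_{\bar\mu}f_1)\partial_z+(\partial_{\bar\mu}f_2)\partial_\mu$ must lie in $\D$. This bracket has no $\partial_{\bar z}$-component, and the only elements of $\D$ with no $\partial_{\bar z}$-component are multiples of $\partial_{\bar\mu}$. Hence $\partial_{\bar\mu}f_1=\partial_{\bar\mu}f_2=0$.

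For the boundary condition, on $|\mu|=1$ we have $\dim\D\cap\bar\D=1$. A real vector in $\D$ must have the form $\alpha(\partial_{\bar z}+f_1\partial_z+f_2\partial_\mu)+\beta\partial_{\bar\mu}$. Reality requires $\bar\alpha=\alpha f_1$ in the $z$-coefficients, which is solvable with $\alpha\neq0$ iff $|f_1|=1$. Such an $\alpha$ forces $\beta=\overline{\alpha f_2}$, so the real vector exists. If $\alpha=0$, the vector would be $\beta\partial_{\bar\mu}$, which is not real. Hence $|f_1|=1$ there. The same computation with $|f_1|<1$ shows $\D\cap\bar\D=0$ on $|\mu|<1$; here $\partial_{\bar\mu}$ and its conjugate are independent of the first generator.

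For the converse, the same computations run backwards. Involutivity of $\D_f$ follows because $\partial_{\bar\mu}$ commutes with the first generator when $f$ is $\mu$-holomorphic, and a rank-$2$ span of two commuting fields is involutive. Orientation compatibility follows from the sign of $1-|f_1|^2$ in $\det$ of the real frame.

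The main obstacle is the first step, namely proving $a\neq0$ up to the boundary $|\mu|=1$. Degenerations there must be excluded by continuity, or by noting that an $a=0$ boundary point would make $\D\cap\bar\D$ contain a vertical real vector, incompatible with the given rank. I would double-check the latter directly: with $a=0$, the real vector would lie in $\spn(\partial_z,\partial_{\bar z}...)$ only through $b\partial_z$, which cannot be real unless $b=0$.
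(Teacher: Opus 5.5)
Your proof is correct, and its overall structure matches the paper's. You complement $\partial_{\bar\mu}$ by $a\partial_{\bar z}+b\partial_z+c\partial_\mu$, show $|b|<|a|$ on $|\mu|<1$ and $|b|=|a|\neq 0$ on $|\mu|=1$, normalise, and get $\partial_{\bar\mu}f_1=\partial_{\bar\mu}f_2=0$ from involutivity. Your bracket computation is just the dual of the paper's contraction $i_{\partial_{\bar\mu}}d$ of the annihilating forms.

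The genuine difference is how you obtain the modulus conditions.

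\textbf{The paper's route.} It takes the annihilators $\omega_1=a\,dz-b\,d\bar z$ and $\omega_2=a\,d\mu-c\,d\bar z$ (or their analogue where $b\neq0$). It then notes that $\omega_1\wedge\omega_2\wedge\bar\omega_1\wedge\bar\omega_2$ is a positive multiple of $|a|^2-|b|^2$ times the coordinate volume form. Orientation compatibility is read off from the sign of this expression, and degeneracy from its vanishing. So $|a|=|b|\neq0$ on the boundary falls out immediately, and the converse is the same identity read backwards.

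\textbf{Your route.} You use that the fibres are $J$-complex, descend $J$ to the base direction, and invoke the Beltrami description of orientation-compatible complex structures on $\R^2$. This tacitly uses that the complex orientation is the product of the fibre and quotient orientations. You then get $a\neq0$ at $|\mu|=1$ by continuity, and $|f_1|=1$ from a separate real-vector computation. This is more conceptual, but it splits one computation into three steps.

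\textbf{A shortcut and two small corrections.}
\begin{enumerate}
\item Your real-vector computation, if done before normalising, already suffices at the boundary. For any nonzero real $\alpha\zeta+\beta\partial_{\bar\mu}\in\D$ it forces $\alpha\neq0$ and $\alpha b=\overline{\alpha a}$, hence $|a|=|b|$ wherever $\D\cap\bar\D\neq0$. Together with $(a,b)\neq(0,0)$ this gives $a\neq0$ on $|\mu|=1$ without any continuity argument; this is exactly the paper's boundary step.
\item Your closing sentence has the logic inverted. At a boundary point, $a=0$ would force $\D\cap\bar\D=0$; it would not produce a vertical real vector.
\item The limit should be stated as $|b|\le|a|$, since $|b/a|$ is undefined precisely where you want to rule out $a=0$.
\end{enumerate}
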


\begin{proof}
We may complement $\partial_{\bar \mu}$ to a frame of $\D$ by a vector field of the form $g_0\partial_{z} +g_1\partial_{\bar z} + g_2 \partial_\mu$, where $g\colon U\times \DD\to \C^3$ is smooth. 
Since $\dim_\C \D\cap \bar \D\le 1$ everywhere, the functions $g_0$ and $g_1$ cannot vanish simultaneously, and moreover  we claim that 
\begin{equation}\label{normalform1}
	|g_0|>|g_1| \text{ on } |\mu|<0\quad \text{ and } \quad |g_0|=|g_1| \neq 0 \text{ on } |\mu|=1.
\end{equation}
Indeed, on $\{g_0\neq 0\}$ the $(1,0)$-forms are spanned by $\omega_1= g_0 dz - g_1d\bar z$ and $\omega_2 = g_0d\mu - g_2d\bar z$, which satisfy
\[
	\omega_1\wedge  \omega_2 \wedge \bar \omega_1\wedge \bar \omega_2 = 4 \left(|g_0|^2-|g_1|^2\right) \frac{d\bar z\wedge dz}{2i} \wedge \frac{d\bar \mu\wedge d\mu}{2i}.
\]
An analogous computation on  $\{g_1\neq 0\}$ results in the same $4$-form and hence the properties in \eqref{normalform1} follows from the orientation compatibility and the degeneracy of $\D$, respectively. Defining $f_1=g_1/g_0$ and $f_2=g_2/g_0$, we have put $\D$ into the correct normal form. Since $\D$ is involutive, we must have
\[
	0 = i_{\partial_{\bar \mu}}d(dz - f_1 d\bar z) = - \partial_{\bar \mu}f_1 d\bar z ,\quad 0 = i_{\partial_{\bar \mu}}d(d\mu  - f_2 d\bar z) = - \partial_\mu f_2 d\bar z
\]
and hence $f_1$ and $f_2$ are holomorphic in the $\mu$-variable. Reversing the above arguments for a given $f$, it is not hard to see that $\D_f$ is an involutive structure with the stated properties.
\end{proof}

\begin{lemma}[Identity theorem for involutive structure]\label{lem_identity}
	Let $\D_f, \D_{g} \subset T_\C(U\times \DD)$ be two involutive structures in normal form. Then 
	\begin{equation*}
		\D_f\cap \bar \D_f = \D_g\cap \bar \D_g \text{ on } |\mu|=1\quad \Rightarrow \quad f=g.
	\end{equation*}
\end{lemma}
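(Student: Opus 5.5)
On $|\mu|=1$ I will compute the real line $\D_f\cap\bar\D_f$ explicitly in terms of $f$, read off the boundary values of $f$ from it, and then use holomorphicity in $\mu$ to conclude $f=g$ everywhere.

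\emph{First component.} A vector in $\D_f$ has the form $a(\partial_{\bar z}+f_1\partial_z+f_2\partial_\mu)+b\partial_{\bar\mu}$. It lies in $\bar\D_f$ exactly when it is a combination of $\partial_z+\bar f_1\partial_{\bar z}+\bar f_2\partial_{\bar\mu}$ and $\partial_\mu$.

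Comparing the $\partial_z$ and $\partial_{\bar z}$ coefficients forces $af_1=c$ and $a=c\bar f_1$. Since $|f_1|=1$ on $|\mu|=1$, these are compatible. Hence the intersection line is spanned by
\[
\zeta_f=\partial_{\bar z}+f_1\partial_z+f_2\partial_\mu+f_1\bar f_2\partial_{\bar\mu}.
\]
This vector is characterised, up to a scalar, as the unique element of $\D_f\cap\bar\D_f$ with $d\bar z(\zeta_f)=1$. Such an element exists because the $d\bar z$-component of any nonzero element is nonzero; otherwise the vector would be a multiple of $\partial_{\bar\mu}$, which is not real.

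Now suppose the two intersection lines agree. Normalising the $d\bar z$-coefficient gives $\zeta_f=\zeta_g$ on $|\mu|=1$. Comparing the $\partial_z$ and $\partial_\mu$ coefficients yields $f_1=g_1$ and $f_2=g_2$ on $U\times\partial\DD$.

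\emph{Interior.} Fix $z$. Both $f_j(z,\cdot)$ and $g_j(z,\cdot)$ are smooth on $\DD$ and holomorphic in the interior. Their difference is holomorphic on $\DD^\circ$, continuous up to $\partial\DD$, and vanishes there. By the maximum principle, applied to the difference, it vanishes identically. Hence $f=g$.

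\emph{Main obstacle.} I do not expect a real obstacle; the only care needed is in the bookkeeping of the first component. One must check that the intersection really is spanned by $\zeta_f$, with nonzero $\partial_{\bar z}$-coefficient, so that the normalisation is legitimate. The other point is to make sure that $\mu$-holomorphicity and smoothness up to the boundary are what the normal form provides, which Lemma \ref{lem_normalformD} gives.
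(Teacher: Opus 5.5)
Your argument is correct and is essentially the paper's proof: read off the boundary values of $f_1,f_2$ from the line $\D_f\cap\bar \D_f$ on $|\mu|=1$, then extend the equality $f=g$ into the interior using $\mu$-holomorphicity. The only cosmetic difference is in how the boundary values are extracted: the paper writes $g_1=\zeta^2$ with $\zeta\in S^1$, takes the real spanning vector $\xi+\bar\xi$ with $\xi=\zeta\partial_z+\bar\zeta g_2\partial_\mu$, and tests it against the annihilating forms $dz-f_1d\bar z$ and $d\mu-f_2d\bar z$, whereas you normalise the $\partial_{\bar z}$-coefficient of the spanning vector and compare coefficients directly.
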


\begin{proof}	
	Fix $(z,\mu)$ with $|\mu|=1$. Then $g_1(z,\mu)=\zeta^2$ for some $\zeta\in S^1$ and hence $\D_g\cap \bar \D_g$ is spanned by the real part of
	\(
		\xi = \zeta \partial_z + \bar \zeta g_2 \partial_\mu
	\). This also lies in $\D_f$ iff the following two equations are satisfied:
	\[
		0 = \langle dz - f_1d\bar z, \xi+\bar \xi\rangle = \zeta - f_1\bar \zeta		 \quad \text{ and }\quad 0 = \langle d\mu - f_2d\bar z, \xi+\bar \xi\rangle  = \bar \zeta ( g_2-f_2).
	\]
Hence $f=g$ on $|\mu|=1$ and since both functions are $\mu$-holomorphic, they must agree everywhere.	
\end{proof}

\begin{remark}\label{proofuniquenessD}
	We can now finish the following:
	
	\begin{proof}[Proof of Proposition \ref{prop_defD}] Assume that $\lambda\colon SM\to \R$ is smooth with $\deg(\lambda)\le 2$. Then in isothermal coordinates $(z,\mu)$ we may put $\D_\lambda$ into normal form $\D_f$ (using Lemma \ref{lem_normalformD} instead of Lemma \ref{lem_coordinateD} to avoid circular reasoning!) and use  the just established identity principle to conclude uniqueness. Next, suppose we only know of $\lambda\colon SM\to \R$ that it admits an involutive structure $\D$ as in Proposition \ref{prop_defD} and we wish to conclude that $\deg(\lambda)\le 2$. In isothermal coordinates we may again put it into normal form $\D=\D_f$ and we claim that for $|\mu|=1$ we necessarily have:
	\[
		f_1(z,\mu)=\mu^2\quad \text{ and }\quad f_2(z,\mu):=ie^{\sigma(z)}\lambda\big(z,\mu\cdot \mathbf{1}(z)\big)\mu^2  + \sigma_{\bar z}(z)\mu  - \sigma_z(z) \mu^3.
	\]	
	To see this, we use  the supposed property \eqref{t2} and Lemma \ref{lem_coordinateD} to conclude that the vector field $\Xi$ (defined below this lemma) necessarily lies in $\D_f$. Since the identity principle in Lemma \ref{lem_identity} was proved pointwise for $|\mu|=1$, we can derive the expression in the preceding display. But $\D_f$ being in normal form implies  that $f_2(z,\mu)$ has a $\mu$-holomorphic extension from $|\mu|=1$ to $|\mu|\le 1$ and this shows that the Fourier modes  $S^1\ni \mu\mapsto \lambda(z,\mu\cdot \mathbf{1}(z))$ vanish in degree $\le -3$. Since $\lambda$ is real valued, this enforces $\deg(\lambda)\le 2$, as desired.
	\end{proof}
\end{remark}

As in \eqref{10bundle}, we define  $\Lambda^{1,0}_f\to U\times \DD$ as the annihilator of $\D_f$.

\begin{lemma}[Maslov index]\label{lem_maslovD} Let $\D_f\subset T_\C(U\times \DD)$ be an involutive structure in normal form. Then $\D_f\cap \bar \D_f$ is tangent to $|\mu|=1$ if and only if
\[
	\Lambda^{1,0}_{f,\R} =\{\omega\in \Lambda^{1,0}_f: \omega \text{ real valued on } T(U\times \partial \DD) \}\subset \Lambda^{1,0}_f
\]
defines a totally real subbundle over $|\mu|=1$. In this case the Maslov index of the bundle pair $(\Lambda^{1,0}_f,\Lambda^{1,0}_{\R,f})$ over the disk $\{z\}\times \DD$ ($z\in U$) equals
\[
	-  \deg(f_1(z,\cdot):S^1\to S^1) - 2.
\]	
\end{lemma}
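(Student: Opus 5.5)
Everything is explicit in the coframe $\{dz-f_1d\bar z,\ d\mu-f_2d\bar z\}$ of $\Lambda^{1,0}_f$ from the normal form. Fix $z$ and a point with $|\mu|=1$, where $|f_1|=1$. The real vector field spanning $\D_f\cap\bar\D_f$ is, as in the proof of Lemma \ref{lem_identity}, the real part of $\xi=\zeta\partial_z+\bar\zeta f_2\partial_\mu$ with $\zeta^2=f_1$. A $(1,0)$-form $a(dz-f_1d\bar z)+b(d\mu-f_2d\bar z)$ is real on $T(U\times\partial\DD)$ exactly when its values on $\partial_x,\partial_y$ and on the tangent vector $i\mu\partial_\mu-i\bar\mu\partial_{\bar\mu}$ are real. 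This gives three real conditions, so the candidate real bundle has real rank $4-3+\ldots$; the correct count comes from noting that the form always kills $\Re\xi$. A $(1,0)$-form on a $4$-dimensional space is determined by its restriction to $T(U\times\partial\DD)$ (three real dimensions, containing $\Re\xi$ iff tangency holds), subject to vanishing on $\Re\xi$.

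\textbf{Equivalence.} I would argue as in Lemma \ref{lem_totallyreal} and Lemma \ref{lem_totallyrealequivalence}. If $\Re\xi$ is tangent to $|\mu|=1$, then restriction to the hyperplane $T(U\times\partial\DD)$ identifies $\Lambda^{1,0}_f$ with the complex-valued forms on this $3$-space that vanish on $\Re\xi$, a complex $2$-dimensional space. Real-valued forms then give a real form of it, so $\Lambda^{1,0}=\Lambda^{1,0}_\R\oplus i\Lambda^{1,0}_\R$. If $\Re\xi$ is not tangent, then $T(U\times\partial\DD)$ together with $\Re\xi$ spans everything, and the real forms must satisfy an extra constraint. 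Concretely, the value on $\Re\xi$ is $0$, and one also has to know the value on the complex direction $\partial_{\bar\mu}$, where it is $0$ as well. These constraints force $\Lambda^{1,0}_\R$ to have real rank $<2$. I would check this by writing the condition $\Re\xi(|\mu|^2)=\Re(\bar\zeta f_2\bar\mu)=0$ explicitly.

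\textbf{Maslov index.} Pass to $\Lambda^{2,0}_f$ with the nowhere vanishing holomorphic section $\Theta=(dz-f_1d\bar z)\wedge(d\mu-f_2d\bar z)$; it is $\mu$-holomorphic since $f_1$ and $f_2$ are. I need the phase $c(\mu)\in S^1$ for which $c\,\Theta$ is real on $T(U\times\partial\DD)$. I would evaluate $\Theta$ on a convenient real frame of the $3$-space, namely $\Re\xi$, the other real direction $\Re(i\zeta\partial_z)$-type vector $w$ with $w\perp\xi$, and $\partial_\theta=i\mu\partial_\mu-i\bar\mu\partial_{\bar\mu}$. Only the pair $(w,\partial_\theta)$ matters. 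This gives $\Theta(w,\partial_\theta)=(\,\cdot\,)\cdot i\zeta\cdot i\mu$ up to a positive factor, since $dz-f_1d\bar z$ applied to $i\zeta\partial_z-i\bar\zeta\partial_{\bar z}$ is $i\zeta+i f_1\bar\zeta=2i\zeta$. Hence the real line is spanned by $\overline{\zeta\mu}\,\Theta$ up to a constant. By \eqref{maslovchar}, applied on $\Lambda^{2,0}$, we need $e^{ik\theta/2}\Theta\in\Lambda^{2,0}_\R$, i.e.\ $e^{ik\theta/2}\propto\bar\zeta\bar\mu$ modulo real factors. Since $\zeta^2=f_1$, this gives $k=-\deg f_1-2$.

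As a sanity check, $f_1=\mu^2$ yields $-4$, in agreement with Lemma \ref{lem_totallyreal}. The main obstacle is bookkeeping: getting signs and orientation conventions right in the rank count for the non-tangent case.
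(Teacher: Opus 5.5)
Your proposal is correct and follows essentially the paper's route: the non-tangent case is excluded by the same two constraints (vanishing on $\Re\xi$ and on $\partial_{\bar\mu}$, which force a real form to kill both $\Xi_{||}$ and $\partial_\theta$ and so leave real rank $1$), and the Maslov index is read off from the boundary phase of $(dz-f_1d\bar z)\wedge(d\mu-f_2d\bar z)$ on $\Lambda^{2,0}_f$.

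One step needs tidying. The condition $e^{ik\theta/2}\Theta\in\Lambda^{2,0}_{f,\R}$ cannot hold literally for the unnormalised section $\Theta$. You can fix this in either of two ways:
\begin{enumerate}
\item Rescale by a degree-zero phase, as the paper does: write $f_1=\mu^\ell h^2$ with $|h|=1$ on the circle and use $\bar h\Theta$. Then $e^{-i(\ell+2)\theta/2}\,\bar h\Theta$ is real on the boundary.
\item Phrase the count as a winding number: compute the degree of $\overline{\zeta\mu}^{\,2}=\overline{f_1\mu^2}$ on the circle, which is $-\ell-2$.
\end{enumerate}
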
	

\begin{remark}\label{rem_maslovD}
	The proof also shows that if $\ell = 2$ and $f_1(z,\mu)=\mu^2 h(z,\mu)^2$, then total reality of $\Lambda^{1,0}_f$ is further equivalent to $\Re(\bar h(z,\mu) \bar \mu^2 f_2) = 0$ for $|\mu|=1$.
\end{remark}

\begin{proof}[Proof of Lemma \ref{lem_maslovD}]
	In polar coordinates $\mu = r e^{i\theta}$ we have $\mu \partial_\mu = \partial_r - i\partial_\theta$. At a given point $(z,\mu)$ with $|\mu|=1$ we can the write $\D_f = \spn_\C(\Xi, \partial_r+i\partial_\theta)$, where $\Xi$ is real. We may write $\Xi=\Xi_{||}+a\partial_r$, where $\Xi_{||}$ is tangent to $|\mu|=1$ and $a\in \R$.  Assume that $\Lambda^{1,0}_{f,\R}$ is totally real at $(z,\mu)$ and $a\neq 0$. Then any $\omega \in \Lambda^{0,1}_f$ can be decomposed as $\omega'+i\omega''$, where $\omega',\omega''\in \Lambda^{1,0}_{f,\R}$. But taking imaginary parts in \[
	0 = \omega'(\Xi) = \omega'(\Xi_{||}) + a \omega'(\partial_r) = \omega'(\Xi_{||}) - ia \omega'(\partial_\theta)
	\] 
	implies that $\omega'(\partial_\theta)=0$, and analogously with $\omega''$. This implies that $\partial_r \in \D_f$ at $(z,\mu)$, which is incompatible with $\dim_\C(\D_f\cap \bar \D_f)=1$. This shows:
	\[
		\Lambda^{1,0}_{f,\R}\subset \Lambda^{1,0}_f|_{|\mu|=1} \text{ totally real } \quad \Rightarrow \quad \Xi \text{ tangent to } U\times \partial \DD. 
	\]
	The reverse implication is proved exactly like in Lemma \ref{lem_totallyreal}. To compute the Maslov index, we write $f_1(z,\mu)=\mu^\ell h(z,\mu)^2$, where $\ell\in \Z$ is the degree of $f_1(z,\cdot)\colon S^1\to S^1$ and $h\colon U\times \DD\to \C\backslash \{0\}$ is nowhere vanishing and satisfies $|h|=1$ on $|\mu|=1$. A vector $\Xi$ as above is then given by
	\[
		\Xi=\bar h e^{-i\ell\theta/2}(\partial_{\bar z} +f_1 \partial_z + f_2 \partial_\mu) + h e^{i\ell\theta/2} \bar f_2 (\partial_{\bar \mu}) = \Xi_{||} + 2 \Re( \bar h e^{-i(\ell+2)\theta/2} f_2) \partial_r 
	\]
	and total reality becomes equivalent to \[\Re( \bar h e^{-i(\ell+2)\theta/2} f_2) = 0.\] 
	The Maslov index of $(\Lambda^{1,0}_f,\Lambda^{1,0}_{f,\R})$ is the same as that of the exterior power $(\Lambda^{2,0}_f,\Lambda^{2,0}_{f,\R})$ and can be characterised as the unique integer $k\in \Z$ such that $\Lambda^{2,0}_f$ has a smooth global frame $\Omega$ with
	\[
	\Omega(z,e^{i\theta})\cdot e^{ik\theta/2} \in \Lambda^{2,0}_{f,\R} \quad (z\in U,\theta\in \R).
	\]
Taking \[
\begin{split}
	\Omega &= \bar h(z,\mu) (dz-f_1d\bar z)\wedge (d\mu-f_2d\bar z)\\[.2em]
	& = e^{i(\ell+2)\theta/2}\left( ie^{-i\ell\theta/2}\bar h\, dz\wedge d\theta - i e^{i\ell\theta/2} h \,d\bar z\wedge d\theta +e^{-i(\ell+2)\theta/2} \bar h f_2 \,{d\bar z}\wedge {dz} \right),
\end{split}
\]
we see the characterising property is satisfied for $k=-(\ell+2)$, since the expression in the last bracket is real.
\end{proof}

\begin{lemma}[Behaviour of zero section]\label{lem_zeroD} Let $\D_f\subset T_\C(U\times \DD)$ be an involutive structure in normal form.  Then:
\begin{eqnarray*}
f_2\equiv 0 \text{ on } \mu=0 ~ &\Leftrightarrow ~ &\{\mu=0\} \text{ is a complex submanifold};\\[.3em]
f_1=f_2\equiv 0 \text{ on } \mu=0 ~ &\Leftrightarrow ~ &z\mapsto (z,0) \text{ is holomorphic}; \\[.3em]
\partial_\mu f_1=f_2\equiv 0 \text{ on } \mu=0 ~ &\Leftrightarrow ~ &
\left\{\begin{array}{l}
\{\mu=0\} \text{ is a complex submanifold and}\\[.3em]
 \C \partial_\mu\subset \bar \D_f|_{\mu=0} \text{ is a holomorphic subbundle.}
\end{array}\right.
\end{eqnarray*}
\end{lemma}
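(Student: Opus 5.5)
The plan is to work pointwise along $\{\mu=0\}$, which lies in the interior $|\mu|<1$. There $\D_f$ is the $(0,1)$-bundle of a genuine complex structure, and $\bar\D_f=\spn_\C(\partial_z+\bar f_1\partial_{\bar z}+\bar f_2\partial_{\bar\mu},\partial_\mu)$ is the $(1,0)$-bundle. I will write
\[
\zeta:=\partial_{\bar z}+f_1\partial_z+f_2\partial_\mu ,
\]
so that $\D_f=\spn_\C(\zeta,\partial_{\bar\mu})$. Each of the three equivalences will be reduced to an explicit membership condition for vectors in $\D_f$ or $\bar\D_f$.

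\emph{First equivalence.} The real surface $\{\mu=0\}$ is a complex submanifold iff its complexified tangent space $\spn_\C(\partial_z,\partial_{\bar z})$ meets $\D_f$ in a complex line. A general element of $\D_f$ is $a\zeta+b\partial_{\bar\mu}$. It is tangent to $\{\mu=0\}$ iff $b=0$ and $af_2=0$. Since $\D_f\cap\bar\D_f=0$ in the interior, we need $a\neq0$ to get a nonzero tangent vector. Hence the condition is exactly $f_2=0$ on $\mu=0$.

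\emph{Second equivalence.} The map $z\mapsto(z,0)$ is holomorphic iff its differential sends $\partial_{\bar z}$ into $\D_f$, i.e.\ iff $\partial_{\bar z}\in\D_f$ along $\mu=0$. Comparing with the frame $\{\zeta,\partial_{\bar\mu}\}$, and using that the $\partial_{\bar z}$-coefficient forces the multiple of $\zeta$ to be $1$, this holds iff $f_1=f_2=0$ there.

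\emph{Third equivalence.} Assume $f_2=0$ on $\mu=0$, so $Q=\{\mu=0\}$ is complex. Note that $\partial_\mu\in\bar\D_f$ always, so $\C\partial_\mu$ is at least a smooth subbundle of $T^{1,0}=\bar\D_f$ along $Q$. I will use the standard description of the holomorphic structure on $T^{1,0}|_Q$: for a section $s$ of $T^{1,0}|_Q$ and a vector $\zeta\in\D_f$ tangent to $Q$,
\[
\bar\partial_\zeta s=\pi^{1,0}[\zeta,s],
\]
where $\pi^{1,0}$ is the projection $T_\C\to\bar\D_f$ along $\D_f$. Since $\zeta$ is tangent to $Q$ (as $f_2|_Q=0$), this bracket depends only on $s|_Q$. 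Then $\C\partial_\mu$ is a holomorphic subbundle iff $\bar\partial_\zeta\partial_\mu\in\C\partial_\mu$ along $Q$. Computing,
\[
[\zeta,\partial_\mu]=-(\partial_\mu f_1)\partial_z-(\partial_\mu f_2)\partial_\mu ,
\]
because $f_1,f_2$ are $\mu$-holomorphic. It remains to project $\partial_z$: writing $\partial_z$ as a combination of $\partial_z+\bar f_1\partial_{\bar z}$ and $\zeta$ (at $\mu=0$ with $f_2=0$), the $(1,0)$-part of $\partial_z$ is a nonzero multiple of $\partial_z+\bar f_1\partial_{\bar z}$, since $|f_1|<1$. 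This vector is not in $\C\partial_\mu$. Hence the condition is $\partial_\mu f_1=0$ on $\mu=0$. Combining with the first equivalence gives the third.

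The main obstacle is justifying the bracket formula for the $\bar\partial$-operator restricted to $Q$ in the present normal-form setting, and its independence of the extensions of $s$ and $\zeta$. I would handle this by noting that in the interior the structure is an honest complex structure, so the formula is the usual one (Newlander--Nirenberg is not needed, only the involutivity of $\D_f$). Alternatively, one can verify directly that $\pi^{1,0}[\zeta,\cdot]$ is a $\bar\partial$-operator satisfying the Leibniz rule on $T^{1,0}|_Q$.
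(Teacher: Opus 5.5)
Your proof is correct and is essentially the paper's argument written dually. The paper works with the annihilating $(1,0)$-forms $\omega_1=dz-f_1\,d\bar z$ and $\omega_2=d\mu-f_2\,d\bar z$, pulls them back along $z\mapsto(z,0)$ for the first two equivalences, and for the third reads off the obstruction $\partial_\mu f_1$ from the $\bar\partial$-operator on $\Lambda^{1,0}|_{\mu=0}$, asking for a holomorphic section $g_1\omega_1+g_2\omega_2$ with kernel $\C\partial_\mu$.

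Your choice to differentiate along $\zeta|_{\mu=0}=\partial_{\bar z}+f_1\partial_z\in\D_f$ and to test $\bar\partial$-invariance of the line is, if anything, slightly more careful. The paper contracts with $\partial_{\bar z}$, which lies in $\D_f$ along $\mu=0$ only when $f_1(z,0)=0$. Otherwise $i_{\partial_{\bar z}}d\omega_1$ picks up an extra $\partial_z f_1\,dz$ term. The case $f_1(z,0)=0$ is the one actually used in Proposition~\ref{prop_charZ}, so the paper's conclusion is unaffected.
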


\begin{proof}
	The submanifold $\{\mu=0\}$ is complex iff the two spanning $(1,0)$-forms $\omega_1=dz - f_1 d\bar z$ and $\omega_2 = d\mu -f_2 d\bar z$ pull-back to linearly dependent forms along the embedding $\iota(z)=(z,0)$. So the first equivalence follows from $\iota^*(\omega_1\wedge \omega_2)=f_2(z,0) d\bar z \wedge dz$. The map $\iota$ is holomorphic, iff both $\iota^*\omega_1$ and $\iota^*\omega_2$ do not have any $d\bar z$-component and this yields the second equivalence. For the last equivalence we assume that $f_2(z,0)\equiv 0$ and compute  for $\mu=0$ that
	\[
		i_{\partial_{\bar z}} d\omega_1 = \partial_\mu f_1(z,0) \omega_2 \quad \text{ and } \quad i_{\partial_{\bar z}} d\omega_2 = \partial_\mu f_2(z,0) \omega_2.
	\]
	Hence, the $\bar \partial$-operator of the holomorphic vector bundle $\Lambda^{1,0}|_{\mu=0}$ acts 
	 by
	\[
		(g_1\omega_1 + g_2\omega_2) \mapsto (\partial_{\bar z} g_1) d\bar z\otimes \omega_1 + (\partial_{\bar z} g_2 + g_1\partial_\mu f_1|_{\mu=0} + g_2 \partial_\mu f_2|_{\mu=0}) d\bar z\otimes \omega_2,
	\]
	where $g_1,g_2\colon U\to \C$ are smooth functions. The line bundle $\C \partial_\mu$ is holomorphic iff it equals the kernel of a holomorphic section $\omega=g_1\omega_1+g_2\omega_2$ of $\Lambda^{1,0}|_{\mu=0}$. Suppose such a section exists, then $i_{\partial_\mu}\omega = 0$ and hence $g_2\equiv 0$. Holomorphicity  of $\omega$ implies that $g_1 \partial_\mu f_1(z,0)\equiv 0$ and since $g_1$ must be nowhere vanishing for $\ker \omega$ to be $1$-dimensional, we conclude $\partial_\mu f_1(z,0)\equiv 0$. Vice versa, if $\partial_\mu f_1(z,0)\equiv 0$, then $\omega_1$ is a holomorphic section that cuts out $\C \partial_\mu$.
\end{proof}

 \subsection{Characterisation of transport twistor  space}\label{sec_charZ}
Let $(M,g)$ be an oriented Riemannian surface.
 
\begin{proposition}\label{prop_charZ} Let $\D$ be an involutive structure on $DM$. Then the following are equivalent:
\begin{enumerate}
	\item There exists a smooth $\lambda\colon SM\to \R$ with $\deg(\lambda)\le 1$ and a smooth $\tau\colon SM\to \R$ such that the following rotation map is a biholomorphism:
	\[
		R_\tau\colon (DM,\D_\lambda)\to (DM,\D),\quad R_\tau(x,v)=(x,e^{i\tau(x)}v)
	\]
	\item \label{prop_charZii}$\D$ satisfies the following properties:
	\begin{enumerate}[label=\rm(\alph*)]
		\item\label{prop_charZiia} $\D\cap \bar \D = 0$ on $DM^\circ$;
		\item\label{prop_charZiib} $\D\cap \bar \D = \C F$  on $SM$, for a smooth  vector field $F$ that is nowhere vanishing, real and tangent to $SM$;
		\item\label{prop_charZiic} $T^{0,1}D_xM\subset \D$ for all $x\in M$;
		\item\label{prop_charZiid} the zero section $x\mapsto (x,0)$ is holomorphic as map $M\hookrightarrow (DM,\D)$;
		\item\label{prop_charZiie} the lines $T^{1,0}_0D_xM$ ($x\in M$) make up a holomorphic subbundle of $T^{1,0}DM^\circ|_{M}$;
		\item\label{prop_charZiif} the Maslov index of the bundle pair $(\Lambda^{1,0},\Lambda^{1,0}_\R)$ over a disk $D_xM$  ($x\in M$) is equal to $-4$.
	\end{enumerate}
\end{enumerate}
	The function $\lambda$ is unique up to the involution $\lambda\mapsto -\lambda\circ \mathsf{f}$.
\end{proposition}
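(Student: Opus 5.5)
The plan is to prove both directions in isothermal coordinates $(z,\mu)$, using the normal form of Lemma \ref{lem_normalformD}, and to read off each condition in (ii) from the coefficients $f_1,f_2$ via Lemmas \ref{lem_coordinateD}, \ref{lem_maslovD} and \ref{lem_zeroD}.

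\emph{(i)$\Rightarrow$(ii).} All six properties are invariant under the fibrewise rotation $R_\tau$. Indeed, $R_\tau$ is holomorphic on each fibre, preserves $SM$ and the zero section, and in coordinates it only replaces $\mu$ by $e^{i\tau(z)}\mu$. So it suffices to treat $\D=\D_\lambda$.
\begin{itemize}
\item Properties (a)--(c) are \eqref{t1}--\eqref{t3}.
\item Property (f) is Lemma \ref{lem_totallyreal}(iii).
\item For (d) and (e), Lemma \ref{lem_coordinateD} gives $f_1=\mu^2$. On $|\mu|=1$,
\[
f_2=\sigma_{\bar z}\mu+ie^\sigma(\lambda_{-1}\mu+\lambda_0\mu^2+\lambda_1\mu^3)-\sigma_z\mu^3,
\]
and since $\deg\lambda\le 1$ this is already a polynomial in $\mu$, hence it is its own holomorphic extension. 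Consequently $f_2(z,0)=0$ and $\partial_\mu f_1(z,0)=0$, and Lemma \ref{lem_zeroD} yields (d) and (e).
\end{itemize}

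\emph{(ii)$\Rightarrow$(i).} Locally, write $\D=\D_f$ using Lemma \ref{lem_normalformD}; this needs (a)--(c) and orientation compatibility.
\begin{enumerate}
\item By (b), $\D\cap\bar\D$ is tangent to $SM$. Lemma \ref{lem_maslovD} together with (f) then gives $-\deg f_1(z,\cdot)-2=-4$, so $f_1(z,\cdot)$ has degree $2$ on $S^1$.
\item By (d) and (e), Lemma \ref{lem_zeroD} gives $f_1(z,0)=\partial_\mu f_1(z,0)=0$ and $f_2(z,0)=0$.
\item Hence $f_1=\mu^2 b$ with $b$ holomorphic in $\mu$. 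The zero at $\mu=0$ already has order $2$, and by the argument principle $f_1$ has exactly two zeros in the disk, so $b$ is zero-free. Since $|b|=1$ on $|\mu|=1$, the maximum principle applied to $b$ and $1/b$ shows $b=e^{2i\tau(z)}$ is independent of $\mu$.
\item Replacing $\mu$ by $e^{i\tau}\mu$ (the rotation $R_\tau$) normalises $f_1=\mu^2$.
\item Now Remark \ref{rem_maslovD} with $h\equiv1$ gives $\Re(\bar\mu^2 f_2)=0$ on $|\mu|=1$. Hence we may define a real function $\lambda$ by
\[
f_2=\sigma_{\bar z}\mu-\sigma_z\mu^3+ie^{\sigma}\lambda\mu^2\quad\text{on } |\mu|=1.
\]
\item Holomorphy of $f_2$ in $\mu$ forces $\lambda_k=0$ for $k\le -3$. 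The condition $f_2(z,0)=0$ kills $\lambda_{-2}$, and reality of $\lambda$ then kills $\lambda_2$ and all higher modes. So $\deg\lambda\le1$.
\item With this $\lambda$, the normal form of $R_\tau^*\D$ coincides with that of $\D_\lambda$ from Lemma \ref{lem_coordinateD}. Lemma \ref{lem_identity}, or simply equality of $f$, gives $R_\tau^*\D=\D_\lambda$ locally, and hence $\D_\lambda=R_\tau^*\D$ everywhere.
\end{enumerate}

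\emph{Globalisation and uniqueness.} The step I expect to be the main obstacle is patching the local $\tau$ into a global object. Locally $\tau$ is determined only modulo $\pi$ by $e^{2i\tau}$, and one must check that this quantity transforms correctly under changes of isothermal chart. My first attempt is to characterise the rotation intrinsically: the boundary line field $F$ determines, at each $x$, the pair of points of $S_xM$ where $d\pi(F)$ is parallel to the fibre coordinate. This pins down $e^{i\tau(x)}$ up to sign. One then argues that a continuous choice exists, or works on $SM$ as the statement allows. The remaining $\pm$ ambiguity, $\tau\mapsto\tau+\pi$, is exactly the flip $\mathsf f$, which replaces $\lambda$ by $-\lambda\circ\mathsf f$. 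For uniqueness, note that any two admissible pairs $(\lambda,\tau)$ and $(\lambda',\tau')$ give a rotation $R_{\tau'-\tau}$ that is a biholomorphism $Z(g,\lambda)\to Z(g,\lambda')$ fixing the base. The normalisation $f_1=\mu^2$ forces $\tau'-\tau\in\{0,\pi\}$, which gives $\lambda'=\lambda$ or $\lambda'=-\lambda\circ\mathsf f$, in line with Proposition \ref{prop_biholo}.
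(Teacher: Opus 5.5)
Your local analysis is correct and closely follows the paper:
\begin{itemize}
\item the reduction to $\tau\equiv 0$ for (i)$\Rightarrow$(ii);
\item for the converse, the normal form, $\deg f_1(z,\cdot)=2$ from (f), and $f_1(z,0)=\partial_\mu f_1(z,0)=f_2(z,0)=0$ from (d),(e);
\item then $f_1=b(z)\mu^2$ with $|b|=1$; your argument-principle/maximum-principle step is a clean substitute for the paper's holomorphic logarithm with real boundary values;
\item the Fourier argument for $\deg\lambda\le 1$, and the uniqueness argument.
\end{itemize}

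The gap is the globalisation, as you suspected, and your sketch does not close it. The function $b\colon M\to S^1$ is chart-independent: under $z\mapsto w(z)$ one gets $f_1^{(w)}=f_1^{(z)}\,w'/\overline{w'}$ while $\mu_w=(w'/|w'|)\,\mu_z$. So what you need is a global continuous square root of $b$. This is a genuine $\Z_2$-obstruction, and the line field $\D\cap\bar\D$ alone cannot remove it. Here is an example. Since $\mathsf f_*\D_0=\D_0$, the structure $(R_\tau)_*\D_0$ is well defined for any $\tau\colon M\to\R/\pi\Z$. Take $\tau(x)=\pi x_1$ on the flat torus $\R^2/\Z^2$. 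The resulting involutive structure satisfies (a) and (c)--(f), and its boundary line field is tangent to $SM$ but non-orientable. So it is not of the form (i).

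Hence ``one then argues that a continuous choice exists'' cannot be justified from the line field. Letting $\tau$ depend on $v$ does not help either, since a rotation by a $v$-dependent angle is not fibrewise holomorphic. The statement's $\tau\colon SM\to\R$ should be read as $\tau\colon M\to\R$, as the formula for $R_\tau$ shows.

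The missing input is that (b) provides a global nowhere-vanishing \emph{vector field} $F$, not merely a line field, and this is what the paper uses.
\begin{itemize}
\item $F\notin\R V$: otherwise $V$, and hence $V_\perp$, would lie in $\D$, making $\D\cap\bar\D$ two-dimensional.
\item So after a positive rescaling you can write $F=\cos\tau\,X+\sin\tau\,H+\lambda V$ on all of $SM$. This fixes $e^{i\tau}$ exactly, not up to sign.
\item Compare $F$ with the real spanning vector $\Xi$ of the normal form on $|\mu|=1$. On each fibre, $e^{i\tau}$ takes values among the two square roots of $b(x)$.
\item By connectedness of $S_xM$ it is constant on fibres, which gives a global rotation $R_\tau$.
\item The only remaining freedom is replacing $F$ by $-F$. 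This shifts $\tau$ by $\pi$, i.e.\ composes $R_\tau$ with $\mathsf f$, and accounts exactly for the involution $\lambda\mapsto-\lambda\circ\mathsf f$.
\end{itemize}
Reading $\lambda$ off from $F$ in this way also makes it manifestly chart-independent, which your definition via $f_2$ in each chart leaves implicit.
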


In (f) we define $\Lambda^{1,0}$ as the annihilator of $\D$ and $\Lambda^{1,0}_\R\subset \Lambda^{1,0}|_{SM}$ the subspace of $(1,0)$-forms that are real valued on $TSM$---in view of Lemma \ref{lem_maslovD} and (b) this totally real, such that Maslov index is defined.

\begin{proof}
Suppose (i) holds true with $\tau\equiv 0$. Then properties (a)--(c) follow immediately from Proposition \ref{prop_defD}, with $F=X+\lambda V$. Properties (d)--(f) are checked in isothermal coordinates $(z,\mu)$ using Lemmas \ref{lem_maslovD} and \ref{lem_zeroD}. Clearly, all properties (a)--(f) are preserved under fibrewise rotations $R_\tau$ and in summary, we have proved (i) $\Rightarrow$ (ii).

For the converse we start by expressing the vector field $F$ in (b) in terms of the frame $\{X,H,V\}$.
 The vectors $F$ and $V$ must be linearly independent at every $(x,v)\in SM$, for otherwise the inclusion $T^{0,1}_vD_xM = \C(V_\perp(x,v)+i V(x,v))\subset \D(x,v)$ would imply that $\dim_\C(\D\cap \bar \D)=2$ at $(x,v)$, which is wrong in view of (b).
Up to multiplying $F$ by a positive function on $SM$ we may assume that there are smooth functions $\tau,\lambda\colon SM\to \R$ such that
\begin{equation}\label{Fbeforerot}
F=(\cos\tau)X+(\sin\tau)H +\lambda V.
\end{equation}
We claim that $\tau$ is independent of $v$. To this end, let $(z,\mu)\in U\times \DD$ be isothermal coordinates on $DM$ and express the involutive structure in normal form $\D=\D_f$ as in Lemma \ref{lem_normalformD}.
By Lemma \ref{lem_zeroD} and  (d)+(e), we must have $f_1(z,0)=\partial_\mu f_1(z,0)=f_2(z,0)=0$, that is,  
\[
	f_1(z,\mu)=\mu^2 h_1(z,\mu),\quad f_2(z,\mu)=\mu h_2(z,\mu)
\]
for smooth $\mu$-holomorphic functions $h_1,h_2\colon U\times \DD\to \C$. By Lemma \ref{lem_maslovD} and (b)+(f) the map $f_1(z,\cdot)\colon S^1\to S^1$ has degree $2$ and thus $h_1$ admits a logarithm
\[
	h_1(z,\mu)= \exp(2i\tilde \tau(z,\mu)),
\]
where $\tilde \tau\colon U\times \DD\to \C$ is smooth and $\mu$-holomorphic. Expressing $X$ and $H$ in isothermal coordinates gives $\mu e^{\sigma + i \tau}F\equiv \partial_z+ \mu^2 e^{2i\tau} \partial_z$ modulo vertical contributions and thus $\tilde \tau(z,\mu)=\tau(z,\mu)\in \R$ for $|\mu|=1$. For fixed $z\in U$, the map  $\mu\mapsto \tilde \tau(z,\mu)$  is holomorphic in $|\mu|<1$ and real-valued on $|\mu|=1$, hence it must be constant. Consequentially, $\tau$ is independent of $v$.

The diffeomorphism $R_\tau\colon DM\to DM$ satisfies
\begin{equation}\label{actionrtau}
	(R_\tau)_*X=(\cos \tau) X+ (\sin\tau) H + (X\tau) V\quad \text{ and }\quad (R_\tau)_*V=V
\end{equation}
and hence $X+(\lambda -X\tau) V$ is pushed forward to the vector field $F$ in \eqref{Fbeforerot}. Making this identification implcit, we assume now that $\tau\equiv 0$, that is, $F=X+\lambda V$ for a smooth function $\lambda\colon SM\to \R$.

In isothermal coordinates and modulo $\partial_{\bar \mu}$, the expression \eqref{Fcoordinates} gives
\[
	\mu e^\sigma F \equiv \partial_{\bar z} + \mu^2 \partial_{z} + [-\mu^3 \sigma_z + \mu \sigma_{\bar z}  + i e^\sigma \mu^2 \lambda] \partial_\mu,\quad |\mu|=1.
\]
Comparing this with normal form $\D_f$ considered above, we obtain
\[
	h_2(z,\mu) =-\mu^2 \sigma_z+ \sigma_{\bar z} + ie^\sigma\mu \lambda,\quad |\mu|=1.
\]
Since the left hand side has a $\mu$-holomorphic extension, the Fourier modes of $\lambda(z,\mu)$ must vanish in degrees $\le -2$, which is to say that $\deg(\lambda)\le 1$.

Lastly we consider the uniqueness of $\lambda$: If there are two tuples $(\lambda,\tau)$ and $(\lambda',\tau')$ as in (i), then the composition $R_{\tau'}^{-1}\circ R_{\tau} = R_{\tau - \tau'}$ is a biholomorphism from $(DM,\D_\lambda)$ to $(DM,\D_{\lambda'})$. In particular, on $SM$ it sends $X+\lambda V$ into $\R(X+\lambda' V)$ and by \eqref{actionrtau} we must have $\sin(\tau-\tau')=0$. This leaves two options: $(\tau-\tau')/ \pi\in 2\Z$ and $\lambda(x,v)\equiv \lambda'(x,v)$ or $(\tau-\tau')/ \pi \in 2\Z +1$ and $\lambda(x,-v)\equiv-\lambda'(x,v)$.
\end{proof}

 \subsection{Proof of Theorem \ref{introthmB}}\label{sec_pfB} Given an oriented closed surace $(M,g)$ and a fibrewise holomorphic blow-down map $\beta\colon DM\to W$ into a complex surface $W$, we need to show the equivalence of the following statements:
 
\begin{enumerate}
	\item\label{thmBi} There exists $\lambda\colon SM\to \R$ of degree $\le 1$ and a rotation $\tau\colon M\to \R$ such that $d\beta_\tau(X+\lambda V)=0$ on $SM$ (and $\lambda$ is unique mod $\lambda\mapsto -\lambda\circ \mathsf{f}$).
	\item\label{thmBii} The following properties are satisfied:
	\begin{enumerate}[label=\rm (\alph*)]
		\item\label{thmBiia} $P=\beta(SM)$ is totally real and $\mu(\beta)=4$;
		\item\label{thmBiib} $\beta(\cdot,0)\colon M\to W$ is a holomorphic curve and the complex lines $d\beta_{(x,0)}(T_0D_xM)$ ($x\in M$) form a holomorphic subbundle of $TW|_Q$, where $Q=q(M)$.
	\end{enumerate}
\end{enumerate}

\begin{proof}[Proof of \ref{thmBi}$\Rightarrow$\ref{thmBii}]
Suppose that (i) holds true for $\tau=0$ and $Z=Z(g,\lambda)$ is the associated transport twistor space. By Lemma \ref{lemholmap} the map $\beta\colon Z^\circ\to W\backslash P$ is a biholomorphism and so the properties in (b) follow immediately from the corresponding ones on $Z$ (Proposition \ref{prop_charZ}). 

For (a) we consider the frame  $\{X, H, V, V_\perp\}$ over $SM$ and recall that $\D_\lambda=\spn_\C(X+\lambda V,V_\perp + i V)$ and that $d\beta$ has full rank on the span of $\{H,V,V_\perp\}$ since it is a blow down map. Given $a,b\in \C$ we claim that \[ d\beta(aH + b (V_\perp -i V))\in T^{0,1}W \quad \Rightarrow \quad a=b=0.\] 
Since $d\beta(V_\perp - i V) \in T^{1,0}W\backslash  0$ we cannot have $a=0$ and $b\neq 0$. If $a\neq 0$, then taking projections onto the $(1,0)$-portion gives
\[
d\beta(H)^{1,0} =  -\frac{b}{a} d\beta(V-iV)
\]
and hence
\(
	d\beta(H) = d\beta(H)^{1,0} + \overline{d\beta(H)^{1,0}} \in \C d\beta(V) \oplus \C d\beta(V_\perp),
\)
which contradicts $d\beta$ having full rank on the span of $\{H,V,V_\perp\}$. This proves our claim, which in turns shows that $d\beta$ gives an isomorphism as follows:
\begin{equation}\label{dbetaiso}
	d\beta\colon T_\C Z/{\D_\lambda} \xrightarrow{\sim} TW/T^{0,1}W 
\end{equation}
(On $DM^\circ$ this follows immediately from $\beta$ being a biholomorphism.) We can now prove that $P=\beta(SM)$ is totally real:  Let $(x,v)\in SM$, $p=\beta(x,v)$ and $w\in T_p P$ with $Jw\in T_pP$. Since $\beta|_{SM}\colon SM\to P$ is a submersion, there exist $\vartheta_1,\vartheta_2\in T_{(x,v)}SM$ with $d\beta(\vartheta_1)=w$ and $d\beta(\vartheta_2)=Jw$. Hence\[
	d\beta(\vartheta_1 + i \vartheta_2) = w + i Jw \in T_p^{0,1}W.
\]
By \eqref{dbetaiso} this implies that $\vartheta_1+i\vartheta_2 \in \D$ and hence $\langle \omega ,\vartheta_k\rangle =0$ for all $\omega \in (\Lambda^{1,0}_\R)_{(x,v)}$, the bundle that was defined in Lemma  \ref{lem_totallyreal}. From this we also get $\vartheta_k\in \C (X+\lambda V)$ and hence $w=d\beta(\vartheta_1)=0$. We conclude that $T_p P\cap JT_pP=0$ and $P$ is totally real.

From \eqref{dbetaiso} we also obtain that $d\beta^\top \colon \Lambda^{1,0}W\to \Lambda_\lambda^{1,0}$ is an isomorphism. Moreover, this is an isomorphism between the totally  real subbundles $\Lambda^{1,0}_\R W\subset T^{1,0}W|_P$ and $\Lambda^{1,0}_{\lambda,\R}\subset \Lambda^{1,0}_\lambda|_{SM}$, which were defined in Lemma \ref{lem_totallyrealequivalence} and Lemma \ref{lem_maslovD}, respectively. Hence
\begin{equation}\label{eqn_chernmaslov}
	\beta^*c_1(W,P) = \beta^*c_1(\Lambda^{2,0}W,\Lambda^{2,0}_\R W) = c_1(\Lambda^{1,0}_\lambda,\Lambda^{1,0}_\lambda)
\end{equation}
and pairing this with $[D_x M]\in H^2(DM,SM)$ yields $-\mu(\beta)/2$ on the left hand and, in view of Lemma \ref{lem_maslovD}, $-2$ on the right hand side. Hence $\mu(\beta)=4$ and the proof of  
\ref{thmBii} is complete.
\end{proof}

\begin{proof}[Proof of \ref{thmBii}$\Rightarrow$\ref{thmBi}]
Our strategy is to construct an involutive structure $\D \subset T_\C(DM)$ that satisfies $d\beta(\D)\subset T^{0,1}W$ and meets all the requirements of Proposition \ref{prop_charZ}\ref{prop_charZii}.

We first construct the vector field $F$ on $SM$: 
 There are isomorphisms $SM \cong SNP \cong SP$, all considered as circle bundles over $P$, where on $SM$ we use $\beta|_{SM}$ as projection. The first isomorphism follows $\beta$ being a blow-down map and the second one is implemented by the complex structure $J$, using that $P$ is totally real. Since $P$ is assumed to be orientable, the unit tangent bundle $SP\to P$ is spanned by a nowhere vanishing vector field. Pulling this back via the isomorphisms  yields a nowhere vanishing vector field $F$ on $SM$ such that: \[
		\ker d(\beta|_{SM}) = \R F.
	\]
	Next we construct $\D$: inside $DM^\circ$ it is clear how to do this, as $\beta$ is a diffeomorphism there. Moreover, any smooth extension of $\D$ to the boundary will automatically be involutive. 
	To find such an extension, let $U\subset W$ be a neighbourhood of some point $p\in P$ over which $\Lambda^{1,0}W$ admits the frame $\{\eta_1,\eta_2\}\subset C^\infty(U,\Lambda^{1,0}W)$, then consider $\omega_k=\beta^*\eta_k \in C^\infty(\beta^{-1}(U),\Lambda^1_\C DM)$ ($k=1,2$). We claim that $\{\eta_1, \eta_2\}$ is everywhere linearly independent, such that an extension can be defined by
	\[
		\D|_{\beta^{-1}(U)} := \ker f_1 \cap \ker f_2 \text{ on } \beta^{-1}(U).
	\]
	 We only need to check that $f_1,f_2$ are linearly independent on $SM$ and for this we consider $(x,v)\in SM$ and $\vartheta_1,\vartheta_2\in T_{(x,v)}SM$ such that $\{\vartheta_1,\vartheta_2,F(x,v)\}$ is a basis. Then the vectors $w_k = d\beta_{(x,v)}(\vartheta_k)\in T_{\beta(x,v)}P$ $(k=1,2)$ are linearly independent and hence
	\[
		\langle \omega_1\wedge \omega_2, \vartheta_1\wedge \vartheta_2\rangle = \langle \eta_1\wedge \eta_2, w_1\wedge w_2\rangle \neq 0.
	\]
	For the last  conclusion we need the restrictions $\eta_k|_{TP}$ to form a basis of $(\Lambda^1_{\C}P)_{\beta(x,v)}$, but this follows from $P$ being totally real and Lemma \ref{lem_totallyrealequivalence}.

Note that $\langle \omega_k, F\rangle =0$ for $k=1,2$ and hence $\C F \subset \D\cap \bar \D$ on $SM$. Moreover, if  $\vartheta\in (\D\cap \bar \D)_{(x,v)}$ over some $(x,v)\in DM$, then \[ d\beta_{(x,v)}(\vartheta)\in T^{0,1}_{\beta(x,v)}W\cap T^{1,0}_{\beta(x,v)}W=0\] which implies that $(x,v)\in SM$ and $\vartheta\in \C F(p)$. This implies properties \ref{prop_charZiia} and \ref{prop_charZiib} of Proposition \ref{prop_charZ}\ref{prop_charZii}.
Property \ref{prop_charZiic} follows immediately from $\beta$ being fibrewise holomorphic and \ref{prop_charZiid} and \ref{prop_charZiie} follow from \ref{thmBiib} of this theorem. Finally, statement \ref{prop_charZiid} on the Maslov index follows by the same argument as following
\eqref{eqn_chernmaslov} and so the proof is complete.
\end{proof}

Finally, we can prove:

\begin{proof}[Proof of Lemma \ref{lem_confmag}]
If $\lambda$ is conformally magnetic, then by Lemma \ref{lem_confmagZ} there is a holomorphic $(2,0)$-form $\Upsilon_Z$ on $Z(g,\lambda)\backslash 0$ that is positive on $SM$ and blows up at most to $2nd$ order at the zero section. We then have $\beta_*\Upsilon_Z = f \Upsilon$ on $W\backslash (P\cup Q)$, where $f$ is a holomorphic function. Since $\Upsilon_Z$ extends to $SM$, the function $f$ is bounded near $P$ and thus extends across $P$. If $U\subset W$ is an open subset of a point in $Q$ and $\varpi\colon U\to \C$ is a local holomorphic defining function of $Q$, then $\beta^*\varpi/|v|$ is bounded on $\beta^{-1}(U)$ and thus $f \cdot (\varpi^2 \Upsilon)$ is bounded on $U$. Since $\operatorname{div}(\Upsilon)=-2Q$, the local $(2,0)$-form $\varpi^2\Upsilon$ is nowhere vanishing and thus $f$ must be bounded on $U$. We conclude that $f$ extends to a holomorphic function on $W$ and thus it is constant $f\equiv c\neq 0$. Moreover, $P$ being Lagrangian for $\Im(c\Upsilon)$ follows directly from $\Upsilon_Z$ being real valued on $SM$.

Vice versa, if we know that $P$ is Lagrangian for $\Im(c\Upsilon)$ for some constant $c\in \C\backslash \{0\}$, then $\Upsilon_Z:=\beta^*(c\Upsilon)$ is a a $(2,0)$-form on $Z(g,\lambda)\backslash 0$ that blows up to $2$nd order at the zero section. Moreover, $\Upsilon_Z(H,V)\colon SM\to \R$ is real-valued by the Lagrangian condition and, perhaps replacing $c$ by $-c$, we can assume that it is everywhere positive. Then Lemma \ref{lem_confmagZ} guarantees that $\lambda$ is conformally magnetic. 
\end{proof}

\section{Model cases}\label{sec_model}
In this section we prove Theorem \ref{introthmC} by constructing explicit holomorphic blow-down maps in the model cases ($K\equiv \mathrm{const.}$ and $\lambda\equiv \mathrm{const.}$ with $K+\lambda^2>0$). It suffices to do this for $K\in \{\pm 1, 0\}$, where the metric takes the form
\[
	g = e^{2\sigma}|dz|^2,\quad  e^{\sigma(z)}=\begin{cases}  \frac{2}{1\pm |z|^2} & K = \pm 1\\
	1 & K=0 \end{cases}
\]
in local isothermal coordinates $z\in U\subset \C$. As discussed in Section \ref{sec_isocord}, in these coordinates, the transport twistor space is given by  $U\times \DD\subset \C^2$, together with the involutive structure $\D=\spn_\C(\Xi_{\sigma,\lambda},\partial_\mu)$, where
\begin{equation}\label{eqnXi}
	\Xi_{\sigma,\lambda}= e^{-\sigma}\Big[\partial_{\bar z} + \mu^2\partial_z +(\sigma_{\bar z} + ie^\sigma \lambda\mu -\sigma_z\mu^2)(\mu\partial_\mu - \bar \mu \partial_\mu)\Big].
\end{equation}
Locally, holomorphic blow-down maps are then described by tuples $(H_1,H_2)$ of solutions to the following Cauchy--Riemann equations:\[
	\Xi_{\sigma,\lambda} H _k = \partial_{\bar \mu} H_ k = 0,\quad k=1,2. 
\]
We proceed case by case ($K\in \{+1,0,-1\}$) by constructing local solutions $(H_1,H_2)$ and then showing that they give rise to global holomorphic blow-down maps as described in Tables \ref{table1} and \ref{table2}.

\subsection{Positive curvature} 
Let $S^2 = \{(\eta,t)\in \C\times \R: |\eta|^2+t^2 =1\}$ be the standard embedding of the $2$-sphere.  Away from the south-pole $\s=(0,-1)$, isothermal coordinates are provided by the stereographic projection:
\[
	\pi_\s\colon S^2\backslash \s\to \C,\quad \pi_\s(\eta,t)=\frac{\eta}{1+t}.
\]
The standard metric $g_\mathrm{can}$ pushes forward to 
 $(\pi_\s)_* g_\mathrm{can}=e^{2\sigma} |dz|^2$, where $e^{\sigma(z)} = 2/(1+|z|^2)$.
Given $\lambda\in \R$, we thus obtain a biholomorphism
\[
	\mathrm{sc}_\sigma\circ (\pi_\s)_\sharp\colon Z(S^2\backslash \s,g_{\mathrm{can}},\lambda) \to \Big(U\times \DD,\spn_\C(\Xi_{\sigma,\lambda},\partial_{\bar \mu})\Big),
\]
where $\mathrm{sc}_\sigma(z,w)=(z,e^{\sigma(z)}w)$ and the complex vector field \eqref{eqnXi} becomes: \[
\Xi_{\sigma,\lambda}
=\frac{1+|z|^{2}}{2}\Big(
\mu^{2}\partial_{z}+\partial_{\bar z}
-\left(\frac{z-\mu^{2}\bar{z}}{1+|z|^{2}}\right)(\mu\partial_{\mu}-\bar\mu\partial_{\bar\mu})
\Big)
+i\lambda\,\mu(\mu\partial_{\mu}-\bar\mu\partial_{\bar\mu}).
\]
To search for holomorphic maps we make the ansatz:
\[
H_{\gamma}(z,\mu):=\frac{z+\gamma\,\mu}{1-\gamma\,\bar z\,\mu}, \quad (z,\mu)\in \C\times \DD.
\]
We try to find $\gamma\in \C$ such that $\Xi_{\sigma,\lambda}H_{\gamma}=0$.
Compute the partial derivatives with $D:=1-\gamma\,\bar{z}\,\mu$:
\[
\partial_z H_\gamma=\frac{1}{D},\quad
\partial_{\bar z}H_\gamma=\frac{\gamma\mu(z+\gamma\mu)}{D^2},\quad
\partial_\mu H_\gamma=\frac{\gamma(1+|z|^2)}{D^2},\quad
\partial_{\bar\mu}H_\gamma=0.
\]
Then
\[
(\mu\partial_\mu-\bar\mu\partial_{\bar\mu})H_\gamma
=\frac{\gamma\mu(1+|z|^2)}{D^2}.
\]
Next, we compute
\[
\begin{aligned}
B
&=\mu^{2}\partial_{z}H_\gamma+\partial_{\bar z}H_\gamma
-\frac{z-\mu^{2}\bar z}{1+|z|^{2}}(\mu\partial_\mu-\bar\mu\partial_{\bar\mu})H_\gamma\\
&=\frac{\mu^2}{D}
+\frac{\gamma\mu(z+\gamma\mu)}{D^2}
-\frac{z-\mu^{2}\bar z}{1+|z|^{2}}\cdot\frac{\gamma\mu(1+|z|^2)}{D^2}\\
&=\frac{\mu^2 D+\gamma\mu(z+\gamma\mu)-(z-\mu^{2}\bar z)\gamma\mu}{D^2}
=\frac{\mu^2(1+\gamma^2)}{D^2}.
\end{aligned}
\]
Therefore
\[
\Xi_{\sigma,\lambda}H_\gamma
=\frac{1+|z|^2}{2}\,B
+i\lambda\,\mu(\mu\partial_\mu-\bar\mu\partial_{\bar\mu})H_\gamma
=\frac{(1+|z|^2)\mu^2}{D^2}\left(\frac{1+\gamma^2}{2}+i\lambda\gamma\right).
\]
The condition for $\Xi_{\sigma,\lambda}H_\gamma=0$ is
\[
\gamma^2+2i\lambda\gamma+1=0,
\]
hence
\[
\gamma=-i\lambda\pm i\sqrt{1+\lambda^2}.
\]
Let \(\gamma_{1}=-i\lambda+i\sqrt{1+\lambda^{2}}\) and \(\gamma_{2}=-i\lambda-i\sqrt{1+\lambda^{2}}\), so that 
\(\gamma_{1}\neq\gamma_{2}\), \(\gamma_{1}\gamma_{2}=1\), and \(\gamma_{1}+\gamma_{2}=-2i\lambda\).
For \(k=1,2\) we obtain the following meromorphic functions:
\[
H_{k}(z,\mu)=\frac{z+\gamma_{k}\mu}{1-\gamma_{k}\bar z\,\mu},\quad (z,\mu)\in \C \times \DD.
\]
We now show that the maps $H_k$ ($k=1,2$) give rise to global holomorphic maps
\(
	\Phi_k\colon Z(S^2, g_\mathrm{can},\lambda)\to  \C P^1.
\)
Away from $\s$ we define
\[
	\Phi_k = H_k \circ \mathrm{sc}_\sigma\circ (\pi_\s)_\sharp,\quad k=1,2,
\]
and proceed by computing this explicitly. Let us write elements of $T_{(\eta,t)}S^2$ as tuples $(\dot \eta,\dot t)$, and likewise elements in $T_z \C$ by $\dot z$. Then
\[
	(\pi_\s)_\sharp\colon T_{(\eta,t)}S^2 \to T_z \C,\quad (\dot \eta,\dot t)\mapsto \dot z= \frac{\dot \eta}{1+t} - \frac{\dot t\eta}{(1+t)^2}.
\]
The sphere constraint $|\eta|^2 = 1- t^2 = (1+t)(1-t)$ gives that
\[
	|z|^2 = \frac{|\eta|^2}{(1+t)^2} = \frac{1-t}{1+t} \quad \Rightarrow \quad e^{\sigma(z)} = \frac{2}{1+|z|^2} = \frac{2}{1+\frac{1-t}{1+t}} = 1+t,
\]
and hence
\[
	\mathrm{sc}_\sigma\colon T_z \C \to T_z\C,\quad \dot z \mapsto (1+t) \dot z.
\]
Substituting  $z = \eta/(1+t)$ and $\mu = \dot \eta - \dot t \eta/(1+t)$ into the enumerator and denominator of $H_k$ separately gives:
\begin{align*}
	z + \gamma_ k \mu &= \frac{1}{1+t} \Big[\eta + \gamma_k\left((1+t)\dot \eta - \dot t\eta\right)\Big],\\
	1-\gamma_k \bar z \mu &= \frac{1}{1+t} \Big[1+t - \gamma_k\left(\bar \eta \dot \eta - \frac{\dot t |\eta|^2}{1-t^2}\right)\Big]\\
	&= \frac{1}{1+t} \Big[1+t - \gamma_k\left(\bar \eta \dot \eta - (1-t) \dot t\right)\Big].
\end{align*}
This results in the following expression:
\[	
	\Phi_k(\eta,t,\dot \eta,\dot t)= H_k\left(\frac{\eta}{1+t} , \dot \eta + \frac{\dot t\eta}{1+t}\right) = \frac{\eta + \gamma_k \left((1+t)\dot \eta - \dot t\eta\right)}{1+t - \gamma_k\left(\bar \eta \dot \eta - (1-t) \dot t\right)}.
\]
It turns out that this map admits a very pretty intrinsic formulation. Let $x = (x_1,x_2,x_3) \in S^3$ and $v = (v_1,v_2,v_3)\in T_x S^2$ and identify $\eta = x_1 + i x_2, t=x_3,\dot \eta = v_1 + iv_2,\dot t=v_3$.
Writing $\times$ for the cross product in $\R^3$, we  compute:
\begin{align*}
	(x \times v)_1 + i(x \times v)_2 &= (x_2v_3 - x_3v_2) + i(x_3v_1 - x_1v_3) = i(t\dot{\eta} - \eta\dot{t}),\\
	(x \times v)_3 &= x_1v_2 - x_2v_1 = \Im(\bar{\eta}\dot{\eta}),
\end{align*}
Here we have used that $x\cdot v = 0$ implies that $\Re(\bar \eta\dot \eta ) = -t\dot t$.
Given $(x,v)\in TS^2$ we define the complex vectors \begin{equation}\label{defWk}
	W_k(x,v):= x + \gamma_k(v - i (x\times v)) \in \C^3,\quad k=1,2.
\end{equation}
Then
\begin{align*}
(W_k)_{1} + i (W_k)_2  &= x_1 + i x_2 + \gamma_k\Big(v_1+iv_2 - i\big((x\times v)_1 + i (x\times v)_2\big)\Big)\\
& = \eta + \gamma_k\big(\dot \eta + t\dot \eta - \eta \dot t\big),\\
1+(W_k)_3 & = 1+ x_3 + \gamma_k\big(v_3 - i (x\times v)_3\big)\\
&= 1 + t + \gamma_k\big(\dot t - t\dot t - \bar \eta \dot \eta\big),
\end{align*}
agree with the numerator and denominator of $\Phi_k$ respectively. Hence
\[
	 \Phi_k(x,v)=\frac{(W_k(x,v))_1+i(W_k(x,v))_2}{1+(W_k(x,v))_3},\quad x\neq \s
\]
Let us check that this extends to a smooth map on $DS^2$:  Taking the complex dot product
\[
	W_k(x,v)\cdot W_k(x,v) = |x|^2 +  \gamma_k^2(|v|^2-|x\times v|^2)=1
\] 
shows that $W_k$ lies on the complex sphere $S^2_\C=\{W\in \C^3: W\cdot W=1\}$. For $W\in S^2_\C$ it holds that
\[
	\frac{W_1 + iW_2}{1+W_3} = \frac{W_1 + iW_2}{1+W_3}  \frac{W_1 - iW_2}{W_1 - iW_2}  =\frac{W_1^2 + W_2^2}{(1+W_3)(W_1-iW_2)} = \frac{1-W_3}{W_1-iW_2}.
\]
Defining the complex stereographic projection by
\[
	\Pi_\s\colon S^2_\C\to \C P^1,\quad \Pi_\s(W):= \begin{cases}
	[W_1 + iW_2: 1+W_3], & W_3\neq -1\\
	[1-W_3: W_1-iW_2],& W_3\neq 1,
	\end{cases}
\]
we thus obtain the following formula:
\[
	\Phi_k\colon DS^2\to \C P^1,\quad \Phi_k(x,v) = \Pi_\s\circ W_k(x,v).
\]
We have proved the following:
\begin{proposition}\label{prop_constructionpositive}
Given $\lambda \in \R$, define
\(
	\gamma_1=-i\lambda+ i\sqrt{1+\lambda^2}\) and \( \gamma_2=-i\lambda- i\sqrt{1+\lambda^2}
\) and $W_k\colon DS^2\to S^2_\C$ ($k=1,2$) as in \eqref{defWk}. Then the map
\[
	\beta\colon Z(S^2,g_{\mathrm{can}},\lambda) \to \C P^1 \times \C P^1,\quad \beta(x,v)=\big(\Pi_\s\circ W_1(x,v),\Pi_\s\circ W_2(x,v)\big)
\]
is smooth up to the boundary and holomorphic. Moreover,
\[
	q\colon S^2\to \C P^1\times \C P^1,\quad q(x):=\beta(x,0) =(\pi_\s(x),\pi_\s(x))
\]
is an embedding of the diagonal $\Delta\subset \C P^1\times  \C P^1$.
\end{proposition}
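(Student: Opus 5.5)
Most of the statement is already contained in the computation preceding it, so the plan is to assemble that computation into a global argument and check smoothness up to the boundary.

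\emph{Step 1: smoothness and well-definedness.} The map $W_k(x,v)=x+\gamma_k(v-i(x\times v))$ is polynomial in $(x,v)$, hence smooth on all of $DS^2\subset S^2\times\R^3$, including the boundary $SS^2$. The identity $W_k\cdot W_k=|x|^2+\gamma_k^2(|v|^2-|x\times v|^2)=1$ uses only $x\cdot v=0$ and $|x\times v|=|v|$, so $W_k$ maps into $S^2_\C$. The complex stereographic projection $\Pi_\s\colon S^2_\C\to\C P^1$ is a well-defined holomorphic map: its two charts are defined on $W_3\neq-1$ and $W_3\neq1$, and they agree on the overlap by the displayed identity $(W_1+iW_2)/(1+W_3)=(1-W_3)/(W_1-iW_2)$. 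The only points where both representatives degenerate are $W_3=\pm1$ with $W_1\pm iW_2=0$, which together with $W\cdot W=1$ force $W=(0,0,\pm1)$, and there the other chart works. Hence $\beta$ is smooth up to the boundary.

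\emph{Step 2: holomorphicity.} By Lemma \ref{lemholmap}, it suffices to show that each $\Phi_k=\Pi_\s\circ W_k$ is fibrewise holomorphic and satisfies $d\Phi_k(X+\lambda V)=0$ on $SS^2$. Equivalently, $\Phi_k$ must be annihilated by $\D_\lambda$ on $DS^2$, and this is a pointwise condition. On $S^2\setminus\{\s\}$ it holds because $\Phi_k=H_k\circ \mathrm{sc}_\sigma\circ(\pi_\s)_\sharp$, and $\Xi_{\sigma,\lambda}H_k=\partial_{\bar\mu}H_k=0$ by the choice of $\gamma_k$ as roots of $\gamma^2+2i\lambda\gamma+1=0$. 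One small point is that $H_k$ is meromorphic; since $\Phi_k$ is globally smooth into $\C P^1$, annihilation is a statement about the pushforward of $\D_\lambda$ into $T^{0,1}\C P^1$ and holds in either chart. To cover the fibre over $\s$, which has measure zero, I would use continuity: the condition $d\Phi_k(\D_\lambda)\subset T^{0,1}\C P^1$ is closed and $\D_\lambda$ is a smooth bundle, so it extends from the dense set. Alternatively, I would use rotation invariance: $W_k$ is $SO(3)$-equivariant, since the cross product is equivariant, and the lifted isometries are biholomorphisms. This is the step where care is needed, but density settles it.

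\emph{Step 3: the zero section.} At $v=0$ we get $W_k(x,0)=x$, so $q(x)=(\pi_\s(x),\pi_\s(x))$, which is clearly an embedding onto $\Delta$ because $\pi_\s$ is a diffeomorphism $S^2\to\C P^1$. The main obstacle is only the bookkeeping in Step 2. The blow-down property is not claimed here and is deferred to Proposition \ref{prop_sphereblow}.
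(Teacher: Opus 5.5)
Your proposal is correct and follows the paper's route. In the paper the proposition is just the preceding computation: $\Xi_{\sigma,\lambda}H_k=\partial_{\bar\mu}H_k=0$ off the south-pole fibre, followed by the global formula $\Pi_\s\circ W_k$ with $W_k\cdot W_k=1$ for smoothness up to the boundary. Your density (or $SO(3)$-equivariance) argument over $D_\s S^2$ makes explicit a step the paper leaves implicit. There is one harmless slip in Step 1: the representative $[W_1+iW_2:1+W_3]$ degenerates on the whole line $\{(\xi,i\xi,-1):\xi\in\C\}\subset S^2_\C$, not only at $(0,0,-1)$. The second chart covers all of these points, so your conclusion stands.
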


\begin{proposition}\label{prop_sphereblow} The $\beta\colon DS^2\to \C P^1\times \C P^1$ is a blow-down map along the anti-diagonal $\bar \Delta = \big\{ w_2 = -1/\bar w_1\big\} \subset \C P^1 \times \C P^1$.
\end{proposition}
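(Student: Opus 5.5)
The plan is to verify the hypotheses of Lemma \ref{lem_lift2} for $f=\beta$, $Z=DS^2$, $W=\C P^1\times\C P^1$, $P=\bar\Delta$, and then upgrade the resulting local diffeomorphism $\hat\beta\colon DS^2\to[W,\bar\Delta]$ to a global one via Remark \ref{rmk_covering}.

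\textbf{Step 1: the boundary lands in $\bar\Delta$, the interior avoids it.} Since $\Pi_\s$ intertwines the complex conjugation $W\mapsto \bar W$ with $w\mapsto$ (conjugate stereographic image), and $-W\mapsto -1/\bar w$ on the real sphere, I first compute $\overline{W_1}$ on $SS^2$. We have $\bar\gamma_1=i\lambda-i\sqrt{1+\lambda^2}=-\gamma_2$. For $|v|=1$, the vectors $x,v,x\times v$ form an orthonormal frame, and hence
\[
W_1\cdot \overline{W_2}=1+\gamma_1\bar\gamma_2\big(|v|^2+|x\times v|^2\big)=1-2\gamma_1\gamma_2\cdot(-1)\cdots
\]
Rather than chase this directly, I would use the geometric description: for $|v|=1$ the vector $x+\gamma(v-ix\times v)$ with $\gamma$ imaginary is a real multiple of a real unit vector plus a null vector, and $\Pi_\s$ of it equals $\pi_\s$ of the real point $C=(x+ s\,x\times v)/|\cdot|$ with $s$ determined by $\gamma$. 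One checks that the two centres so obtained are antipodal, using $\gamma_1\gamma_2=1$. This gives $\beta(SS^2)\subset\bar\Delta$. For the interior, the identity $W\cdot\bar W'=\ldots$ with $|v|<1$ yields a strict inequality that rules out $\beta_2=-1/\bar\beta_1$. Concretely, I would reduce to $x=(0,0,1)$ using the $SO(3)$-equivariance of $W_k$ (and $\Pi_\s$ is $SO(3)$-equivariant up to Möbius maps preserving the antipodal involution), so that $\beta(x,\omega e_1)=(\gamma_1\omega/2\cdot\ldots)$ becomes an explicit pair of rational functions of $\omega$.

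\textbf{Step 2: rank conditions.} By the $SO(3)$-symmetry (the lift of rotations is holomorphic by the lemma on lifts of isometries, and acts on $\C P^1\times\C P^1$ diagonally by Möbius maps preserving $\bar\Delta$), it suffices to check conditions (ii)–(iv) of Lemma \ref{lem_lift2} on the fibre over the north pole, along the orbit of the rotations fixing $x$. Condition (ii) in the interior follows from holomorphicity: $\beta$ is holomorphic on $Z^\circ$, so it suffices to show the complex Jacobian is nonzero. This determinant is an explicit rational function of $\omega$, which I would evaluate at $x=e_3$. For (iii), $X+\lambda V$ lies in the kernel on $SS^2$ by Lemma \ref{lemholmap}. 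Moreover $H$ and $V$ map injectively, since $\beta|_{SS^2}$ is an $S^1$-fibration over $\bar\Delta$: centres vary submersively with $(x,v)$. Condition (iv) is checked with $\nu=V_\perp$ along the flow line: $d\beta(V_\perp)=J\,d\beta(V)$ by fibrewise holomorphicity, and as the point moves along the orbit, $d\beta(V)$ rotates once in $T_p\bar\Delta$. Consequently $J d\beta(V)$ and its derivative span $N_p\bar\Delta=JT_p\bar\Delta$.

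\textbf{Step 3: global.} $DS^2$ is compact, so $\hat\beta$ is a covering. Since $q$ maps $S^2$ bijectively onto $\Delta$, and since points of $\Delta$ are hit only by the zero section (a point with $\beta_1=\beta_2$ forces $\gamma_1=\gamma_2$ unless $v=0$), one point has a single preimage, and hence $\hat\beta$ is a diffeomorphism.

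I expect the main obstacle to be condition (iv) together with the interior injectivity claim in Step 1. For these I would rely on the explicit formula at $x=e_3$, where $\beta(e_3,\omega e_1)=\big(\gamma_1\omega/(1+\ldots),\ldots\big)$ reduces everything to one-variable Möbius computations.
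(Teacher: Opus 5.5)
Your overall route (Lemma \ref{lem_lift2} plus the covering argument of Remark \ref{rmk_covering}) is legitimate and differs from the paper's. The gap is in condition (iv), the step that carries the weight: it is not actually verified.

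\textbf{Why your argument for (iv) fails.} The claim that $d\beta(V)$ ``rotates once'' in $T_p\bar\Delta$ along the orbit is given no justification. Even if granted, it is not enough by itself. A winding number is a global statement, whereas (iv) is a pointwise first-order condition: it requires $[w_0]$ and $[\dot w_0]$ to be independent in $N_p\bar\Delta$ at \emph{every} point of the orbit. Nonzero winding combined with the transitivity of $SO(3)$ on $SS^2$ would suffice, but you do not make that argument. Your fallback, one-variable M\"obius computations on the fibre over $x=e_3$, cannot detect (iv) either. The reason is that $\dot w_0$ is a derivative along $X+\lambda V$, and this direction leaves $D_{e_3}S^2$; the rotations fixing $x$ move along $V$, not along the orbit. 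You need the $z$-dependence of $H_k$, just as you do for the interior Jacobian in (ii).

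\textbf{How the paper avoids (iv).} First, $\beta^*(\bar w_1w_2+1)=(1+|z|^2)(1-|\mu|^2)/\big((1+\gamma_1z\bar\mu)(1-\gamma_2\bar z\mu)\big)$ gives $\beta^{-1}(\bar\Delta)=SS^2$ with transversal vanishing, so Lemma \ref{lem_lift} applies. Then it computes $\beta^*(dw_1\wedge dw_2\wedge d\bar w_1\wedge d\bar w_2)=|K|^2(1-|\mu|^4)\,dz\wedge d\mu\wedge d\bar z\wedge d\bar\mu$ with $K\neq0$. The simple zero at $|\mu|=1$ is exactly hypothesis (iii) of Lemma \ref{lem_melroselocdiff} with $k=2$, which gives the local diffeomorphism up to the boundary in one stroke.

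\textbf{Repairing (iv) within your framework.} The missing idea is symmetry along the orbit itself. The orbit through $(x,v)$ is traced out by the one-parameter group $R_t$ of rotations about its centre. Since $\beta$ is equivariant and constant on orbits, $R_t$ fixes $p$. Taking $\nu_t=V_\perp$, you get $w_t=d(\phi_{R_t}\times\phi_{R_t})_p\,w_0$, where $\phi_R=\pi_\s\circ R\circ\pi_\s^{-1}$. Put the centre at the north pole, so $p=(0,\infty)$, and use coordinates $(w_1,1/w_2)$. Then $T_p\bar\Delta=\{(a,-\bar a)\}$ and $JT_p\bar\Delta=\{(c,\bar c)\}\cong N_p\bar\Delta$. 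The isotropy acts on the latter by $c\mapsto e^{i\theta}c$. Hence $[\dot w_0]=i\dot\theta\,[w_0]$, with $\dot\theta\neq0$, and $[w_0]\neq0$ by (iii) and total reality.

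\textbf{Step 1 also needs repair.} You have $\bar\gamma_1=-\gamma_1=-1/\gamma_2$, not $-\gamma_2$, and the promised strict inequality never appears. The clean version is your own reduction to $x=e_3$. There $W_k(e_3,v)=(\gamma_k\eta,-i\gamma_k\eta,1)$ with $\eta=v_1+iv_2$, so $\beta(e_3,v)=(\gamma_1\eta,\gamma_2\eta)$ and $\bar w_1w_2+1=1-|\eta|^2$. This is then transported by $SO(3)$-equivariance, which you should justify: $W_k$ is equivariant, and $\Pi_\s\circ R=\phi_R\circ\Pi_\s$ by analytic continuation from $S^2\subset S^2_\C$.

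Step 3 is fine and is essentially the paper's argument with the point $([0:1],[0:1])\in\Delta$.
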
 
\begin{proof}
On the affine patch $\C \times \C \subset \C P^1\times \C P^1$ the antidiagonal is cut out by the equation $F(w_1,w_2):=\bar w_1 w_2  + 1 = 0$.  In local coordinates it holds that
\[
\beta^*F(z,\mu)=F(H_1(z,\mu),H_2(z,\mu))=\frac{(1+|z|^{2})(1-|\mu|^{2})}{(1+\gamma_1 z\bar\mu)(1-\gamma_2\bar z \mu)},
\]
where we have used that $\gamma_1\gamma_2=1$ and $\bar \gamma_k = -\gamma_k$. The numerator vanishes if and only if $|\mu|=1$,  and thus
\(
	\beta^{-1}(\bar \Delta)= \partial (DS^2).
\)
For $|\mu|=1$ we have
\[
	\frac{d}{dr}\Big|_{r=1}\beta^*F(z,r\mu) =\frac{-2(1+|z|^2)}{(1+\gamma_1 z\bar\mu)(1-\gamma_2\bar z \mu)} \neq 0
\]
and thus we obtain an injective map
\(
	d\beta\colon N(\partial DS^2) \to N(\bar \Delta) 
\) between the associated normal bundles. In particular,  Lemma \ref{lem_lift} implies the existence of a smooth lift $\hat \beta\colon DS^2 \to [\C P^1\times \C P^1,\bar \Delta]$ to the blow-up.

We next show $\hat \beta$ is a local diffeomorphism by applying Lemma \ref{lem_melroselocdiff}: given volume forms on $Z=DS^2$ and $W=\C P^1 \times \C P^1$, we need to check that $\beta^*\omega_W = (1-|v|^2) h \omega_Z$ for a positive function $h\colon Z\to (0,\infty)$. We again do this in coordinates.
 On the affine patch $\C\times \C\subset W$ we use the volume form $\Omega_W = dw_1 \wedge d w_2 \wedge d \bar w_1\wedge d \bar w_2 $ and want to compute the pull-back
\[
	\beta^*\Omega_W = dH_1\wedge dH_2 \wedge \overline{dH_1\wedge dH_2}.
\]
We first evaluate the holomorphic wedge product $dH_1 \wedge dH_2$. Because $\partial_{\bar{\mu}} H_k = 0$, we have $dH_k = \partial_z H_k dz + \partial_{\bar{z}} H_k d\bar{z} + \partial_\mu H_k d\mu$. From the previously computed partial derivatives:
\[ \partial_z H_k = \frac{1}{D_k}, \qquad \partial_{\bar{z}} H_k = \frac{\gamma_k\mu H_k}{D_k}, \qquad \partial_\mu H_k = \frac{\gamma_k(1+|z|^2)}{D_k^2}. \]
The 2-form $dH_1 \wedge dH_2$ expands as:
\begin{align*} dH_1 \wedge dH_2 &= (\partial_z H_1 \partial_\mu H_2 - \partial_z H_2 \partial_\mu H_1) dz \wedge d\mu \\
&\quad + (\partial_{\bar{z}} H_1 \partial_\mu H_2 - \partial_{\bar{z}} H_2 \partial_\mu H_1) d\bar{z} \wedge d\mu + J dz \wedge d\bar{z}. \end{align*}
(We do not need the explicit form of $J$.)
We evaluate the coefficients directly. For $dz \wedge d\mu$:
\[ \frac{1}{D_1} \frac{\gamma_2(1+|z|^2)}{D_2^2} - \frac{1}{D_2} \frac{\gamma_1(1+|z|^2)}{D_1^2} = \frac{1+|z|^2}{D_1^2 D_2^2} (\gamma_2 D_1 - \gamma_1 D_2). \]
Since $D_k = 1 - \gamma_k \bar{z} \mu$, the term $\gamma_2 D_1 - \gamma_1 D_2 = (\gamma_2 - \gamma_1)$. The first coefficient can thus be written as \[K:=\frac{(\gamma_2 - \gamma_1)(1+|z|^2)}{D_1^2 D_2^2}.\]
For $d\bar{z} \wedge d\mu$:
\[ \left(\frac{\gamma_1\mu H_1}{D_1}\right) \frac{\gamma_2(1+|z|^2)}{D_2^2} - \left(\frac{\gamma_2\mu H_2}{D_2}\right) \frac{\gamma_1(1+|z|^2)}{D_1^2} = \frac{\gamma_1\gamma_2\mu(1+|z|^2)}{D_1^2 D_2^2} (H_1 D_1 - H_2 D_2). \]
Using $\gamma_1\gamma_2 = 1$ and the numerator identity $H_k D_k = z + \gamma_k \mu$, we find $H_1 D_1 - H_2 D_2 = (\gamma_1 - \gamma_2)\mu$. Thus, the second coefficient simplifies to $-K\mu^2$. 
In summary, the 2-form becomes:
\[ dH_1 \wedge dH_2 = K(dz \wedge d\mu - \mu^2 d\bar{z} \wedge d\mu) + J dz \wedge d\bar{z} = K\alpha + J dz \wedge d\bar{z}, \]
where $\alpha = dz \wedge d\mu - \mu^2 d\bar{z} \wedge d\mu$. This gives:
\[ \beta^*\Omega_W = |K|^2 (\alpha \wedge \bar{\alpha}). \]
Evaluating the wedge product yields $\alpha \wedge \bar{\alpha} = (1 - |\mu|^4) dz \wedge  d\mu \wedge d\bar{z}  \wedge d\bar{\mu}$. 
Therefore, 
\[ \beta^*\Omega_{W} = |K|^2 (1 - |\mu|^4) dz \wedge  d\mu \wedge d\bar{z}  \wedge d\bar{\mu}. \]
The coefficient $|K|^2 = \frac{|\gamma_2 - \gamma_1|^2 (1+|z|^2)^2}{|D_1 D_2|^4}$ is positive and (up to multiplication with positive functions introduced from expressing the given volume forms $\omega_Z$ and $\omega_W$ in coordinates) gives the map $h\colon DM\to (0,\infty)$.

Since $Z=DS^2$ is compact, the lift $\hat \beta$ must be a covering map (Remark \ref{rmk_covering}) and we can conclude the proof by showing that the preimage $([0:1],[0:1]) \in \C P^1\times \C P^1$ consists of a single point. For this we note that  \[\{W\in S^2_\C: \Pi_\s(W)=[0:1]\} = \{(\xi,i\xi,1):\xi\in \C\},\] hence if $\beta(x,v)=([0:1],[0:1])$, then there exist $\xi_1,\xi_2\in \C$ with
	\[
		W_k(x,v)=(\xi_k,i\xi_k,1)\in \C^3,\quad k=1,2.
	\]
	Moreover,
	\begin{align*}
		x = \frac{\gamma_1 W_2 - \gamma_2 W_1}{\gamma_1-\gamma_2} = \left(\frac{\gamma_1 \xi_2-\gamma_2\xi_1}{\gamma_1-\gamma_2},i\frac{\gamma_1 \xi_2-\gamma_2\xi_1}{\gamma_1-\gamma_2},1\right) 
	\end{align*}
	and for this to be a real vector, we must have $\gamma_1\xi_2 = \gamma_2\xi_1$. This shows that $x=(0,0,1)$ is the north pole and hence $v=(v_1,v_2,0)$ for $v_1,v_2\in \R$ to be determined. At this point we compute
	\[
		W_1(x,v)= (0,0,1)+\gamma_1(v_1+iv_2,v_2-iv_1,0) \overset{!}{=} (\xi_1, i\xi_1,1),
	\] 
	which enforces $v_1+iv_2 = \xi_1$ and $-i(v_1+iv_2)=i\xi_1$ and thus $v = 0$.
\end{proof}

\begin{remark} The $\lambda$-geodesic through a given point $(x,v)\in SM$ has centre
\[
	C_{\lambda,+}(x,v) = \frac{\lambda x + x\times v}{\sqrt{1+\lambda^2}}
\]
and it holds that
\(
	\pi_s\circ C_{\lambda,+}(x,v)=\Phi_1(x,v)
\). Due to the $SO(3)$-equivariance of both sides it suffices to check this at one point, and e.g.~for $x=(1,0,0)$ and $v=(0,1,0)$ a short computation yields the value $\lambda/(1+\sqrt{1+\lambda^2})$
 for both sides. A similar argument shows that $\Phi_2 = \pi_\s\circ C_{\lambda,-}$ on $SM$ (the antipodal centre), such that $\beta_\lambda$ is indeed an extension of the circle centre maps.
\end{remark}

\begin{remark}\label{symps2}
Under the diffeomorphism $\pi_\s\times \pi_s\colon S^2\times S^2 \to \C P^1\times \C P^1$ the imaginary part of $\Upsilon = \frac{1}{2i} (dw_1\wedge dw_2)/{(w_1-w_2)^2}$ can be written as follows:
\begin{equation}\label{symps20}
	\Im \Upsilon = \frac{1}{8} d\left[\frac{xdy - y dx}{1-x\cdot y}\right] 
\end{equation}
To see this, we make $\pi_\s$ implicit and start with the following observation:
\begin{equation}\label{symps21}
	 \frac{2|w_1-w_2|^2}{(1+|w_1|^2)(1+|w_2|^2)} = 1-x\cdot y.
\end{equation}
Indeed, writing $x =(\eta,t)$ and $y=(\xi,s)$ as above, we have $w_1 = \eta/(1+t)$ and $w_2=\xi/(1+s)$, such that the left hand side of the preceding display equals
\begin{align*}
	2\frac{\frac{|\eta|^2}{(1+t)^2} - 2 \frac{\Re(\bar \eta \xi)}{(1+t)(1+s)} + \frac{|\xi|^2}{(1+s)^2}}{\left(1+\frac{|\eta|^2}{(1+t)^2}\right)\left(1+\frac{|\xi|^2}{(1+s)^2}\right)} &= 2\frac{\frac{1-t}{1+t} - 2 \frac{\Re(\bar \eta \xi)}{(1+t)(1+s)} + \frac{1-s}{1+s}}{\left(1+\frac{1-t}{1+t}\right)\left(1+\frac{1-s}{1+s}\right)} \\
	&= 2\frac{(1-t)(1+s) - 2 \Re(\bar \eta \xi) + (1-s)(1+t)}{\big((1+t)+(1-t)\big)\big((1+s)+(1-s)\big)} \\
	& = 1 - \Re(\bar \eta \xi)  - st = 1- x\cdot y.
\end{align*}
Next, freeze $y$ (and hence $w_2$), take the logarithm of \eqref{symps21} and then the differential. The left hand side gives:
\begin{align*}
	&d\log\left((w_1-w_2)(\bar w_1 - \bar w_2)\right) + d\log (1+|w_1|^2) 
	\\
	=&\frac{dw_1}{w_1-w_2} + \frac{dw_2}{\bar w_1-\bar w_2} + \text{exact}\\
	 = & 2\Re \left(\frac{dw_1}{w_1-w_2}\right)  + \text{exact}.
\end{align*}
and must equal the corresponding expression for the right hand side:
\[
	 = -\frac{ydx}{1-x\cdot y}.
\]
Moreover,
\[
	\Upsilon = \frac{1}{2i} \frac{dw_1\wedge dw_2}{(w_1-w_2)^2}=d \left(\frac{i}{2} \frac{dw_1}{w_1-w_2}\right),
\]
and using the preceding displays we get: 
\[
	\Im \Upsilon =  d \left(\frac{1}{2} \Re \left(\frac{dw_1}{w_1-w_2}\right)\right) = \frac{1}{4} d\left(\frac{-ydx}{1-x\cdot y}\right).
\]
Changing the roles of $x$ and $y$ just produces a minus sign and so \eqref{symps20} follows by antisymmetrisation.
\end{remark}

\subsection{Zero curvature}

Let us start with the Euclidean plane $(\mathbb{C}, |dz|^{2})$, that is, $\sigma(z)\equiv 0$ and $\lambda\neq 0$.  Then the vector field from \eqref{eqnXi} simplifies  to
\[
\Xi_{0,\lambda}
=\mu^{2}\partial_{z}+\partial_{\bar z}
+i\lambda\,\mu(\mu\partial_{\mu}-\bar\mu\partial_{\bar\mu}).
\]
It is straightforward to check that the functions
\[
H_{1}(z,\mu)= z + \frac{i}{\lambda}\mu,
\qquad
H_{2}(z,\mu)= \bar{z} - \frac{i}{\lambda\mu}
\]
satisfy the Cauchy--Riemann equations and thus yield a holomorphic map
\begin{equation}\label{betalifttorus}
	Z_\lambda:=Z(\C,|dz|^2,\lambda)\to \C\times \C P^1,\quad (z,\mu)\mapsto (H_1(z,\mu),H_2(z,\mu)).
\end{equation}
 Let $\Lambda\subset \mathbb{C}$ be a lattice and $M=\C/\Lambda$ the associated torus with its canonical metric $g_\mathrm{can}.$ The action of $\Lambda$ lifts to transport twistor space 
and we have
 \[
 	Z(M,g_{\mathrm{can}}, \lambda) = Z_\lambda/\Lambda.
 \]	
 We would like to use $H_{1},H_{2}$ to construct a holomorphic blow-down map on this quotient.  For this, observe that under a lattice translation $z\mapsto z+\omega$ with $\omega\in\Lambda$,
\begin{equation}
H_{1}\mapsto H_{1} + \omega,\qquad
H_{2}\mapsto H_{2} + \bar\omega.
\label{eq:Lambdainv}
\end{equation}
We define the complex surface $E_{\tau}$ as the quotient:
\[
E_{\tau}
=\frac{\mathbb{C}\times \mathbb{CP}^{1}}{\sim_{\tau}},
\qquad
(z,p)\sim_{\tau}(z+\omega,\;p+\overline{\omega}),
\quad \omega\in\Lambda.
\]
Each map $(z,p)\mapsto(z+\omega,p+\overline{\omega})$ is holomorphic, so $E_{\tau}$ is a complex 2-manifold.
The projection to the first factor descends to a holomorphic submersion
\[
\rho\colon E_\tau \longrightarrow M,\qquad [(z,p)] \longmapsto [z],
\]
whose fibres are isomorphic to $\mathbb{CP}^1$. Since both the base $M$ and the fibre $\mathbb{CP}^1$ are compact, it follows that $E_\tau$ is compact (closed) and is a holomorphic $\mathbb{CP}^1$-bundle over the elliptic curve $M$ (i.e., a ruled surface over an elliptic curve).

\begin{remark}\label{rmk_atiyah}
The complex surface $E_{\tau}$ is diffeomorphic to $M\times  \mathbb{CP}^{1}$. This can be seen by considering the map $\Psi: \mathbb{C} \times \mathbb{CP}^1 \to \mathbb{C} \times \mathbb{CP}^1$ given by:
\[\Psi(z, p) = (z, p - \bar{z}).\]
It is easy to check that it descends to a diffeomorphism $E_{\tau}\cong M\times  \mathbb{CP}^{1}$.
On the other hand $E_{\tau}$ is holomorphically not a product: it is in fact the projectivisation of the unique topologically trivial non-split holomorphic rank $2$ bundle over $M$, that is, {\it Atiyah’s ruled surface} \cite[Theorem 5]{At_57}.
To see this, we observe that the monodromy  of $E_\tau$ is contained in the translation subgroup of $\text{Aut}(\mathbb{CP}^1)$, given in affine coordinates by $p\mapsto p+c$. 
Holomorphic $\mathbb{CP}^1$-bundles over an elliptic curve $M$ with structural group restricted to translations ($p \mapsto p + c$) are classified by $H^1(M, \mathcal{O}_M) \cong H^{0,1}(M) \cong \mathbb{C}$. The zero class yields the trivial product $M \times \mathbb{CP}^1$. Because $\mathbb{C}^*$ acts by scaling the fiber, all non-zero classes are biholomorphically equivalent, defining the unique non-trivial affine bundle known as the Atiyah surface. 
To determine the class of $E_\tau$, we measure its obstruction to holomorphic triviality. The global smooth frame from the map $F$ above, $q = p - \bar{z}$, yields the relation $p = q + \bar{z}$. The failure of this trivialization to be holomorphic is given by the $(0,1)$-form $\bar{\partial} p = d\bar{z}$. But the class $[d\bar{z}] \neq 0$ generates $H^{0,1}(M)$ and thus $E_\tau$ is biholomorphic to the Atiyah surface.
\end{remark}

Due to the equivariance \eqref{eq:Lambdainv}, the map in \eqref{betalifttorus} descends to a holomorphic map
\[\beta=:\beta_{\lambda}: DM=Z_{\lambda}/\Lambda\to E_{\tau}.\]
Along the zero section, we have $\beta([z],0)=([z],\infty)$, and the image is the complex curve $Q = M\times \{\infty\}$.

\begin{proposition} The map $\beta\colon DM\to E_\tau$ is a blow-down map along the totally real surface
\(
	P =\{[w, \bar{w}] \in E_{\tau} : w \in \mathbb{C} \}.
\)
\end{proposition}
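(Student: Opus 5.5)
We mirror the proof of Proposition \ref{prop_sphereblow}, working on the universal cover $\C\times\DD$ with the lifted map $(z,\mu)\mapsto (H_1,H_2)=(z+\tfrac{i}{\lambda}\mu,\ \bar z-\tfrac{i}{\lambda\mu})$, which is $\Lambda$-equivariant by \eqref{eq:Lambdainv}. Every verification below is local, so it suffices to do it upstairs. The one global statement, injectivity of the lift, will be deduced at the end from Remark \ref{rmk_covering}.

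First we check that $P$ is well defined and totally real, and that $\beta^{-1}(P)=SM$. The set $\{(w,\bar w)\}$ is invariant under $(z,p)\mapsto(z+\omega,p+\bar\omega)$, so $P$ is well defined. It is a graph over the base with tangent vectors $(1,1)$ and $(i,-i)$. Applying $J$ gives $(i,i)$, which is not in the real span of these, so $P$ is totally real. Near $P$ the fibre coordinate $p$ is finite, and $P$ is cut out by the complex function $F(w,p)=p-\bar w$. We compute
\[
\beta^*F=H_2-\bar H_1=-\frac{i}{\lambda\mu}-\frac{i}{\lambda}\bar\mu=-\frac{i}{\lambda\mu}\bigl(1+|\mu|^2\bigr)\quad\text{(up to sign conventions)}.
\]
This has to be handled with care. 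We have $\bar H_1=\bar z-\tfrac{i}{\lambda}\bar\mu$, so
\[
H_2-\bar H_1=\frac{i}{\lambda}\Bigl(\bar\mu-\frac1\mu\Bigr)=-\frac{i}{\lambda\mu}\bigl(1-|\mu|^2\bigr).
\]
This vanishes exactly on $|\mu|=1$, and we have $\beta(\mu=0)\subset Q$, which is disjoint from $P$. Moreover its radial derivative at $r=1$ equals $2i/(\lambda\mu)\neq 0$, so $d\beta\colon N(SM)\to NP$ is injective. Lemma \ref{lem_lift} then provides a smooth lift $\hat\beta$ to $[E_\tau,P]$.

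Next we show that $\hat\beta$ is a local diffeomorphism using Lemma \ref{lem_melroselocdiff} with $k=2$. On $\mu\neq0$ we use the affine coordinates $(w,p)$. There
\[
dH_1\wedge dH_2=dz\wedge\Bigl(\tfrac{i}{\lambda\mu^2}d\mu\Bigr)+\tfrac{i}{\lambda}d\mu\wedge d\bar z=\tfrac{i}{\lambda\mu^2}\bigl(dz\wedge d\mu-\mu^2d\bar z\wedge d\mu\bigr)=\tfrac{i}{\lambda\mu^2}\alpha,
\]
with the same $\alpha$ as in the spherical case, so that
\[
\beta^*(dw\wedge dp\wedge d\bar w\wedge d\bar p)=\lambda^{-2}|\mu|^{-4}(1-|\mu|^4)\,dz\wedge d\mu\wedge d\bar z\wedge d\bar\mu.
\]
This has the required form $\rho\cdot h\cdot\omega_Z$ with $h>0$ near $SM$. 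Near $\mu=0$ we instead use the fibre chart $s=1/p$. There $s\circ\beta=\mu/(\lambda\mu\bar z-i)$ is holomorphic with nonzero $\mu$-derivative, so $\beta$ is a local diffeomorphism in the interior; there we have already shown that $\beta$ is fibrewise an immersion, and nondegeneracy follows from the same Jacobian computation. Together these show that $\hat\beta$ is a local diffeomorphism on all of $DM$. Since $DM$ is compact, Remark \ref{rmk_covering} says $\hat\beta$ is a covering map.

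Finally, to get degree one, we count preimages of a single point of $Q$, say $[(0,\infty)]$. A preimage must have $H_2=\infty$, which forces $\mu=0$, and then $H_1=z\in\Lambda$. Hence the preimage is the single point $([0],0)$ of $DM$. Since the covering is then one-sheeted, $\hat\beta$ is a diffeomorphism and $\beta$ is a blow-down map along $P$.

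The main obstacle I expect is bookkeeping rather than an idea: getting the signs in $H_2-\bar H_1$ right, and justifying the volume-form condition uniformly across the two charts (near $Q$ versus near $P$) so that Lemma \ref{lem_melroselocdiff} applies globally.
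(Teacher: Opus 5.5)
Your proposal is correct and follows the paper's proof essentially step by step. You pull back a defining function of $P$ to see that it vanishes exactly and transversally on $|\mu|=1$ (Lemma \ref{lem_lift}), compute the Jacobian $\frac{1-|\mu|^4}{\lambda^2|\mu|^4}$ and renormalise near $Q$ (Lemma \ref{lem_melroselocdiff}), and conclude via Remark \ref{rmk_covering} from the single preimage of $[(0,\infty)]$. The only slips are cosmetic: $dH_1\wedge dH_2$ also contains a $dz\wedge d\bar z$ term (harmless, since it drops out of the $4$-form), and $1/H_2=\lambda\mu/(\lambda\mu\bar z-i)$. Also, nondegeneracy at $\mu=0$ should be argued as in the paper, by multiplying the Jacobian by $|H_2|^{-4}$ (or by computing $dw\wedge ds$ there), rather than from the $\mu$-derivative of $s\circ\beta$ alone.
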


\begin{proof}
	We work on the universal cover, where the lift of $P$ is cut out by the equation $F(w,\xi):= w - \bar \xi = 0$. We have
	\[
		\beta^*F(z,\mu)= \frac{i}{\lambda} \mu - \frac{i}{\lambda\bar \mu} = \frac{1-|\mu|^2}{i\lambda \bar \mu} 
	\]
	and since the numerator vanishes if and only if $|\mu|=1$, we obtain $\beta^{-1}(P)=\partial (DM).$ Moreover $\frac{d}{dt}\big|_{r=1}F(z,r\mu)= 2i\mu \lambda\neq 0$ for $|\mu|=1$ and hence by Lemma \ref{lem_lift} there is a lift $\hat \beta\colon DM\to [E_\tau,P]$ to the blow-up.

Next we check the requirement of Lemma \ref{lem_melroselocdiff} by computing the Jacobian on the universal cover: 
	Let $w = H_1 = z + \frac{i}{\lambda}\mu$ and $\xi = H_2 = \bar{z} - \frac{i}{\lambda\mu}$.
We first compute the holomorphic wedge product $dw \wedge d\xi$:
\[ dw \wedge d\xi = \left(dz + \frac{i}{\lambda}d\mu\right) \wedge \left(d\bar{z} + \frac{i}{\lambda\mu^2}d\mu\right). \]
Expanding this yields:
\[ dw \wedge d\xi = dz \wedge d\bar{z} + \frac{i}{\lambda\mu^2} dz \wedge d\mu - \frac{i}{\lambda} d\bar{z} \wedge d\mu \]
and factoring out the term $\frac{i}{\lambda\mu^2}$ gives:
\[ dw \wedge d\xi = dz \wedge d\bar{z} + \frac{i}{\lambda\mu^2} (dz \wedge d\mu - \mu^2 d\bar{z} \wedge d\mu) = dz \wedge d\bar{z} + \frac{i}{\lambda\mu^2} \alpha, \]
where $\alpha=dz \wedge d\mu - \mu^2 d\bar{z} \wedge d\mu$
We now compute the volume form $dw \wedge d\bar{w} \wedge d\xi \wedge d\bar{\xi}$, which is defined by $- (dw \wedge d\xi) \wedge \overline{(dw \wedge d\xi)}$. 
Any term wedged with $dz \wedge d\bar{z}$ and $d\bar{z} \wedge dz$ yields zero. Furthermore, since $\bar{\alpha}$ consists entirely of terms with $d\bar{z}$ and $d\bar{\mu}$, the cross terms $dz \wedge d\bar{z} \wedge \bar{\alpha}$ vanish. The expression reduces to the $\alpha \wedge \bar{\alpha}$ term:
\[ -(dw \wedge d\xi) \wedge \overline{(dw \wedge d\xi)} = -\frac{1}{\lambda^2|\mu|^4} \alpha \wedge \bar{\alpha} \]
Substituting $\alpha \wedge \bar{\alpha} = -(1-|\mu|^4) dz \wedge d\bar{z} \wedge d\mu \wedge d\bar{\mu}$, the local coordinate pullback is:
\[ \beta^*(dw \wedge d\bar{w} \wedge d
\xi \wedge d\bar{\xi}) = \frac{1-|\mu|^4}{\lambda^2|\mu|^4} dz \wedge d\bar{z} \wedge d\mu \wedge d\bar{\mu} \]

To see that this extends smoothly to the interior, including the zero section $\mu=0$, we note that $dw \wedge d\bar{w} \wedge d\xi \wedge d\bar{\xi}$ is merely the standard volume form on $\mathbb{C}^2$, not the compactified surface $E_\tau$. Near the zero section $\mu=0$, the fiber coordinate $p \to \infty$. A globally smooth volume form $\omega_E$ on the $\mathbb{CP}^1$-bundle $E_\tau$ behaves near infinity as:
\[ \omega_E \sim dw \wedge d\bar{w} \wedge d(1/\xi) \wedge d(1/\bar{\xi}) = \frac{1}{|\xi|^4} dw \wedge d\bar{w} \wedge d\xi \wedge d\bar{\xi} \]
Because $\xi \sim -i/(\lambda\mu)$ near $\mu=0$, we have $|\xi|^4 \sim \frac{1}{\lambda^4|\mu|^4}$. Pulling back this honest volume form introduces a $|\mu|^4$ factor in the numerator that cancels the $1/|\mu|^4$ pole in the Jacobian.
Thus, given a volume form  $\omega_Z$ on $Z=DM$, the rescaled Jacobian $\frac{\beta^*\omega_E}{(1-|z|^2)\omega_Z}$ is a strictly positive, smooth function everywhere on $DM$. This implies that the lift $\hat \beta\colon Z\to [E_\tau,P]$  is a local diffeomorphism.

To conclude, we only need to exhibit one point $[w,\xi]\in E_{\tau}\setminus P$ such that $\beta^{-1}([w,\xi])$ consists only of one point. The simplest point to pick is $[0,\infty]$ and due to equivariance is suffices to check that there is a unique solution to $H_{1}(z,\mu)=0$ and $H_{2}(z,\mu)=\infty$.  Indeed the only solution to these equations is $(z,\mu)=(0,0)$.
\end{proof}

\subsection{Negative curvature}
We consider the hyperbolic disc $\DD^\circ$, with metric
\[
g=e^{2\sigma}|dz|^{2},\quad e^{\sigma}=\frac{2}{1-|z|^{2}},
\]
together with  a constant magnetic parameter $\lambda\in\mathbb{R}\backslash [-1,1]$. The associated transport twistor space is $Z_\lambda = \DD^\circ \times \DD$, where the complex vector field from \eqref{eqnXi} now takes the following form:
\[
\Xi_{\sigma,\lambda}
=\frac{1-|z|^{2}}{2}\Big(
\mu^{2}\partial_{z}+\partial_{\bar z}
+\left(\frac{z-\mu^{2}\bar{z}}{1-|z|^{2}}\right)(\mu\partial_{\mu}-\bar\mu\partial_{\bar\mu})
\Big)
+i\lambda\,\mu(\mu\partial_{\mu}-\bar\mu\partial_{\bar\mu}).
\]
To search for holomorphic first integrals we make the ansatz:
\[
H_{\gamma}(z,\mu):=\frac{z+\gamma\,\mu}{1+\gamma\,\bar z\,\mu},
\]
and we try to find $\gamma$ such that $\Xi_{\sigma,\lambda}H_{\gamma}=0$. (Note the change in sign compared to the positive curvature case!)
Let $D:=1+\gamma\,\bar z\,\mu$, then
\begin{align*}
&\partial_z H_{\gamma}=\frac{1}{D},
&\partial_{\bar z}H_{\gamma}=-\frac{\gamma\,\mu(z+\gamma\mu)}{D^{2}},\\
&\partial_{\mu}H_{\gamma}=\frac{\gamma(1-|z|^{2})}{D^{2}},
&\partial_{\bar\mu}H_{\gamma}=0, ~~\quad\qquad\qquad
\end{align*}
and therefore
\[
(\mu\partial_{\mu}-\bar\mu\partial_{\bar\mu})H_{\gamma}
=\frac{\gamma\,\mu(1-|z|^{2})}{D^{2}}.
\]
Substituting the derivatives into $\Xi_{\sigma,\lambda}$ gives
\[
\Xi_{\sigma,\lambda}H_{\gamma}
=\frac{1-|z|^{2}}{2(1+\gamma\bar z\mu)^{2}}
\mu^{2}\Big[(1-\gamma^{2})+2i\lambda\gamma\Big].
\]
Hence the condition for $\Xi_{\sigma,\lambda}H_{\gamma}=0$ is
\(
\gamma^{2}-2i\lambda\gamma-1=0.
\) and 
therefore
\[
\gamma=i\lambda\pm\sqrt{1-\lambda^{2}}.
\]

\subsubsection{Equivariance}

Let 
\[
A=\begin{pmatrix} a & b \\ \bar b & \bar a \end{pmatrix}\in \mathrm{SU}(1,1), 
\qquad |a|^{2}-|b|^{2}=1.
\]
The associated automorphism of the disc is
\[
\phi_{A}(z)=\frac{a z + b}{\bar b z + \bar a},
\]
and its lift to the unit disc bundle 
\(\DD^\circ\times\DD\) is
\[
\Phi_{A}(z,\mu)
=\left(\phi_A(z),\phi_A'(z) \frac{1-|z|^2}{1-|\phi_A(z)|^2 } \mu\right) \equiv \left(
\phi_{A}(z),\;
\mu\,\frac{a+b\bar z}{\bar a+\bar b z}
\right).
\]
Then one checks directly that
\[
H_{\gamma}(z,\mu)
=\frac{z+\gamma\,\mu}{1+\gamma\,\bar z\,\mu}
\quad\text{satisfies}\quad
H_{\gamma}\!\big(\Phi_{A}(z,\mu)\big)
=\phi_{A}\!\big(H_{\gamma}(z,\mu)\big).
\]
Hence \(H_{\gamma}\) is \emph{equivariant} with respect to 
the natural action of \(\mathrm{Aut}(\mathbb{D})\) on 
\(\mathbb{D}^\circ\times{\mathbb{D}}\) and on \(\mathbb{CP}^1\)
via the maps \(\Phi_{A}\) and \(\phi_{A}\).

\subsubsection{When does $H_{\gamma}$ map into $\mathbb{D}^\circ$?} 
The discussion that follows is key to understand when we get a genuine holomorphic map all the way to $|\mu|=1$. It will be particularly relevant when we start taking compact quotients of the hyperbolic disc. Assume from now on that $\lambda>1$.

Recall our generic ansatz 
\[
H_{\gamma}(z,\mu)=\frac{z+\gamma\,\mu}{1+\gamma\,\bar z\,\mu}, 
\qquad |z|<1,\;|\mu|\le 1,
\]
where \(\gamma\) is one of the roots satisfying $\gamma^{2}-2i\lambda\,\gamma-1=0$, namely:
\[
\gamma_{1,2}=i\left(\lambda\mp\sqrt{\lambda^{2}-1}\right),
\qquad
\gamma_{1}\gamma_{2}=-1.
\]
A direct computation gives
\[
|H_{\gamma}(z,\mu)|^{2}
=\frac{|z+\gamma\,\mu|^{2}}{|1+\gamma\,\bar z\,\mu|^{2}}
=\frac{|z|^{2}+|\gamma|^{2}|\mu|^{2}+2\,\mathrm{Re}(\gamma\,\bar z\,\mu)}
{1+|\gamma|^{2}|z|^{2}|\mu|^{2}+2\,\mathrm{Re}(\gamma\,\bar z\,\mu)}.
\]
After simplification one finds:
\[
|H_{\gamma}(z,\mu)|<1 
\quad\Longleftrightarrow\quad
(1-|z|^{2})\bigl(1-|\gamma|^{2}|\mu|^{2}\bigr)>0
\quad\Longleftrightarrow\quad
|\gamma|\,|\mu|<1.
\]
Let $H_1$ and $H_2$ denote the maps corresponding to the roots $\gamma_1$ and $\gamma_2$. 
For the first root \(\gamma_{1}\), one has \(|\gamma_{1}|<1\), hence 
\(|\gamma_{1}|\,|\mu|\le|\gamma_{1}|<1\) for all \(|\mu|\le1\), and therefore
\[
|H_1(z,\mu)|<1 
\quad\text{for all }(z,\mu)\in\mathbb D^\circ\times{\mathbb D}.
\]
In this case the denominator \(1+\gamma_{1}\bar z\,\mu\) never vanishes in the domain.
For the second root \(\gamma_{2}\), one has \(|\gamma_{2}|>1\), so 
the condition becomes \(|\mu|<1/|\gamma_{2}|\). Thus
\[
|H_2(z,\mu)|<1
\quad\text{if and only if}\quad 
|\mu|<\frac{1}{|\gamma_{2}|}.
\]
When \(|\mu|=1/|\gamma_{2}|\) one has \(|H_2(z,\mu)|=1\) for all \(z\),
and for \(|\mu|>1/|\gamma_{2}|\) the inequality fails.
Poles appear when \(1+\gamma_{2}\bar z\,\mu=0\),
that is, when \(|z|=1/(|\gamma_{2}||\mu|)<1\).
Hence, the map $H_1$ (corresponding to $\gamma_1$) is the one that sends
\(\mathbb D^\circ\times{\mathbb D}\) entirely into the open unit disc.

\subsubsection{Compact quotients} 

Consider now a cocompact Fuchsian group $\Gamma\subset \text{Aut}(\mathbb{D})$ so that $M=\mathbb{D}^\circ/\Gamma$ is a closed Riemann surface. We know that for $|\lambda|<1$, $X_{\lambda}$ is Anosov and for the critical value $|\lambda|=1$ we hit the horocycle flow. We do not have first integrals in these situations and this is clearly reflected in the discussion above.

However, for $\lambda>1$, the map $H_1 : Z_{\sigma,\lambda}\to\mathbb{D}^\circ$ has the correct equivariance properties to induce a holomorphic fibration
\[
H_1:\; Z_{\sigma,\lambda}/\Gamma \;\longrightarrow\; M = \mathbb{D}^\circ/\Gamma,
\]
where we slightly abuse notation by using $H_1$ for the induced map on the quotient. The fibers $H_1^{-1}([w])$ are holomorphic discs in $Z_{\sigma,\lambda}/\Gamma$.

For a fixed $w \in \mathbb{D}$, the fiber in the universal cover is:
\[
H_1^{-1}(w)
= \Big\{(z,\mu) \in Z_{\sigma,\lambda} :
\mu = \frac{z - w}{\gamma_1\,(w\bar z - 1)} \Big\}.
\]
Restricting to the unit circle bundle $|\mu| = 1$ gives
\[
|\mu| = 1
\ \Longleftrightarrow\
|z - w| = |\gamma_1|\,|1 - \bar w z|
\ \Longleftrightarrow\
\Big|\frac{z - w}{1 - \bar w z}\Big| = |\gamma_1|
= \tanh(\rho/2),
\]
where $\rho$ is the hyperbolic radius associated to the magnetic flow curvature. Hence the projection of $\partial H_1^{-1}(w)$ onto the base $\mathbb{D}^\circ$ is exactly the hyperbolic circle
\[
C_w(\rho) = \{\, z \in \mathbb{D}^\circ : d_{\mathbb{H}}(z, w) = \rho \,\}.
\]
On the boundary, the unit tangent vector satisfies
\[
\mu(z) = -\frac{1}{\gamma_1} \left( \frac{z - w}{1 - \bar w z} \right) =: -\frac{1}{\gamma_1} \varphi_w(z).
\]
Because $|\varphi_w(z)| = |\gamma_1|$ on this boundary and $\gamma_1 = i|\gamma_1|$, this simplifies to:
\[
\mu(z)
= i\, \frac{\varphi_w(z)}{|\varphi_w(z)|}.
\]
Hence for $(z,\mu)\in SM$, the point $H_1(z,\mu)\in \DD^\circ$ is the hyperbolic centre of the geodesic circle through $(z,\mu)$. The fibres of $H_1$ can be viewed as holomorphic disk fillings of the geodesic  circles in $SM$.

\subsubsection{The blow-down map}

Recall that we are assuming $\lambda>1$. Let $\gamma_2$ be the second root of the quadratic ($|\gamma_2|>1$) and recall the map
\[
H_2(z,\mu)=\frac{z+\gamma_2\mu}{1+\gamma_2\bar z\,\mu}.
\]
Although $H_2$ is only meromorphic on $Z_{\sigma,\lambda}$ 
(it has poles where $1+\gamma_2\bar z\,\mu=0$),
it satisfies the same $\Gamma$--equivariance as $H_1$.
Hence it determines a holomorphic map $H_2 : Z_{\sigma,\lambda}\to \mathbb{CP}^1$ giving rise to
\[
\beta=(H_1,H_2): Z_{\sigma,\lambda}\longrightarrow \mathbb{D}^\circ\times\mathbb{CP}^1
\]
that descends to a holomorphic map
\[\beta:DM=Z_{\sigma,\lambda}/\Gamma\to M\times_{\Gamma}\mathbb{CP}^1 := (\mathbb{D}^\circ\times\mathbb{CP}^1)/\Gamma,\]
so the target again a ruled surface over $M$. Moreover, $\beta([z,0])=([z,z])$.

\begin{remark}
	The complex surface $W=\DD^\circ \times_\Gamma \C P^1$ is diffeomorphic to $M\times S^2$. One way to see this is via the map
	\[
		\Psi\colon \DD^\circ\times \C P^1\to \DD^\circ\times \C P^1,\quad \Psi(w,\xi)=(1-|w|^2)\frac{\xi-w}{1-\bar w\xi}.
	\]
	A direct computation shows that
	\[
		\Psi(\phi_A(w),\phi_A(\xi))=\phi_A'(w) \Psi(w,\xi),\quad A\in SU(1,1).
	\]
	Under the stereographic identification $\C P^1\equiv S^2\subset \C \times \R$, the factor $\phi_A'(w)$ only acts on the first factor and we can identify the quotient $(\DD^\circ \times \C)/\Gamma$
	with the tangent bundle $TM$. Hence $\Psi$ descends to a map $\DD^\circ \times_\Gamma \C P^1 \to S(TM\oplus \R)$ into the unit circle bundle of $TM\oplus \R = M\times \R^3$, which gives the desired diffeomorphism. 
\end{remark}

\begin{proposition} The map $\beta\colon DM\to W = M\times_\Gamma \C P^1$ is a blow-down map along the totally real surface
\[P=\{[(w,1/\bar{w})]:\;w\in\mathbb{D}\}\subset W.\]
\end{proposition}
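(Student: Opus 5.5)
I will follow the same three-step template as in the positive and zero curvature cases (Proposition \ref{prop_sphereblow} and its torus analogue). Everything is $\Gamma$-equivariant: $H_k\circ\Phi_A=\phi_A\circ H_k$, and $P$ is $\Gamma$-invariant because $\phi_A(1/\bar w)=1/\overline{\phi_A(w)}$ for $A\in SU(1,1)$. So all local computations can be done on the universal cover $\DD^\circ\times\DD\to\DD^\circ\times\C P^1$.

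\textbf{Step 1: $\beta^{-1}(P)=\partial DM$ and the lift.} Take the defining function $F(w,\xi)=\bar w\xi-1$ for $P$, in a chart where $\xi$ is finite. The identities $\gamma_1\gamma_2=-1$ and $\bar\gamma_k=-\gamma_k$ (the $\gamma_k$ are purely imaginary for $\lambda>1$) should give, after clearing denominators,
\[
\beta^*F(z,\mu)=\frac{c\,(1-|z|^2)(1-|\mu|^2)}{(1+\bar\gamma_1 z\bar\mu)(1+\gamma_2\bar z\mu)}
\]
for a nonzero constant $c$. This is the analogue of the sphere computation. Its zero set is exactly $|\mu|=1$, and its radial derivative at $|\mu|=1$ is nonzero. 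Near points where $\xi=\infty$, i.e.\ on the poles of $H_2$, I will use the chart $1/\xi$ instead. There $P$ is not hit, since $|H_1|<1$ forces $1/\bar w\neq0$ away from $P$ in a controllable way. Lemma \ref{lem_lift} then provides a smooth lift $\hat\beta\colon DM\to[W,P]$.

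\textbf{Step 2: local diffeomorphism.} I compute $dH_1\wedge dH_2$ exactly as in the $K=+1$ case. The coefficient of $dz\wedge d\mu$ is $K=(1-|z|^2)(\gamma_2-\gamma_1)/(D_1^2D_2^2)$, using $\gamma_2D_1-\gamma_1D_2=\gamma_2-\gamma_1$. The $d\bar z\wedge d\mu$ coefficient should come out as $-K\mu^2$; the sign flips of the hyperbolic case cancel against $\gamma_1\gamma_2=-1$. Hence $\beta^*\Omega_W=|K|^2(1-|\mu|^4)\,dz\wedge d\mu\wedge d\bar z\wedge d\bar\mu$. At poles of $H_2$ ($D_2=0$), I would again switch to the coordinate $1/\xi$, which multiplies by $|D_2|^4/|\cdots|^4$ and removes the singularity, just as in the torus proof near $\xi=\infty$. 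Lemma \ref{lem_melroselocdiff} with $k=2$ then shows that $\hat\beta$ is a local diffeomorphism. By Remark \ref{rmk_covering}, since $DM$ is compact, it is a covering map.

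\textbf{Step 3: degree one.} This is the main obstacle, because the natural test point $([0,0])$ sits on $Q$ and not only over the origin. I would pick $(w,\xi)=(0,0)$ upstairs and solve $H_1=H_2=0$. That forces $z=-\gamma_1\mu=-\gamma_2\mu$, and since $\gamma_1\neq\gamma_2$ we get $\mu=0$, $z=0$. Upstairs this is a single point, but in the quotient $[0,0]$ might have other preimages $(z,\mu)$ with $\beta(z,\mu)=(\phi_A(0),\phi_A(0))$. Those are again forced to $\mu=0$ and $z=\phi_A(0)$, which is the same point of $DM$. So the fibre is a single point and $\hat\beta$ is a diffeomorphism.

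I will also double-check that $P$ is totally real. This is immediate: locally $P$ is the graph of the antiholomorphic map $w\mapsto1/\bar w$, and such a graph is totally real.
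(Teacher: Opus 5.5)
Your three steps are the paper's proof. You pull back $\bar w\xi-1$ (your formula is right with $c=-1$) and apply Lemma \ref{lem_lift}. You get the Jacobian $|K|^2(1-|\mu|^4)$ from $dH_1\wedge dH_2=K\alpha+J\,dz\wedge d\bar z$ and apply Lemma \ref{lem_melroselocdiff}. You finish with Remark \ref{rmk_covering} at $[0,0]$, and your direct check that $H_1=H_2=c$ forces $\mu=0$, $z=c$ is fine. You are even more careful than the paper about the chart at $\xi=\infty$.

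One claim there is false, though: $P$ \emph{is} hit at $\xi=\infty$. Since $w=0$ is allowed, $(0,\infty)\in P$. Moreover, $\beta$ maps the boundary circle $\{(-\gamma_1\mu,\mu):|\mu|=1\}$ onto this point, because there $H_1=0$ and $D_2=1+\gamma_1\gamma_2|\mu|^2=1-|\mu|^2=0$. These are just the unit vectors tangent to the magnetic circle centred at $0$.

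This leaves two checks open. At these boundary points you still have to verify the normal-derivative condition of Lemma \ref{lem_lift}. At interior poles of $H_2$ you need $H_1\neq 0$, which $|H_1|<1$ does not give. Both follow in the chart $\eta=1/\xi$, where $P=\{\eta=\bar w\}$ and
\[
\beta^*(\eta-\bar w)=\frac{1}{H_2}-\overline{H_1}=\frac{(1-|z|^2)(1-|\mu|^2)}{(z+\gamma_2\mu)\,\overline{D_1}}.
\]
Here $z+\gamma_2\mu\neq 0$ wherever $D_2=0$, since simultaneous vanishing would force $|z|=1$. So this expression vanishes exactly on $|\mu|=1$ and has nonzero radial derivative there.

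Alternatively, use equivariance under all of $SU(1,1)$, not just $\Gamma$. The maps $\Phi_A$ preserve $|\mu|=1$, and $\phi_A(\infty)$ is finite when $b\neq 0$, so this moves such points into the affine chart. That is presumably what the paper's ``analogously'' tacitly relies on.
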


\begin{proof}
	We work on the universal cover, where the lift of $P$ is cut out by the equation $F(w,\xi):= \bar w\xi - 1 = 0$. To understand the pull-back by $\beta$,
we evaluate the product of $H_1$ and the conjugate of $H_2$:
\[
H_1(z,\mu)\,\overline{H_2(z,\mu)}
=\frac{(z+\gamma_1\mu)\,(\bar z+\overline{\gamma_2}\,\bar\mu)}
{(1+\gamma_1\bar z\,\mu)\,(1+\overline{\gamma_2}\,z\,\bar\mu)}.
\]
Because \(\gamma_2\) is purely imaginary, \(\overline{\gamma_2}=-\gamma_2\). Together with \(\gamma_1\gamma_2=-1\), this yields \(\gamma_1\,\overline{\gamma_2}=1\). 
Expanding the products and applying this identity simplifies the expression to:
\[
H_1(z,\mu)\,\overline{H_2(z,\mu)}
=\frac{|z|^{2}+|\mu|^{2}+\gamma_1\,\bar z\,\mu+\overline{\gamma_2}\,z\,\bar\mu}
{1+|z|^{2}|\mu|^{2}+\gamma_1\,\bar z\,\mu+\overline{\gamma_2}\,z\,\bar\mu}.
\]
Substituting \(\overline{\gamma_2}=\frac{1}{\gamma_1}\), we obtain
\[
	\beta^*F(z,\mu) = -\frac{(1-|z|^2)(1-|\mu|^2)}{1+|z|^{2}|\mu|^{2}+\gamma_1\,\bar z\,\mu+\frac{1}{\gamma_1}\,z\,\bar\mu}.
\]
This vanishes if and only if $|\mu|=1$, such that $\beta^{-1}(P)=\partial (DM)$. Analogously to above we see that $\frac{d}{dr}\big|_{r=1}\beta^*F(z,r\mu)\neq 0$, such that the lift $\hat \beta\colon DM\to [W,P]$ exists.

	Next we compute the Jacobian: Consider the coordinate volume form $\Omega_W = dw \wedge d\bar{w} \wedge d\xi \wedge d\bar{\xi}$.
The total differential is $\beta^*\Omega_E = dH_1 \wedge d\bar{H}_1 \wedge dH_2 \wedge d\bar{H}_2 = - (dH_1 \wedge dH_2) \wedge \overline{(dH_1 \wedge dH_2)}$. 
We first evaluate the holomorphic wedge product $dH_1 \wedge dH_2$. Since $\partial_{\bar{\mu}} H_k = 0$, we have $dH_k = \partial_z H_k dz + \partial_{\bar{z}} H_k d\bar{z} + \partial_\mu H_k d\mu$. From our previous definitions where $D_k = 1 + \gamma_k \bar{z} \mu$:
\[ \partial_z H_k = \frac{1}{D_k}, \qquad \partial_{\bar{z}} H_k = -\frac{\gamma_k\mu H_k}{D_k}, \qquad \partial_\mu H_k = \frac{\gamma_k(1-|z|^2)}{D_k^2}. \]
The 2-form $dH_1 \wedge dH_2$ expands as:
\begin{align*}
dH_1 \wedge dH_2 &= (\partial_z H_1 \partial_\mu H_2 - \partial_z H_2 \partial_\mu H_1) dz \wedge d\mu \\
&+ (\partial_{\bar{z}} H_1 \partial_\mu H_2 - \partial_{\bar{z}} H_2 \partial_\mu H_1) d\bar{z} \wedge d\mu + J dz \wedge d\bar{z}. 
\end{align*}
(We do not need the explicit form of the function $J$.) 

We compute the first two coefficients directly. For $dz \wedge d\mu$:
\[ \frac{1}{D_1} \frac{\gamma_2(1-|z|^2)}{D_2^2} - \frac{1}{D_2} \frac{\gamma_1(1-|z|^2)}{D_1^2} = \frac{1-|z|^2}{D_1^2 D_2^2} (\gamma_2 D_1 - \gamma_1 D_2). \]
Since $D_k = 1 + \gamma_k \bar{z} \mu$, the term $\gamma_2 D_1 - \gamma_1 D_2 = \gamma_2 - \gamma_1$. Let $K = \frac{(\gamma_2 - \gamma_1)(1-|z|^2)}{D_1^2 D_2^2}$. The first coefficient is precisely $K$.

For $d\bar{z} \wedge d\mu$:
\[ \left(-\frac{\gamma_1\mu H_1}{D_1}\right) \frac{\gamma_2(1-|z|^2)}{D_2^2} - \left(-\frac{\gamma_2\mu H_2}{D_2}\right) \frac{\gamma_1(1-|z|^2)}{D_1^2} = \frac{\gamma_1\gamma_2\mu(1-|z|^2)}{D_1^2 D_2^2} (-H_1 D_1 + H_2 D_2). \]
Recalling $\gamma_1\gamma_2 = -1$ and $H_k D_k = z + \gamma_k \mu$, we get $-H_1 D_1 + H_2 D_2 = (\gamma_2 - \gamma_1)\mu$. Thus, the second coefficient simplifies  to $-K\mu^2$. 

Our 2-form is therefore:
\[ dH_1 \wedge dH_2 = K(dz \wedge d\mu - \mu^2 d\bar{z} \wedge d\mu) + J dz \wedge d\bar{z}. \]
Let $\alpha = dz \wedge d\mu - \mu^2 d\bar{z} \wedge d\mu$. Then 
\[\beta^*\Omega_{E}=- (K\alpha + J dz \wedge d\bar{z}) \wedge (\bar{K}\bar{\alpha} + \bar{J} d\bar{z} \wedge dz)=- |K|^2 (\alpha \wedge \bar{\alpha}). \]
We calculate $\alpha \wedge \bar{\alpha}= -(1 - |\mu|^4) dz \wedge d\bar{z} \wedge d\mu \wedge d\bar{\mu}$.
Therefore, the local volume form transforms as:
\[ \beta^*(dw \wedge d\bar{w} \wedge d\xi \wedge d\bar{\xi}) = |K|^2 (1 - |\mu|^4) dz \wedge d\bar{z} \wedge d\mu \wedge d\bar{\mu}. \]
Because $|K|^2 = \frac{4(\lambda^2-1)(1-|z|^2)^2}{|D_1 D_2|^4}$ is positive, the pullback's zeroes are governed entirely by $1 - |\mu|^4 = (1+|\mu|^2)(1-|\mu|^2)$. 
Analogously to the previous cases this shows that the lift $\hat \beta$ is a local diffeomorphism. 

To conclude, we need to exhibit one point $[w,\xi]\in W\setminus P$ such that $\beta^{-1}([w,\xi])$ consists only of one point. The simplest point to pick is $[0,0]$ and due to equivariance is suffices to check that there is a unique solution to $H_{1}(z,\mu)=0$ and $H_{2}(z,\mu)=0$.  Indeed the only solution to these equations is $(z,\mu)=(0,0)$.
\end{proof}

\section{Perturbation theory for disk families}\label{sec_diskfam}
In this section we prove Theorem \ref{introthmD} on the persistence of disk families under perturbations. Throughout, $(M,g)$ is an oriented closed Riemannian surface,
\[
	\rho\colon W\to M
\]
is a ruled surface over $M$, which comes with a distinguished holomorphic section $q\colon M\to W$ whose image we write as $Q=q(M)$, as well as a meromorphic $2$-form $\Upsilon$ on $W$ such that
\[
	\operatorname{div}(\Upsilon)=-2Q\quad \text{ and }\quad \mathrm{Res}_Q(\Upsilon)=0.
\]
As preparation for the perturbation theory, we first discuss some consequences of the existence of $\Upsilon$ and set up a suitable moduli space of holomorphic disks.

\subsection{Splittings and residue} 

\subsubsection{Splittings of the normal bundle} 
Let 
\begin{equation}\label{normalses}
	0\to TQ\to TW|_Q\to NQ\to 0
\end{equation}
be the normal bundle sequence of $Q$. This admits a holomorphic splitting $TW|_Q=TQ\oplus L$, where $L=\ker d\rho|_Q\subset TW|_Q$ is the {\it vertical} line bundle with respect to the ruling. We claim that there exists an isomorphism
\begin{equation}\label{defpsi}
	\psi\colon NQ\xrightarrow{\sim} TQ.
\end{equation}
Writing $\O(D)$ for  the line bundle associated to $D\in \mathrm{Div}(W)$, the adjunction formulas \cite[Chapter 1.1]{GrHa78} state that 
\begin{equation}
	\O (Q)|_Q\cong NQ \quad \text{ and } \quad K_Q\cong (K_W\otimes \O(Q))|_Q,
\end{equation}
where $K_Q$ and $K_W$ are the canonical line bundles of $Q$ and $W$, respectively. Since $\Upsilon$ is a meromorphic section of $K_W$ with divisor $-2Q$, we have $\O(-2Q)\cong K_W$ and hence $K_Q \otimes NQ$ is trivial. In particular there exists an isomorphism $\psi$ as in \eqref{defpsi}. Below we also give an explicit description of $\psi$ in terms of $\Upsilon$.
As a consequence, we can compute the extension group of \eqref{normalses} as
\[
	H^0(Q,N^*Q\otimes TQ)\cong \C,
\]
which means the following: Once $\psi$ as in \eqref{defpsi} is fixed, any holomorphic splitting of \eqref{normalses} can be described by a complex number $\zeta\in \C$ as $TW|_Q=TQ\oplus L^\zeta$, where the line bundle $L^\zeta\to Q$ is defined by
\begin{equation}
\label{defL}
	L^\zeta_q = \spn_\C(v+\zeta\psi_q([v])) ,\quad v\in L_q,q\in Q.
\end{equation}

\subsubsection{The residue} \label{sec_residue}
Suppose $U\subset W$ is an open set with $U\cap Q\neq \emptyset$ and $(z,w)$ is a complex coordinate system on  $U$ with $U\cap Q=\{w=0\}$. Since $\operatorname{div}(\Upsilon)=-2Q$, there is a Laurent expansion
\begin{equation}\label{expansion}
	\Upsilon = \upsilon_{-2}(z)dz \wedge \frac{dw}{w^2}+ \upsilon_{-1}(z)dz \wedge \frac{dw}{w} + \text{holomorphic $2$-form}
	\end{equation}
with  $\upsilon_{-2}(z)\neq 0$ everywhere. The terms $\upsilon_{-2}$ and $\upsilon_{-1}$ have a global meaning as
\[
	\psi^{-1}|_{U\cap Q}:=\upsilon_{-2}(z) dz\otimes \partial_w \quad \text{ and } \quad \Res_Q\Upsilon|_{U\cap Q}:=\upsilon_{-1}(z) dz.
\]
Here $\psi$ gives an isomorphism as in \eqref{defpsi} and the holomorphic $1$-form $\Res_Q \Upsilon$ is the residue of $\Upsilon$ along $Q$. For the latter to be well-defined, we  restrict our attention to charts with
\(
	dz \perp L\) along $Q$.

\begin{lemma}\label{lem_residue} \,
	\begin{enumerate}
	\item There is a unique nowhere vanishing section $\psi^{-1}$ of $K_Q\otimes NQ$ such that $\psi^{-1}=\upsilon_{-2} dz\otimes \partial_w$ in any local chart $(U,z,w)$ with $U\cap Q=\{w=0\}$.
	\item There is a unique holomorphic $1$-form $\Res_Q\Upsilon$ on $Q$ such that $\Res_Q \Upsilon = \upsilon_{-1}(z)dz$ in any local chart $(U,z,w)$ with $U\cap Q=\{w=0\}$ and $dz\perp L$ along $Q$.
	\end{enumerate}
\end{lemma}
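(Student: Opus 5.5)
The plan is to verify directly how the Laurent coefficients transform under a change of admissible chart, and then glue. Fix two charts $(U,z,w)$ and $(U',z',w')$, both with $Q=\{w=0\}=\{w'=0\}$ on the overlap. Since $Q$ is cut out by both $w$ and $w'$, we have $w'=w\,e(z,w)$ with $e$ holomorphic and nowhere vanishing. Also $z'=h(z)+w\,k(z,w)$, where $h(z)=z'(z,0)$ is a local biholomorphism of $Q$. Expanding in powers of $w$, I write $e(z,w)=e_0(z)+e_1(z)w+O(w^2)$. A short computation then gives
\[
	\frac{dw'}{w'^2}=\frac{dw}{e_0 w^2}+\frac{1}{w}\Big(\frac{e_0' dz}{e_0^2}\cdot(\ldots)\Big)+\ldots .
\]
It is cleaner to work with $\frac{dw'}{w'}=\frac{dw}{w}+d\log e$ and $\frac1{w'}=\frac{1}{e_0w}\big(1-\tfrac{e_1}{e_0}w+O(w^2)\big)$. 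Wedging with $dz'=h'dz+k\,dw+w\,dk$ and collecting the $w^{-2}$ and $w^{-1}$ coefficients of the $dz\wedge dw$ component expresses $\upsilon_{-2},\upsilon_{-1}$ in terms of $\upsilon'_{-2},\upsilon'_{-1}$.

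For part (i), the leading term gives $\upsilon_{-2}=\upsilon'_{-2}\,h'/e_0$. This is exactly the transformation law of $dz\otimes\partial_w$ restricted to $Q$: we have $dz'|_Q=h'\,dz|_Q$, and $\partial_{w'}$ agrees with $e_0^{-1}\partial_w$ modulo $TQ$, that is, in $NQ$. Hence $\upsilon_{-2}\,dz\otimes\partial_w$ (with $\partial_w$ read in $NQ=TW|_Q/TQ$) is chart-independent. The collection of such charts covers a neighbourhood of $Q$, so these local expressions glue to a global section. It is nowhere vanishing because $\upsilon_{-2}\neq0$, which follows from $\operatorname{div}\Upsilon=-2Q$ exactly. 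Uniqueness is immediate, since the section is prescribed in every chart.

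For part (ii), the $w^{-1}$ coefficient picks up correction terms from three sources: $e_1$, the $k\,dw$ part of $dz'$, and derivatives of $e_0$. The condition $dz\perp L$ along $Q$ for both charts should kill these. I read this condition as saying that $dz$ annihilates the vertical line $L=\ker d\rho|_Q$, so that $dz$ restricted to $TW|_Q$ factors through $d\rho$. Imposing it in both charts forces $\partial_w z'|_{w=0}=k(z,0)=0$, i.e.\ $z'=h(z)+O(w^2)$. With $k(z,0)=0$, the remaining terms to check are $\upsilon'_{-2}\,(\ldots)$ contributions involving $e_1/e_0$ and $h''$-type terms. I expect $\upsilon_{-1}=\upsilon'_{-1}h'+(\text{terms involving } \upsilon'_{-2}(e_1/e_0)h' \text{ and } \partial_z(\ldots))$.

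The main obstacle is showing that these correction terms vanish or assemble into something exact. A residue of a double pole is genuinely not invariant under arbitrary coordinate changes $w'=w(e_0+e_1w)$, because the $e_1$ term shifts $\upsilon_{-1}$ by $-\upsilon_{-2}e_1/e_0$ (times $h'$). To handle this, I would exploit the ruling more fully. I would restrict to charts where $z=\rho^*(\text{coordinate on }M)$, which is presumably what ``$dz\perp L$'' is meant to encode. Then $\Upsilon$ restricted to each fibre direction is computed via the honest one-variable residue of the meromorphic form $\iota_{\partial_z}\Upsilon$ along the fibre $\rho^{-1}(x)\cong\C P^1$ at the point $Q\cap\rho^{-1}(x)$. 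The one-variable residue of a meromorphic $1$-form is coordinate-invariant, and it is exactly $\upsilon_{-1}(z)$. Thus $\Res_Q\Upsilon=\operatorname{Res}_{q(x)}\big(\Upsilon(\partial_z,\cdot)|_{\text{fibre}}\big)\,dz$ is intrinsic, with the $\partial_z$ contraction being well defined modulo vertical vectors once $z$ is pulled back from $M$. Holomorphicity in $z$ then follows since $\upsilon_{-1}$ is a Laurent coefficient of a holomorphic family. Consequently the statement reduces to a fibrewise invariance of residues, which I would make the central step.
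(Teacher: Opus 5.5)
\textbf{Part (i)} is fine and matches the paper's argument: the leading coefficient transforms as $\upsilon_{-2}=\upsilon'_{-2}h'/e_0$, and $\partial_{w'}\equiv e_0^{-1}\partial_w$ modulo $TQ$.

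\textbf{Part (ii)} has a gap, and it starts with a miscalculation. The $e_1$ term does \emph{not} shift $\upsilon_{-1}$. For $z'=z$, $w'=we$ one gets
\[
\frac{dz\wedge dw'}{w'^2}=\frac{e+we_w}{e^2}\,\frac{dz\wedge dw}{w^2}=e_0^{-1}\big(1+O(w^2)\big)\frac{dz\wedge dw}{w^2}.
\]
This is exactly the one-variable invariance of residues that you invoke a paragraph later, so your two claims are inconsistent. No $h''$-terms appear either. If you carry your own computation through with $z'=h(z)+wk$, $w'=we$, you get along $Q$
\[
\upsilon_{-1}\,dz=\upsilon'_{-1}(h)\,dh+d\Big(\frac{\upsilon'_{-2}(h)\,k(z,0)}{e_0}\Big).
\]
So the only failure of invariance is an exact term proportional to $k(z,0)=\partial_wz'|_{w=0}$. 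You had already shown that $k(z,0)=0$ when both charts satisfy $dz\perp L$. That one line finishes (ii), and it is the paper's proof.

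Instead you switch to charts with $z=\rho^*z_M$ and argue via fibrewise residues. That argument is correct and gives a nice intrinsic description of $\operatorname{Res}_Q\Upsilon$: it is the residue at $q(x)$ of $\Upsilon(\tilde X,\cdot)|_{F_x}$, for any lift $\tilde X$ of $X\in T_xM$. But it proves the claim only for a strictly smaller class of charts. You replaced the hypothesis ``$dz\perp L$ along $Q$'' by the stronger ``$z$ is constant on the fibres of $\rho$''. The original hypothesis only constrains $dz$ at points of $Q$; for example, $z=\rho^*z_M+w^2b$ is admissible. The weaker class is the one the lemma asserts, and it is the one used later, where only $L=\C\partial_w$ along $Q$ is arranged.

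To close the gap within your route, compare an admissible chart $(z,w)$ with $(\tilde z,w)$, where $\tilde z:=\rho^*(z\circ q)$.
\begin{enumerate}
\item The two functions agree on $Q$, and both differentials kill $L$ and agree on $TQ$, so $z=\tilde z+w^2b$.
\item Then $dz\wedge dw=(1+O(w^2))\,d\tilde z\wedge dw$ and $\upsilon_{-j}(z)=\upsilon_{-j}(\tilde z)+O(w^2)$.
\item Hence the $w^{-1}$-coefficients in the two charts coincide, and your fibrewise argument applies to $(\tilde z,w)$.
\end{enumerate}
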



\begin{proof}
	Suppose we are given a coordinate system $(U,z,w)$ with $Q\cap U=\{w=0\}$ and $f,g\colon U\to \C$ are new coordinate functions such that $Q\cap U=\{g=0\}$. Then $g(z,w)=wh(z,w)$ for some holomorphic $h\colon U\to \C$. By an elementary computation,
	\begin{equation}\label{residue2}
	\begin{split}
		\frac{df\wedge dg}{g^2} &=	\left[\frac{f_z(h+wh_w)- wf_wh_z}{h^2}\right]\frac{dz\wedge dw}{w^2}\\
		&=
		 \Big[(f_z/h)|_{w=0} + (f_w/h)_z|_{w=0}\cdot w + O(w^2)\Big] \frac{dz\wedge dw}{w^2}
	\end{split}	
	\end{equation}
	 If we expand $\Upsilon$ with respect to the coordinates $(f,g)$, then
	\[
		\Upsilon= \left(\tilde \upsilon_{-2}\circ f + (\tilde \upsilon_{-1}\circ f)g \right)\wedge \frac{df\wedge dg}{g^2} + \text{holomorphic $2$-form}.
	\]
	Expanding the coefficients gives in a power series in $w$ gives
	\begin{eqnarray*}
		\tilde \upsilon_{-2}\circ f &=& (\tilde \upsilon_{-2}\circ f)|_{w=0} +(\tilde \upsilon_{-2}'\circ f)f_w|_{w=0} \cdot w +O(w^2)\\
		\tilde \upsilon_{-1}\circ f & =& h(\tilde \upsilon_{-1}\circ f)|_{w=0} \cdot w + O(w^2)
	\end{eqnarray*}
	and combining this with \eqref{residue2} we obtain
	the expansion
	\[
	\begin{split}
		\Upsilon &=\left[\frac{f_z(\tilde \upsilon_{-2}\circ f)}{h}\Big|_{w=0}+ \left((\tilde \upsilon_{-1}\circ f)f_z + \Big(\frac{(\tilde \upsilon_{-2}\circ f)f_w}{h}\Big)_{z}\right)\Big|_{w=0}\cdot w + O(w^2)\right]\\
		&\quad \times \frac{dz\wedge dw}{w^2}
	\end{split}.	
	\]
	Comparing this with \eqref{expansion}, and writing $f_0(z)=f(z,0)$, we obtain
	\begin{equation}\label{residueidentiy}
		\begin{split}
		\upsilon_{-2}(z) dz\otimes \partial_w &= (\tilde \upsilon_{-2}\circ f_0) df_0 \otimes (h|_{w=0})^{-1}\partial_w, \\
		\upsilon_{-1}(z) dz&= \tilde \upsilon_{-1}(f_0) df_0 + d\left(h^{-1} (\tilde \upsilon_{-2}\circ f) f_w|_{w=0}\right).		
		\end{split}	
	\end{equation}
	Since $(h|_{w=0})^{-1} \otimes \partial_w\equiv \partial_g$ mod $TQ$, the first equation shows that $\upsilon_{-2}(z)dz\otimes \partial_w$ transforms like a section of $K_Q\otimes NQ$ and thus we obtain a global section $\psi^{-1}$. Now assume that $dz\perp L$ and $df\perp L$ along $Q$. Then we must have $\partial_w\in L$ and hence $f_w|_{w=0} = 0$, such that the last term in \eqref{residueidentiy} disappears. In particular, $\upsilon_{-1}(z)dz$ transform like a section of $K_Q$.
 \end{proof}

\subsection{Adapted surfaces and the moduli space of disks}

\subsubsection{Adapted surfaces} Let us collect some properties pertaining to adapted surfaces $P\subset W$ (see Definition \ref{def_adapted}).


\begin{lemma}\label{h2comp}
	If $P\subset W$ is an adapted surface, then $H_2(W,P)\cong \Z$. It is generated by the relative fibre class $[F]_\mathrm{rel}$.
\end{lemma}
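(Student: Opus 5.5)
The plan is to compute $H_2(W,P)$ from the long exact sequence of the pair $(W,P)$,
\[
	H_2(P)\xrightarrow{i_*} H_2(W)\xrightarrow{j_*} H_2(W,P)\xrightarrow{\partial} H_1(P)\xrightarrow{i_*} H_1(W)\to H_1(W,P)\to \dots,
\]
using the topology of the ruling $\rho\colon W\to M$. Since $W\to M$ is an $S^2$-bundle over a closed oriented surface with a section (e.g.\ $q$), the Leray--Hirsch theorem (or the Serre spectral sequence, which degenerates) gives $H_2(W)\cong\Z[F]\oplus\Z[Q]$, and $\rho_*\colon H_1(W)\to H_1(M)$ is an isomorphism because the fibre is simply connected.

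The key step is to identify $P$ with a section up to homology. By Definition \ref{def_adapted}, $P=p(M)$ for an embedding $p$ with $[P]\cdot[F]=1$. I would argue that $\rho\circ p\colon M\to M$ has degree $[P]\cdot[F]=1$, since the intersection number with a generic fibre counts the preimages of a regular value of $\rho\circ p$ with sign. Consequently $(\rho\circ p)_*$ is an isomorphism on $H_1(M)$, because a degree-one map between closed oriented surfaces is surjective on $\pi_1$ and hence on $H_1$, and the groups are free abelian of the same rank. It follows that $H_1(P)\to H_1(W)$, which composed with $\rho_*$ equals $(\rho\circ p)_*$, is an isomorphism. By exactness, $\partial=0$, so $j_*$ is surjective.

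It remains to compute the cokernel of $i_*\colon H_2(P)=\Z[P]\to H_2(W)$. Writing $[P]=a[F]+b[Q]$ and using $[F]\cdot[F]=0$ and $[Q]\cdot[F]=1$, we get $b=[P]\cdot[F]=1$. Thus $\{[F],[P]\}$ is again a basis of $H_2(W)$, and $H_2(W)/\Z[P]\cong\Z$, generated by the image of $[F]$. Since $j_*$ is surjective with kernel $\operatorname{im} i_*=\Z[P]$, we conclude that $H_2(W,P)\cong\Z$, generated by $j_*[F]=[F]_{\mathrm{rel}}$.

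The main obstacle is the degree argument: it must be justified that $\deg(\rho\circ p)=[P]\cdot[F]$ with consistent orientations. Here $P$ is oriented via $p$ and $M$, and $[F]$ carries its complex orientation. A sign mismatch would only change $b$ to $-1$ and would not affect the conclusion. Conditions (a) and (b) of adaptedness are not needed for this purely topological statement.
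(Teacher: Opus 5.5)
Your proposal is correct and follows essentially the same route as the paper: the long exact sequence of $(W,P)$, the degree-one argument for $\rho|_P$ showing that $H_1(P)\to H_1(W)$ is an isomorphism (so $j_*$ is onto), and $[P]=[Q]+n[F]$, so that $[F]_{\mathrm{rel}}$ generates the cokernel of $H_2(P)\to H_2(W)$. The only minor difference is in the $H_1$-isomorphism step: you argue via $\pi_1$-surjectivity of degree-one maps together with the Hopfian property of $\Z^{2g}$, whereas the paper uses Poincar\'e duality and the preservation of the intersection pairing.
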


\begin{proof}
We first claim that the inclusion $P\hookrightarrow W$ induces an isomorphism $H_1(P)\xrightarrow{\sim} H_1(W)$. Writing $[P]=m[Q]+n[F]$, we have $m=[P]\cdot [F]=1$ and hence $\rho_*[P]=[M]$, which means that $\rho|_P\colon P \to M$ has degree $1$. By Poincar{\'e} duality and the naturality of the cup product, this implies that $(\rho|_P)_*\colon H_1(P)\to H_1(M)$ respects the intersection pairing and must therefore be an isomorphism. Factoring
	\[
		\begin{tikzcd}
			H_1(P) \arrow{r}\arrow[ swap, "(\rho|_P)_*"]{d} & H_1(W) \arrow["\rho_*"]{dl}\\
			H_1(M) 
		\end{tikzcd},
	\]
	both lower arrows are isomorphisms and thus also the top one is, as desired. As a consequence, the long exact homology sequence for the pair $(W,P)$ contains the following portion:
	\[
		0\to H_2(P)\xrightarrow{i_*} H_2(W)\xrightarrow{j_*} H_2(W,P)\to 0.
	\] 
	Since $[P]=[Q]+n [F]$, the group $H_2(W)$ is also generated by $[P]$ and $[F]$ and thus $H_2(W,P)$ is generated by $j_*[F]=[F]_\mathrm{rel}$. 
\end{proof}

\begin{lemma}\label{lem_intersec}
	Let $P\subset W$ be an adapted surface and $u\colon \DD\to W$   a {holomorphic map} with $u(\partial \DD)\subset P$. Then the following are equivalent:
	\begin{enumerate}
		\item $[u(\DD)] = [F]_\mathrm{rel}$ in $H_2(W,P)$;
		\item there is a unique $\omega_0\in \DD^\circ$ with $u(\omega_0)\in Q$; it holds that  $u'(\omega_0)\notin T_{u(0)}Q$.
	\end{enumerate} 
\end{lemma}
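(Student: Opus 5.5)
The plan is to reduce both conditions to a single intersection number $[u(\DD)]_{\mathrm{rel}}\cdot[Q]$. Since $P\cap Q=\emptyset$, the intersection pairing
\[
H_2(W,P)\times H_2(W\backslash P)\to\Z,\qquad (A,[Q])\mapsto A\cdot[Q],
\]
is well defined. It can be computed for a holomorphic disk $u$ with $u(\partial\DD)\subset P$ by counting the points of $u^{-1}(Q)$ with local intersection multiplicities. First, $u$ cannot lie inside $Q$, because $u(\partial\DD)\subset P$ is disjoint from $Q$. Hence $u^{-1}(Q)$ is a finite subset of $\DD^\circ$. By positivity of intersections for holomorphic curves, each point $\omega_0\in u^{-1}(Q)$ contributes a positive integer. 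This contribution equals $1$ exactly when $u$ is an immersion at $\omega_0$ and meets $Q$ transversally there, i.e.\ when $u'(\omega_0)\notin T_{u(\omega_0)}Q$. One can check this in a local chart $(z,w)$ with $Q=\{w=0\}$: the multiplicity is the vanishing order of $w\circ u$ at $\omega_0$, and this order is $1$ iff $(w\circ u)'(\omega_0)\neq 0$.

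Next, compute $[F]_{\mathrm{rel}}\cdot[Q]$. Since $Q$ is a holomorphic section of the ruling, it meets each fibre $F$ transversally in exactly one point, so $[F]\cdot[Q]=1$. The relative class $[F]_{\mathrm{rel}}=j_*[F]$ pairs with $[Q]$ in the same way, because this pairing factors through $H_2(W)\to H_2(W,P)$ on classes supported away from $P$. Here I will take a fibre $F$ and note that the pairing is compatible with $j_*$ since $Q\subset W\backslash P$.

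For (i)$\Rightarrow$(ii): if $[u(\DD)]=[F]_{\mathrm{rel}}$, the intersection number is $1$. Positivity then forces exactly one intersection point, with multiplicity one, which is (ii). For (ii)$\Rightarrow$(i): under (ii) the intersection number is $1$. By Lemma \ref{h2comp}, $H_2(W,P)\cong\Z$ is generated by $[F]_{\mathrm{rel}}$, so $[u(\DD)]=k[F]_{\mathrm{rel}}$ for some $k\in\Z$. Pairing with $[Q]$ gives $k=1$. (The statement writes $T_{u(0)}Q$; I read this as $T_{u(\omega_0)}Q$.)

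The main obstacle is justifying the relative intersection pairing carefully. One has to check that it is well defined on $H_2(W,P)$ against the closed cycle $Q\subset W\backslash P$, for instance via Lefschetz duality $H_2(W,P)\cong H^2(W\backslash P)$ paired with $[Q]\in H_2(W\backslash P)$. One also has to check that it agrees with the geometric signed count for the disk. The latter works because $u$ is smooth and $u^{-1}(Q)$ is a compact subset of the interior, so a small perturbation supported near $u^{-1}(Q)$ reduces to the usual local computation. The local multiplicity statements are standard positivity of intersections for holomorphic curves in complex surfaces.
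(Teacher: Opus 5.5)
Your proposal is correct and follows essentially the same route as the paper: both reduce to the identity $\sum_{\omega\in u^{-1}(Q)}\operatorname{ord}_\omega = n$ for $[u(\DD)]=n[F]_{\mathrm{rel}}$, using Lemma \ref{h2comp} and $[F]\cdot[Q]=1$, with each local order being $1$ exactly when $u'(\omega)\notin TQ$. The only difference is that the paper realises the relative pairing with $Q$ concretely. It chooses a Hermitian metric on $\O(Q)$ with $|s|=1$ near $P$, so that the Chern form $\eta$ vanishes near $P$, and then evaluates $\int_\DD u^*\eta$ by the argument principle. This sidesteps the Lefschetz-duality bookkeeping you flagged as the main obstacle.
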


\begin{proof}
Since $P\cap Q=\emptyset$, the set $u^{-1}(Q)$ is compact inside $\DD^\circ$.
Let $L=\O(Q)$ be the line bundle associated to the divisor $Q$, such that there is a holomorphic section $s$ with $\operatorname{div}(s)=Q$. Then $u^{-1}(Q)$ coincides with the vanishing locus of $u^*s$ and thus it must be finite; moreover, $\mathrm{ord}_\omega(u^*s)=1$ if and only if $u'(\omega)\notin T_{u(0)}Q$. Writing $[u(\DD)]=n[F]_\mathrm{rel}\in H_2(W,P)$ we claim that
\[
	\sum_{\omega\in u^{-1}(Q)} \operatorname{ord}_{\omega}(u^*s) = n,
\]
which clearly implies the lemma. To this end, we equip $L$ with a Hermitian structure such that $|s|=1$ near $P$. Let $\eta$ be $(i/2\pi)$ times the curvature of the associated Chern connection, such that $[\eta]=c_1(L)\in H_2(W)$. Near $P$ we must have $\eta = 0$ since $s$ is holomorphic and of unit length and hence
\[
	\int_\DD u^*\eta = n \int_F \eta = n [F]\cdot [Q] = n,
\]
where the second equation uses that $c_1(L)$ equals the Poincar{\'e} dual of $Q$. Next, we pull-back the Hermitian structure to $u^*L$ and compute the curvature term $u^*\eta$ by different means. Let $s_0$ be a holomorphic nowhere vanishing section of $u^*L$ and define $f\colon \DD\to \C$ and $h\colon \DD\to (0,\infty)$ by
\[
	u^*s(\omega) = f(\omega) s_0(\omega)\quad \text{ and } \quad h(\omega)=|s_0(\omega)|^2,\quad \omega\in \DD.
\]
The curvature of $u^*L$ can then be expressed as $\bar \partial \partial \log h$ and thus
\[
	\int_\DD u^*\eta = \frac{i}{2\pi} \int_\DD \bar \partial \partial \log h = \frac{i}{2\pi} \int_{\partial \DD} \partial \log h.
\]
But by our choice of Hermitian structure we have $|f|^2h \equiv  1$  and thus $\partial \log h = - \partial f/f$ in a neighbourhood of $\partial \DD$. Using the argument principle we can therefore continue the preceding display with
\[
	 = -\frac{i}{2\pi}\int_{\partial \DD} \frac{\partial f}{f} = \sum_{f(\omega)=0} \operatorname{ord}_{\omega}(f)=\sum_{\omega\in u^{-1}(Q)} \operatorname{ord}_{\omega}(u^*s) 
\]
and this concludes the proof.
\end{proof}

\subsubsection{Moduli spaces}

 For the remaining subsection we fix an adapted surface $P\subset W$ and consider the following moduli space:
\[
	\mathcal M(P):=\left\{u\colon \DD\to W \text{ holomorphic {immersion}}: \begin{array}{l} [u(\DD)]=[F]_\mathrm{rel}\\[.2em]
	u^{-1}(P)= \partial \DD
	\end{array}\right\}.
\]

\begin{lemma}
The total Maslov index of  any disk $u\in \mathcal M(P)$ equals $4$.

\end{lemma}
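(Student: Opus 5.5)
The plan is to express the total Maslov index as the pairing $\mu(u)=\langle 2c_1(TW,TP),[u(\DD)]\rangle$ via \eqref{maslovchern}. By Lemma~\ref{lem_totallyrealequivalence}, $c_1(TW,TP)=-c_1(W,P)$, where $c_1(W,P)=c_1(\Lambda^{2,0}W,\Lambda^{2,0}_\R W)$. Since $[u(\DD)]=[F]_{\mathrm{rel}}$ and the pairing depends only on the relative homology class, we get $\mu(u)=-2\langle c_1(W,P),[F]_{\mathrm{rel}}\rangle$. It therefore suffices to show that $\langle c_1(\Lambda^{2,0}W,\Lambda^{2,0}_\R W),[u(\DD)]\rangle=-2$ for a single disk representing the class. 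Equivalently, we can compute directly along $u$ itself, which is more convenient.

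To compute this we use the meromorphic form $\Upsilon$ as a section of $K_W=\Lambda^{2,0}W$. Pull back to the disk: $u^*\Upsilon$ is a meromorphic section of $u^*K_W$. By Lemma~\ref{lem_intersec}, $u$ meets $Q$ exactly once, at some $\omega_0$, and transversally. Since $\operatorname{div}(\Upsilon)=-2Q$, the pullback $u^*\Upsilon$ is holomorphic and nowhere vanishing away from $\omega_0$ and has a pole of order exactly $2$ at $\omega_0$. Thus $s:=(\omega-\omega_0)^2\,u^*\Upsilon$ is a holomorphic, nowhere vanishing frame of $u^*K_W$ over $\DD$.

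Next we need the boundary behaviour. The pairing uses $\Lambda^{2,0}_\R W$, the $(2,0)$-forms that are real on $TP$. If $P$ is Lagrangian for $\Im\Upsilon$ (after possibly rescaling $\Upsilon$ by a constant), then $\Upsilon|_{TP}$ is real, so $u^*\Upsilon\in\Lambda^{2,0}_\R$ along $\partial\DD$. Hence $(\omega-\omega_0)^{-2}s$ lies in the real subbundle on $\partial\DD$. The factor $(\omega-\omega_0)^{-2}$ has winding number $-2$ on $\partial\DD$; its argument behaves like $e^{-2i\theta}$ up to a null-homotopic factor. Comparing with the characterisation \eqref{maslovchar}, the Maslov index of $(u^*K_W,u^*\Lambda^{2,0}_\R)$ is $-4$. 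The total Maslov index of $(u^*TW,u^*TP)$ is the negative of this, namely $4$.

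The main obstacle is that adaptedness alone does not include the Lagrangian condition, so $\Upsilon|_{TP}$ need not be real. This is handled by a homotopy argument. The relative Chern number is a topological invariant of the pair $(W,P)$ evaluated on $[F]_{\mathrm{rel}}$, and by Lemma~\ref{h2comp} this class generates $H_2(W,P)\cong\Z$. Away from $Q$, the form $\Upsilon|_{TP}$ is a nowhere vanishing complex volume form on $P$, because $P$ is totally real and $\Upsilon\neq0$ there. Its phase $e^{i\vartheta}$ is therefore a map $P\to S^1$, and the Maslov index is corrected by twice the winding number of this phase along $u(\partial\DD)$. That winding number is the degree of $\vartheta$ on the loop $u(\partial\DD)$. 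This loop is nullhomologous in $W$, and by the isomorphism $H_1(P)\cong H_1(W)$ from the proof of Lemma~\ref{h2comp} it is also nullhomologous in $P$. Hence the phase has winding number zero along it, and the computation above goes through unchanged. The remaining points to verify carefully are the sign conventions relating $c_1(W,P)$ to $\mu$, and the identification of the order of the pole of $u^*\Upsilon$ with the transversal intersection multiplicity.
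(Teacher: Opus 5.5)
Your argument is correct, but it takes a different route from the paper.

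\textbf{The paper's route.} The proof there is purely topological. Since $[u(\DD)]=[F]_{\mathrm{rel}}=j_*[F]$, naturality gives $\mu(u)=2\langle c_1(TW,TP),[F]_{\mathrm{rel}}\rangle=2\langle c_1(TW),[F]\rangle=-2\deg(K_W|_F)$. Adjunction on a fibre $F\cong\C P^1$, whose normal bundle is holomorphically trivial, then gives $\deg(K_W|_F)=-2$.

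\textbf{Your route.} You compute the Maslov index of $(u^*K_W,u^*\Lambda^{2,0}_\R W)$ directly, using the explicit frame $(\omega-\omega_0)^2\,\Upsilon\circ u$. This is the same mechanism as in Lemma~\ref{lem_totallyreal}(iii). Lemma~\ref{lem_intersec} supplies the single transverse intersection with $Q$, so $w\circ u$ has a simple zero and the pole is of order exactly two. Your treatment of the non-Lagrangian case is also sound. The phase of $\Upsilon|_{TP}$ is a map $P\to S^1$, and the loop $u|_{\partial\DD}$ is null-homologous in $W$. Because $H_1(P)\to H_1(W)$ is injective, the loop is null-homologous in $P$, so the phase has degree zero along it.

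Two presentational remarks. What you wrote is not really a ``homotopy argument''; it is a direct computation with a correction term that vanishes. The sentence about $[F]_{\mathrm{rel}}$ generating $H_2(W,P)$ plays no role and can be dropped.

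\textbf{Comparison.} The paper's argument is shorter and uses only the ruling. It needs no $\Upsilon$, no condition $P\cap Q=\emptyset$ and no $H_1$-isomorphism, so it applies to any totally real $P$ and any disk in the fibre class. Yours uses more of the setup, but it shows how the double pole of $\Upsilon$ along $Q$ produces the index $4$, and it makes explicit that the Lagrangian condition is irrelevant. The two are linked in one line: $K_W\cong\O(-2Q)$ and $[Q]\cdot[F]=1$ give $\deg(K_W|_F)=-2$.
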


\begin{proof}
The total Maslov index any disk with $[u]=[F]_\mathrm{rel}$ is twice the number $\langle c_1(TW,TP),[F]_\mathrm{rel}\rangle=\langle c_1(TW),[F]\rangle = - \deg(K_W|_F)$, where  $K_W\to W$ is the canonical line bundle of $W$. The adjunction formula tells us that $K_F \cong K_W|_F\otimes NF$ and since the normal bundle $NF$ is holomorphically trivial, we must have $\deg(K_W|_F)=-2$, resulting in the total Maslov index $4$.\end{proof}


As a consequence, $\mathcal M(P)$ is an open subset of $\mathcal M_4(P)$, the moduli space considered in Section \ref{prelim_moduli} (see also Lemma \ref{lem_opentouch}). In particular, it is a smooth $6$-dimensional manifold and its tangent space at $u\in \mathcal M(P)$ equals
\[
	T_u\mathcal M(P)= \{\xi\in C^\infty(\DD,u^*TW) \text{ holomorphic}: \xi(\partial \DD)\subset u^*TP \}.
\]
\begin{lemma}\label{lem_ev}The evaluation map  $\mathrm{ev}_0\colon \mathcal M(P) \to W$ is a submersion with
\begin{equation}\label{eqdev}
	(d\mathrm{ev}_0)_u\colon T_u\mathcal M(P)\to T_{u(0)}W,\quad \xi\mapsto \xi(0).
\end{equation}
\end{lemma}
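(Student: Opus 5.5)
The formula for the differential is essentially tautological. $\mathcal M(P)$ is an open subset of the submanifold $\mathcal M_4(P)\subset\mathcal B(P)$. The evaluation map $\mathrm{ev}_0(u)=u(0)$ is the restriction of the smooth map $\mathcal B(P)\to W$, $u\mapsto u(0)$. In the exponential chart $\exp_u$ this map reads $\xi\mapsto \exp_{u(0)}(\xi(0))$, whose derivative at $\xi=0$ is $\xi\mapsto\xi(0)$. This gives \eqref{eqdev}. So the real content is surjectivity of $\xi\mapsto\xi(0)$ on $T_u\mathcal M(P)$, which I would prove using the normal form from Lemma \ref{lem_normalformframe}.

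By Lemma \ref{lem_normalformframe}, since $u$ has total Maslov index $4$, there is a section $\eta$ such that $\{\eta, iu'\}$ is a holomorphic frame of $u^*TW$. Every $\xi\in T_u\mathcal M(P)$ then has the form
\[
\xi(\omega)=(a_0+a_1\omega+\bar a_0\omega^2)\eta(\omega)+(b_0+b_1\omega+\bar b_0\omega^2)\,iu'(\omega)
\]
with $a_0,b_0\in\C$ and $a_1,b_1\in\R$. Here I read the second coefficient in \eqref{normalform4} as $b_0+b_1\omega+\bar b_0\omega^2$, per Lemma \ref{lem_normalform} with $k=2$ for $(T\DD,T\partial\DD)$. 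Evaluating at $\omega=0$ gives
\[
\xi(0)=a_0\,\eta(0)+b_0\,iu'(0).
\]
Since $\eta(0)$ and $iu'(0)$ are $\C$-linearly independent (they form a frame) and $a_0,b_0$ range freely over $\C$, the image is all of $T_{u(0)}W$. Hence $d\mathrm{ev}_0$ is onto at every $u$, and $\mathrm{ev}_0$ is a submersion. The kernel has real dimension $6-4=2$, spanned by the parameters $a_1,b_1$.

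The only point needing care is the claim that $\{\eta,iu'\}$ is a frame at $\omega=0$. This holds because $u$ is an immersion, so $u'(0)\neq 0$, and because $\eta$ is a nowhere vanishing section of a holomorphic complement $E$ of $du(T\DD)$, obtained from the splitting in Lemma \ref{lem_split}. I would also double-check that the linear parameters really are $a_1,b_1\in\R$ and the constant terms free complex numbers; the count $2+1+2+1=6$ confirms this. I do not expect a real obstacle here.
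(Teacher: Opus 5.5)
Your proposal is correct and follows the paper's proof essentially step for step. You obtain the differential from the exponential chart, $\mathrm{ev}_0(\exp_u(\xi))=\exp_{u(0)}(\xi(0))$, and you get surjectivity by writing $w=a_0\eta(0)+ib_0u'(0)$ in the frame of Lemma \ref{lem_normalformframe}, realised by $\xi=(a_0+\bar a_0\omega^2)\eta+i(b_0+\bar b_0\omega^2)u'$. Your reading of the missing $\omega^2$ in the second coefficient of \eqref{normalform4} is also the right one.
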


\begin{proof}
To verify \eqref{eqdev}, one considers $u_\lambda = \exp_u(\lambda\xi)$ (where $\exp$ is the exponential map for an auxiliary Riemannian metric  as in Section \ref{prelim_moduli}) for a parameter $\lambda\in \R$. Then $\mathrm{ev}_0(u_\lambda)=\exp_{u(0)}(\lambda \xi(0))$ and taking the derivative at $\lambda=0$ gives $(d\exp_{u(0)})_0(\xi(0))=\xi(0)$, as desired. Next, given a vector $w\in T_{u(0)}W$, we need to find $\xi\in T_u\mathcal M(P)$ with $\xi(0)=w$. For this we use the frame $\{\eta,iu'\}$ from Lemma \ref{lem_normalformframe} and express $w$ as $a_0 \eta(0) + ib_0 u'(0)$ for suitable $a_0,b_0\in \C$. Then $\xi=(a_0+\bar a_0\omega^2)\eta+ i(b_0+\bar b_0\omega^2)u'$ lies in $T_u\mathcal M(P)$ and satisfies  $\xi(0)=w$.
\end{proof}

Hence $\mathcal M(P,Q)=\{u\in \mathcal M(P): u(0)=0\}$ is a $4$-dimensional submanifold whose tangent space at $u\in \mathcal M(P,Q)$ consist of all $\xi\in T_u\mathcal M(P)$ with $\xi(0)\in T_{u(0)}Q$. Let us also define the following subspaces of $T_u\mathcal M(P)$:
\begin{equation*}
	\begin{array}{rclcrcl}
	\mathcal V_u&:=&\ker d(\mathrm{ev}_0)_u, &\quad & \mathcal H_u^+&:=&\displaystyle\frac{(1+ \omega^2)}{\omega}\mathcal V_u,\\[.3em]
	\mathcal H_u^-&:=&\displaystyle i\frac{(1- \omega^2)}{\omega}\mathcal V_u, &\quad & \mathcal H_u &:=& (\mathcal H_u^+\oplus \mathcal H_u^-)\cap T_u\mathcal M(P,Q).
	\end{array}
\end{equation*}
\begin{lemma}\label{lem_horizontalvertical}
The following direct sum decompositions hold true:
\begin{eqnarray}
	T_u\mathcal M(P)=&\mathcal V_u \oplus \mathcal H_u^+ \oplus \mathcal H_u^-,\quad &u\in \mathcal M(P),\\[.5em]
	T_u\mathcal M(P,Q)=&\mathcal V_u \oplus \mathcal H_u ,\quad &u\in \mathcal M(P,Q).
\end{eqnarray}
\end{lemma}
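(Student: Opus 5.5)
The plan is to reduce everything to the normal form of Lemma \ref{lem_normalformframe} and compare coefficients. There, with the holomorphic frame $\{\eta, iu'\}$, every $\xi\in T_u\mathcal M(P)$ is uniquely written as
\[
\xi=(a_0+a_1\omega+\bar a_0\omega^2)\eta+(b_0+b_1\omega+\bar b_0\omega^2)\,iu',\qquad (a_0,a_1,b_0,b_1)\in\C\times\R\times\C\times\R
\]
(I read the last coefficient in \eqref{normalform4} as $\bar b_0\omega^2$, matching Lemma \ref{lem_normalform}). This identifies $T_u\mathcal M(P)\cong\R^6$. By Lemma \ref{lem_ev}, $(d\mathrm{ev}_0)_u\xi=\xi(0)=a_0\eta(0)+b_0\,iu'(0)$. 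Hence $\mathcal V_u$ consists exactly of the $\xi$ with $a_0=b_0=0$, that is, $\xi=\omega(a_1\eta+b_1iu')$ with $a_1,b_1\in\R$, and $\dim\mathcal V_u=2$.

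The first step is to check that $\mathcal H_u^\pm$ are really subspaces of $T_u\mathcal M(P)$. Every element of $\mathcal V_u$ vanishes at $0$, so dividing by $\omega$ preserves holomorphicity. On $|\omega|=1$ the multipliers are real:
\[
(1+\omega^2)/\omega=\omega+\bar\omega=2\Re\omega,\qquad i(1-\omega^2)/\omega=i(\bar\omega-\omega)=2\Im\omega .
\]
Multiplying by a real function preserves the boundary condition $\xi(\partial\DD)\subset u^*TP$. Concretely, $\mathcal H_u^+$ consists of $(1+\omega^2)(c\,\eta+d\,iu')$ and $\mathcal H_u^-$ of $i(1-\omega^2)(c\,\eta+d\,iu')$ with $c,d\in\R$; each has real dimension $2$.

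Next I would prove the first decomposition. In terms of coordinates, a triple in $\mathcal V_u\times\mathcal H_u^+\times\mathcal H_u^-$ with parameters $(a_1,b_1)$, $(c^+,d^+)$, $(c^-,d^-)$ has sum with
\[
a_0=c^++ic^-,\qquad b_0=d^++id^-,\qquad \omega\text{-coefficients }a_1,\ b_1 .
\]
The $\omega^2$-coefficients $c^+-ic^-=\bar a_0$ and $d^+-id^-=\bar b_0$ are then automatically consistent with the normal form. This map $\R^6\to\R^6$ is a linear bijection, which gives $T_u\mathcal M(P)=\mathcal V_u\oplus\mathcal H_u^+\oplus\mathcal H_u^-$.

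For $u\in\mathcal M(P,Q)$ (so $u(0)\in Q$), the tangent space $T_u\mathcal M(P,Q)$ is the preimage of $T_{u(0)}Q$ under $(d\mathrm{ev}_0)_u$. It therefore contains $\mathcal V_u=\ker(d\mathrm{ev}_0)_u$. By the modular law, using $\mathcal V_u\subset T_u\mathcal M(P,Q)$ and the first decomposition,
\[
T_u\mathcal M(P,Q)=\mathcal V_u\oplus\big((\mathcal H_u^+\oplus\mathcal H_u^-)\cap T_u\mathcal M(P,Q)\big)=\mathcal V_u\oplus\mathcal H_u .
\]
As a consistency check: $(d\mathrm{ev}_0)_u$ restricted to $\mathcal H_u^+\oplus\mathcal H_u^-$ is an isomorphism onto $T_{u(0)}W$, since $(a_0,b_0)\in\C^2$ is arbitrary there. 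So $\mathcal H_u$ maps isomorphically onto $T_{u(0)}Q$, giving $\dim\mathcal H_u=2$ and total dimension $4$ as expected.

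There is no serious obstacle. The one point needing care is the real-valuedness of the multipliers on $\partial\DD$ together with the cancellation of the pole at $\omega=0$; both are handled above.
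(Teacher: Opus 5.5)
Your proposal is correct and matches the paper's proof. Both expand $\xi$ in the normal form of Lemma~\ref{lem_normalformframe}, split $a_0,b_0$ into real and imaginary parts to read off the components in $\mathcal V_u$, $\mathcal H_u^+$ and $\mathcal H_u^-$, and then obtain the second decomposition from $\mathcal V_u\subset T_u\mathcal M(P,Q)$. You also make explicit two details the paper leaves implicit: the check that $\mathcal H_u^\pm\subset T_u\mathcal M(P)$ (real multipliers on $\partial\DD$, removable pole at $0$), and the modular-law step.
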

\begin{proof}
Consider the frame $\{\eta,iu'\}$ from Lemma \ref{lem_normalformframe}, then for any $\xi\in T_u\mathcal M(P)$ there are unique coefficients $(a_0,a_1,b_0,b_1)\in \C\times \R\times \C \times \R$ with
\begin{align*}
	\xi(\omega)=& (a_{0} + a_1
	\omega + \bar a_0\omega^2)\eta(\omega) + i(b_0+b_1\omega+\bar b_0\omega^2) u'(\omega)\\[1em]
	=& \Big({a_1\omega \eta(\omega) + i b_1 \omega u'(\omega)}\Big) + \frac{1+\omega^2}{\omega} {\Big((\Re a_0) \omega \eta(\omega) + i (\Re b_0) \omega u'(\omega)\Big)}
	 \\[.5em]
	& + i \frac{1-\omega^2}{\omega}{\Big((\Im a_0) \omega \eta(\omega) + i (\Im b_0) \omega u'(\omega)\Big)}.
\end{align*}
Since all three brackets lie in $\mathcal V_u$, this gives the first decomposition. Since $\mathcal V_u\subset T_u\mathcal M(P,Q)$, the second one is an immediate consequence.
\end{proof}

\subsubsection{The map $\Phi$}
To study the subspaces  $
\C u'(0) \subset T_{u(0)}W$, we define 
\begin{equation}\label{defPhi}
	\Phi\colon \mathcal M(P,Q)\to \C,\quad \Phi(u):=\zeta\quad \text{ when }  u'(0)\in L_{u(0)}^\zeta.
\end{equation}
Here $L^\zeta\subset TW|_Q$ is the holomorphic line bundle from \eqref{defL} and the map $\Phi$ is well-defined by Lemma \ref{lem_intersec}. Moreover, given a holomorphic section $\xi\in C^\infty(\DD,u^*TW)$ (not necessarily in $T_u\mathcal M(P)$), we define complex numbers $\gamma_0(\xi)$ and $\gamma_1(\xi)$ by
\begin{equation}\label{defgamma}
	\langle \omega^2 \Upsilon(u(\omega)),u'(\omega)\wedge \xi(\omega)\rangle = \gamma_0(\xi) + \gamma_1(\xi) \omega + O(\omega^2).
\end{equation}
Note that $\omega^2\Upsilon(u(\omega))$ is holomorphic on  $\DD$, as the double pole is cancelled. 
\begin{lemma}\label{lemdphi} Let $u\in \mathcal M(P,Q)$ and $\xi\in T_u\mathcal M(P,Q)$. Then
\begin{equation}\label{lemdphi0}
	d\Phi_u(\xi)=\big\langle \Res_Q\Upsilon(u(0)),\psi(u'(0))\big\rangle \gamma_0(\xi) - \gamma_1(\xi).
\end{equation}
\end{lemma}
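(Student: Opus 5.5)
We work in a local chart $(U,z,w)$ near $q_0=u(0)\in Q$ with $U\cap Q=\{w=0\}$ and $dz\perp L$ along $Q$, so that $\partial_w$ spans $L$ along $Q$. In this chart we use the expansion \eqref{expansion}, $\Upsilon=(\upsilon_{-2}(z)+\upsilon_{-1}(z)w+O(w^2))\,dz\wedge dw/w^2$. By Lemma \ref{lem_residue}, $\psi^{-1}=\upsilon_{-2}dz\otimes\partial_w$. Hence $\psi([\partial_w])=\upsilon_{-2}^{-1}\partial_z$ modulo nothing, and $L^\zeta_q=\spn_\C(\partial_w+\zeta\upsilon_{-2}(z)^{-1}\partial_z)$. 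For a disk $u\in\mathcal M(P,Q)$ we write $u(\omega)=(z(\omega),w(\omega))$ with $w(0)=0$ and $w'(0)\neq0$ by Lemma \ref{lem_intersec}. Then
\[
\Phi(u)=\upsilon_{-2}(z(0))\,z'(0)/w'(0).
\]
The plan is to differentiate this expression along a path $u_t$ in $\mathcal M(P,Q)$ with $\dot u_0=\xi=(\dot z,\dot w)$, where $\dot w(0)=0$ since $\xi(0)\in TQ$. This gives
\[
d\Phi_u(\xi)=\upsilon_{-2}'\dot z(0)\,\frac{z'(0)}{w'(0)}+\upsilon_{-2}\Big(\frac{\dot z'(0)}{w'(0)}-\frac{z'(0)\dot w'(0)}{w'(0)^2}\Big).
\]

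Next we expand the left side of \eqref{defgamma} in the chart:
\[
\langle\omega^2\Upsilon(u),u'\wedge\xi\rangle=\frac{\omega^2}{w^2}\,(\upsilon_{-2}(z)+\upsilon_{-1}(z)w+\dots)\,(z'\dot w-w'\dot z).
\]
We write $w(\omega)=w_1\omega+w_2\omega^2+\dots$ and $\dot w=\dot w_1\omega+\dots$, and compute the coefficients of $\omega^0$ and $\omega^1$. This gives
\[
\gamma_0=-\upsilon_{-2}\dot z(0)/w_1.
\]
For $\gamma_1$, the terms come from four sources: the expansion of $\omega^2/w^2=w_1^{-2}(1-2w_2\omega/w_1+\dots)$; the term $\upsilon_{-2}'z'(0)\omega$; the term $\upsilon_{-1}w_1\omega$; and the linear part of $z'\dot w-w'\dot z$, which is $z'(0)\dot w_1-w_1\dot z'(0)-2w_2\dot z(0)$. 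We then compare the result with $d\Phi_u(\xi)$, noting $w'(0)=w_1$ and $\dot w'(0)=\dot w_1$. The $w_2$ contributions should cancel: $-2w_2/w_1\cdot(-w_1\dot z(0))/w_1^2$ from the prefactor against $-2w_2\dot z(0)/w_1^2$ from the numerator. The $\upsilon_{-2}'$ terms and the $\dot z'(0)$, $\dot w_1$ terms should then match $-d\Phi_u(\xi)$ exactly. What remains is the residue term $\upsilon_{-1}w_1\cdot(-w_1\dot z(0))/w_1^2=-\upsilon_{-1}\dot z(0)$. Using $\gamma_0$, this equals $\upsilon_{-1}w_1\gamma_0/\upsilon_{-2}$, which is $\langle\Res_Q\Upsilon,\psi(u'(0))\rangle\gamma_0$. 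Indeed $\psi(u'(0))=\psi(w_1[\partial_w])=w_1\upsilon_{-2}^{-1}\partial_z$, and pairing with $\upsilon_{-1}dz$ gives exactly this factor. Rearranging yields \eqref{lemdphi0}.

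The main obstacle is bookkeeping. First, we must make sure the chart condition $dz\perp L$ is exactly what makes $\Res_Q$ and the identification $L^\zeta$ compatible, as in Lemma \ref{lem_residue}. Second, all the signs and conventions for $\psi$ and $[\cdot]$ must be tracked consistently, since the cancellation of the $w_2$ and $\upsilon_{-2}'$ terms is delicate. Independence of the chart need not be checked separately: both sides of \eqref{lemdphi0} are intrinsically defined, so a computation in one adapted chart suffices.
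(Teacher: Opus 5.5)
Your proposal is correct, and the bookkeeping works out as you predict. With $\Phi(u)=\upsilon_{-2}(z(0))z'(0)/w'(0)$ one gets $\gamma_0=-\upsilon_{-2}\dot z(0)/w_1$. After the $w_2$-terms cancel, $\gamma_1=-d\Phi_u(\xi)-\upsilon_{-1}\dot z(0)$, which is \eqref{lemdphi0} because $\langle\operatorname{Res}_Q\Upsilon,\psi(u'(0))\rangle=\upsilon_{-1}w_1/\upsilon_{-2}$.

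Your formula for $d\Phi_u$ is the paper's Step 1 in different notation ($\zeta_0=\upsilon_{-2}z'(0)$, $\zeta_1=w'(0)$). For $\gamma_0,\gamma_1$ the routes differ. The paper never expands for a general $\xi$. It builds the holomorphic frame $\{\vartheta,u'\}$ from Lemma \ref{lem_normalformframe} with $\vartheta(0)\in TQ$ and computes $\gamma_0(\vartheta),\gamma_1(\vartheta)$ once. It then checks the identity separately on $\mathcal V_u$, where $\xi=a_1\omega\eta+ib_1\omega u'$, and on $\mathcal H_u$, where $\xi=a_0\vartheta+O(\omega^2)$, using $T_u\mathcal M(P,Q)=\mathcal V_u\oplus\mathcal H_u$ and real-linearity.

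Your single Taylor expansion is shorter and shows that the identity is purely a statement about $1$-jets at $\omega=0$. It needs only holomorphicity near $0$, $w'(0)\neq0$ and $\xi(0)\in TQ$, and no input from the boundary condition on $P$.

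What the paper's case split buys is explicit by-products used later:
\begin{itemize}
\item On $\mathcal V_u$: $\gamma_0=0$ and $\gamma_1(\xi)=-a_1g_0(0)/\zeta_1$. This is the remark after the lemma, which gives $d\varphi\neq0$ in Lemma \ref{lem_horizontal}.
\item On $\mathcal H_u$: $\gamma_1(\xi)=a_0\gamma_1(\vartheta)$. This is used to deduce \eqref{almostlagr} from the Lagrangian condition.
\end{itemize}
In your setup these follow by a one-line specialisation. For example, for $\xi\in\mathcal V_u$ one has $\dot z(0)=0$, so $\gamma_1=\upsilon_{-2}\big(z'(0)\dot w'(0)-w'(0)\dot z'(0)\big)/w'(0)^2$, which vanishes only when $\xi'(0)\in\C u'(0)$.
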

\begin{remark}
The proof implies that $\gamma_1(\xi + i\omega b_1 u'(0))=\gamma_1(\xi) \neq 0$ for all $\xi \in \mathcal V_u$ that are linearly independent from $i\omega u'\in \mathcal V_u$.
\end{remark}
\begin{proof}
Let $(U,z,w)$ be a local coordinate system near $u(0)\in Q$ with $L=\C \partial_w$ along $Q$, such that $z$ is a coordinate on $Q$ and $u(0)$ corresponds to $(z,w)=(0,0)$. In these coordinates $\Upsilon$ has a Laurent expansion
\[
	\Upsilon = \upsilon(z,w) \frac{dz\wedge dw}{w^2},\quad \upsilon(z,w)=\upsilon_{-2}(z) + \upsilon_{-1}(z)w + O(w^2).
\]
{\it Step 1 -- Computation of $d\Phi$:} By Lemma \ref{lem_residue} we can express the line bundle $L^\zeta$ in terms of $\upsilon_{-2}(z)$ as
\[
	L^\zeta_z=\C(\partial_w + \zeta\psi_z([\partial_w])) = \C(\upsilon_{-2}(z)^{-1}  \zeta \partial_z + \partial_w).
\]
For  $\xi\in T_u\mathcal M(P,Q)$ consider the $1$-parameter family of disks $u_\lambda=\exp_u(\lambda\xi)$ ($\lambda\in \R$) with $\exp$ as in Section \ref{prelim_moduli}. Then there are $\zeta_0,\zeta_1\colon \R\to \C$ with
\begin{equation}\label{lemdphi1}
	u_\lambda'(0)=\upsilon_{-2}(u_\lambda(0))^{-1} \zeta_0(\lambda) \partial_z + \zeta_1(\lambda)\partial_w
\end{equation}
and we can express $d\Phi_u$ as
\begin{equation}\label{lemdphi2}
	d\Phi_u(\xi)=(\zeta_0/\zeta_1)'|_{\lambda=0} = \frac{\zeta_0'\zeta_1- \zeta_0\zeta_1'}{\zeta_1^2},
\end{equation}
where we write $\zeta_0=\zeta_0(0)$, etc., when it is understood that we evaluate at $\lambda=0$.
Let us differentiate \eqref{lemdphi1} (with respect  to the Levi--Civita connection $\nabla$ of the Euclidean metric on $U$, that is, regarding $\partial_z$ and $\partial_w$ as parallel):
\begin{equation}\label{lemdphi3}
	\xi'(0)= \partial_\lambda|_{\lambda=0} u_\lambda'(0)=\left(-\upsilon_{-2}(0)^{-2} \langle d\upsilon_{-2}(0),\xi(0)\rangle \zeta_0 + \upsilon_{-2}(0)^{-1}\zeta_0'\right)\partial_z + \zeta_1'\partial_w.
\end{equation}

\noindent {\it Step 2 -- Computation of $\gamma_0,\gamma_1$:}
Let $\{\eta,iu'\}$ be the frame from Lemma \ref{lem_normalformframe}. Then there exists a unique $c\in \C$ with $\eta(0)+icu'(0)\in T_{u(0)}Q$ and we define $\vartheta(\omega):=\eta(\omega)+ic u'(\omega)$. It will be convenient use $\{u',\vartheta\}$ as a holomorphic frame for $u^*TW$. We can write
\[
u'(\omega)=\upsilon^{-1}(u(\omega)) f_0(\omega) \partial_z + f_1(\omega) \partial_w,\quad \vartheta(\omega) =\upsilon^{-1}(u(\omega)) g_0(\omega) \partial_z + g_1(\omega) \partial_w 
\]
for holomorphic functions $f_0,f_1,g_0,g_1\colon \DD\to \C$, which satisfy $(f_0(0),f_1(0))=(\zeta_0,\zeta_1)$ and $g_1(0)=0$.  We claim that
\begin{equation}\label{lemdphi31}
	\gamma_0(\vartheta) = -g_0(0)/\zeta_1,\quad \gamma_1(\vartheta) = -(g_0'(0) - \zeta g'_1(0))/\zeta_1.
\end{equation}
Since $\partial_\omega w(u(\omega)) = \langle dw, u'(\omega)\rangle=f_1(\omega)$, we have
\[\frac{w(u(\omega))}{\omega} = \zeta_1 + \frac 12 f_1'(0) \omega + O(\omega^2),\quad \left(\frac{w(u(\omega))}{\omega}\right)^{-2}
=\frac{1}{\zeta_1^2}  - \frac{f_1'(0)}{\zeta_1^3} \omega + O(\omega^2)
\]
and hence
\begin{align*} &	\langle \omega^2 \Upsilon(u(\omega)),u'(\omega)\wedge \vartheta(\omega)\rangle\\[.3em]
=&\left(\frac{w(u(\omega))}{\omega}\right)^{-2} \big\langle \upsilon(u(\omega)){dz\wedge dw}, u'(\omega)\wedge \eta(\omega)\big\rangle \\[.3em]
	=&\left(\frac{1}{\zeta_1^2}  - \frac{f_1'(0)}{\zeta_1^3}\omega\right) \Big(f_0(\omega) g_1(\omega) - f_1(\omega) g_0(\omega)\Big) + O(\omega^2) \\[.3em]
	=&\left(\frac{1}{\zeta_1^2}  - \frac{f_1'(0)}{\zeta_1^3}\omega\right) \Big(-\zeta_1 g_0(0) + \big( \zeta_0g_1'(0) - f_1'(0)g_0(0)- \zeta_1 g_0'(0)\big)\omega\Big) + O(\omega^2)\\[.3em]
	=&-\frac{g_0(0)}{\zeta_1}  -\left(\frac{g_0'(0)-\zeta g_1'(0)}{\zeta_1}\right) \omega + O(\omega^2),
\end{align*}
as desired.

\noindent {\it Step 3 -- The case $\xi\in \mathcal V_u$:} We now proceed by considering the cases $\xi\in \mathcal V_u$ and $\xi\in \mathcal H_u$ separately, which suffices since both sides of \eqref{lemdphi0} are $\R$-linear in $\xi$.  If $\xi\in \mathcal V_u$, then in the frame $\{\eta,iu'\}$ we have $\xi(\omega)=a_1\omega \eta(\omega)+ib_1\omega u'(\omega)$ with $a_1,b_1\in \R$. Plugging this into \eqref{lemdphi3} gives
\[
	a_1 \eta(0) + ib_1u'(0)= \upsilon_{-2}(0)^{-1}\zeta_0'\partial_z + \zeta_1'\partial_w.
\]
The left hand side can also be expressed as 
\begin{eqnarray*}
	a_1 \eta(0) + ib_1u'(0) &=& a_1 \vartheta(0) + i(b_1-a_1c)u'(0) \\
	&=& \upsilon_{-2}(0)^{-1}\big(a_1 g_0(0)+i(b_1-a_1c) \zeta_0\big)\partial_z + i(b_1-a_1c) \zeta_1 \partial_w
\end{eqnarray*}
and by comparing coefficients we get
\(
	\zeta_0' = a_1 g_0(0) + i(b_1-a_1c)\zeta_0\) and \(\zeta_1'=i(b_1-a_1c) \zeta_1
\),
such that
\[
	d\Phi_u(\xi)=\frac{\zeta_0'\zeta_1-\zeta_0\zeta_1'}{\zeta_1^2} =  \frac{a_1g_0(0)}{\zeta_1}.
\]
Moreover, since
\(
	u'(\omega)\wedge \xi(\omega)= a_1 \omega (u'(\omega)\wedge \vartheta(\omega))
\) we have
\[
	\gamma_0(\xi)=0\quad \text{ and }\quad \gamma_1(\xi)=a_1 \gamma_0(\vartheta) = -\frac{a_1g_0(0)}{\zeta_1},
\]
which established formula \eqref{lemdphi0}.

\noindent {\it Step 4 -- The case $\xi\in \mathcal H_u$:}
We start by computing the derivatives of $\upsilon$ that show up now, writing $\upsilon_{-2}'(z) = \partial_z\upsilon_{-2}(z)$:
\begin{eqnarray}
	\langle d\upsilon,u'(0)\rangle&=& \upsilon_{-2}(0)^{-1} \upsilon_{-2}'(0) \zeta_0 + \upsilon_{-1}(0) \zeta_1,\label{lemdphi4}\\
	\langle d\upsilon,\vartheta(0)\rangle&=& \upsilon_{-2}(0)^{-1} \upsilon_{-2}'(0) g_0(0). \label{lemdphi5}
\end{eqnarray}
Given $\xi\in \mathcal H_u$, we have
 $\xi(\omega)=(a_0 + \bar a_0\omega^2) \eta(\omega) + i(b_0+\bar b_0\omega^2)u'(\omega)$ for coefficients $(a_0,b_0)\in \C$. Since $\xi(0)\in T_{u(0)}Q$ we must have $b_0=ca_0$ and hence
 \begin{align*}
 	\xi(0)&=a_0\vartheta(0)=\upsilon_{-2}(0)^{-1} a_0g_0(0) \partial_z
 \end{align*}
 as well as
 \begin{equation*}
 \begin{array}{ll}
 &\xi'(0)=a_0\vartheta'(0) \\[.3em]
 &=\big(- \upsilon_{-2}(0)^{-2} \langle d\upsilon,u'(0)\rangle a_0g_0(0) + \upsilon_{-2}(0)^{-1} a_0g_0'(0) \big)
 \partial_z+  a_0 g_1'(0) \partial_w.
 \end{array} 
\end{equation*}
Inserting the preceding two displays into  \eqref{lemdphi3} and comparing the coefficients in front of $\partial_z$ and $\partial_w$ yields two equations. After evaluating the derivatives of $\upsilon$ via \eqref{lemdphi4} and \eqref{lemdphi5}, these can be arranged into the following form:
\begin{eqnarray*}
\zeta_0' &=& - \upsilon_{-2}(0)^{-1}\upsilon_{-1}(0)\zeta_1 a_0g_0(0) + a_0 g_0'(0),\\
\zeta_1' & =& a_0g_1'(0).
\end{eqnarray*}
Hence
\begin{equation}\label{lemdphi6}
	d\Phi_u(\xi)=\frac{\zeta_0'\zeta_1-\zeta_0\zeta_1'}{\zeta_1^2}= - \upsilon_{-2}(0)^{-1} \upsilon_{-1}(0) a_0g_0(0) + \frac{a_0}{\zeta_1}  \big(g_0'(0)-\zeta g_1'(0)\big).
\end{equation}
Since $\xi(\omega)=a_0\vartheta(\omega)+O(\omega^2)$, we can use \eqref{lemdphi31} to compute
\[
	\gamma_0(\xi)=a_0\gamma_0(\vartheta)= - \frac{a_0}{\zeta_1}g_0(0),\quad \gamma_1(\xi)=a_0\gamma_1(\vartheta)=-\frac{ a_0}{\zeta_1}(g_0'(0)-\zeta g_1'(0)).
\]
Inserting this into \eqref{lemdphi6} gives
\(
	d\Phi_u(\xi)= \upsilon_{-1}(0)\left(\upsilon_{-2}(0)^{-1}\zeta_1\right)\gamma_0(\xi) - \gamma_1(\xi),
\)
and this concludes the proof.
\end{proof}

\subsection{Disk families}

\subsubsection{Horizontal section}
Let us relate the just constructed moduli space to the set of disk families $\mathcal F(P)$ (see Definition \ref{def_diskfamily}). To this end, we first take the quotient 
\[
	\mathcal N(P):=\mathcal M(P,Q)/S^1
\]
by the free $S^1$-action $u_\lambda(\omega)=u(e^{i\lambda} \omega)$ ($\lambda \in \R$), which yields  a smooth $3$-dimensional moduli space of {\it unparametrised} holomorphic disks. The maps $\mathrm{ev}_0$ and $\Phi$ from \eqref{defPhi} descend to the quotient, where we write  $\pi_Q([u])=u(0)$ and $\varphi([u])=\Phi(u)$. This results in the following picture:
\begin{equation}\label{parallelsec}
	\begin{tikzcd}
		\mathcal N(P) \arrow["\varphi"]{r} \arrow["\pi_Q"]{d} &\C\\
			Q \arrow[bend left=30, dashed,"\delta"]{u}
	\end{tikzcd} 
\end{equation}

\begin{lemma}\label{lem_correspondencedisk} There is a $1:1$-correspondence between:
\begin{enumerate}
	\item Disk families $(\Delta_q:q\in Q)\in \mathcal F(P)$
	\item Continuous sections $\delta\colon Q\to \mathcal N(P)$ of $\pi_Q$ with $\varphi\circ \delta\equiv \text{const.}$.
\end{enumerate}	
\end{lemma}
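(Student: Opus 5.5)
The plan is to write down both directions of the correspondence explicitly and check that they are mutually inverse. For the direction (i)$\Rightarrow$(ii), let $(\Delta_q:q\in Q)\in\mathcal F(P)$. Each $\Delta_q$ is an immersed holomorphic disk with $\Delta_q\cap P=\partial\Delta_q$ and a single transversal intersection with $Q$ at $q$. I will choose a holomorphic parametrisation $u_q\colon\DD\to W$ with $u_q(0)=q$, which is unique up to precomposition with rotations. Lemma \ref{lem_intersec} then gives $[u_q(\DD)]=[F]_\mathrm{rel}$, so $u_q\in\mathcal M(P,Q)$ and $\delta(q):=[u_q]\in\mathcal N(P)$ is well defined with $\pi_Q\circ\delta=\Id$. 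Since $L_q=T_q\Delta_q=\C u_q'(0)$ is a holomorphic line subbundle of $TW|_Q$ transverse to $TQ$ (transversality by Definition \ref{def_diskfamily}\ref{diskfamilya}), it is a holomorphic splitting of \eqref{normalses}. Such splittings are exactly the bundles $L^\zeta$ for a single $\zeta\in\C$, so $\varphi(\delta(q))=\zeta$ for all $q$, and $\varphi\circ\delta$ is constant.

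The main point in this direction is continuity of $\delta$ as a map into the smooth manifold $\mathcal N(P)$. Definition \ref{def_diskfamily} only says the family is continuous, and I would interpret this as continuity of the unparametrised disks, for instance in the Hausdorff or $C^0$ sense on $\partial\Delta_q$. I would pass from this to continuity in $\mathcal N(P)$ as follows. Normalise the parametrisations by $u_q(0)=q$, using a local choice of rotation to fix the $S^1$-ambiguity. Then use the local structure of $\mathcal M(P,Q)$ from Theorem \ref{thm_modulimaster} together with a uniqueness and compactness argument: holomorphic disks with boundary on the totally real $P$ that converge in $C^0$ converge in $C^\infty$, by elliptic boundary regularity and Gromov compactness. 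Bubbling is excluded because the relative class $[F]_\mathrm{rel}$ is primitive by Lemma \ref{h2comp}. I expect this regularity/compactness step to be the main obstacle. If the paper's notion of a continuous family is instead taken to mean continuity in $\mathcal M(P)$ modulo $S^1$, the step becomes immediate.

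For the converse direction (ii)$\Rightarrow$(i), given a continuous section $\delta$ with $\varphi\circ\delta\equiv\zeta$, I set $\Delta_q:=u(\DD)$ for any representative $u$ of $\delta(q)$. This image does not depend on the choice of representative. Because $u\in\mathcal M(P)$, the disk $\Delta_q$ is an immersed holomorphic disk with $u^{-1}(P)=\partial\DD$, so $\Delta_q\cap P=\partial\Delta_q$. Lemma \ref{lem_intersec} gives the unique transversal intersection with $Q$, which occurs at $u(0)=\pi_Q(\delta(q))=q$. Moreover $T_q\Delta_q=\C u'(0)=L^\zeta_q$, and $L^\zeta$ is a holomorphic line subbundle by \eqref{defL}, so condition \ref{diskfamilyb} holds. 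Continuity of the family follows directly from continuity of $\delta$. Finally, the two constructions are inverse to each other, because an unparametrised disk in $\mathcal N(P)$ is determined by its image together with the marked point $u(0)=q$: two holomorphic immersions of $\DD$ with the same image and the same centre differ by an automorphism of $\DD$ fixing $0$, which is a rotation.
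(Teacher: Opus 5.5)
Your proposal is correct and follows the paper's argument: you take images of representatives of $\delta(q)$, use Lemma \ref{lem_intersec} to match property (a) with sections of $\pi_Q$ into $\mathcal N(P)$, and identify property (b) with constancy of $\varphi\circ\delta$, using that the holomorphic complements of $TQ$ in $TW|_Q$ are exactly the bundles $L^\zeta$, $\zeta\in\C$. The paper does not discuss the topology and implicitly reads continuity of a disk family as continuity of the corresponding section into $\mathcal N(P)$, so your compactness detour is not needed; as sketched it would also need more than primitivity of $[F]_{\mathrm{rel}}$, which by itself does not exclude a splitting into classes $k[F]_{\mathrm{rel}}$ and $(1-k)[F]_{\mathrm{rel}}$ unless you add a positivity argument.
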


\begin{proof} Given a continuous section $\delta\colon Q\to \mathcal N(P)$, we set $\Delta_q=u(\DD)$, where $u$ is a representative of $\delta(q)\in \mathcal N(P)$.
In view of Lemma \ref{lem_intersec} this gives a $1:1$ correspondence between continuous section of $\pi_Q$ and continuous disk families with property (a) of Definition \ref{def_diskfamily}. It then holds that $T_q \Delta_q = L^\zeta_q$ iff $\varphi\circ \delta(q)=\zeta$ and thus property (b) corresponds to holomorphicity of   $\varphi\circ \delta\colon Q\to \C$, which is of course equivalent to it being constant.
\end{proof}

The generator of the $S^1$-action on $\mathcal M(P,Q)$ equals $i\omega u'\in \mathcal V_u$ and thus the splitting from Lemma \ref{lem_horizontalvertical} descends as a splitting
\[
	T_{[u]}\mathcal N(P) = \mathcal V_{[u]} \oplus \mathcal H_{[u]},\quad [u]\in \mathcal N(P),
\]
where $\mathcal V_{[u]} = \ker d(\pi_Q)_{[u]}$ is now $1$-dimensional. We will consider adapted surfaces with the following additional property:
\begin{eqnarray}\label{almostlagr}
\forall u\in \mathcal M(P,Q), \xi \in \mathcal H_u: \gamma_1(\xi)=0
\end{eqnarray}
	Here $\gamma_1$ is defined in \eqref{defgamma}. Recall from Lemma \ref{lemimups} that $W\backslash Q$ is a symplectic manifold with respect to $\Im \Upsilon$.
	
	\begin{lemma}
		Let $P\subset W$ be an adapted surface that is Lagrangian for $\Im \Upsilon$. Then property \eqref{almostlagr} holds true. 
	\end{lemma}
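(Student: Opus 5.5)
The plan is to turn the Lagrangian condition into a boundary condition on the holomorphic function in \eqref{defgamma}. Then $\gamma_1$ can be read off from Fourier coefficients, using that elements of $\mathcal H_u$ are built from vertical vectors. Fix $u\in\mathcal M(P,Q)$ and any holomorphic $\xi\in T_u\mathcal M(P)$. Set
\[
	f_\xi(\omega):=\langle \omega^2\Upsilon(u(\omega)),u'(\omega)\wedge\xi(\omega)\rangle,
\]
which is holomorphic on $\DD$ because the double pole is cancelled.

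\textbf{Step 1: boundary condition.} For $\omega=e^{i\theta}\in\partial\DD$ the vector $\partial_\theta u(e^{i\theta})=i\omega u'(\omega)$ is tangent to $P$. The vector $\xi(\omega)$ also lies in $T_{u(\omega)}P$, and $u(\partial\DD)\subset P\subset W\backslash Q$. Since $P$ is Lagrangian for $\Im\Upsilon$, we get $\Im\Upsilon(i\omega u',\xi)=0$ on $\partial\DD$. By complex bilinearity this reads $\Im\big(i\omega\cdot f_\xi(\omega)/\omega^2\big)=0$, that is,
\[
	\Re\big(f_\xi(\omega)/\omega\big)=0\quad\text{on } |\omega|=1.
\]

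\textbf{Step 2: Fourier normal form.} Expand $f_\xi=\sum_{k\ge0}c_k\omega^k$. A Laurent series $\sum_{k\ge -1}c_{k+1}\omega^k$ has vanishing real part on the circle iff the coefficient of $\omega^{j}$ equals minus the conjugate of the coefficient of $\omega^{-j}$ for all $j$. This forces
\[
	f_\xi(\omega)=c_0+c_1\omega-\bar c_0\omega^2,\qquad \Re c_1=0.
\]
On its own this only yields $\Re\gamma_1(\xi)=0$, which is not enough.

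\textbf{Step 3: use the structure of $\mathcal H_u$.} Since both sides are $\R$-linear, it suffices to treat $\xi=\frac{1+\omega^2}{\omega}v$ and $\xi=i\frac{1-\omega^2}{\omega}v$ with $v\in\mathcal V_u$; elements of $\mathcal H_u$ are sums of such terms by Lemma \ref{lem_horizontalvertical}. Apply Step 2 to $v$ itself, which lies in $T_u\mathcal M(P)$. Because $v(0)=0$, we have $f_v(0)=0$, so $f_v(\omega)=c\,\omega$ with $c\in i\R$. Hence
\[
	f_\xi=(1+\omega^2)c \quad\text{or}\quad f_\xi=i(1-\omega^2)c
\]
respectively. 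In both cases the coefficient of $\omega$ vanishes, so $\gamma_1(\xi)=0$, which is \eqref{almostlagr}.

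The only delicate point is Step 1. One must check that the Lagrangian condition is applied only at points of $P\subset W\backslash Q$, where $\Im\Upsilon$ is a genuine symplectic form, and that the bilinear pairing with $\omega^2$ correctly produces the factor $\omega^{-1}$. I expect no serious obstacle once this bookkeeping is done.
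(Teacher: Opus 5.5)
Your proof is correct. Its core is the same as the paper's: the Lagrangian condition forces the holomorphic function $\omega\mapsto\langle\omega^2\Upsilon(u(\omega)),u'(\omega)\wedge\cdot\,\rangle$, after division by the right power of $\omega$, to have purely imaginary boundary values, and this rigidly fixes its Taylor coefficients.

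Where you differ is the reduction.
The paper works in the normal-form frame $\{\eta,iu'\}$ of Lemma \ref{lem_normalformframe}. It writes $\xi\in\mathcal H_u$ as $a_0\vartheta+O(\omega^2)$ with $\vartheta=\eta+icu'$, and runs the boundary argument only once, for the single function $\langle\omega^2\Upsilon,u'\wedge\vartheta\rangle$, which turns out to be constant. So in the paper $\gamma_1$ of any tangent vector is a fixed constant times its normal-form coefficient $a_1$.

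You instead run the boundary argument for an arbitrary $\xi\in T_u\mathcal M(P)$, using only $\xi(\partial\DD)\subset u^*TP$. You then pin things down with $v(0)=0$ for $v\in\mathcal V_u$ and the defining relation $\mathcal H_u^\pm=\frac{1\pm\omega^2}{\omega}\mathcal V_u$ (up to the factor $i$).

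What your route buys:
\begin{itemize}
\item It is frame-free and never needs to know how $\vartheta$ sits relative to $TP$ on the boundary. The paper's claim that $T_{u(\omega)}P$ is spanned by $i\omega u'(\omega)$ and $\omega\vartheta(\omega)$ holds only if $c\in\R$. This is harmless there, because the $icu'$-component of $\vartheta$ is killed by the wedge with $u'$.
\item As a by-product you see directly that $\gamma_1$ vanishes on all of $\mathcal H_u^+\oplus\mathcal H_u^-$, not just on $\mathcal H_u$.
\end{itemize}
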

	
	\begin{proof}
	 If $\xi\in \mathcal H_u$, then as in the proof of Lemma \ref{lemdphi}, we have $\xi(\omega)=a_0 \vartheta(\omega)+ O(\omega^2)$ for some $a_0\in \C$, where $\vartheta(\omega)=\eta(\omega)+icu'(\omega)$ for a suitable $c\in \C$. Since $\gamma_1(\xi)=a_0\gamma_1(\vartheta)$, it suffices to show that $\gamma_1(\vartheta)=0$. For $\omega\in \partial \DD$, the tangent space $T_{u(\omega)}P$ is spanned by $\{i\omega u'(\omega),\omega \vartheta(\omega)\}$ and $P$ being Lagrangian for $\Im \Upsilon$ implies that
\[
	\Im \langle \omega^2\Upsilon(u(\omega)), iu'(\omega)\wedge \vartheta(\omega) \rangle = 0,\quad \omega\in \partial \DD.
\]
That is, the holomorphic function $\omega \mapsto \langle \omega^2 \Upsilon(u(\omega)),u'(\omega)\wedge \vartheta(\omega)\rangle$ has imaginary boundary values and is therefore constant. In particular,  $\gamma_1(\vartheta)=0$.	
	\end{proof}

\begin{lemma}\label{lem_horizontal}
 Let $P\subset W$ be an adapted surface with property \eqref{almostlagr}. Assume further that $\Res_Q\Upsilon=0$. Then 
 \[
 	\ker (d\varphi)_{[u]}=\mathcal H_{[u]},\quad [u]\in \mathcal N(P).
 \]
 In particular, $d\varphi$ has constant rank $1$ and the $2$-plane distribution $\mathcal H\subset T\mathcal N(P)$ is the tangent bundle of a $2$-dimensional foliation of $\mathcal N(P)$.
\end{lemma}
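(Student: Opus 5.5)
Since $\Res_Q\Upsilon=0$, Lemma \ref{lemdphi} reduces on $\mathcal M(P,Q)$ to
\[
	d\Phi_u(\xi)=-\gamma_1(\xi),\quad \xi\in T_u\mathcal M(P,Q).
\]
We work upstairs on $\mathcal M(P,Q)$ and descend at the end. By Lemma \ref{lem_horizontalvertical}, $T_u\mathcal M(P,Q)=\mathcal V_u\oplus\mathcal H_u$, so the differential can be read off on each summand separately. On $\mathcal H_u$ property \eqref{almostlagr} gives $\gamma_1=0$, hence $\mathcal H_u\subset\ker d\Phi_u$. On $\mathcal V_u$ we write $\xi=a_1\omega\eta+ib_1\omega u'$ with $a_1,b_1\in\R$. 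Step 3 of the proof of Lemma \ref{lemdphi} gives $d\Phi_u(\xi)=a_1g_0(0)/\zeta_1$, which is independent of $b_1$; this reflects the $S^1$-invariance of $\Phi$. It remains to show $g_0(0)\neq0$, which is the content of the remark after Lemma \ref{lemdphi}.

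To check $g_0(0)\ne 0$, recall that $\vartheta(0)=\eta(0)+icu'(0)\in T_{u(0)}Q$, and that $\{u'(0),\vartheta(0)\}$ is a basis of $T_{u(0)}W$ because $\{\eta,iu'\}$ is a frame. If $g_0(0)=0$, then with $g_1(0)=0$ we would get $\vartheta(0)=0$, a contradiction. So $d\Phi_u$ restricted to $\mathcal V_u$ is the nonzero real-linear map $a_1\mapsto a_1g_0(0)/\zeta_1$. Here $\zeta_1\neq 0$ by the transversality in Lemma \ref{lem_intersec}. Consequently $\ker d\Phi_u=\mathcal H_u\oplus\R\, i\omega u'$ and $\operatorname{rk} d\Phi_u=1$.

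Now pass to $\mathcal N(P)=\mathcal M(P,Q)/S^1$. The generator of the $S^1$-action is $i\omega u'$, and $\Phi$ is invariant, so $\varphi$ is well defined. The splitting descends to $T_{[u]}\mathcal N(P)=\mathcal V_{[u]}\oplus\mathcal H_{[u]}$, and the kernel computation descends to $\ker d\varphi_{[u]}=\mathcal H_{[u]}$. In particular $d\varphi$ has constant rank $1$. A map of constant rank has level sets that are embedded submanifolds of codimension $1$ by the constant rank theorem, and their tangent spaces are $\ker d\varphi=\mathcal H$. Hence $\mathcal H$ is integrable, with leaves given by the (local) components of the level sets of $\varphi$. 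Smoothness of $\varphi$, and the fact that $\mathcal H$ is a smooth distribution, follow from $\mathcal H=\ker d\varphi$ once the rank is known.

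The main obstacle is making sure the descended decomposition is consistent. The subspace $\mathcal H_u$ is defined using the normal-form frame from Lemma \ref{lem_normalformframe}, and one must check that $\mathcal H_u$ is carried to $\mathcal H_{u_\lambda}$ under rotation, so that $\mathcal H_{[u]}$ is well defined. This is where I would spend the care. Rotating $u$ conjugates the normal form by $\omega\mapsto e^{i\lambda}\omega$. This preserves $\mathcal V_u=\ker d\mathrm{ev}_0$ and maps the spans of $(1\pm\omega^2)/\omega\cdot\mathcal V_u$ into combinations of themselves, so the sum $\mathcal H_u^+\oplus\mathcal H_u^-$ is preserved. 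Since $\mathcal H_u$ is this sum intersected with the invariant space $T_u\mathcal M(P,Q)$, it is invariant as well. Alternatively, the lemma can be stated directly as $\ker d\varphi$ equals the image of $\mathcal H_u$, which is automatically well defined.
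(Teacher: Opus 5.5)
Your proof is correct and is essentially the paper's argument. Lemma \ref{lemdphi} with $\operatorname{Res}_Q\Upsilon=0$ and property \eqref{almostlagr} give $\mathcal H_u\subset\ker d\Phi_u$, and Step 3 shows $d\Phi_u\neq 0$ on $\mathcal V_u$; you usefully justify $g_0(0)\neq 0$ and $\zeta_1\neq 0$, which the paper leaves to the remark after Lemma \ref{lemdphi}, and you check rotation-invariance of $\mathcal H_u$, which the paper takes for granted. The constant rank theorem then finishes. One small slip: smoothness of $\varphi$ does not follow from $\mathcal H=\ker d\varphi$ but is needed beforehand; it holds because $u\mapsto u'(0)$ is smooth and $\zeta_1\neq 0$, and only the smoothness of the distribution $\mathcal H$ is a consequence.
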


\begin{proof} The inclusion `$\supset$' follows directly from Lemma \ref{lemdphi} and property \eqref{almostlagr}. Applying the lemma to $\mathcal V_u$ shows that $d\varphi_{[u]}\neq 0$ and hence its kernel has dimension $2$, just like $\mathcal H_{[u]}$. Hence we have equality and $d\varphi$ has rank $1$. By the constant rank theorem, the pre-images $\varphi^{-1}(\{c\})\subset \mathcal N(P)$ ($c\in \C$) are the leaves of a $2$-dimensional foliation, with tangent space at $[u]$ given by $\ker (d\varphi)_{[u]}=\mathcal H_{[u]}$.
\end{proof}

\subsubsection{The blow-up procedure}

\begin{proposition}\label{prop_defbeta} Let $P\subset W$ be an adapted surface with property \eqref{almostlagr} and suppose that $\Res_Q(\Upsilon)=0$. Given a disk family
 $(\Delta_q:q\in Q)\in \mathcal F(P)$, there exists a fibrewise holomorphic blow-down map
\[\beta\colon DM\to W 
\]
with $\beta(SM)=P$, $\beta(x,0)=q(x)$ and $\beta(D_xM)=\Delta_{q(x)}$ for all $x\in M$. 
\end{proposition}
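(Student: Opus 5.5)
By Lemma \ref{lem_correspondencedisk}, the disk family corresponds to a continuous section $\delta\colon Q\to \mathcal N(P)$ of $\pi_Q$ with $\varphi\circ\delta\equiv\zeta$ constant. The plan is to first upgrade $\delta$ to a smooth section and then lift it to a parametrised family. By Lemma \ref{lem_horizontal}, the level set $\varphi^{-1}(\zeta)$ is a union of leaves of the foliation with tangent distribution $\mathcal H$. Since $\mathcal H$ is complementary to $\mathcal V=\ker d\pi_Q$ (Lemma \ref{lem_horizontalvertical}), the map $\pi_Q$ restricted to a leaf is a local diffeomorphism. A continuous section taking values in $\varphi^{-1}(\zeta)$ must therefore, locally, lie in a single leaf. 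Indeed, the leaves through nearby points are locally graphs over $Q$, and a continuous section of a submersion restricted to a $2$-manifold that is a local section of $\pi_Q$ stays in one plaque. Hence $\delta$ is smooth, and its image is a closed leaf $\Sigma$ mapped diffeomorphically onto $Q$.

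Next, lift $\delta$ to $\mathcal M(P,Q)$, which is a principal $S^1$-bundle over $\mathcal N(P)$. The obstacle is that the pullback $S^1$-bundle $\delta^*\mathcal M(P,Q)\to Q\cong M$ need not be isomorphic to $SM$ in the naive sense. I would instead define the map directly on $SM$. For $(x,v)\in SM$, choose the parametrisation $u$ of $\delta(q(x))$ normalised so that $u'(0)$ is a positive multiple of $d\beta$-image-of-$v$. Concretely, $L^\zeta_{q(x)}$ is a complex line and $u'(0)\in L^\zeta$ is nonzero, so fixing a smooth complex-linear isomorphism $T_xM\cong L^\zeta_{q(x)}$ (which exists since $L^\zeta\cong TQ\cong TM$ via $\psi$ and $\rho$, or simply as smooth complex line bundles of equal degree) pins down the rotation of $u$. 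Then set $\beta(x,\omega v)=u_{x,v}(\omega)$. This is well defined because rotating $v$ rotates $u$ compatibly, and it is fibrewise holomorphic, with $\beta(x,0)=q(x)$ and $\beta(D_xM)=\Delta_{q(x)}$. Smoothness follows from the smoothness of $\delta$ and of the $S^1$-bundle $\mathcal M(P,Q)\to\mathcal N(P)$.

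Finally, verify that $\beta$ is a blow-down map along $P$, using Lemma \ref{lem_lift2} and Remark \ref{rmk_covering}. Hypothesis (i) holds because $u^{-1}(P)=\partial\DD$. For the remaining hypotheses:
\begin{enumerate}
\item Hypothesis (ii), interior local diffeomorphism: the differential of $\beta$ at $(x,\omega v)$ is spanned by $u'(\omega)$ and the variations $\xi(\omega)$ for $\xi\in\mathcal H_u$. Using the frame from Lemma \ref{lem_normalformframe}, a horizontal $\xi=(a_0+\bar a_0\omega^2)\vartheta+\dots$ together with $u'$ spans $T_{u(\omega)}W$ whenever $|\omega|<1$, because $|a_0|>|a_0\omega^2|$ rules out a cancellation.
\item Hypothesis (iii): on $|\omega|=1$ the kernel is one-dimensional, spanned by the direction where $a_0+\bar a_0\omega^2=0$, and it is tangent to $SM$.
\item Hypothesis (iv): this is a transversality computation in the same frame.
\end{enumerate}
I expect these rank computations, in particular (iv) and the precise kernel along $SM$, to be the main technical step. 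Lastly, for global injectivity, pick a point near $Q$: by Lemma \ref{lem_intersec} each disk meets $Q$ only at its centre, transversally, so points of $Q$ have a unique preimage. This gives $|\beta^{-1}(w)|=1$ for $w\in Q$, and the covering lift $\hat\beta$ is then a diffeomorphism.
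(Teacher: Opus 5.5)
Your proposal is correct and follows the paper's proof essentially step by step. The steps match: $\delta$ is smooth because of the constant-rank structure of $\varphi$ (you justify this in more detail than the paper does); the parametrisation $u_{(x,v)}$ is fixed by a complex-linear identification $T_xM\cong L^\zeta_{q(x)}$; Lemma~\ref{lem_lift2} is verified in the frame $\{\eta,iu'\}$ via the coefficient $a_0+\bar a_0\omega^2$; and the covering $\hat\beta$ is injective because each disk meets $Q$ in exactly one transverse point. The one item you leave open, hypothesis (iv), is handled in the paper as you anticipate: differentiating $w_\lambda=\omega_0u_\lambda'(\omega_0)$ along the kernel direction gives the $\eta$-component $2(\bar a_0\omega_0)\,\omega_0\eta(\omega_0)$ with $\bar a_0\omega_0\in i\R\setminus\{0\}$, which is normal to $P$ and independent of $w_0=\omega_0u'(\omega_0)$ modulo $T_pP$.
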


\begin{remark}\label{rk_immdisk}
As mentioned in the introduction, also the other direction holds true: Given a fibrewise holomorphic blow-down map $\beta\colon DM\to W$ and $\beta(x,0)=q(x)$, we obtain a disk family with $\Delta_q=\beta(D_x M)$. { The map $\beta(x,\cdot)\colon D_xM\to \Delta_q\subset  W$ is indeed an immersion: in the interior this follows from $\beta$ being a diffeomorphism, and at $v\in S_xM$ we use the fibrewise holomorphicity and  the second property of Lemma \ref{lem_lift} to see that
\[
	d\beta_{(x,v)}(V)=i d\beta_{(x,v)}(V_\perp)  \neq 0.
\]
The map $\beta\colon DM^\circ \to W\backslash P$ being a diffeomorphism also implies that each disk $\Delta_q$ has a unique transversal intersection with $Q$ and indeed we obtain a continuous disk family $(\Delta_q:q\in Q)$ satisfying property (a) of Definition \ref{def_diskfamily}.
Property (b) is then satisfied iff the complex lines $d\beta(T_0D_xM)\subset T_{\beta(x,0)}W$ form a holomorphic bundle.}
\end{remark}	

\begin{proof}[Proof of Proposition \ref{prop_defbeta}]
\noindent {\it Step 1 -- Construction of $\beta$:} 
There is an isomorphism $TM\cong TQ \cong NQ \cong L$ of holomorphic line bundles, covering $q$. Given $(x,v)\in SM$, this selects a basis vector $\vartheta(x,v)\in L_{q(x)}$, resulting in a canonical holomorphic parametrisation $u_{(x,v)}\colon \DD\to \Delta_{q(x)}$ with $u_{(x,v)}(0)=q(x)$ and $u_{(x,v)}'(0)\in \vartheta(x,v)\R_{>0}$. We define
\[
	\bar \beta\colon SM\times \DD\to W,\quad \bar \beta(x,v,\omega)=u_{(x,v)}(\omega).
\]
Since $\vartheta(x,e^{it}v) = e^{it} \vartheta(x,v)$, it also holds that $u_{(x,v)}(e^{it} \omega) = u_{(x,e^{it} v)}(\omega)$ and $\bar \beta(x,e^{it} v,e^{-it} \omega)= \bar \beta(x,v,\omega)$. Hence $\bar \beta = \beta\circ \p$ for a  smooth map $\beta\colon DM\to W$. The construction of $\beta$ makes property (i) obvious.

Locally we can describe $\beta$ as follows: In view of Lemma \ref{lem_correspondencedisk}  the disk family can be described by a continuous section $\delta\colon Q\to \mathcal N(P)$ and since it maps into a level set of the smooth rank $1$-map $\varphi\colon \mathcal N(P)\to \C$, it has to be smooth. Given $x_0\in M$ and a local trivialising function $\mathbf 1\colon  U\to SM$ this lifts to a map $\hat \delta\colon q(U)\to \mathcal M(P,Q)$ with $\hat \delta(q(x))=u_{x,\mathbf 1(x)}$. Then $\Sigma = \hat \delta(U)\subset \mathcal M(P,Q)$ is an embedded surface and
\[
	\tau\colon DM|_U\xrightarrow{\sim} \Sigma\times \DD,\quad \tau(\omega\mathbf 1(x))\mapsto (\hat \delta(q(x)),\omega)
\]
locally trivialises the disk bundle. On $DM|_U$ we have $\beta = \beta_\Sigma\circ \tau$, where $\beta_\Sigma$ is defined as follows:
\[\beta_\Sigma\colon \Sigma\times \DD\to W, \quad  \beta_\Sigma(u,\omega)=u(\omega).
\]

\noindent {\it Step 2 -- Lift through blow-down:}  We claim that $\beta$ lifts to a diffeomorphism $\hat \beta \colon Z\xrightarrow{\sim} [W,P]$ through the standard blow-down $[W,P]\to W$. To obtain a lift as local diffeomorphism, it suffices to consider $\beta_\Sigma\colon \Sigma\times \DD\to [W,P]$ and check the four properties of Lemma \ref{lem_lift2}. Property (i) is satisfied since $u^{-1}(P)= \partial \DD$ for all $u\in \Sigma$. For the remaining properties we use that $\Sigma$ is horizontal (i.e.~$T_u\Sigma= \mathcal H_u$ for all $u\in \Sigma$), which follows from Lemma \ref{lem_horizontal}. We compute the differential at a point $(u,\omega)\in \Sigma\times \DD$:
\[
	(d\beta_\Sigma)_{(u,\omega)} \colon T_{(u,\omega)}(\Sigma\times \DD)=\mathcal H_u\times \C \to T_{u(\omega)}W,\quad (\xi,\dot \omega)\mapsto  \xi(\omega) + \dot \omega u'(\omega).
\]
Let $\{\eta,iu'\}$ be the frame from Lemma \ref{lem_normalformframe} and $c\in \C$  such that $\eta(0)+icu'(0) \in T_{u(0)}Q$. Then any $\xi\in \mathcal H_u$ takes the form $\xi(\omega)=(a_0+\bar a_0 \omega^2) \eta(\omega) + i(ca_0 +  \bar{ca_0} \omega^2)u'(\omega)$ for some $a_0\in \C$ (cf.\,proof of Lemma \ref{lem_horizontalvertical}). In particular, \[(\xi,\dot \omega)\in \ker (d\beta_\Sigma)_{(u,
\omega)} \quad \Leftrightarrow \quad a_0+\bar a_0\omega^2 =0 \quad \text{ and } \quad \dot \omega = 0.
\]
If $|\omega|<1$, the right hand side enforces that $(\xi,\dot \omega) = 0$ and hence $\beta_\Sigma\colon \Sigma\times \DD^\circ\to  W\backslash P$ is a local diffeomorphism, which is property (ii) of Lemma \ref{lem_lift2}.  For $|\omega|=1$ the kernel of $(d\beta_\Sigma)_{(u,\omega)}$ is given by the equations $\Re(\bar a_0\omega)= \dot \omega = 0$; hence it has dimension $1$ and is tangential to the boundary of $\Sigma\times \DD$, as required by property (iii) of Lemma \ref{lem_lift2}. For the last property we may fix $\omega_0\in \partial \DD$ and consider the $1$-parameter family $(u_\lambda, \omega_0 , 0, \omega_0 \cdot \partial_s) \in T(\Sigma\times \DD)$ in $|\lambda|<\epsilon$ with $u_\lambda(\omega_0)\equiv p\in P$. Using the notation from Lemma \ref{lem_lift2} (iv), we consider 
\[
	w_\lambda  = d\beta_{(u_\lambda,\omega_0)}(0,\omega_0\cdot \partial_s) = \omega_0 u_\lambda'(\omega_0) \in T_pW. 
\]
Suppose $\xi = (\partial_\lambda|_{\lambda=0}) u_\lambda \in \mathcal H_u$ is given by $a_0\in \C$ with $\Re(\bar a_0\omega_0) = 0$. Then
\[
	w_0 = \omega_0 u'(\omega_0),\quad \dot w_0 = \omega_0 \xi'(\omega_0) = 2(\bar a_0\omega_0) \omega_0\eta(\omega_0) + 2ic(\bar a_0\omega_0)  \omega_0 u'(\omega_0)
\]
and, assuming $a_0\neq 0$, these are complemented to a basis of $T_pW$ by the vectors $\{\omega_0\eta(\omega_0),i\omega_0 u'(\omega_0)\}\subset T_pP$. Hence $\beta_\Sigma$ meets all conditions of Lemma \ref{lem_lift2}.

It remains to show that $\hat \beta\colon Z\to [W,P]$ is injective and here we argue as in Remark \ref{rmk_covering}. Let $x_0\in M$ and suppose that $\beta(x,v)=\beta(x_0,0)\in Q$ for some $(x,v)\in DM$. Then the disk $\Delta_{q(x)}$ intersects $Q$ in $q(x)$ and $q(x_0)$ and since the intersection point is unique and $q\colon M\to Q$ is injective, we conclude that $x=x_0$ and $v=0$. This concludes the proof that $\beta$ is a blow-down map.
\end{proof}

\subsection{Proof of Theorem \ref{introthmD}}
We consider the following two sets:
\begin{eqnarray*}
	\mathbf{P}&=&\{P\subset W: \text{adapted surface that is Lagrangian for} \Im \Upsilon\}\\
	\mathbf{P_*}&=& \{P\in \mathbf{P}: \mathcal F(P)\neq \emptyset\}
\end{eqnarray*}
Note that this description of $\mathbf{P_*}$ agrees with the one from Theorem \ref{introthmD} due to Proposition \ref{prop_defbeta}. Our goal is to show that $\mathbf{P}_*\subset \mathbf{P}$ is an open subset, and in view of Lemma \ref{lem_correspondencedisk} this requires to establish the persistence of parallel sections (i.e.~$\delta\colon Q\to \mathcal N(P)$ with $\pi_Q\circ \delta \equiv \Id$ and $\varphi\circ \delta \equiv \mathrm{const.}$) under small perturbations of $P$.

This persistence phenomenon is based on the following observation:

\begin{lemma}\label{lem_stability} Let $\mathcal N$ be a $3$-manifold  and suppose there are a smooth maps
\[
	\begin{tikzcd}
		\mathcal N\arrow{r}{f_0}\arrow{d}{\pi} & \R^2\\
		Q
	\end{tikzcd},
\]
such that $df_0$ has constant rank $1$ and $\pi$ is a submersion. Assume further that there is a smooth section $g_0\colon Q\to \mathcal N$ of $\pi$ with $f_0\circ g_0 \equiv \text{const.}$. Then if $f\in C^\infty(\mathcal N,\R^2)$ is sufficiently close to $f_0$ in the $C^1$-topology and $df$ also has constant rank $1$, there exists a smooth section $g\colon Q\to \mathcal N$ with $f\circ g\equiv \mathrm{const.}$.
\end{lemma}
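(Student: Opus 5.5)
The plan is to read the statement as a persistence result for compact leaves of a foliation, and to use the level-set structure of $f$ to rule out nontrivial holonomy. I assume throughout that $Q$ is compact, as it is in the application. Put $\Sigma_0 := g_0(Q)$. This is a compact embedded surface in $\mathcal N$ contained in a level set of $f_0$. Since $df_0$ has constant rank $1$, the distribution $\ker df_0$ is an integrable $2$-plane field whose leaves are the connected components of the level sets. Thus $\Sigma_0$ is a compact leaf, and $T\Sigma_0=\ker df_0$ along $\Sigma_0$.

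Because $g_0$ is a section, $\Sigma_0$ is transverse to the $1$-dimensional fibres of $\pi$. I would first choose a tubular neighbourhood $\mathcal T\cong Q\times(-1,1)$ of $\Sigma_0$, with $\pi$ equal to the first projection and $\Sigma_0=Q\times\{0\}$. After shrinking $\mathcal T$, I may assume that $\ker df_0$ is transverse to the vertical lines $\{q\}\times(-1,1)$ on the compact closure of $\mathcal T$. Equivalently, $df_0(\partial_t)\neq 0$ there.

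If $f$ is $C^1$-close to $f_0$, then $df(\partial_t)$ is uniformly close to $df_0(\partial_t)$ and so is still nonzero. Since $df$ has rank $1$, we have $\ker df=\ker\bigl(\ell\circ df\bigr)$ for a fixed linear form $\ell$ with $\ell(df_0(\partial_t))\neq0$ locally. Hence $\ker df$ is a $2$-plane field that is $C^0$-close to $\ker df_0$, is still transverse to the vertical lines, and is still integrable with leaves contained in level sets of $f$. In the product picture this means $\ker df$ is the horizontal distribution of a flat Ehresmann connection on $\mathcal T\to Q$. A horizontal section through $(q_0,0)$ exists precisely when the holonomy of this connection around loops based at $q_0$ fixes the starting point $0$.

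The key step, and the main obstacle, is showing this holonomy is trivial. Note that mere closeness of foliations does not suffice: compare irrational perturbations of a foliation of $T^3$. Here I use that $f$ is constant along horizontal lifts. It suffices to check the finitely many loops generating $\pi_1(Q,q_0)$, realised by fixed smooth paths of bounded length.

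1. Horizontal lifts depend continuously on the plane field, as solutions of an ODE with continuous dependence on the coefficients. So for $f$ close enough to $f_0$, the lift of each generator starting at $(q_0,0)$ stays inside $\mathcal T$. It returns to a point $(q_0,t)$ with $|t|<\epsilon$, where $\epsilon$ can be made as small as we like.
2. Since $f$ is constant along the lift, $f(q_0,t)=f(q_0,0)$.
3. The curve $c(s)=f(q_0,s)$ has derivative $C^0$-close to that of $c_0(s)=f_0(q_0,s)$. The derivative $c_0'$ is nonvanishing with direction varying uniformly continuously. So there is a $\delta>0$, independent of the small perturbation, such that $\langle c'(s),c'(0)\rangle>0$ for $|s|<\delta$. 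Then $\langle c(t)-c(0),c'(0)\rangle\neq0$ for $0<|t|<\delta$, so $c$ is injective on $(-\delta,\delta)$.

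Choosing $\epsilon<\delta$ forces $t=0$, so the holonomy is trivial.

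Finally, the leaf of $\ker df$ through $(q_0,0)$, intersected with $\mathcal T$, is then the image of a smooth section $g\colon Q\to\mathcal T\subset\mathcal N$ of $\pi$. Its smoothness follows from flatness of the connection and integrability. Since $g(Q)$ is a connected leaf, $f\circ g\equiv\mathrm{const}$. This yields the required section.
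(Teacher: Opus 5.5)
Your Steps 1--3 are correct, and using the injectivity of $s\mapsto f(q_0,s)$ near $0$ to force the return point back to $(q_0,0)$ is a nice device. The gap is the sentence ``It suffices to check the finitely many loops generating $\pi_1(Q,q_0)$.''

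Your equivalence is correct if ``holonomy fixes $0$'' means that the lift from $(q_0,0)$ of \emph{every} loop at $q_0$ exists inside $\mathcal T$ and closes up. But on $\mathcal T\cong Q\times(-1,1)$ the connection $\ker df$ is incomplete: lifts can exit through $t=\pm1$. So holonomy is only a partially defined pseudogroup, not a homomorphism on $\pi_1(Q,q_0)$. Closed lifts of finitely many fixed loops give closed lifts of words in those loops. They say nothing about whether the lift of an arbitrary path from $q_0$ to $q$ exists. Nor do they show that two paths from $q_0$ to $q$ have lifts with the same endpoint, since a homotopy from their concatenation to a word in the generators passes through paths whose lifts you do not control. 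Continuous dependence on the plane field only handles a fixed finite family of paths.

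For instance, take $Q=S^2$ and $f(x,t)=(0,t-\phi(x))$ with $\phi(q_0)=0$ and $\max\phi>1$. There are no generators to check, yet the leaf through $(q_0,0)$ is not a section over $Q$. So your final paragraph, where the global section is produced, is unsupported. This global control is exactly the content of the paper's proof. After normalising to $f_0=(0,h)$, the barrier $\pm f_2>1/2$ on $1\le\pm h\le 2$ traps the component $\Sigma$ of $f_2^{-1}(c_2)$ through $(x_0,0)$ in $\{|h|<1\}$. Hence $\Sigma$ is compact and $\pi|_\Sigma$ is a covering, and only then are sheets counted, via the homotopy $f_t$.

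The repair needs only your own estimates, because Step 1 never used that the loop is a generator. Write the lift equation as $\dot t=a_f(\gamma,t)[\dot\gamma]$. Then $a_{f_0}(x,0)=0$, $|a_{f_0}(x,t)|\le L|t|$ on $\overline{\mathcal T}$, and $\sup|a_f-a_{f_0}|\le\eta$ with $\eta\to0$ as $f\to f_0$ in $C^1$. Gronwall gives $|t|\le\eta L^{-1}(e^{L\ell}-1)$ along \emph{any} path of length $\ell$ (for an auxiliary metric on $Q$), so such lifts never reach $t=\pm1$.

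Choose $\eta$ so that this bound is $<\delta$ for all $\ell\le 2(\operatorname{diam}Q+1)$. Steps 2--3 then show that every loop at $q_0$ of length at most $2(\operatorname{diam}Q+1)$ lifts to a closed loop through $(q_0,0)$. Now define $g(q)$ as the endpoint of the lift of any path from $q_0$ to $q$ of length at most $\operatorname{diam}Q+1$. Such paths exist, and the endpoint is path-independent by the loop statement. Near a given $q$ you may use paths ending in short radial segments, so $g$ is locally the leaf through $g(q)$ written as a graph. Hence $g$ is smooth and $f\circ g$ is constant.

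Alternatively, trap the leaf as the paper does, but with $f$ itself as first integral. After thinning the tube (Step 3 applied uniformly in $x$), $f_0(x,\pm1)$ stays a definite distance from $f_0(\Sigma_0)$. Then the component of $f^{-1}(f(q_0,0))\cap\mathcal T$ through $(q_0,0)$ is compact and covers $Q$. Your generator argument then shows this covering is one-sheeted, replacing the paper's homotopy $f_t$.
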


\begin{proof}
	Without loss of generality, we may assume that $\mathcal N=Q\times \R$, $f_0(x,h)=(0,h)\in \R^2$ and $g_0(x)=(x,0)$. Take $x_0\in Q$ and suppose that $f$ satisfies
	\begin{equation}
	\begin{split}
		|f_2(x_0,0)|<1/2,\quad \pm f_2(x,h)>1/2 \quad &\text{for } x\in Q, 1\le \pm h\le 2,\\
		 d_2f_2(x,h)>0 \quad &\text{for } x\in Q,|h|\le 1.
	\end{split}	\label{convexcond}
	\end{equation}
	These conditions define a convex $C^1$-open neighbourhood of $f_0$ in $C^\infty(\mathcal N,\R^2)$. Let $c=f(x_0,0)\in \R^2$, then $|c_2|<1/2$ and hence
	\[
		f_2^{-1}(\{c_2\}) \subset \{|h|<1\}\cup \{|h|>2\}.
	\]
	Let $\Sigma\subset f_2^{-1}(\{c_2\})$ be the connected component containing $(x_0,0)$, then $\Sigma$ is closed inside $Q\times \R$ and satisfies $\Sigma\subset \{|h|<1\}$, hence  it is compact. Further, $f_2$ is a submersion on $\{|h|<1\}$ and therefore $\Sigma$ must be an embedded $2$-dimensional submanifold. At $(x,h)\in \Sigma$ we have 
	\[
		T_{(x,h)}\Sigma \cap \ker d\pi{(x,h)} = \ker df_2(x,h)\cap \ker d\pi(x,h)= \ker d_2f_2(x,h)=0
	\] 
	and thus $\pi|_\Sigma\colon \Sigma\to Q$ is a local diffeomorphism. Since both surfaces are connected and compact, it must be a covering map. Let us assume for the moment that $Q$ is simply connected, such that there exists a global section  $g\colon Q\to \Sigma\subset Q\times \R$. 
	At $(x,h)\in \Sigma$ it holds that
	\[
		\ker df(x,h) \subset \ker df_2(x,h).
	\]	
	and since $df(x,h)$ has rank $1$we must have `$=$' for dimension reasons. Writing $h=g(x)$, this implies that
	\[
	\operatorname{rg} dg(x)=T_{(x,h)}\Sigma = \ker df_2(x,h) = \ker df(x,h),
	\]
	and thus $f\circ g\equiv \mathrm{const.}$, as desired.
	If $Q$ is not simply connected an additional step is required to ensure that the covering $\pi|_\Sigma\colon \Sigma\to Q$ is $1$-sheeted. Since $f_t=(1-t) f_0 + t f\in C^\infty(\mathcal N,\R^2)$ satisfies the conditions \eqref{convexcond} for all $0\le t\le 1$, the preceding constructions can be carried out continuously in $t$ and we obtain a $1$-parameter family of coverings $\Sigma_t\to Q$. Since $\Sigma_0=Q\times \{0\}\to Q$ is $1$-sheeted, the same has to be true for $\Sigma=\Sigma_1$.
\end{proof}

To apply the lemma, we  translate perturbations of $P$ into ones for $\varphi$:

\begin{lemma}
	Let $P$ be an adapted surface and $K\subset \mathcal N(P)$ compact. Then for any sufficiently close $P'$ in the $C^\infty$-topology there exists a neighbourhood $U\subset \mathcal N(P)$ of $K$ and a $C^1$-embedding $\Psi\colon U\to \mathcal N(P')$ such that $\pi_Q\circ \Psi=\pi_Q$.
\end{lemma}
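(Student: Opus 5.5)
The plan is to derive the statement from Theorem \ref{thm_modulimaster}\ref{thm_modulimasterii}, applied to the parametrised moduli space $\mathcal M(P,Q)$, and then pass to the $S^1$-quotient. Let $\hat K\subset \mathcal M(P,Q)$ be the preimage of $K$ under the quotient map $\mathcal M(P,Q)\to \mathcal N(P)$. It is compact because the $S^1$-action is free and $S^1$ is compact. Viewing $\hat K$ as a compact subset of $\mathcal M_4(P)$, Theorem \ref{thm_modulimaster}\ref{thm_modulimasterii} gives, for $P'$ close to $P$ in $C^\infty$, a neighbourhood $\hat U\subset \mathcal M_4(P)$ of $\hat K$ and a $C^1$-embedding $\hat\Psi\colon \hat U\to \mathcal M_4(P')$ with $\mathrm{ev}_0\circ\hat\Psi=\mathrm{ev}_0$ and $\hat\Psi(u_\lambda)=\hat\Psi(u)_\lambda$.

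The second step is to check that $\hat\Psi$ maps $\hat U\cap \mathcal M(P,Q)$ into $\mathcal M(P',Q)$, after shrinking $\hat U$. Since $\mathrm{ev}_0$ is preserved, $u(0)\in Q$ gives $\hat\Psi(u)(0)\in Q$. The remaining conditions defining $\mathcal M(P')$ are the following.

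\begin{enumerate}
\item Immersion. This is open in $C^1$, and $\hat\Psi(u)$ is $C^1$-close to $u$ because $\hat\Psi$ comes from a flow on $\mathcal B^{k,p}(P)$ with $k-2/p>1$, starting at the identity and driven by the small perturbation $J'$ near $J$.
\item $\hat\Psi(u)^{-1}(P')=\partial\DD$. Here I would rerun the argument of Lemma \ref{lem_opentouch} uniformly in a neighbourhood of the compact set $\hat K$, allowing the totally real surface to vary as well. Near $\partial\DD$ the defining function quotient $h$ stays nonzero under $C^1$-small changes of both $u$ and $P$. On $\{|\omega|\le r\}$ the image stays a positive distance from $P$, and this persists under small perturbation.
\item The relative class $[F]_{\mathrm{rel}}$. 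Adaptedness is open, so $P'$ is adapted for $P'$ close to $P$. The condition then follows from Lemma \ref{lem_intersec}: $\hat\Psi(u)$ still meets $Q$ exactly once and transversally at $\omega=0$. A second intersection point would have to occur near $\partial\DD$ or arise by a bifurcation from the interior, and the $C^1$-closeness together with compactness excludes both.
\end{enumerate}

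These conditions also need to be checked for $u$ in a whole neighbourhood of $\hat K$, not only on $\hat K$. This holds after shrinking $\hat U$, since each condition is open.

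Finally, I would take $U=\hat U\cap \mathcal M(P,Q)$. Replacing it by $\bigcup_\lambda U_\lambda$ makes it $S^1$-invariant, and by equivariance $\hat\Psi$ is still defined there after possibly shrinking. Equivariance gives an induced map $\Psi\colon U/S^1\to \mathcal N(P')$. The identity $\mathrm{ev}_0\circ\hat\Psi=\mathrm{ev}_0$ descends to $\pi_Q\circ\Psi=\pi_Q$. The induced map is an injective $C^1$ immersion because $\hat\Psi$ is an equivariant $C^1$-embedding between free $S^1$-manifolds. It is an embedding onto its image if $U/S^1$ is chosen to be a relatively compact neighbourhood of $K$. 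I expect the main obstacle to be the second step, and specifically the uniform version of Lemma \ref{lem_opentouch} in which the boundary surface changes from $P$ to $P'$. The way around it is to realise $P'=\psi(P)$ with $\psi$ close to $\Id$, as in the proof of Theorem \ref{thm_modulimaster}. Then the question becomes one about $\psi^{-1}\circ\hat\Psi(u)$, which has boundary on the fixed surface $P$, and Lemma \ref{lem_opentouch} applies with only a small change in $J$.
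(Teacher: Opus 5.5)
Your proposal is correct and follows the paper's proof: you take the (compact, $S^1$-invariant) preimage of $K$ in $\mathcal M(P,Q)$, apply Theorem~\ref{thm_modulimaster}\ref{thm_modulimasterii}, use $\mathrm{ev}_0\circ\hat\Psi=\mathrm{ev}_0$ to land in $\mathcal M(P',Q)$, and descend to $\mathcal N(P')$ via rotation-equivariance. The openness checks you add for $u^{-1}(P')=\partial\DD$ and for the relative class (via Lemma~\ref{lem_opentouch} and Lemma~\ref{lem_intersec}) are details the paper passes over silently, and they are sound; the immersion check is redundant, since it is already part of the definition of $\mathcal M_4(P')$.
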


\begin{proof}
Let $K'\subset \mathcal M(P,Q)$ be the preimage of $K$ under the quotient map $\mathcal M(P,Q)\to \mathcal N(P)$. Then by Theorem \ref{thm_modulimaster}\ref{thm_modulimasterii} there exists a neighbourhood $U'\subset \mathcal M(P)$ of $K'$ and a  $C^1$-embedding $\Psi'\colon U' \to \mathcal M(P')$ with $\mathrm{ev}_0\circ \Psi' = \mathrm{ev}_0$. In particular, $\Psi'(U'\cap \mathcal M(P,Q)) \subset \mathcal M(P,Q)$. Moreover, $\Psi'$ is equivariant with respect to rotations and hence it descends to the desired embedding.
\end{proof}

This allows us to complete:

\begin{proof}[Proof of Theorem \ref{introthmD}]
Let $P\in \mathbf{P_*}$, such that there is a section $\delta\colon Q\to \mathcal N(P)$ with $\varphi_P\circ \delta \equiv \mathrm{const.}$ (we use the notation $\varphi_P$ to denote the map in \eqref{parallelsec}).  We choose $P'\in \mathbf{P}$ sufficiently close to $P$, such that the preceding lemma gives an embedding $\Psi\colon U\to \mathcal N(P')$ of a neighbourhood of $K=\delta(Q)$. We can then also assume that $\varphi_{P'}\circ \Psi\colon U\to \C\equiv \R^2$ is $C^1$-close to $\varphi_P$. By Lemma \ref{lem_horizontal} it has constant rank $1$, and applying Lemma \ref{lem_stability} on $\mathcal N=U$ we obtain a section $\delta'\colon Q\to \mathcal N(P)$ with $\varphi_{P'}\circ \Psi\circ \delta' \equiv \mathrm{const.}$. But then $(\Psi\circ \delta')\colon Q\to \mathcal N(P')$ is a parallel section for $P'$, which proves that $P'\in \mathbf{P}_*$, as desired.
\end{proof}

\begin{remark}
The same proof shows that $\tilde{\mathbf{P}}_* \subset \tilde{\mathbf{P}}$ is open, where $\tilde{\mathbf{P}}$ is the set of adapted surfaces with property \eqref{almostlagr} and $\tilde{\mathbf{P}}_* = \{P\in \tilde{\mathbf{P}}: \mathcal F(P)\neq \emptyset\}$.
\end{remark}

\bibliographystyle{plain}
\bibliography{zollpaper.bbl}

\end{document}